\documentclass[11pt]{article}
\usepackage{amsmath, amssymb, amsthm, esint, hyperref, cite}
\usepackage{geometry, mathabx}
\newcommand{\bd}{\boldsymbol}

\usepackage{xcolor}
\usepackage{graphicx} 

\geometry{margin = 1.0in}

\def\R{{\mathbb R}}

\newcommand{\aalpha}{{\alpha}}
\newcommand{\ttt}{\nu}

\def\bu{{\bf u}}
\def\bq{{\bf q}}
\def\bv{{\bf v}}
\def\dt{{\Delta t}}

\theoremstyle{definition}

\newtheorem{theorem}{Theorem}[section]

\newtheorem{definition}{Definition}[section]
\newtheorem{proposition}{Proposition}[section]
\newtheorem{remark}{Remark}[section]
\newtheorem{lemma}{Lemma}[section]
\begin{document}

\title{\sc{Fluid-poroviscoelastic structure interaction problem with nonlinear {geometric} coupling}}
%\title{\sc{Well-posedness of a nonlinearly coupled fluid-poroviscoelastic structure interaction problem}}
\author{Jeffrey Kuan, Sun\v{c}ica \v{C}ani\'{c}, Boris Muha}
%\date{}
\maketitle

\begin{abstract}
{We investigate weak solutions to a fluid-structure interaction (FSI) problem between the flow of an incompressible, viscous fluid modeled by the Navier-Stokes equations, and a poroviscoelastic medium modeled by the Biot equations. These systems are coupled nonlinearly across an interface with mass and elastic energy, modeled by a reticular plate equation, which is transparent to fluid flow. We provide a constructive proof of the existence of a weak solution to 
a regularized problem. Next, a weak-{{classical}} consistency result is obtained, showing that the weak solution to the regularized problem converges, as the 
regularization parameter approaches zero, to a {{classical}} solution to the original problem, when {{such a classical}} solution exists. While the assumptions in the first step only require the Biot medium to be poroelastic, the second step requires additional regularity, namely, that the Biot medium is poroviscoelastic. This is the first weak solution existence result for {{an}} FSI problem with nonlinear coupling involving a Biot model for poro(visco)elastic media.}
\iffalse
We prove the existence of a weak solution to a fluid-structure interaction (FSI) problem between the flow of an incompressible, viscous fluid modeled by the Navier-Stokes equations, and a poroviscoelastic medium modeled by the Biot equations. The two are nonlinearly coupled over an interface with mass and elastic energy, modeled
by a reticular plate equation, which is transparent to fluid flow. The existence proof is constructive, consisting of two steps. First, the existence of a weak solution to 
a regularized problem is shown. Next, a weak-{{classical}} consistency result is obtained, showing that the weak solution to the regularized problem converges, as the 
regularization parameter approaches zero, to a {{classical}} solution to the original problem, when {{such a classical}} solution exists. While the assumptions in the first step only 
require the Biot medium to be poroelastic, the second step requires additional regularity, namely, that the Biot medium is poroviscoelastic. This is the first weak solution existence result for {{an}} FSI problem with nonlinear coupling involving 
a Biot model for poro(visco)elastic media. 
\fi
\end{abstract}

\section{Introduction and motivation}\label{intro}

In this paper we study a time-dependent nonlinearly coupled fluid-structure interaction problem between the flow of an incompressible, viscous fluid, modeled by the Navier-Stokes equations, and bulk poroviscoelasticity modeled by the Biot equations.
Bulk poroviscoelasticity means that the dimensions of the ``free fluid flow'' domain and the poroviscoelastic medium domain are the same. In particular, in this manuscript we consider {{a 2D fluid-poroelastic structure interaction (FPSI) problem, which captures the main mathematical difficulties of such coupling, see Fig.~\ref{domain}}}. 
The free fluid flow and the Biot poro(visco)elastic medium are coupled across the current location of the interface, {{which is modeled by a reticular plate that has inertia and elastic energy}}. A reticular plate is a lattice-type structure characterized by
two properties: periodicity and small thickness, where periodicity refers to periodic cells (holes) distributed in all directions \cite{CioranescuBook}.
The reticular plate interface is transparent to fluid flow. 
We are interested in the existence of finite energy weak solutions (of the Leray-Hopf type).

\begin{figure}[htp!]
\center
                    \includegraphics[width = 0.4 \textwidth]{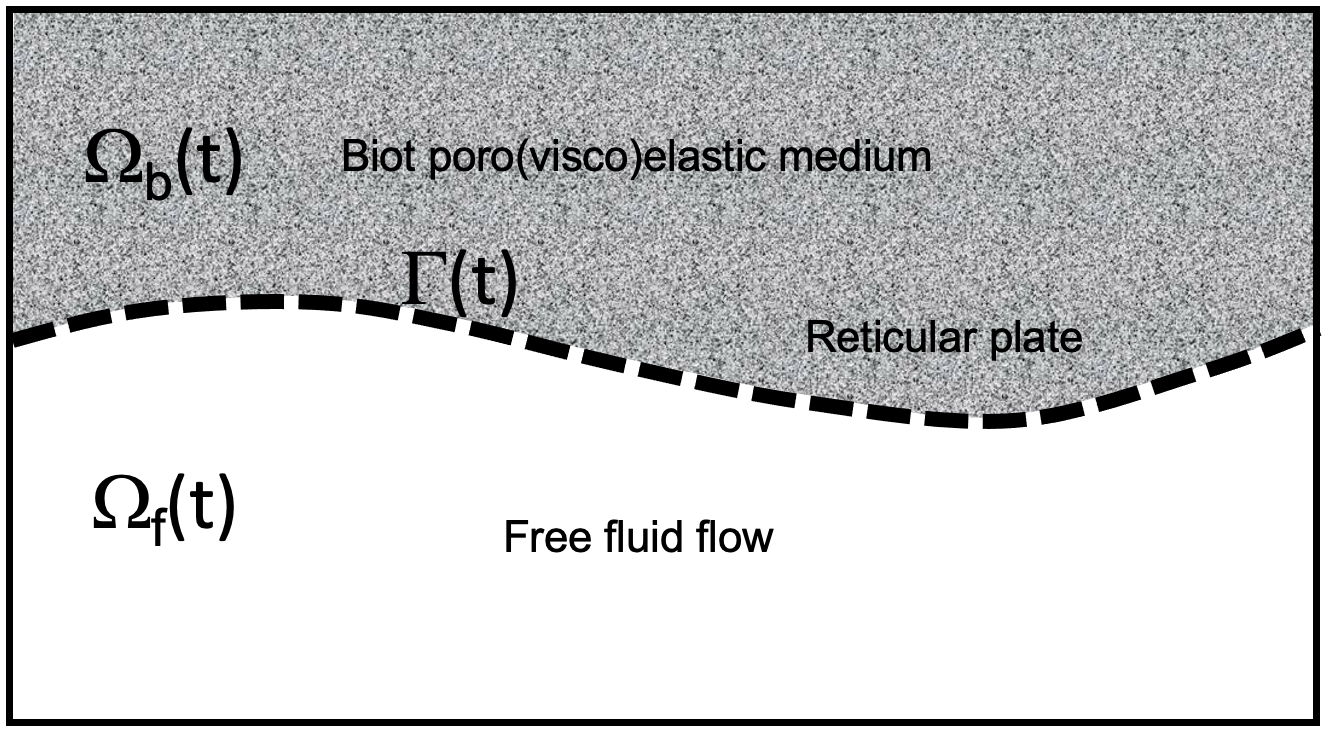}
  \caption{{\small\emph{A sketch of the fluid-poroelastic structure interaction domain. }}}
\label{domain}
\end{figure}

The problem we study here arises in many applications. In particular, we mention encapsulation of bioartificial organs \cite{FluidsCanic} and blood flow in arteries which are modeled as poro(visco)elastic media to study drug transport through the vascular walls \cite{YifanDES,BukacZuninoYotovPoroelastic,QuainiQuarteroniPoroelastic}. The reticular plate can be used to capture the elastodynamics behavior of the intima/elastic laminae layer of arterial walls which is in direct contact with the blood flow on one side, and a poroelastic medium 
consisting of the arterial media/adventitia complex on the other side. 

{From the mathematical point of view the primary difficulties in studying Navier-Stokes equations nonlinearly coupled to bulk poro(visco)elasticity} arise from the fact that 
the finite energy solutions do not posses sufficient regularity to (1) define the moving domain and the corresponding traces, and (2) guarantee that all the integrals in the weak formulation of the problem are well-defined. The first issue is related to the difficulties associated with {fluid-structure coupling, where the fluid and structure domains are of the same dimension}. 
The second issue is a consequence of the geometric nonlinearities associated with moving domain problems.
These are the main reasons why to this day there have been no works on the existence of weak solutions for the Biot-Navier-Stokes coupled problems in which the coupling is assumed over a moving interface. 

To get around these difficulties, we take the following approaches. First, the reticular plate at the interface associates mass and elastic energy to the interface, and regularizes the boundary of the fluid domain. {{In classical fluid-structure interaction problems involving {\emph{elastic}} structures, this usually takes care of the issues related to the regularity of traces in moving boundary problems \cite{Multilayered}. In the case when the structure is {\emph{poroelastic}}, and it satisfies the Biot equations on a {\emph{moving}} domain}}, this is, however, not sufficient {{since the energy estimates do not provide sufficient regularity of the poroelastic matrix displacement for certain integrals over the moving Biot domain in the weak formulation}} to be well defined. 
{{
This is why we take the following two-step approach:
\begin{enumerate}
\item We introduce a ``consistent {\bf{regularized}} weak formulation'' of the coupled problem by defining a suitable convolution in spatial variables and regularizing only the problematic terms in the weak formulation of the coupled problem. We prove the existence of a weak solution to this regularized problem.
\item We show that as the regularization parameter tends to zero, the  solution to this regularized problem converges to the solution of 
the original nonregularized problem in the case when the original problem has a {{classical}} solution and the Biot poroelastic matrix is viscoelastic. {{Here,  a classical solution is a solution that is smooth both temporally and spatially and it hence satisfies the system of PDEs for the {\bf{original fluid-poroelastic structure problem}} pointwise}}. 
\end{enumerate} 
}}

{{The existence of a weak solution to the regularized problem was announced by the authors in \cite{NonlinearFPSI_CRM23}, where only the main steps of the proof were outlined. In particular, the proofs of the existence of weak solutions to the fluid and structure subproblems used in constructing the coupled solution were omitted in \cite{NonlinearFPSI_CRM23}, and only the main steps of the uniform estimates were presented. Most importantly, the proof of the main compactness result needed to address the main difficulty, the geometric nonlinearity in the regularized problem, is only outlined in \cite{NonlinearFPSI_CRM23}.
Furthermore, details of the construction of appropriate test functions that are defined on moving domains are also omitted in \cite{NonlinearFPSI_CRM23}.
Here we present details of all proofs,}} and show the {\bf{weak-classical consistency}} result outlined in step 2 above. 

The {\bf{weak-classical consistency}} result outlined in step 2 {{
is obtained by using a Gronwall-type estimate, which shows that the energy of the difference
between the weak solution of the regularized problem and the {{classical (temporally and spatially smooth)}} solution to the original, nonregularized problem with viscoelastic Biot poroelastic matrix, converges to zero as the regularization parameter tends to zero. While the main idea is simple, the estimates  are quite nontrivial due to the fact that we need to 
work with the integrals over regularized Biot domains and compare them with the integrals over the nonregularized moving domains. 
Details are presented in Section~\ref{weakstrong}.}}

%Namely, we prove that if a smooth solution exists for the fluid-poroviscoelastic structure interaction (FPSI) problem without regularization, then there exists a time $T>0$ such that a sequence of weak solutions to the regularized FPSI problem constructed here, converges to the strong solution on $[0,T]$ as the regularization parameter converges to $0$.

%%%%%%%%%%%START OF ERASED PARAGRAPH%%%%%%%%%%%
\if 1= 0
Therefore, in this manuscript 
{\emph{we prove
the existence of a weak solution to a nonlinearly coupled fluid-structure interaction problem between the flow of an incompressible, viscous fluid modeled by the
Navier-Stokes equations, and a structure consisting of two solids -- a thick poroviscoelastic medium modeled by the Biot equations, and a thin interface with mass modeled by a reticular plate equation.}}
 We mention that no viscoelasticity is needed for the proof of the existence of a weak solution to the regularized problem. The existence of a weak solution
 to the regularized problem holds in the purely poroelastic case (and in the {{viscoelastic}} case). Viscoelasticity of the Biot poroelastic matrix is needed only in the proof of {{weak-classical}} consistency. 
 \fi
 %%%%%%%%%%%%END OF ERASED PARAGRAPH%%%%%%%%%%%%

{{We conclude this section by noting that the main steps of the {\bf{constructive}} proof presented in this manuscript can be used to design a numerical scheme to capture the solutions to the original (non-regularized) FPSI problem, {{see  \cite{AndrewPartitionedSchemeFPSI}.}} 
{{The main constructive steps of the proof can be summarized as follows. 
We semidiscretize the regularized FPSI problem in time by subdividing the time interval into $N$ subintervals of width $\Delta t$. }}
At each time step we split the reticular plate subproblem from the regularized fluid-Biot subproblem using a Lie operator splitting strategy \cite{glowinski2003finite}.
To deal with the moving domains we use
the Lagrangian map  for the Biot domain, and an {{Arbitrary Lagrangian-Eulerian mapping for the fluid domain, which maps}} a fixed, reference domain onto the current, 
physical domain. We switch between the reference domain formulation and moving domain formulation in the proof as needed. {{For each $\Delta t$,}} approximate solutions are constructed by ``solving'' the sequence of semidiscretized (linearized) problems defined on the current (approximate) moving domain for each $t_n = n \Delta t, n = 1,\dots,N$. 

We then show uniform boundedness of the approximate solutions by deriving energy estimates {{that are}} uniform in the time discretization parameter $\Delta t$. This will allow us to deduce the existence of 
weakly and weakly* convergent subsequences. Since the problem is highly nonlinear, just having weakly and weakly* subsequences is not sufficient to pass to the limit in the weak formulations
of {{the approximate problems}}. Hence, we must obtain strong convergence of approximate sequences by using
several compactness results: the classical Aubin-Lions compactness lemma \cite{AubinLions} for the Biot displacement, Arzela-Ascoli for {{the plate displacement}}, 
Dreher and J\"{u}ngel's compactness result \cite{DreherJungel} for the Biot and plate velocity and pore pressure,
and a recent generalized Aubin-Lions-Simon compactness result by Muha and \v{C}ani\'{c} \cite{MuhaCanic13}, to deal 
with the most {{involved}} part, which is the free fluid velocity defined on different time-dependent fluid domains. 

Once strongly convergent subsequences are obtained from the 
compactness results, one would like to pass to the limit in the weak formulation to show that the limits of the subsequences are weak solutions to the regularized 
fluid-poroelastic structure interaction problem. However, this cannot be done yet, since the velocity test functions 
are also defined on moving domains and we need to construct ``appropriate'' test functions which can be compared for different domains,
and for which we can show convergence to a test function of the limiting, continuous problem. Luckily, in contrast with the classical fluid-elastic structure interaction problems, in our case the 
fluid test functions decouple from the structure problem, and so it is a bit easier to construct appropriate test functions for which one can show 
uniform pointwise convergence to a test function for the continuous problem.
With this final step, we can pass to the limit in the weak formulations of approximate problems and show that the limits of approximate subsequences
satisfy the continuous weak formulation of the regularized problem. 

This existence result is local in time because we can guarantee  the nondegeneracy of the fluid domains both for the free fluid flow and the 
filtrating flow through the poroelastic medium only locally in time. However, using the {{approach}} presented in \cite[Section 5]{CDEM} the time of existence can be extended to
the maximal time until {{one of the following three events occurs:}} (1) the {{moving fluid domain or Biot domain}} degenerates (e.g., the interface touches the bottom of the fluid domain {{or the top of the Biot domain}}), (2) the pores in the poroelastic matrix
denegerate in the sense that the Lagrangian mapping stops being injective, or (3) $T = \infty$.
}}

%This {{weak-classical consistency}} proof effectively shows the weak-strong uniqueness, namely, it shows that
%the solution we constructed is unique and equal to the strong solution in energy norm 
%when the strong solution exists. 

\section{Literature review}

There is extensive past work on fluid-structure interaction (FSI) studying fully coupled systems involving incompressible, Newtonian fluids interacting with {{deformable}} structures. 

{{Most of the FSI literature considers models involving purely {\bf{elastic}} structures.
The models first considered were {\bf{linearly coupled}} FSI models \cite{barGruLasTuff2, BarGruLasTuff, KukavicaTuffahaZiane}, which pose the fluid equations on a fixed reference fluid domain, as a linearization that approximates real-life dynamics well when structure displacements 
and deformations are small. 

In cases when displacements and deformations of the structure are large, they can significantly affect the fluid dynamics 
in which case time-dependent moving fluid domains that depend on the displacement itself must be taken into account. 
Such {\bf{nonlinearly coupled}} FSI models have been extensively studied in
\cite{BdV1,CDEM,ChengShkollerCoutand,ChenShkoller,CSS1,CSS2,CG,Grandmont16,FSIforBIO_Lukacova,IgnatovaKukavica,ignatova2014well,Kuk,LengererRuzicka,Lequeurre,MuhaCanic13, BorSun3d,BorSunMultiLayered,BorSunNonLinearKoiter,BorSunSlip,Raymond}.
In such models the time-dependent and a priori unknown fluid domain evolves according to the displacement of the structure, giving rise to a fully coupled problem with two-way coupling between the fluid and structure that has significant geometric nonlinearities arising from the moving boundary. 
There are two broad classes of nonlinearly coupled FSI models:
(1) models in which the elastic structure has a lower spatial dimension than the fluid so that the structure is for example an elastic plate or shell, and (2) models in which the fluid and structure domains have the same spatial dimension. In the first case (involving elastic structures of lower spatial dimension), the works showing existence of {\emph{strong solutions}} include \cite{BdV1, Lequeurre, Grandmont16, FSIforBIO_Lukacova}, and the  works showing existence of {\emph{weak solutions}} include \cite{CDEM, CG, MuhaCanic13, LengererRuzicka, FSIforBIO_Lukacova}. 
In the second case (involving coupled elastic structures and fluids of the same spatial dimension), well-posedness results have been studied in \cite{CSS1, CSS2, ChengShkollerCoutand, ChenShkoller, Kuk, IgnatovaKukavica, ignatova2014well, Raymond}. 

Closest to the work presented in this manuscript is the work of \cite{MuhaCanic13} showing existence of weak solutions to a nonlinearly coupled problem
 between an elastic Koiter shell and an incompressible viscous fluid modeled by the Navier-Stokes equations. In \cite{MuhaCanic13}   a splitting scheme was introduced to prove the existence of a weak solution to the nonlinearly coupled problem by semidiscretizing the fully coupled problem in time and splitting the coupled problem into fluid and structure subproblems. This scheme has proven to be a robust way for analyzing a variety of complex nonlinearly coupled (moving boundary) FSI problems involving elastic or viscoelastic structures, see \cite{MuhaCanic13, BorSun3d, BorSunMultiLayered, BorSunNonLinearKoiter, BorSunSlip}.
In the present manuscript we adapt the splitting scheme approach to the nonlinearly coupled {\bf{fluid-poroelastic structure}} interaction problem.

%However, many elastic materials, such as biological tissues and sediments that interact with fluids are not impermeable and can admit fluid flow through their pores, 
%in which case poroelasticity of the material needs to be taken into account. 

In terms of literature related to {\bf{poroelastic media}} modeled by the Biot equations, we mention the studies by Biot, modeling soil consolidation \cite{Biot1, Biot2}, the studies of fractures in porous and poroelastic materials \cite{GWG15, LDQ11} and more recently, applications to biomedical science, including the study of the ocular tissue related to  the onset of glaucoma \cite{CGH14}, and the modeling of intestinal walls as poroelastic media \cite{YRC14}. The mathematical well-posedness of the Biot equations discussed in these models has been the focus of a number of works, including 
\cite{Biotwell1, Biotwell2, Biotwell3, Biotwell4, Biotwell5, Biotwell6, Biotwell7, Biotwell8,BiotWell22,BiotWell23}. 
}}

%More recently, there has been a need in applications to understand not just poroelastic materials by themselves, but the interaction between poroelastic materials and fluids. 
%Mathematically, such systems are described by coupling fluid equations (e.g. the Navier-Stokes or Stokes system) with poroelasticity. 
{{In terms of fluid-poroelastic structure interaction problems, the analysis of well-posedness for linearly coupled problems were discussed in \cite{AEN19, Ces17, Sho05}.
Recent progress in the design of bioartificial organs, see e.g., \cite{FluidsCanic}, sparked the need to study FPSI problems in which the fluid-structure interface itself has mass and elastic or poroelastic energy. 
%Namely, in the recent work on the design of 
%a bioartificial pancreas \cite{FluidsCanic}, the bioartifical pancreas consists of  an {\emph{encapsulated}} poroelastic agarose gel containing transplanted pancreatic cells, where the capsule containing the poroelastic medium is itself poroelastic, and it is designed to protect the transplanted cells within the poroelastic agarose gel
%from the patient's own immune cells, while allowing the passage of oxygen and nutrients to the cells for long time viability. 
%This capsule is a thin poroelastic membrane/plate which sits at the interface between the poroelastic gel containing the transplanted cells, and the flow of blood 
%carrying oxygen and nutrients to the bioartificial organ.  
The well-posedness for a linearly coupled FPSI problem 
 in which the structure consists of two layers: a thin poroelastic plate located at the interface between
the free fluid flow and a thick poroelastic medium modeled by the Biot equations,  was obtained in \cite{BCMW21} for both the linear and nonlinear Biot equations, where the nonlinearity refers to 
the dependence of the permeability tensor in the Biot equations on the fluid content. 
In \cite{BCMW21}  the fluid-structure interface with mass serves as a regularizing mechanism and 
provides sufficient information about the regularity of the interface and the free fluid domain to allow, for the first time, the proof of the existence of 
 a finite energy weak solution. 
 
 None of the works that address weak solutions to  fluid-structure interaction problems between the flow of an incompressible, viscous fluid and a poroelastic solid have taken into account
 nonlinear coupling over the moving interface. 
 %Such problems, however, are of importance in many applications, including the flow of blood in coronary arteries 
 %sitting on the surface of the heart, and contracting and relaxing with each heart beat \cite{YifanDES,YifanCMAME}. To understand large displacements that occur due to the 
% contractions of the heart muscle, and capture the flow of drugs through the vascular wall, nonlinear coupling between the blood flow and vascular walls,
% modeled as poro(visco)elastic media, needs to be taken into account. 
 %
%
%We emphasize that the existing work on fluid-poroelastic structure interaction is solely for linearly coupled models, in which the fluid equations are posed on a fixed reference fluid domain and the Biot equations describing poroelasticity are posed on a fixed reference poroelastic structure domain. More recently, there has been work, motivated by the design of bioartificial organs, that has considered the well-posedness of a linearly coupled poroelastic model involving a multilayered structure, consisting of a poroelastic structure and a poroelastic plate separating the fluid and poroelastic structure \cite{BCMW21}. 
%
%However, there has been no prior work that has considered the case of nonlinearly coupled fluid-poroelastic structure interaction, in which the poroelastic structure and fluid are both posed on moving domains that depend on the poroelastic structure displacement, as all past work has been for linearly coupled models. 
%
The goal of the current manuscript is to develop a well-posedness theory for a nonlinearly coupled (moving boundary) fluid-poroelastic structure interaction problem by constructing new tools for dealing with the equations of poroelasticity defined on a priori unknown and time-dependent domains. 
}}

%As in \cite{BCMW21}, we consider a multilayered problem, in which there is a plate which separates a poroelastic medium and a fluid, though in the current manuscript, the separating plate will be purely elastic rather than poroelastic. 

\section{Description of the main problem}\label{model}
We study {{fluid-poroelastic structure interaction}} between the flow of an incompressible, viscous fluid and a multilayered poro(visco)elastic structure
consisting of two layers: a thick poro(visco)elastic layer modeled by the Biot equations, and a thin elastic layer modeled by the reticular plate equation. 
The problem is set on a two dimensional domain, which embodies all the main mathematical difficulties associated with the analysis of this problem. 
The entire two dimensional domain $\hat{\Omega}$ is a union of the reference domain for the fluid subproblem $\hat{\Omega}_{f}$, {{the}} reference domain for the Biot poroviscoelastic material $\hat{\Omega}_{b}$, and the reference domain $\hat{\Gamma}$ of the elastic reticular plate
which serves {{as the interface}} separating the free fluid flow and the Biot medium:
\begin{equation*}
	\hat{\Omega} = \hat{\Omega}_{b} \cup \hat{\Omega}_{f} \cup \hat{\Gamma}, \ {\rm where} \ 
	\hat{\Omega}_{b} = (0, L) \times (0, R), \  \hat{\Gamma} = (0, L) \times \{0\}, \  \hat{\Omega}_{f} = (0, L) \times (-R, 0).
\end{equation*}
These domains will evolve in time, 
giving rise to the time-dependent $\Omega(t) = \Omega_{b}(t) \cup \Omega_{f}(t) \cup \Gamma(t)$. 
We will be using the hat notation to denote objects associated with the reference domain.
On each subdomain we will consider the following mathematical models.

\subsection{The Biot equations on a moving domain}

The Biot system consists of the elastodynamics equation, which in this work will be defined on the Lagrangian domain  $\hat{\Omega}_{b}$,
and the fluid equation, which in this work will be defined on the Eulerian domain $ {\Omega}_{b}(t)$.
Let 
$\hat{\bd{\eta}}: [0, T] \times \hat{\Omega}_{b} \to \mathbb{R}^{2}$  denote the displacement of the Biot poroviscoelastic matrix from its reference configuration,
and let $\hat{p}: \hat{\Omega}_{b} \to \mathbb{R}$ denote the fluid pore pressure. 
To specify the fluid equation given in terms of the fluid pore pressure in Eulerian formulation, we introduce  the {\bf{Lagrangian map}} by
\begin{equation}\label{phi}
	\hat{\boldsymbol{\Phi}}_{b}^{\eta}(t, \cdot) = \text{Id} + \hat{\bd{\eta}}(t, \cdot): \hat{\Omega}_{b} \to \Omega_{b}(t),
\end{equation}
with $(\boldsymbol{\Phi}_{b}^{\eta})^{-1}(t, \cdot): \Omega_{b}(t) \to \hat{\Omega}_{b}$ denoting its inverse. 
The Biot equations are then given by:
\begin{align}\label{Biot1}
	\rho_{b} \partial_{tt} \hat{\boldsymbol{\eta}} &= \hat{\nabla} \cdot \hat{S}_{b}(\hat{\nabla} \hat{\boldsymbol{\eta}}, \hat{p}) & \text{ in } \hat{\Omega}_{b},
\\
\label{Biot2}
	\frac{c_{0}}{[\det(\hat{\nabla} \hat{\boldsymbol{\Phi}}^{\eta}_{b})] \circ (\bd{\Phi}^{\eta}_{b})^{-1}} \frac{D}{Dt} p + \alpha \nabla \cdot \frac{D}{Dt} \boldsymbol{\eta} 
	&= \nabla \cdot (\kappa \nabla p) & \text{ in } \Omega_{b}(t),
\end{align}
where {{$\frac{D}{Dt} = \frac{d}{dt} + \left (\left (\partial_t\boldsymbol{\eta}(t, \cdot)\circ(\boldsymbol{\Phi}_{b}^{\eta})^{-1}(t, \cdot)\right ) \cdot \nabla\right)$}} is the material derivative. The first equation describes the elastodynamics of the poroelastic solid matrix, while the second equation models the conservation of mass 
principle of the filtrating fluid, see, e.g. \cite{NonlinearFPSI1, NonlinearFPSI2} { for more details about Biot equations defined on moving domains.}
To recover the filtration fluid velocity $\boldsymbol{q}$, Darcy's law is used:
\begin{equation}\label{darcy}
	\boldsymbol{q} = -\kappa \nabla p \qquad \text{ on } \Omega_{b}(t), 
\end{equation}
where $\kappa$ is a positive permeability constant. 

In this work, we will consider both the viscoelastic and the purely elastic consitutive models for the Biot poroelastic matrix 
with the Piola-Kirchhoff stress tensor for the viscoelastic case given by
\begin{equation}\label{Biotstress}
	\hat{S}_{b}(\nabla \boldsymbol{\eta}, p) = 2\mu_{e} \hat{\bd{D}}(\hat{\bd{\eta}}) + \lambda_{e} (\hat{\nabla} \cdot \hat{\boldsymbol{\eta}}) \boldsymbol{I} + 2\mu_{v} \hat{\bd{D}}(\hat{\bd{\eta}}_{t}) + \lambda_{v} (\hat{\nabla} \cdot \hat{\bd{\eta}}_{t}) \bd{I} - \alpha \det(\hat{\nabla} \hat{\boldsymbol{\Phi}}^{\eta}_{b}) \hat{p} (\hat{\nabla} \hat{\boldsymbol{\Phi}}^{\eta}_{b})^{-t},
\end{equation}
where superscript $t$ denotes matrix transposition and $A^{-t}=(A^{-1})^t$.
The {purely elastic case} has the coefficients $\lambda_{v}$ and $\mu_{v}$ equal to zero.
Here, $\bd{D}$ denotes the symmetrized gradient, $\mu_{e}$ and $\lambda_{e}$ are the Lam\'{e} parameters related to the elastic stress, $\mu_{v}$ and $\lambda_{v}$ are the corresponding parameters related to the viscoelastic stress, and $\hat{\bd{\Phi}}^{\eta}_{b}$ is the Lagrangian map defined above. {{From the definition of the stress tensor \eqref{Biotstress}, one can see that the elastodynamics of the Biot medium in \eqref{Biot1} is described by linear elasticity with an additional term involving pore pressure. This pressure term embodies additional geometric nonlinearities arising from transforming the pressure between the Eulerian and Lagrangian frameworks.

In equation \eqref{Biot2} the Biot material displacement $\bd{\eta}$ and the pore pressure $p$ are defined on the physical domain $\Omega_{b}(t)$ as 
\begin{equation*}
	\bd{\eta}(t, \cdot) = \hat{\bd{\eta}}(t, (\boldsymbol{\Phi}^{\eta}_{b})^{-1}(t, \cdot)), \qquad p(t, \cdot) = \hat{p}(t, (\boldsymbol{\Phi}_{b}^{\eta})^{-1}(t, \cdot)),
	 \ {\rm where} \ \Omega_{b}(t) = \hat{\boldsymbol{\Phi}}_{b}^{\eta}(t, \hat{\Omega}_{b}).
\end{equation*}}}
We remark that in the last term of the Piola-Kirchhoff stress tensor \eqref{Biotstress}, we have used the Piola transform {(e.g. \cite[Section 1.7.]{Ciarlet})}, which is a transformation that maps tensors in Lagrangian coordinates to corresponding tensors in Eulerian coordinates in such a way that divergence-free tensors in Lagrangian coordinates remain divergence free in Eulerian coordinates \cite{Ciarlet}. 

We note that \emph{a priori} the notion of $\Omega_{b}(t)$ is not entirely clear, unless $\hat{\bd{\eta}}$ is sufficiently regular, and furthermore, the formulation of this problem makes sense only if the map $\hat{\bd{\Phi}}^{\eta}_{b} = \text{Id} + \hat{\bd{\eta}}$ is an injective map from $\hat{\Omega}_{b}$ to $\Omega_{b}(t)$. 
We address these important issues later.

\subsection{The reticular plate equation}

{{A reticular plate is a lattice-type structure characterized by two properties: periodicity and small thickness, where periodicity refers to periodic cells (holes) distributed in all directions \cite{CioranescuBook}.
Reticular plates, shells or membranes are models for reticular tissue, which is a connective tissue made up of a network of supportive fibers that provide a framework for soft organs.}}
The elastodynamics of reticular plates, studied in  \cite{CioranescuBook} using homogenization,
is governed by a  plate-type equation, defined on the equilibrium middle surface $\hat\Gamma$ of the homogenized plate or shell. 
The homogenized equation is given in terms of transverse displacement $\hat{\boldsymbol{\omega}} = \hat{\omega} \boldsymbol{e}_{y}$
from the reference configuration:
\begin{equation}\label{plate}
	{{\rho_{p}}} \partial_{tt} \hat{\omega} + \hat{\Delta}^{2} \hat{\omega} = \hat{F}_{p}, \qquad \text{ on } \hat{\Gamma},
\end{equation}
where {{$\rho_{p}$}} is the plate density coefficient and $\hat{F}_{p}$ is the external forcing on the plate in $y$ direction, to be specified later in the coupling conditions.
The constant {{$\rho_p$}} is the ``average'' plate density,
which depends on the periodic structure. The in-plane bi-Laplacian $\hat\Delta^2$ (Laplace-Beltrami operator for curved $\hat\Gamma$'s) is associated with 
the elastic energy of the plate. Typically, there is a coefficient $\tilde{D}$ in front of the bi-Laplacian, which contains 
information about the periodicity of the structure and its stiffness properties \cite{CioranescuBook}. 
In the present work, {{without loss of generality,}} we will assume that it is equal to $1$. 
The source term $\hat{F}_p$ corresponds to the loading of the poroelastic plate, which will come
from the jump in the normal stress (traction) between the free fluid on one side and the thick Biot poroelastic structure on the other, see (7) below.

In our problem, the reticular plate separates the regions of free fluid flow and the Biot poroviscoelastic medium, and is transparent to the flow between the two. 
{{This means, in particular, that there is no resistance to the fluid flow passing through the reticular place. 
%As a consequence, the fluid flow quantities, e.g., pressure and normal stress, satisfy the coupling condition \eqref{pressurebalance} below, which is the same as the coupling condition that would appear between the thick Biot medium and free fluid flow in an FPSI problem without the reticular plate. 
However, due to the inertia and elastic energy of the plate, the analysis of the problem will be simplified due to the regularizing effects of the plate inertia and elastic energy, as we shall see below (see e.g., Remark~\ref{RemarkAboutInterface}). }}

 The time-dependent configuration of the plate 
\begin{equation*}
	\Gamma(t) = \{(x, y) : 0 < x < L,\; y = \hat{\omega}(t, x)\},
\end{equation*}
forms the bottom boundary of the moving Biot domain $\Omega_{b}(t)$, and the remaining left, top, and right boundaries of the moving Biot domain $\Omega_{b}(t)$ are fixed in time. 
Hence, we impose $\bd{\eta} = 0$ on the left, top, and right boundaries of $\Omega_{b}(t)$.
See Fig.~\ref{domain}. Hence, we can describe the moving domain $\Omega_{b}(t)$ as
\begin{equation*}
	\Omega_{b}(t) = \{(x, y) : 0 < x < L, \; \hat{\omega}(t, x) < y < R\}.
\end{equation*}

\subsection{The Navier-Stokes equations on a moving domain}

The free flow of an incompressible, viscous fluid will be modeled by  the Navier-Stokes equations
\begin{equation}\label{NS1}
\left.
\begin{array}{rcl}
	\displaystyle{\partial_{t} \boldsymbol{u} + (\boldsymbol{u} \cdot \nabla) \boldsymbol{u}} &=& \nabla \cdot \boldsymbol{\sigma}_{f}(\nabla \boldsymbol{u}, \pi)
	\\
	\displaystyle{\nabla \cdot \boldsymbol{u}} &=& 0
\end{array}
\right\}
\quad
\text{ in } \Omega_{f}(t),
\end{equation}
where $\boldsymbol{u}$ is the fluid velocity and $\pi$ is the fluid pressure. The Cauchy stress tensor is given by
\begin{equation*}
	\boldsymbol{\sigma}_{f}(\nabla \boldsymbol{u}, \pi) = 2\nu \boldsymbol{D}(\boldsymbol{u}) - \pi \boldsymbol{I},
\end{equation*}
where $\pi$ is the fluid pressure and $\nu$ is kinematic viscosity coefficient. 
Notice that the fluid problem is defined on a moving domain, which is not {{known}} {\emph{a priori}}. 
The moving fluid domain  $\Omega_{f}(t)$ is a function of time and it is determined by the plate displacement $\hat{\omega}$, as follows:
\begin{equation*}
	\Omega_{f}(t) = \{(x, y) : 0 < x < L, -R < y < \hat{\omega}(t, x)\}.
\end{equation*}
The fact that the free fluid domain depends on one of the unknowns in the problem presents a geometric nonlinearity that is difficult to deal with.
We will be using the following {\bf{Arbitrary Lagrangian Eulerian (ALE) mapping}} 
$\hat{\boldsymbol{\Phi}}^{\omega}_{f}: \hat{\Omega}_{f} \to \Omega_{f}(t)$ to map the fixed reference domain $\hat{\Omega}_{f}$ onto the current, physical 
domain $\Omega_f(t)$:
	\begin{equation}\label{phif}
		\hat{\boldsymbol{\Phi}}^{\omega}_{f}(\hat{x}, \hat{y}) = \left(\hat{x}, \hat{y} + \left(1 + \frac{\hat{y}}{R}\right)\hat{\omega} \right), \qquad (\hat{x}, \hat{y}) \in \hat{\Omega}_{f}.
	\end{equation}
In our analysis, we will use this ALE mapping to will switch between the fixed and moving boundary formulations of the coupled problem as needed.
{ 
\begin{remark}\label{RemarkALE}
In numerical computations, it is typical to employ harmonic extension to construct the Arbitrary Lagrangian-Eulerian (ALE) mapping. However, given the simplicity of our geometry, we chose to utilize the explicit formula for extension to simplify the calculations related to the change of variables. Since our methodology is not contingent on the particular selection of the ALE map, in scenarios involving more complex geometries where an explicit formula is not viable, alternatives such as harmonic extension can also be utilized.
\end{remark}
}

\subsection{The coupling conditions}
The Navier-Stokes equations \eqref{NS1}, the Biot equations \eqref{Biot1}, \eqref{Biot2}, and the reticular plate equation \eqref{plate} are coupled 
 across the moving reticular plate interface $\Gamma(t)$ via two sets of coupling conditions: the kinematic and dynamic coupling conditions. To state these conditions, we introduce the following notation:
\begin{itemize}
\item {{The Biot Cauchy stress tensor defined on the physical domain ${S}_{b}(\nabla \boldsymbol{\eta}, p)$ is obtained by applying the Piola transform to the Biot Cauchy stress tensor 
$\hat{S}_{b}(\nabla \boldsymbol{\eta}, p)$ defined on the reference domain, to obtain:}}
\begin{align}
	&S_{b}(\nabla \boldsymbol{\eta}, p) = [\det(\hat{\nabla} \hat{\boldsymbol{\Phi}}^{\eta}_{b})^{-1} \hat{S}_{b}(\hat{\nabla} \hat{\boldsymbol{\eta}}, \hat{p}) (\hat{\nabla} \hat{\boldsymbol{\Phi}}^{\eta}_{b})^{t}] \circ (\bd{\Phi}^{\eta}_{b})^{-1} 
	\nonumber 
	\\
	&= \left(\frac{1}{\det(\hat{\nabla} \hat{\boldsymbol{\Phi}}^{\eta}_{b})} \left[2\mu_{e} \hat{\boldsymbol{D}}(\hat{\boldsymbol{\eta}}) + \lambda_{e} (\hat{\nabla} \cdot \hat{\boldsymbol{\eta}}) + 2\mu_{v} \hat{\boldsymbol{D}}(\hat{\bd{\eta}}_{t}) + \lambda_{v} (\hat{\nabla} \cdot \hat{\bd{\eta}}_{t})\right] (\hat{\nabla} \hat{\boldsymbol{\Phi}}^{\eta}_{b})^{t}\right) \circ (\bd{\Phi}^{\eta}_{b})^{-1} - \alpha p \boldsymbol{I}.
	\label{physicalstress}
\end{align}
\item
The Eulerian structure velocity of the Biot poroviscoelastic matrix is given at each point of the physical domain $\Omega_{b}(t)$ by
\begin{equation}\label{xi}
	\boldsymbol{\xi}(t, \cdot) = \partial_{t}\hat{\boldsymbol{\eta}}\left(t, (\boldsymbol{\Phi}^{\eta}_{b})^{-1}(t, \cdot)\right).
\end{equation}
\item
The normal unit vector  to the moving interface $\Gamma(t)$ will be denoted by $\boldsymbol{n}(t)$, and the normal unit vector to the reference configuration of the interface $\hat\Gamma$ will be denoted by $\hat{\bd{n}}$.
Note that $\hat{\bd{n}} = \bd{e}_{y}$. The vectors $\bd{n}(t)$ and $\hat{\bd{n}}$ point outward from $\Omega_{f}(t)$ and $\Omega_{f}$, and inward towards $\Omega_{b}(t)$ and $\Omega_{b}$. 
\end{itemize}
\noindent
The following two sets of coupling conditions give rise to a well-defined bounded energy of the coupled problem:
\vskip 0.1in
\noindent
{\bf{(I) Kinematic coupling conditions:}}
\begin{itemize}
	\item Continuity of normal components of velocity (conservation of mass of the fluid):
	\begin{equation}\label{mass}
		\boldsymbol{u} \cdot \boldsymbol{n}(t) = (\boldsymbol{q} + \boldsymbol{\xi}) \cdot \boldsymbol{n}(t), \qquad \text{ on } (0, T) \times \Gamma(t).
	\end{equation}
	\item Slip in the tangential component of free fluid velocity, known as the Beavers-Joseph-Saffman condition \cite{JM96,JM00}:
	\begin{equation}\label{bjs}
		\beta(\boldsymbol{\xi} - \boldsymbol{u}) \cdot \boldsymbol{\tau}(t) = \boldsymbol{\sigma}_{f} \boldsymbol{n}(t) \cdot \boldsymbol{\tau}(t), \qquad \text{ on } (0, T) \times \Gamma(t),
	\end{equation}
	where $\beta \ge 0$ is a constant and $\bd{\tau}(t)$ is the rightward pointing unit tangent vector to $\Gamma(t)$. 	
	\item Continuity of displacements:
	\begin{equation}\label{DisplCont}
		\hat{\boldsymbol{\eta}} = \hat{\omega} \boldsymbol{e}_{y}, \qquad \text{ on } (0, T) \times \hat{\Gamma}.
	\end{equation} 
\end{itemize}
{\bf{(II) Dynamic coupling conditions:}}
\begin{itemize}
	\item Balance of forces describing the body forcing on the plate as the difference between the normal components of normal stress coming from the Biot medium
	on one side, and free fluid flow on the other:
	\begin{equation}\label{dynamic}
		\hat{F}_{p} = -\det(\nabla \hat{\bd{\Phi}}^{\omega}_{f}) [\bd{\sigma}_{f}(\nabla \bd{u}, \pi) \circ \hat{\bd{\Phi}}^{\omega}_{f}] (\nabla \hat{\bd{\Phi}}^{\omega}_{f})^{-t} \hat{\bd{n}} \cdot \hat{\bd{n}} + \hat{S}_{b}(\hat{\nabla} \hat{\bd{\eta}}, \hat{p}) \hat{\bd{n}} \cdot \hat{\bd{n}}|_{\hat{\Gamma}}, \qquad \text{ on } {{(0, T) \times \hat{\Gamma}}},
	\end{equation}
	
	\if 1 = 0
	\begin{equation}\label{dynamic}
		\hat{F}_{p} = - \hat{\mathcal{J}}^{\omega}_{\Gamma} \cdot (\boldsymbol{\sigma}_{f}(\nabla \boldsymbol{u}, \pi) \boldsymbol{n} \cdot \boldsymbol{e}_{y})|_{\Gamma(t)} \circ (\bd{\Phi}^{\omega}_{\Gamma})^{-1} + \hat{S}_{b}(\hat{\nabla} \hat{\boldsymbol{\eta}}, \hat{p}) \boldsymbol{e}_{y} \cdot \boldsymbol{e}_{y}|_{\hat{\Gamma}}, \qquad \text{ on } \hat{\Gamma},
	\end{equation}
	\fi 
	
	where $\hat{\bd{\Phi}}^{\omega}_{f}$ is the Arbitrary Lagrangian-Eulerian (ALE) mapping defined in \eqref{phif}.
	\item Balance of pressure at the interface:
	\begin{equation}\label{pressurebalance}
		-\boldsymbol{\sigma}_{f}(\nabla \boldsymbol{u}, \pi) \boldsymbol{n}(t) \cdot \boldsymbol{n}(t) + \frac{1}{2} |\boldsymbol{u}|^{2} = p, \qquad \text{ on } (0, T) \times \Gamma(t).
	\end{equation}
	{{See \cite{MuhaCanic13, CanicLectureNotes, NSDarcy, DynamicPressure} for the use of the dynamic (total) pressure $\pi + \frac{1}{2}|\bd{u}|^{2}$ on the left-hand side of \eqref{pressurebalance}.}}
\end{itemize}

\subsection{The {initial and} boundary conditions}\label{boundary}

For the fluid, we will assume rigid walls on $\partial \Omega_{f}(t) \setminus \Gamma(t)$  and impose a no-slip condition 
\begin{equation*}
	\boldsymbol{u} = 0, \qquad \text{ on } \partial \Omega_{f}(t) \setminus \Gamma(t).
\end{equation*}
Similarly, we will assume that the boundaries of the Biot poroviscoelastic medium, excluding the interface $\Gamma(t)$, are rigid
and impose 
\begin{equation*}
	\hat{\boldsymbol{\eta}} = 0 \ \ \text{ and } \ \ \hat{p} = 0, \qquad \text{ on } \partial \hat{\Omega}_{b} \setminus \hat{\Gamma}.
\end{equation*}
{Finally, we prescribe the following initial conditions:
\begin{align*}
	 \boldsymbol{u}(0)=\boldsymbol{u}_0\quad {\rm in}\; \Omega_f(0),
	 \\
	 \hat{\bd{\eta}}(0)=\hat{\bd{\eta}}_0,\;
	 \partial_t\hat{\bd{\eta}}(0)=\hat{\bd{\xi}}_0\quad {\rm in}\; \hat{\Omega}_{b},
	 \\
	 \hat{\omega}(0)=\hat{\omega}_0,\;
	 \partial_t\hat{\omega}(0)=\hat{\zeta}_0\quad{\rm in}\; \hat{\Gamma},
	 \\
	 \hat{p}(0)=\hat{p}_0\quad {\rm in}\; \hat{\Omega}_{b}.
\end{align*}
}

\subsection{Preview of the main results}
{{Our first main result is the existence of a weak solution to a regularized FPSI problem, where there is a regularization parameter $\delta > 0$. The regularization will involve spatially regularizing the Biot displacement by extending the displacement $\hat{\bd{\eta}}$ on $\hat{\Omega}_{b}$ to a larger domain and using spatial convolution by a smooth compactly supported kernel, scaled by $\delta$. This regularized FPSI problem will be introduced in Sec.~\ref{regularized}.}}
 The existence result {{for the regularized FPSI problem}} holds for both elastic and {{viscoelastic}} Biot material. Here we state the theorem informally and refer the reader to Theorem \ref{MainThm1} for the precise statement.
\begin{theorem}\label{InformalThm1}[Existence of a weak solution to the regularized problem]
	Let {{$\rho_b,\mu_e,\lambda_e,\alpha,\rho_p,\nu>0$ and $\mu_v,\lambda_v\geq 0$}}. Moreover, assume that initial data are in {{the finite energy class}} and that initially, the interface does not touch the {{bottom boundary of the fluid domain and the top boundary of the Biot domain}}, and assume that certain compatibility conditions are satisfied. Then for every regularization parameter $\delta>0$, there exists $T>0$ (potentially depending on $\delta > 0$) such that there is a weak solution on $[0,T]$ to the regularized problem with regularization parameter $\delta$. {{Furthermore, the weak solution to the regularized problem exists on a maximal time interval $[0, T]$, where either (1) $T = \infty$ or (2) $T$ is finite and is the time at which either:
	\begin{itemize}
	\item the fluid or Biot domain degenerates so that the moving interface collides with the bottom boundary of {{$\hat{\Omega}_{f}$ or the top boundary of $\hat{\Omega}_{b}$}} or
	\item the (regularized) Lagrangian mapping $\hat{\bd{\Phi}}^{\eta^{\delta}}_{b}$ for the Biot domain is no longer injective.
	\end{itemize}}}
\end{theorem}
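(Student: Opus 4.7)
The plan is to construct a weak solution by Rothe-type time discretization combined with a Lie operator splitting, as sketched in Section~\ref{intro}. I would partition $[0,T]$ into $N$ subintervals of width $\dt = T/N$; at the $n$-th level I first solve a plate subproblem for $\hat{\omega}^{n+1/2}$ on $\hat{\Gamma}$ (a linear elliptic-in-space problem whose discrete solvability follows from the inertia and bi-Laplacian coercivity in \eqref{plate}), and then solve a coupled fluid--Biot subproblem for $(\bd{u}^{n+1},\hat{\bd{\eta}}^{n+1},\hat{p}^{n+1})$ on the frozen domains determined by the updated plate position. After transporting everything back via the ALE map \eqref{phif} and Lagrangian map \eqref{phi}, this second substep becomes a linear mixed Stokes--Darcy--elasticity system, well posed by Lax--Milgram in the natural mixed functional space, with the $\delta$-regularized displacement $\hat{\bd{\eta}}^{\delta}$ appearing only in the coefficients and the geometry. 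The regularization is essential here, because it renders the Jacobians and cofactors smooth and uniformly bounded below away from zero (locally in time), guaranteeing solvability at each step and providing an $L^{\infty}$ advective field in the material derivative of \eqref{Biot2}.

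Next I would derive discrete energy estimates uniform in $\dt$ by testing the plate step against the plate velocity and the fluid--Biot step against $(\bd{u}^{n+1},\partial_{t}\hat{\bd{\eta}}^{n+1},\hat{p}^{n+1})$. Using the kinematic conditions \eqref{mass}--\eqref{DisplCont} to cancel interface terms and the Piola identity to manage the transport terms, one obtains a discrete analogue of the physical energy law, bounding the fluid kinetic energy, the plate kinetic and bending energies, the Biot elastic (and, if $\mu_{v},\lambda_{v}>0$, viscoelastic) energy, the $c_{0}$-weighted $L^{2}$ pore pressure, and the dissipation from $\nu|\bd{D}(\bd{u})|^{2}$, $\kappa|\nabla\hat{p}|^{2}$, and the Beavers--Joseph--Saffman interface term. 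These bounds allow extraction of subsequences of piecewise-constant and piecewise-linear-in-time interpolants $(\bd{u}^{N},\hat{\bd{\eta}}^{N},\hat{\omega}^{N},\hat{p}^{N})$ converging weakly or weakly-$\ast$ in the natural Bochner spaces.

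The main obstacle is strong compactness, needed to identify the nonlinear terms $(\bd{u}\cdot\nabla)\bd{u}$, the material-derivative advection $(\partial_{t}\bd{\eta}\cdot\nabla)p$, the dependence of the domains $\Omega_{f}^{N}(t),\Omega_{b}^{N}(t)$ on the unknowns, and the Piola factors $\det(\hat\nabla\hat{\bd{\Phi}}_{b}^{\eta})$ and $(\hat\nabla\hat{\bd{\Phi}}_{b}^{\eta})^{-t}$. I would deploy the four tools listed in the introduction: Aubin--Lions for $\hat{\bd{\eta}}^{N}$, yielding strong $C([0,T];L^{2}(\hat{\Omega}_{b}))$ convergence; Arzel\`a--Ascoli for $\hat{\omega}^{N}$ using the bi-Laplacian regularity to obtain uniform $C^{1}$-in-space convergence of the interface; the Dreher--J\"ungel discrete compactness lemma for the pore pressure and the Biot/plate velocities, whose time shifts are controlled only in weaker dual norms; and the Muha--\v{C}ani\'c generalized Aubin--Lions--Simon lemma to handle $\bd{u}^{N}$ on the varying fluid domains $\Omega_{f}^{N}(t)$. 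Here the $\delta$-regularization is again indispensable: it yields uniform $W^{1,\infty}$ bounds on $\hat{\bd{\Phi}}^{\eta^{\delta,N}}_{b}$ and on the ALE map, so that inverses, Jacobians and cofactors converge pointwise with uniform bounds, and so that the nonlinear compositions needed for the material derivative are continuous with respect to the convergences at hand.

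Finally I pass to the limit in the discrete weak formulation. Because the test spaces themselves depend on the unknown via the moving domains, for each fixed smooth test function on the limiting domains I would construct a sequence of admissible discrete test functions, transported by the approximate ALE and Lagrangian maps, that converges uniformly to the target test function; the $C^{1}$ uniform convergence of $\hat{\omega}^{N}$ and the uniform bounds on $\hat{\bd{\Phi}}^{\eta^{\delta,N}}_{b}$ feed directly into this construction. A key simplification relative to the classical FSI setting of \cite{MuhaCanic13} is that the fluid test functions decouple from the structure: the kinematic condition is $\bd{u}\cdot\bd{n}(t) = (\bd{q}+\bd{\xi})\cdot\bd{n}(t)$ rather than an equality of velocities, so fluid and structure test functions can be constructed independently. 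Combining the strong convergences with the uniform test-function convergence identifies the limit as a weak solution of the regularized problem on $[0,T]$. The maximal-time statement then follows by the prolongation argument of \cite[Section~5]{CDEM}: the existence time is locally controlled by lower bounds on $R-\|\hat{\omega}\|_{\infty}$ (non-collision of the interface with the top and bottom walls) and on the injectivity radius of $\hat{\bd{\Phi}}^{\eta^{\delta}}_{b}$, and restarting the construction with the terminal state as new initial data extends the solution until one of these quantities degenerates or $T=\infty$.
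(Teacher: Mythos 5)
Your proposal is correct and follows essentially the same route as the paper: Lie operator splitting in time, Lax--Milgram solvability for the two subproblems, uniform discrete energy estimates, the same four compactness tools (Aubin--Lions for $\hat{\bd{\eta}}$, Arzel\`a--Ascoli for $\hat{\omega}$, Dreher--J\"ungel for the pore pressure and Biot/plate velocities, the Muha--\v{C}ani\'c generalized Aubin--Lions--Simon lemma for $\bd{u}$ on moving domains), construction of convergent test functions exploiting the decoupling of the fluid test space, and a prolongation argument in the spirit of \cite[Section 5]{CDEM} for the maximal-time statement. The only technical subtlety you do not flag explicitly is that the fluid velocity approximants fail the original ``Property B'' of \cite{AubinLions} and one must verify a weakened version (with a summable sequence $a^{n}_{N}$ and exponent $p<2$) before the generalized compactness theorem applies, and one also needs a Korn inequality adapted to the mixed boundary conditions on $\hat{\Omega}_{b}$; both are handled in the paper's Sections~6--7 but do not change the overall architecture you describe.
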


Our second main result is a weak-classical consistency result. Namely, in order to justify our regularization procedure and the corresponding definition of weak solutions to the regularized problem, we prove that weak solutions to the regularized problem indeed converge to the solution of the original {{(non-regularized)}} FPSI problem. More precisely, we prove the following result, made precise in Theorem \ref{weakstrongunique}.
\begin{theorem}\label{InformalThm2}[Weak-{{classical}} consistency]
	Assume that a classical (smooth) solution to the FPSI problem with a Biot poroviscoelastic medium exists {on time-interval [0,T]} for the case for which the viscoelasticity parameters $\mu_{v}, \lambda_{v} > 0$. Then every sequence of weak solutions to the regularized problem with regularization parameter $\delta > 0$ converges to the classical solution on $[0,T]$ as the regularization parameter $\delta$ converges to $0$. In particular, the time interval of existence for the weak solutions to the regularized problem is uniform in regularization parameter { and solutions to the regularized problem {{exist}} on the same time interval where the classical solution exists.}
\end{theorem}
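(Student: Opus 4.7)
The plan is to derive a Gronwall-type inequality for an ``energy of the difference'' between the weak solution $(\bd{u}^\delta, p^\delta, \hat{\bd{\eta}}^\delta, \hat{\omega}^\delta)$ of the regularized problem and the given classical solution $(\bd{u}, p, \hat{\bd{\eta}}, \hat{\omega})$, and then let $\delta \to 0$. Because the classical solution is smooth in both space and time and the Biot matrix is viscoelastic ($\mu_v, \lambda_v > 0$), we have at our disposal the extra temporal regularity of $\partial_t\hat{\bd{\eta}}$ needed to make (modifications of) the classical solution an admissible test function in the regularized weak formulation.

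First, I would push the classical fluid and Biot variables forward from their ``true'' moving domains $\Omega_f(t), \Omega_b(t)$ onto the \emph{regularized} moving domains $\Omega^\delta_f(t), \Omega^\delta_b(t)$ using the ALE map built from $\hat{\omega}^\delta$ via \eqref{phif} and the regularized Lagrangian map \eqref{phi} built from the smoothed displacement. This gives a legitimate competitor that can be tested against the regularized weak formulation. Subtracting from it the corresponding identities obtained by multiplying the classical PDEs \eqref{Biot1}--\eqref{Biot2} and \eqref{NS1} pointwise by the \emph{same} pushed-forward differences and integrating, I obtain an identity for $\bd{U}^\delta := \bd{u}^\delta - \bd{u}$, $P^\delta := p^\delta - p$, $\hat{\bd{V}}^\delta := \hat{\bd{\eta}}^\delta - \hat{\bd{\eta}}$, $W^\delta := \hat{\omega}^\delta - \hat{\omega}$. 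After integration by parts and use of the coupling conditions \eqref{mass}--\eqref{pressurebalance}, the leading-order quadratic terms reassemble into the time derivative of the energy of the difference plus its natural dissipation, while the remainder splits into (i) terms bounded by this energy (for Gronwall) and (ii) explicit error terms vanishing as $\delta \to 0$.

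The hard part will be controlling the error terms arising from the geometric discrepancy between $\Omega^\delta_b(t)$ and $\Omega_b(t)$ and from the spatial convolution built into the regularized weak formulation. Integrals over $\Omega_b(t)$ versus $\Omega^\delta_b(t)$ differ by a sliver whose measure is bounded by $\|\hat{\bd{\eta}}^\delta - \hat{\bd{\eta}}\|_{L^\infty}$, which in turn is controlled by $\|\hat{\bd{V}}^\delta\|_{L^\infty} + O(\delta)$ via standard mollifier estimates and Sobolev embedding on the smooth limit. Analogous control is needed for $\det(\hat{\nabla}\hat{\bd{\Phi}}^{\eta^\delta}_b)$ versus $\det(\hat{\nabla}\hat{\bd{\Phi}}^{\eta}_b)$, for $(\hat{\nabla}\hat{\bd{\Phi}}^{\eta^\delta}_b)^{-t}$, and most delicately for the Piola-transformed pressure contribution in \eqref{Biotstress}, where geometry couples to pore pressure. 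The viscoelastic term $2\mu_v\hat{\bd{D}}(\partial_t\hat{\bd{V}}^\delta) + \lambda_v(\hat{\nabla}\cdot\partial_t\hat{\bd{V}}^\delta)\bd{I}$ delivers coercive $H^1$ dissipation of $\partial_t\hat{\bd{V}}^\delta$; via Korn's inequality this is precisely what is strong enough to dominate the geometric error terms, and is what fails in the purely poroelastic case.

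Finally, the Navier-Stokes convection $(\bd{u}\cdot\nabla)\bd{u}$, the material derivative in \eqref{Biot2}, and the Beavers-Joseph-Saffman slip \eqref{bjs} contribute cubic and interface cross terms which are absorbed using $L^\infty$ bounds on the classical solution together with Korn, Poincar\'e, and trace inequalities, while the reticular plate terms are handled analogously using the smoothness of $\hat{\omega}$ and the bi-Laplacian dissipation in \eqref{plate}. Assembling these bounds yields an inequality of the schematic form
\begin{equation*}
\frac{d}{dt}\mathcal{E}^\delta(t) + \mathcal{D}^\delta(t) \;\le\; C\,\mathcal{E}^\delta(t) + R(\delta),
\end{equation*}
where $\mathcal{E}^\delta, \mathcal{D}^\delta$ are the energy and dissipation of the differences, $C$ depends only on norms of the classical solution on $[0,T]$, and $R(\delta) \to 0$. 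Gronwall's inequality then forces $\mathcal{E}^\delta(t) \to 0$ uniformly on $[0,T]$, which simultaneously proves convergence of the weak solutions to the classical one and ensures that the regularized domains remain nondegenerate uniformly in $\delta$, giving the uniform-in-$\delta$ interval of existence asserted in the theorem.
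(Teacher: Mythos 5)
Your overall strategy -- a relative-energy/Gronwall comparison between the regularized weak solution and the classical solution, with the viscoelastic dissipation supplying the coercivity needed to absorb the geometric error terms -- is the same as the paper's. However, two steps as written would fail. First, you propose to transfer the classical fluid velocity onto the regularized fluid domain ``using the ALE map built from $\hat{\omega}^\delta$ via \eqref{phif}.'' Composition with the ALE map does \emph{not} preserve the divergence-free condition: if $\bd{u}$ is divergence free on $\Omega_f(t)$, then $\bd{u}\circ\hat{\bd{\Phi}}^{\omega}_f\circ(\hat{\bd{\Phi}}^{\omega_\delta}_f)^{-1}$ is in general not divergence free on $\Omega_{f,\delta}(t)$, so it is not an admissible test function for the regularized weak formulation and the uncontrolled fluid pressure $\pi_\delta$ no longer drops out of the identity. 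The paper avoids this by using the Inoue--Wakimoto-type transformation (the matrix $K$ in \eqref{Kdelta} and the maps \eqref{uhat}, \eqref{ucheck}), which maps solenoidal fields on one fluid domain to solenoidal fields on the other while preserving traces, and by building the time mollification of the fluid velocity compatibly with this transformation (Proposition \ref{alphaconvprop}); a related point you gloss over is that the weak solution cannot be tested against its own formulation (no time regularity, and $(1/2)\int_{\Omega_{f,\delta}(t)}|\bd{u}_\delta|^2$ is not produced that way), so the paper must also invoke the energy \emph{inequality} of Proposition \ref{energyestimate} and the weak-continuity lemmas of the appendix to close the quadratic terms.

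Second, your claim that the Gronwall inequality itself ``ensures that the regularized domains remain nondegenerate uniformly in $\delta$'' is circular as stated: the constants in the Gronwall estimate require pointwise bounds such as $\det(\bd{I}+\nabla\bd{\eta}^\delta_\delta)\ge c>0$ and $|\bd{I}+\nabla\bd{\eta}^\delta_\delta|\le C$, which are available a priori only on a time interval $[0,T_\delta]$ that may shrink with $\delta$, because the energy controls $\bd{\eta}_\delta$ merely in $L^\infty(0,T;H^1(\Omega_b))$. The paper resolves this with a separate bootstrap argument (Section \ref{bootstrap}): the quantitative rate $\|\bd{\eta}-\bd{\eta}^\delta\|_{H^1(\Omega_b)}\le C\delta^{3/2}$ for the odd-extension convolution (Lemma \ref{strongconv}) turns the Gronwall bound $E_\delta(t)\le C\delta^3e^{Ct}$ into pointwise uniform closeness of $\nabla\bd{\eta}^\delta_\delta$ to $\nabla\bd{\eta}$ (after paying the factor $\delta^{-1}$ from the $L^2$ norm of the kernel), which propagates the nondegeneracy assumptions from $[0,T_\delta]$ to all of $[0,T]$. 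Without this bootstrap and the sharp $\delta^{3/2}$ rate, the uniform-in-$\delta$ existence interval asserted in the theorem does not follow from your schematic inequality.
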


{
\begin{remark}
An alternative formulation for Theorems \ref{InformalThm1} and \ref{InformalThm2} is that there exists a weak solution to an approximate problem of the original FPSI problem. Specifically, Theorem \ref{InformalThm2} asserts that the strong solution can be approximated by solutions to the regularized problem, the existence of which is guaranteed by Theorem \ref{InformalThm1}.
\end{remark}
}

The heart of the proof of this theorem is a bootstrap argument presented in Section \ref{bootstrap}. Namely, the main issue is that geometric quantities, such as the determinant of the displacement, cannot be estimated by the energy, and thus are not uniformly bounded in the regularization parameter $\delta$. We derive appropriate bounds by using a bootstrap argument in combination with optimal convergence rate estimates for the convolution regularization. The main technical issue in comparing the classical solution with weak solutions to the regularized problem is the fact that they are defined on different domains. Therefore, we use a change of variables that transfers fluid velocities as vector fields and preserves the divergence-free condition. This transformation was introduced by \cite{InoueWakimoto} and was used in proving weak-strong type of results in the context of FSI in \cite{WeakStrongRigid1,WeakStrongRigid2,WeakStrongFSI}. The corresponding estimates are carried out in Section \ref{weakstrongestimate}.

\section{Definition of a weak solution}\label{sec:weak}

Because the problem under consideration is nonlinearly coupled, the fluid domain $\Omega_{f}(t)$ and the Biot poroviscoelastic domain $\Omega_{b}(t)$ in physical space are time-dependent and  not known apriori.
To handle the moving domains, it is useful to introduce the mappings that map the reference domains $\hat{\Omega}_{b}$, $\hat{\Gamma}$, and $\hat{\Omega}_{f}$ onto 
the moving domains that depend on time and on the solution itself.
\subsection{Mappings between reference and physical domains}
Let
\begin{equation*}
	\hat{\boldsymbol{\Phi}}^{\eta}_{b}(t, \cdot): \hat{\Omega}_{b} \to \Omega_{b}(t), \qquad \hat{\boldsymbol{\Phi}}^{\omega}_{\Gamma}(t, \cdot): \hat{\Gamma} \to \Gamma(t), \qquad \hat{\boldsymbol{\Phi}}^{\omega}_{f}(t, \cdot): \hat{\Omega}_{f} \to \Omega_{f}(t),
\end{equation*}
be such that
\begin{equation}\label{phif}	
\begin{array}{lll}
	&\hat{\boldsymbol{\Phi}}_{b}^{\eta} = \text{Id} + \hat{\boldsymbol{\eta}}(\hat{x}, \hat{y}),\quad & (\hat{x}, \hat{y}) \in \hat{\Omega}_b
	\\
&\hat{\boldsymbol{\Phi}}^{\omega}_{\Gamma}(\hat{x}, 0) = (\hat{x}, \hat{\omega}(\hat{x})), &{{\hat{x} \in \hat\Gamma}}
\\
&\hat{\boldsymbol{\Phi}}^{\omega}_{f}(\hat{x}, \hat{y}) = \left(\hat{x}, \hat{y} + \left(1 + \frac{\hat{y}}{R}\right)\hat{\omega}(\hat{x}) \right),  &(\hat{x}, \hat{y}) \in \hat{\Omega}_{f},
\end{array}
\end{equation}
with the inverse
\begin{equation}\label{Jf}
	(\boldsymbol{\Phi}^{\omega}_{f})^{-1}(x, y) = \left(x, -R + \frac{R}{R + \hat{\omega}}(R + y)\right).
\end{equation}
We are using $(\hat{x}, \hat{y})$ to denote the coordinates on the reference domain and $(x, y)$ the coordinates on the physical domain. 
Note that these mapings are \textit{time-dependent}, even though in the rest of this manuscript we will not explicitly notate this time dependence for ease of notation. 

The {\bf{Jacobians of the transformations}} are given by:
\begin{equation}\label{Jf}
\begin{array}{lll}
	\hat{\mathcal{J}}^{\omega}_{f} = 1 + \frac{\hat{\omega}}{R},
	\qquad
	\hat{\mathcal{J}}^{\eta}_{b} = \det(\boldsymbol{I} + \hat{\nabla} \hat{\boldsymbol{\eta}}),
	\qquad
        \hat{\mathcal{J}}^{\omega}_{\Gamma} = \sqrt{1 + |\partial_{\hat{x}} \hat{\omega}|^{2}},
 \end{array}
\end{equation}
where $\hat{\mathcal{J}}^{\omega}_{\Gamma}$ measures the arc length difference of between the reference and deformed configuration of the plate.
Notice that in the Jacobian $\hat{\mathcal{J}}^{\omega}_{f}$ we dropped the absolute value sign since 
 our results will hold up until the time of domain degeneracy when $|\hat{\omega}| \ge R$.
 
Under these mappings the {{functions are transformed as follows}}.

{\bf{Tranformations under $\boldsymbol{\Phi}^{\omega}_{f}$}}.
The fluid velocity $\boldsymbol{u}$ defined on $\Omega_{f}(t)$ is transferred to the fixed reference domain $\hat{\Omega}_{f}$ by 
\begin{equation*}
	\hat{\bd{u}}(t, \hat{x}, \hat{y}) = \boldsymbol{u} \circ \hat{\boldsymbol{\Phi}}_{f}, \qquad \text{ for } (\hat{x}, \hat{y}) \in \hat{\Omega}_{f}.
\end{equation*}
Recall that on the moving domain $\Omega_{f}(t)$, the fluid velocity $\boldsymbol{u}$ is divergence free, i.e., $\nabla \cdot \boldsymbol{u} = 0$. However, when we pull the fluid velocity back to the reference domain, $\hat{\bd{u}}$ is not necessarily divergence free on $\hat{\Omega}_{f}$. 
Hence, we want to reformulate the divergence free condition on the fixed reference domain. 

{\bf{The divergence free condition.}} Let $g$ be a function defined on $\Omega_{f}(t)$, then
\begin{equation*}
	\nabla g = \nabla \left(\hat{g} \circ (\boldsymbol{\Phi}^{\omega}_{f})^{-1}\right) = (\hat{\nabla}^{\omega}_{f} \hat{g}) \circ (\boldsymbol{\Phi}^{\omega}_{f})^{-1}, 
\end{equation*}
where $\hat{\nabla}^{\omega}_{f}$ is  the {\emph{transformed gradient operator}}: 
\begin{equation}\label{transformedgradient}
	\hat{\nabla}^{\omega}_{f} = 
	\begin{pmatrix} \partial_{\hat{x}} - (R + y) \partial_{\hat{x}} \hat{\omega} \frac{R}{(R + \hat{\omega})^{2}} \partial_{\hat{y}} \\ \frac{R}{R + \hat{\omega}} \partial_{\hat{y}} 
	\end{pmatrix} 
	\quad {\rm{where}} \quad 
	y = \hat{y} + \left(1 + \frac{\hat{y}}{R}\right)\hat{\omega}.
\end{equation}
Therefore, the divergence free condition and the symmetrized gradient on the fixed reference domain 
$\hat{\Omega}_{f}$ are:
$$
\hat{\nabla}^{\omega}_{f} \cdot \hat{\bd{u}} = 0, \qquad \displaystyle \hat{\bd{D}}^{\omega}_{f}(\hat{\bd{u}}) = \frac{1}{2}\left(\hat{\nabla}^{\omega}_{f} \hat{\bd{u}} + (\hat{\nabla}^{\omega}_{f} \hat{\bd{u}})^{t}\right).
$$

{\bf{Time derivatives.}} The {{time derivative transforms}} under the map $\hat{\boldsymbol{\Phi}}^{\omega}_{f}$ as follows:
%\begin{equation*}
%	\partial_{t} \boldsymbol{u}(t, x, y) = \partial_{t} (\hat{\bd{u}}(t, (\bd{\Phi}^{\omega}_{f})^{-1}(\hat{x}, \hat{y})) = \partial_{t} \hat{\bd{u}} - \partial_{\hat{y}} \hat{\bd{u}} \cdot (R + y) \cdot \frac{R}{(R + \hat{\omega})^{2}} \partial_{t}\hat{\omega} = \partial_{t} \hat{\boldsymbol{u}} - \partial_{\hat{y}} \hat{\bd{u}} \frac{(R + \hat{y}) \partial_{t} \hat{\omega}}{R + \hat{\omega}}.
%\end{equation*}
%Thus,
\begin{equation}\label{womega}
\partial_{t} \boldsymbol{u} = \partial_{t} \hat{\bd{u}} - (\hat{\boldsymbol{w}} \cdot \hat{\nabla}^{\omega}_{f}) \hat{\bd{u}}  \quad  {\rm{where}} \quad 
	\hat{\bd{w}} = \frac{R + \hat{y}}{R}\partial_{t}\hat{\omega} \boldsymbol{e}_{y}.
\end{equation}
%we have that
%\begin{equation}\label{timetransform}
%	\partial_{t} \boldsymbol{u} = \partial_{t} \hat{\bd{u}} - (\hat{\boldsymbol{w}} \cdot \hat{\nabla}^{\omega}_{f}) \hat{\bd{u}}.
%\end{equation}

{\bf{Tranformations under $\boldsymbol{\Phi}^{\omega}_{b}$}}.
Given a scalar function $g$ defined on $\Omega_{b}(t)$ the pull back of $g$ to the reference domain $\hat{\Omega}_{b}$ is given by
\begin{equation*}
	\hat{g} = g \circ \hat{\boldsymbol{\Phi}}^{\eta}_{b}.
\end{equation*}
We claim that for some differential operator $\hat{\nabla}^{\eta}_{b}$, which we will determine below,
\begin{equation*}
	\nabla g = \nabla\left(\hat{g} \circ (\boldsymbol{\Phi}^{\eta}_{b})^{-1}\right) = (\hat{\nabla}_{b}^{\eta} \hat{g}) \circ (\bd{\Phi}^{\eta}_{b})^{-1}, 
\end{equation*}
where $\nabla$ is a gradient on the physical domain, $\hat{\nabla}$ is a gradient on the reference domain, and $\hat{\nabla}^{\eta}_{b}$ is a differential operator (different from $\hat{\nabla}$) on the reference domain. For any function $g$ defined on the physical domain, we have that
\begin{equation*}
	\hat{\nabla} \left(g \circ \hat{\bd{\Phi}}^{\eta}_{b}\right) = [(\nabla g) \circ \hat{\bd{\Phi}}^{\eta}_{b}] \cdot (\boldsymbol{I} + \hat{\nabla} \hat{\boldsymbol{\eta}}).
\end{equation*}
Hence, for
\begin{equation*}
	\hat{\nabla}^{\eta}_{b} \hat{g} = (\nabla g) \circ \hat{\boldsymbol{\Phi}}^{\eta}_{b},
\end{equation*}
we get the following explicit formula for the {\emph{transformed gradient operator}} $\hat{\nabla}^{\eta}_{b}$ on $\hat{\Omega}_{b}$:
\begin{equation}\label{nablaeta}
	\hat{\nabla}^{\eta}_{b} \hat{g} = \left(\frac{\partial \hat{g}}{\partial \hat{x}}, \frac{\partial \hat{g}}{\partial \hat{y}}\right) \cdot (\boldsymbol{I} + \hat{\nabla} \hat{\boldsymbol{\eta}})^{-1}.
\end{equation}
Notice that the invertibility of the matrix $\boldsymbol{I} + \hat{\nabla} \hat{\boldsymbol{\eta}}$ will be related to whether the map $(\hat{x}, \hat{y}) \to (\hat{x}, \hat{y}) + \hat{\bd{\eta}}(\hat{x}, \hat{y})$ is a bijection between $\hat{\Omega}_{b}$ and $\Omega_{b}(t)$. 

\subsection{Weak solution}
We now derive the definition of a weak solution to the given FPSI problem, by means of the following formal calculation. We start with the fluid equations and multiply by a test function $\boldsymbol{v}$. Recall the definition of the Eulerian structure velocity $\bd{\xi}$ from \eqref{xi}. For the inertia term of the Navier-Stokes equations, using the Reynold's transport theorem and integration by parts, we obtain:
\begin{multline*}
\int_{\Omega_{f}(t)} (\partial_{t}\boldsymbol{u} + (\boldsymbol{u} \cdot \nabla) \boldsymbol{u})) \cdot \boldsymbol{v} = \frac{d}{dt} \int_{\Omega_{f}(t)} \bd{u} \cdot \bd{v} - \int_{\Omega_{f}(t)} \bd{u} \cdot \partial_{t}\bd{v} - \int_{\Gamma(t)} (\bd{\xi} \cdot \bd{n}) \bd{u} \cdot \bd{v} \\
+ \frac{1}{2} \int_{\Omega_{f}(t)} [((\bd{u} \cdot \nabla) \bd{u}) \cdot \bd{v} - (\bd{u} \cdot \nabla) \bd{v}) \cdot \bd{u}] + \frac{1}{2} \int_{\Gamma(t)} (\bd{u} \cdot \bd{n}) \bd{u} \cdot \bd{v} \\
= \frac{d}{dt} \int_{\Omega_{f}(t)} \bd{u} \cdot \bd{v} - \int_{\Omega_{f}(t)} \bd{u} \cdot \partial_{t}\bd{v} + \frac{1}{2} \int_{\Omega_{f}(t)} [((\bd{u} \cdot \nabla) \bd{u}) \cdot \bd{v} - ((\bd{u} \cdot \nabla) \bd{v}) \cdot \bd{u}] + \frac{1}{2} \int_{\Gamma(t)} (\bd{u} \cdot \bd{n} - 2\bd{\xi} \cdot \bd{n}) \bd{u} \cdot \bd{v}.
\end{multline*}
For the diffusive term of the Navier Stokes equations, we integrate by parts to obtain
\begin{equation*}
-\int_{\Omega_{f}(t)} (\nabla \cdot \bd{\sigma}_{f}(\nabla \bd{u}, \pi)) \cdot \bd{v} = 2\nu \int_{\Omega_{f}(t)} \bd{D}(\bd{u}) : \bd{D}(\bd{v}) - \int_{\Gamma(t)} \bd{\sigma}_{f}(\nabla \bd{u}, \pi) \bd{n} \cdot \bd{v},
\end{equation*}
where we used the fact that the test function $\boldsymbol{v}$ is divergence free to eliminate the pressure, and we use that the test function satisfies $\bd{v} = 0$ on $\partial \Omega_{f}(t) \setminus \Gamma(t)$ due to the boundary conditions for $\bd{u}$.

Next, we multiply {{the structure equation \eqref{Biot1}}} by a test function $\hat{\bd{\psi}}$ to obtain
\begin{align*}
&\int_{\hat{\Omega}_{b}} (\rho_{b} \partial_{tt} \hat{\bd{\eta}} - \hat{\nabla} \cdot \hat{S}_{b}(\hat{\nabla} \hat{\bd{\eta}}, \hat{p})) \cdot \hat{\bd{\psi}} = \rho_{b} \left(\frac{d}{dt} \int_{\hat{\Omega}_{b}} \partial_{t} \hat{\boldsymbol{\eta}} \cdot \hat{\boldsymbol{\psi}} - \int_{\Omega_{b}} \partial_{t} \hat{\bd{\eta}} \cdot \partial_{t} \hat{\bd{\psi}}\right) 
\\
&+ \int_{\hat{\Omega}_{b}} \hat{S}_{b}(\hat{\nabla} \hat{\bd{\eta}}, \hat{p}) : \hat{\nabla} \hat{\bd{\psi}} + \int_{\hat{\Gamma}} \hat{S}_{b}(\hat{\nabla} \hat{\bd{\eta}}, \hat{p}) \bd{e}_{y} \cdot \hat{\bd{\psi}} = \rho_{b}\left(\frac{d}{dt} \int_{\hat{\Omega}_{b}} \partial_{t} \hat{\boldsymbol{\eta}} \cdot \hat{\boldsymbol{\psi}} - \int_{\hat{\Omega}_{b}} \partial_{t} \hat{\bd{\eta}} \cdot \partial_{t} \hat{\bd{\psi}} \right) \\
&+ \int_{\hat{\Omega}_{b}} (2\mu_{e} \hat{\bd{D}}(\hat{\bd{\eta}}) : \hat{\bd{D}}(\hat{\bd{\psi}}) + \lambda_{e} (\hat{\nabla} \cdot \hat{\bd{\eta}})(\hat{\nabla} \cdot \hat{\bd{\psi}}) + 2\mu_{v} \hat{\bd{D}}(\partial_{t}\hat{\bd{\eta}}) : \hat{\bd{D}}(\hat{\bd{\psi}}) + \lambda_{v} (\hat{\nabla} \cdot \partial_{t} \hat{\bd{\eta}}) (\hat{\nabla} \cdot \hat{\bd{\psi}})) \\
&- \alpha \int_{\Omega_{b}(t)} p (\nabla \cdot \bd{\psi}) + \int_{\hat{\Gamma}} \hat{S}_{b}(\nabla \hat{\bd{\eta}}, \hat{p}) \bd{e}_{r} \cdot \hat{\bd{\psi}}.
\end{align*}
Except on $\hat{\Gamma}$, there are no boundary terms, because $\hat{\boldsymbol{\eta}} = 0$ on the left, top, and right boundaries of $\hat{\Omega}_{b}$, and hence the same condition holds for the corresponding test function $\hat{\boldsymbol{\psi}}$. Note that in the integral over $\Omega_{b}(t)$, $\bd{\psi} := \hat{\bd{\psi}} \circ (\bd{\Phi}^{\eta}_{b})^{-1}$.

Finally, we test the {{second equation \eqref{Biot2}}} corresponding to the evolution of the pore pressure for the Biot poroviscoelastic medium with a test function $r$, and  recall the definition of the Darcy velocity $\bd{q}$ from \eqref{darcy},  keeping in mind that $\bd{n}$ is the \textit{inward} normal vector to $\Omega_{b}(t)$:
\begin{align*}
&\int_{\Omega_{b}(t)} \left(\frac{c_{0}}{[\det(\hat{\nabla} \hat{\bd{\Phi}}^{\eta}_{b})] \circ (\bd{\Phi}_{b}^{\eta})^{-1}} \frac{D}{Dt} p + \alpha  \nabla \cdot \frac{D}{Dt} \bd{\eta} - \nabla \cdot (\kappa \nabla p) \right) r \\
&= \int_{\hat{\Omega}_{b}} c_{0} \partial_{t}\hat{p} \cdot \hat{r} + \int_{\Omega_{b}(t)} \alpha \left(\nabla \cdot \frac{D}{Dt} \bd{\eta}\right) r + \int_{\Omega_{b}(t)} \kappa \nabla p \cdot \nabla r - \int_{\Gamma(t)} (\bd{q} \cdot \bd{n})r \\
&= \frac{d}{dt} \int_{\hat{\Omega}_{b}} c_{0}\hat{p}\cdot \hat{r} - \int_{\hat{\Omega}_{b}} c_{0}\hat{p} \cdot \partial_{t}\hat{r} - \int_{\Omega_{b}(t)} \alpha \frac{D}{Dt} \bd{\eta} \cdot \nabla r - \alpha \int_{\Gamma(t)} (\bd{\xi} \cdot \bd{n}) r + \int_{\Omega_{b}(t)} \kappa \nabla p \cdot \nabla r - \int_{\Gamma(t)} (\bd{q} \cdot \bd{n}) r.
\end{align*}
There are no boundary terms except on $\Gamma(t)$ from the integration by parts in the integral involving $\alpha$ and in the integral involving $\kappa$ because of the Dirichlet boundary condition $r = 0$ (since $p = 0$) on the left, top, and right boundaries of $\hat{\Omega}_{b}$.

After adding the two stress terms, and recalling 
the definition of $\hat{\bd{\Phi}}^{\omega}_{\Gamma}$ in \eqref{phif} and  $\hat{\mathcal{J}}^{\omega}_{\Gamma}$ in \eqref{Jf} we obtain:
\begin{multline*}
-\int_{\Gamma(t)} \bd{\sigma}_{f}(\nabla \bd{u}, \pi) \bd{n} \cdot \bd{v} + \int_{\hat{\Gamma}} \hat{S}_{b}(\hat{\nabla} \hat{\bd{\eta}}, \hat{p}) \bd{e}_{y} \cdot \hat{\bd{\psi}} \\
= \int_{\Gamma(t)} \bd{\sigma}_{f}(\nabla \bd{u}, \pi) \bd{n} \cdot (\bd{\psi} - \bd{v}) + \int_{\hat{\Gamma}} (\hat{S}_{b}(\hat{\nabla} \hat{\bd{\eta}}, \hat{p}) \boldsymbol{e}_{y} - \hat{\mathcal{J}}^{\omega}_{\Gamma} \cdot (\boldsymbol{\sigma}_{f}(\nabla \boldsymbol{u}, \pi) \boldsymbol{n}|_{\Gamma(t)} \circ \bd{\Phi}^{\omega}_{\Gamma}) \cdot \hat{\bd{\psi}}.
\end{multline*}
Since the displacement of the plate is only in the $y$ direction so that $\hat{\bd{\eta}} = \hat{\omega} \bd{e}_{y}$ on $\hat{\Gamma}$, the test function $\hat{\bd{\psi}}$ points in the $y$ direction on $\hat{\Gamma}$ as well. We will denote by $\hat{\varphi}$ the magnitude of $\hat{\bd{\psi}}|_{\hat{\Gamma}}$ so that $\hat{\bd{\psi}} = \hat{\varphi} \bd{e}_{y}$ on $\hat{\Gamma}$. By the dynamic coupling condition \eqref{dynamic}, we have that the previous expression is equal to
\begin{align*}
&= \int_{\Gamma(t)} \bd{\sigma}_{f}(\nabla \bd{u}, \pi) \bd{n} \cdot (\bd{\psi} - \bd{v}) + 
\int_{\hat{\Gamma}} \hat{F}_{p} \cdot \hat{\varphi} = \int_{\Gamma(t)} \bd{\sigma}_{f}(\nabla \bd{u}, \pi) \bd{n} \cdot (\bd{\psi} - \bd{v}) + \int_{\hat{\Gamma}} (\rho_{p} \partial_{tt} \hat{\omega} + \hat{\Delta}^{2} \hat{\omega}) \hat{\varphi} \\
&= \int_{\Gamma(t)} \bd{\sigma}_{f}(\nabla \bd{u}, \pi) \bd{n} \cdot \bd{n} (\psi_{n} - v_{n}) + \int_{\Gamma(t)} \bd{\sigma}_{f}(\nabla \bd{u}, \pi) \bd{n} \cdot \bd{\tau} (\psi_{\tau} - v_{\tau}) + \int_{\hat{\Gamma}} (\rho_{p} \partial_{tt} \hat{\omega} + \hat{\Delta}^{2} \hat{\omega}) \hat{\varphi} \\
&= \int_{\Gamma(t)} \bd{\sigma}_{f}(\nabla \bd{u}, \pi) \bd{n} \cdot \bd{n} (\psi_{n} - v_{n}) + \int_{\Gamma(t)} \beta (\bd{\xi} - \bd{u}) \cdot \bd{\tau} (\psi_{\tau} - v_{\tau}) + \int_{\hat{\Gamma}} (\rho_{p} \partial_{tt} \hat{\omega} + \hat{\Delta}^{2} \hat{\omega}) \hat{\varphi} \\
&= \int_{\Gamma(t)} \left(\frac{1}{2}|\bd{u}|^{2} - p\right) (\psi_{n} - v_{n}) + \int_{\Gamma(t)} \beta (\bd{\xi} - \bd{u}) \cdot \bd{\tau} (\psi_{\tau} - v_{\tau}) \\
&+ \frac{d}{dt}\left(\int_{\hat{\Gamma}} \rho_{p} \partial_{t}\hat{\omega} \cdot \hat{\varphi}\right) - \int_{\hat{\Gamma}} \rho_{p} \partial_{t}\hat{\omega} \cdot \partial_{t}\hat{\varphi} + \int_{\hat{\Gamma}} \hat{\Delta} \hat{\omega} \cdot \hat{\Delta} \hat{\varphi},
\end{align*}
where we used the coupling conditions \eqref{bjs} and \eqref{pressurebalance} in the last step. {{For clarity, we note that in the preceding calculation and in the remainder of the manuscript, $\bd{n}$ denotes the unit normal vector along $\Gamma(t)$ that points upward towards $\Omega_{b}(t)$, and $\bd{\tau}$ denotes the unit tangential vector along $\Gamma(t)$ that points to the right.}}

The weak formulation then follows by summing everything together. {{Before we state the definition of a weak solution to our FPSI problem, we introduce the following notation.
{{Let $\hat{\zeta}$ denote  the {\bf{transverse velocity}} of the plate so that 
\begin{equation}\label{zeta}
\partial_{t}\hat{\omega} = \hat{\zeta},
\end{equation}
and let $\zeta = \hat{\zeta} \circ (\bd{\Phi}^{\omega}_{\Gamma})^{-1}$.}}}}

\begin{definition}\label{weak_solution}
The ordered four-tuple $(\boldsymbol{u}, \hat{\omega}, \hat{\boldsymbol{\eta}}, p)$ {{satisfies the weak formulation to the nonlinearly coupled FPSI problem}} if for every test function $(\boldsymbol{v}, \hat{\varphi}, \hat{\boldsymbol{\psi}}, r)$ that is $C^{1}_{c}$ in time on {{$[0, T)$}} taking values in the test space, satisfying $\hat{\boldsymbol{\psi}} = \hat{\varphi} \bd{e}_{y}$ on $\hat{\Gamma}$, we have that
\begin{multline}\label{weakphysical}
-\int_{0}^{T} \int_{\Omega_{f}(t)} \boldsymbol{u} \cdot \partial_{t}\boldsymbol{v} + \frac{1}{2} \int_{0}^{T} \int_{\Omega_{f}(t)} [((\boldsymbol{u} \cdot \nabla) \boldsymbol{u}) \cdot \boldsymbol{v} - ((\boldsymbol{u} \cdot \nabla)\boldsymbol{v}) \cdot \boldsymbol{u}] + \frac{1}{2} \int_{0}^{T} \int_{\Gamma(t)} (\boldsymbol{u} \cdot \boldsymbol{n} - 2\zeta \bd{e}_{y} \cdot \boldsymbol{n}) \boldsymbol{u} \cdot \boldsymbol{v} \\
+ 2\nu \int_{0}^{T} \int_{\Omega_{f}(t)} \boldsymbol{D}(\boldsymbol{u}) : \boldsymbol{D}(\boldsymbol{v}) + \int_{0}^{T} \int_{\Gamma(t)} \left(\frac{1}{2}|\boldsymbol{u}|^{2} - p\right)(\psi_{n} - v_{n}) + \beta \int_{0}^{T}  \int_{\Gamma(t)} (\zeta \bd{e}_{y} - \bd{u}) \cdot \bd{\tau} (\bd{\psi} - \bd{v}) \cdot \bd{\tau} \\
- \rho_{p} \int_{0}^{T} \int_{\hat{\Gamma}} \partial_{t} \hat{\omega} \cdot \partial_{t} \hat{\varphi} + \int_{0}^{T} \int_{\hat{\Gamma}} \hat{\Delta} \hat{\omega} \cdot \hat{\Delta} \hat{\varphi} - \rho_{b} \int_{0}^{T} \int_{\hat{\Omega}_{b}} \partial_{t} \hat{\boldsymbol{\eta}} \cdot \partial_{t} \hat{\boldsymbol{\psi}} + 2\mu_{e} \int_{0}^{T} \int_{\hat{\Omega}_{b}} \hat{\boldsymbol{D}}(\hat{\boldsymbol{\eta}}) : \hat{\boldsymbol{D}}(\hat{\boldsymbol{\psi}}) \\
+ \lambda_{e} \int_{0}^{T} \int_{\hat{\Omega}_{b}} (\hat{\nabla} \cdot \hat{\boldsymbol{\eta}})(\hat{\nabla} \cdot \hat{\boldsymbol{\psi}}) + 2\mu_{v} \int_{0}^{T} \int_{\hat{\Omega}_{b}} \hat{\bd{D}}(\partial_{t}\hat{\bd{\eta}}) : \hat{\bd{D}}(\hat{\bd{\psi}}) + \lambda_{v} \int_{0}^{T} \int_{\hat{\Omega}_{b}} (\hat{\nabla} \cdot \partial_{t} \hat{\bd{\eta}})(\hat{\nabla} \cdot \hat{\bd{\psi}}) \\
- \alpha \int_{0}^{T} \int_{\Omega_{b}(t)} p \nabla \cdot \boldsymbol{\psi} - c_{0} \int_{0}^{T} \int_{\hat{\Omega}_{b}} \hat{p} \cdot \partial_{t}\hat{r} - \alpha \int_{0}^{T} \int_{\Omega_{b}(t)} \frac{D}{Dt} \boldsymbol{\eta} \cdot \nabla r - \alpha \int_{0}^{T} \int_{\Gamma(t)} (\zeta \bd{e}_{y} \cdot \boldsymbol{n}) r \\
+ \kappa \int_{0}^{T} \int_{\Omega_{b}(t)} \nabla p \cdot \nabla r - \int_{0}^{T} \int_{\Gamma(t)} ((\boldsymbol{u} - \zeta \bd{e}_{y}) \cdot \boldsymbol{n}) r \\
= \int_{\Omega_{f}(0)} \boldsymbol{u}(0) \cdot \boldsymbol{v}(0) + \rho_{p} \int_{\hat{\Gamma}} \partial_{t}\hat{\omega}(0) \cdot \hat{\varphi}(0) + \rho_{b} \int_{\hat{\Omega}_{b}} \partial_{t}\hat{\boldsymbol{\eta}}(0) \cdot \hat{\boldsymbol{\psi}}(0) + c_{0} \int_{\hat{\Omega}_{b}} \hat{p}(0) \cdot \hat{r}(0).
\end{multline}
\end{definition}
\begin{remark}\label{RemarkNonregularized}
{{It is immediate to see that a classical (temporally and spatially smooth) solution to the FPSI problem satisfies the weak formulation stated above. However, when considering less regular solutions (in particular, weak solutions in the class of finite-energy solutions), the above weak formulation \textit{{{is inadequate}} for the regularity of finite-energy solutions} for the following reason.}} By the energy estimates (see Section \ref{energy}), the regularity of the structure displacement $\hat{\boldsymbol{\eta}}$ on $\hat{\Omega}_{b}$  is $L^{\infty}(0, T, H^{1}(\hat{\Omega}_{b}))$, which is not enough regularity to interpret the term
\begin{equation*}
\alpha \int_{\Omega_{b}(t)} p \nabla \cdot \boldsymbol{\psi},
\end{equation*}
since the test function has regularity $\hat{\boldsymbol{\psi}} \in H^{1}(\hat{\Omega}_{b})$ on the \textit{fixed reference domain}, due to the corresponding finite energy regularity of $\hat{\bd{\eta}}$. Hence, after changing variables, which adds an extra factor of $\det(\boldsymbol{I} + \hat{\nabla} \hat{\boldsymbol{\eta}})$ arising from the Jacobian, which is only in $L^{\infty}(0, T; L^{1}(\hat{\Omega}_{b}))$ in two dimensions, there is not enough regularity to guarantee that this integral is finite.
Therefore, we cannot interpret the above notion of weak solution properly in the space of finite energy solutions, as the finite energy space does not have enough regularity to make sense of certain integrals in the weak formulation, involving the deformed domain $\Omega_{b}(t)$. 
\end{remark}

This is why we introduce a regularized problem, which is consistent with the original problem in the sense that weak solutions to the regularized problem converge, as the regularization parameter tends to zero, to a smooth solution of the original, nonregularized problem, when a smooth solution exists. This {{weak-classical consistency}} result will be shown in Sec.~\ref{weakstrong}. 

\section{Regularized weak solution and statement of existence result}\label{regularized}

Since all the mathematical challenges related to the inability to properly interpret all of the terms in the weak solution arise fundamentally from the \textit{lack of regularity} of $\hat{\boldsymbol{\eta}}$ on $\hat{\Omega}_{b}$,  we will regularize  $\hat{\boldsymbol{\eta}}$ via a convolution with a smooth, compactly supported kernel, and introduce an appropriate \textit{regularized weak formulation} of the original FPSI problem. 
Because we are working on a bounded domain $\hat{\Omega}_{b}$, we must be careful to introduce the convolution in a way that preserves the Dirichlet condition on the left, top, and right boundaries of $\hat{\Omega}_{b}=(0, L) \times (0, R)$.

This is why we define an {\emph{extended domain}} $\tilde{\Omega}_{b}$: 
\begin{equation*}
\tilde{\Omega}_{b} = [-L, 2L] \times [-R, 2R],
\end{equation*}
so that for $\delta < \min(L, R)$ the convolution of a function on $\tilde{\Omega}_{b}$ with a smooth function of compact support in the closed ball of radius $\delta$ gives a function defined on $\hat{\Omega}_{b}$.
We then introduce an odd extension along the lines $\hat{x} = 0$, $\hat{x} = L$, $\hat{y} = 0$ and $\hat{y} = R$ as follows.
\begin{definition}\label{extension}
Given $\hat{\bd{\eta}}$ defined on $\hat{\Omega}_{b}$ satisfying $\hat{\bd{\eta}} = 0$ on $\hat{x} = 0$, $\hat{x} = L$, and $\hat{y} = R$ and $\hat{\bd{\eta}} = \hat{\omega} \bd{e}_{y}$ on $\hat{y} = 0$, define the \textbf{odd extension of $\hat{\bd{\eta}}$ to $\tilde{\Omega}_{b}$} by keeping $\hat{\bd{\eta}}$ the same on 
$\hat{\Omega}_{b}=[0, L] \times [0, R]$ and defining $\hat{\bd{\eta}}$ outside of the closure of $\hat{\Omega}_{b}$ as follows:
\begin{enumerate}
\item On $[0, L] \times [-R, 0]$, set $\hat{\bd{\eta}}(\hat{x}, \hat{y}) = \hat{\omega}(\hat{x}) \bd{e}_{y} + (\hat{\omega}(\hat{x}) \bd{e}_{y} - \hat{\bd{\eta}}(\hat{x}, -\hat{y}))$.
\item On $[0, L] \times [R, 2R]$, set $\hat{\bd{\eta}}(\hat{x}, \hat{y}) = -\hat{\bd{\eta}}(\hat{x}, 2R - \hat{y})$. 
\item On $[-L, 0] \times [-R, 2R]$, set $\hat{\bd{\eta}}(\hat{x}, \hat{y}) = -\hat{\bd{\eta}}(-\hat{x}, \hat{y})$.
\item On $[L, 2L] \times [-R, 2R]$, set $\hat{\bd{\eta}}(\hat{x}, \hat{y}) = -\hat{\bd{\eta}}(2L - \hat{x}, \hat{y})$.
\end{enumerate}
\end{definition}

Let $\sigma$ be a radially symmetric function on $\mathbb{R}^{2}$ with compact support in the closed ball of radius one such that $\displaystyle \int_{\mathbb{R}^{2}} \sigma = 1$, and define
\begin{equation*}
\sigma_{\delta} = \delta^{-2} \sigma(\delta^{-1} \bd{x}), \qquad \text{ on } \mathbb{R}^{2}.
\end{equation*}

\begin{definition}
We define the following {\bf{regularized functions}} which are spatially smooth on $\hat{\Omega}_{b}$:
\begin{itemize}
\item The regularized Biot displacement{{, which is}} obtained by extending $\hat{\boldsymbol{\eta}}$ to $\tilde{\Omega}_{b}$ by odd extension and defining:
{{\begin{equation}\label{etadelta}
\hat{\boldsymbol{\eta}}^{\delta}(t, \hat{x}, \hat{y}) = (\hat{\boldsymbol{\eta}} * \sigma_{\delta})(t, \hat{x}, \hat{y}) := \int_{\R^{2d}} \hat{\bd{\eta}}(t, \hat{x} - z, \hat{y} - w) \sigma_{\delta}(z, w) dz dw, \qquad \text{ on } \hat{\Omega}_{b},
\end{equation}}}
\item The regularized Lagrangian mapping:
{{\begin{equation}\label{phidelta}
\hat{\boldsymbol{\Phi}}_{b}^{\eta^{\delta}}(t, \cdot) = \text{Id} + \hat{\boldsymbol{\eta}}^{\delta}(t, \cdot),
\end{equation}}}
\item The regularized moving Biot domain:
{{\begin{equation}\label{omegadelta}
\Omega^{\delta}_{b}(t) = \hat{\boldsymbol{\Phi}}_{b}^{\eta^{\delta}}(t, \hat{\Omega}_{b}).
\end{equation}}}
Note that even though the kinematic coupling condition holds for $\hat{\bd{\eta}}$ in the sense that $\hat{\bd{\eta}}|_{\hat{\Gamma}} = \hat{\omega} \bd{e}_{y}$, it is \textit{not necessarily true} that $\hat{\bd{\eta}}^{\delta}|_{\hat{\Gamma}} = \hat{\omega} \bd{e}_{y}$. Therefore, we will also define:
\item The regularized moving interface:
{{\begin{equation*}
\Gamma^{\delta}(t) = \hat{\bd{\Phi}}^{\eta^{\delta}}_{b}(t, \hat{\Gamma}). 
\end{equation*}}}
Alternatively, $\hat{\Gamma}^{\delta}$ is the plate interface if it were displaced from the reference configuration $\hat{\Gamma}$ in the direction $\hat{\bd{\eta}}^{\delta}|_{\hat{\Gamma}}$, which is a purely transverse $y$ displacement, as one can verify.  
\end{itemize}
\end{definition}

Note that by the way we extended $\hat{\bd{\eta}}$ to the larger domain $\tilde{\Omega}_{b}$ we have that
\begin{equation*}
\hat{\boldsymbol{\eta}}^{\delta} = 0 \quad {\rm on} \quad \partial \hat{\Omega}_{b} \setminus \hat{\Gamma}.
\end{equation*}

With these regularized versions of the Biot structure displacement and velocity, we can now define the notion of a \textit{weak solution to
the regularized weak FPSI problem with the regularization parameter $\delta$}. We start by defining the solution and test space, which are motivated by the energy estimates in Section \ref{energy}, and then we state the regularized weak formulation in the moving domain framework and in the fixed reference domain framework.

{{
\begin{remark}\label{RemarkAboutInterface}
We have regularized the physical Biot domain using the regularized Biot displacement, which results in the regularized moving Biot domain $\Omega_{b}^{\delta}(t)$ as stated in \eqref{omegadelta}. We emphasize that the main reason for this regularization is because the structure displacement $\bd{\eta}$, which is  in the finite energy space $H^{1}(\Omega_{b})$ (without regularization), does not posses sufficient regularity to make sense of the definition of $\Omega_{b}^{\delta}(t)$. However, while the moving Biot domain requires regularization, we emphasize that there is no need to regularize the fluid domain $\Omega_{f}(t)$ because the fluid domain $\Omega_{f}(t)$ can be defined without explicit reference to the Biot displacement $\bd{\eta}$, and hence it is not affected by the regularity issues associated with $\bd{\eta}$. In particular, $\Omega_{f}(t)$ can already be well-defined by using just the plate displacement $\omega$. Even though $\omega$ is the trace of $\bd{\eta}$ along $\Gamma$, the plate displacement $\omega$ has additional regularity in the finite energy space, i.e.,  $\omega \in H_{0}^{2}(\Gamma)$ due to the fact that  $\omega$ itself satisfies the plate equation. This makes $\omega$ a continuous function on $\Gamma$ which allows us to define $\Omega_{f}(t)$ unambiguously. 
\end{remark}}}

\subsection{Functional spaces and definition of weak solutions}

\begin{definition}(Solution and test spaces for the regularized problem)
\begin{itemize}
\item \textit{Fluid function space (moving domain/Eulerian formulation).}
\begin{equation}\label{Vft}
V_{f}(t) = \{\bd{u} = (u_{x}, u_{y}) \in H^{1}(\Omega_{f}(t)) : \nabla \cdot \bd{u} = 0, \text{ and } \bd{u} = 0 \text{ when } x = 0, x = L, y = -R\},
\end{equation}
\begin{equation}\label{fmovingspace}
\mathcal{V}_{f} = L^{\infty}(0, T; L^{2}(\Omega_{f}(t))) \cap L^{2}(0, T; V_{f}(t)).
\end{equation}
\item \textit{Fluid function space (fixed domain/Lagrangian formulation).}
\begin{equation}\label{Vf}
V^{\omega}_{f} = \{\hat{\boldsymbol{u}} = (\hat{u}_{x}, \hat{u}_{y}) \in H^{1}(\hat{\Omega}_{f}) : \hat{\nabla}^{\omega}_{f} \cdot \hat{\boldsymbol{u}} = 0, \text{ and } \hat{\boldsymbol{u}} = 0 \text{ when } \hat{x} = 0, \hat{x} = L, \hat{y} = -R\},
\end{equation}
\begin{equation}\label{frefspace}
\mathcal{V}^{\omega}_{f} = L^{\infty}(0, T; L^{2}(\hat{\Omega}_{f})) \cap L^{2}(0, T; V^{\omega}_{f}).
\end{equation}
\item \textit{Plate function space.}
\begin{equation}\label{omegarefspace}
\mathcal{V}_{\omega} = W^{1, \infty}(0, T; L^{2}(\hat{\Gamma})) \cap L^{\infty}(0, T; H_{0}^{2}(\hat{\Gamma})).
\end{equation}
\item \textit{Biot displacement function space.}
\begin{equation}\label{Vd}
V_{d} = \{\hat{\boldsymbol{\eta}} = (\hat{\eta}_{x}, \hat{\eta}_{y}) \in H^{1}(\hat{\Omega}_{b}): \hat{\boldsymbol{\eta}} = 0  \text{ for } \hat{x} = 0, \hat{x} = L, \hat{y} = R, \text{ and } \hat{\eta}_{x} = 0 \text{ on } \hat{\Gamma}\},
\end{equation}
\begin{equation}\label{brefspace}
\mathcal{V}_{b} = W^{1, \infty}(0, T; L^{2}(\hat{\Omega}_{b})) \cap L^{\infty}(0, T; V_{d}) \cap H^{1}(0, T; V_{d}).
\end{equation}
\item \textit{Biot pore pressure function space.} 
\begin{equation}\label{Vp}
V_{p} = \{\hat{p} \in H^{1}(\hat{\Omega}_{b}): \hat{p} = 0 \text{ for } \hat{x} = 0, \hat{x} = L, \hat{y} = R\},
\end{equation}
\begin{equation}\label{prefspace}
\mathcal{Q}_{b} = L^{\infty}(0, T; L^{2}(\hat{\Omega}_{b})) \cap L^{2}(0, T; V_{p}).
\end{equation}
\item \textit{Weak solution space (moving domain).}
\begin{equation}\label{solnspacemoving}
\mathcal{V}_{\text{sol}} = \{(\boldsymbol{u}, \hat{\omega}, \hat{\boldsymbol{\eta}}, \hat{p}) \in \mathcal{V}_{f} \times \mathcal{V}_{\omega} \times \mathcal{V}_{b} \times \mathcal{Q}_{b} : \hat{\boldsymbol{\eta}} = \hat{\omega} \boldsymbol{e}_{y} \text{ on } \hat{\Gamma}\}.
\end{equation}
\item \textit{Weak solution space (fixed domain).} 
\begin{equation}\label{solnspace}
\mathcal{V}^{\omega}_{\text{sol}} = \{(\hat{\boldsymbol{u}}, \hat{\omega}, \hat{\boldsymbol{\eta}}, \hat{p}) \in \mathcal{V}^{\omega}_{f} \times \mathcal{V}_{\omega} \times \mathcal{V}_{b} \times \mathcal{Q}_{b} : \hat{\boldsymbol{\eta}} = \hat{\omega} \boldsymbol{e}_{y} \text{ on } \hat{\Gamma}\}.
\end{equation}
\item \textit{Test space (moving domain).}
\begin{equation}\label{testspacemoving}
\mathcal{V}_{\text{test}} = \{(\boldsymbol{v}, \hat{\varphi}, \hat{\boldsymbol{\psi}}, \hat{r}) \in C_{c}^{1}([0, T); V_{f}(t) \times H_{0}^{2}(\hat{\Gamma}) \times V_{d} \times V_{p}) : \hat{\boldsymbol{\psi}} = \hat{\varphi} \boldsymbol{e}_{y} \text{ on } \hat{\Gamma}\}.
\end{equation}
\item \textit{Test space (fixed domain).} 
\begin{equation}\label{testspace}
\mathcal{V}^{\omega}_{\text{test}} = \{(\hat{\boldsymbol{v}}, \hat{\varphi}, \hat{\boldsymbol{\psi}}, \hat{r}) \in C_{c}^{1}([0, T); V^{\omega}_{f} \times H_{0}^{2}(\hat{\Gamma}) \times V_{d} \times V_{p}) : \hat{\boldsymbol{\psi}} = \hat{\varphi} \boldsymbol{e}_{y} \text{ on } \hat{\Gamma}\}.
\end{equation}
\end{itemize}
\end{definition}

\begin{remark}
Because $\hat{\Gamma}$ is one dimensional, for plate displacements $\hat{\omega} \in \mathcal{V}_{\omega}$, we have that $\hat{\omega} \in C(0, T; C^{1}(\hat{\Gamma}))$ and hence, there is a one-to-one correspondence between functions in $\mathcal{V}_{\text{sol}}$ and $\mathcal{V}^{\omega}_{\text{sol}}$ and functions in $\mathcal{V}_{\text{test}}$ and $\mathcal{V}^{\omega}_{\text{test}}$, given by composition with the ALE mapping \eqref{phif}.
% for the fluid domain, $\hat{\bd{\Phi}}^{\omega}_{f}: \hat{\Omega}_{f} \to \Omega_{f}(t)$, for the component involving fluid velocities. 
\end{remark}

{{Next, we state the weak formulation to the regularized problem as follows.}}

\begin{definition}\label{regularizedmoving}(Weak solution to the regularized problem, {\bf{moving fluid domain formulation}})
An ordered four-tuple $(\boldsymbol{u}, \hat{\omega}, \hat{\boldsymbol{\eta}}, p) \in \mathcal{V}_{\text{sol}}$ is a \textit{weak solution to the regularized nonlinearly coupled FPSI problem with regularization parameter $\delta$} if for every test function $(\boldsymbol{v}, \hat{\varphi}, \hat{\boldsymbol{\psi}}, \hat{r}) \in \mathcal{V}_{\text{test}}$,
\begin{multline}\label{deltaweakphysical}
-\int_{0}^{T} \int_{\Omega_{f}(t)} \boldsymbol{u} \cdot \partial_{t}\boldsymbol{v} + \frac{1}{2} \int_{0}^{T} \int_{\Omega_{f}(t)} [((\boldsymbol{u} \cdot \nabla) \boldsymbol{u}) \cdot \boldsymbol{v} - ((\boldsymbol{u} \cdot \nabla)\boldsymbol{v}) \cdot \boldsymbol{u}] + \frac{1}{2} \int_{0}^{T} \int_{\Gamma(t)} (\boldsymbol{u} \cdot \boldsymbol{n} - 2\zeta \bd{e}_{y} \cdot \boldsymbol{n}) \boldsymbol{u} \cdot \boldsymbol{v} \\
+ 2\nu \int_{0}^{T} \int_{\Omega_{f}(t)} \boldsymbol{D}(\boldsymbol{u}) : \boldsymbol{D}(\boldsymbol{v}) + \int_{0}^{T} \int_{\Gamma(t)} \left(\frac{1}{2}|\boldsymbol{u}|^{2} - p\right)(\psi_{n} - v_{n}) + \beta \int_{0}^{T} \int_{\Gamma(t)} (\zeta \bd{e}_{y} - \bd{u}) \cdot \bd{\tau} (\bd{\psi} - \bd{v}) \cdot \bd{\tau} \\
- \rho_{p} \int_{0}^{T} \int_{\hat{\Gamma}} \partial_{t} \hat{\omega} \cdot \partial_{t} \hat{\varphi} + \int_{0}^{T} \int_{\hat{\Gamma}} \hat{\Delta} \hat{\omega} \cdot \hat{\Delta} \hat{\varphi} - \rho_{b} \int_{0}^{T} \int_{\hat{\Omega}_{b}} \partial_{t} \hat{\boldsymbol{\eta}} \cdot \partial_{t}\hat{\boldsymbol{\psi}} + 2\mu_{e} \int_{0}^{T} \int_{\hat{\Omega}_{b}} \hat{\boldsymbol{D}}(\hat{\boldsymbol{\eta}}) : \hat{\boldsymbol{D}}(\hat{\boldsymbol{\psi}}) \\
+ \lambda_{e} \int_{0}^{T} \int_{\hat{\Omega}_{b}} (\hat{\nabla} \cdot \hat{\boldsymbol{\eta}})(\hat{\nabla} \cdot \hat{\boldsymbol{\psi}}) + 2\mu_{v} \int_{0}^{T} \int_{\hat{\Omega}_{b}} \hat{\bd{D}}(\partial_{t} \hat{\bd{\eta}}) : \hat{\bd{D}}(\hat{\bd{\psi}}) + \lambda_{v} \int_{0}^{T} \int_{\hat{\Omega}_{b}} (\hat{\nabla} \cdot \partial_{t}\hat{\bd{\eta}}) (\hat{\nabla} \cdot \hat{\bd{\psi}}) \\
- \alpha \int_{0}^{T} \int_{\Omega_{b}^{\delta}(t)} p \nabla \cdot \boldsymbol{\psi} - c_{0} \int_{0}^{T} \int_{\hat{\Omega}_{b}} \hat{p} \cdot \partial_{t}\hat{r} - \alpha \int_{0}^{T} \int_{\Omega_{b}^{\delta}(t)} \frac{D^{\delta}}{Dt} \boldsymbol{\eta} \cdot \nabla r - \alpha \int_{0}^{T} \int_{\Gamma^{\delta}(t)} (\zeta \bd{e}_{y} \cdot \boldsymbol{n}^{\delta}) r \\
+ \kappa \int_{0}^{T} \int_{\Omega^{\delta}_{b}(t)} \nabla p \cdot \nabla r - \int_{0}^{T} \int_{\Gamma(t)} ((\boldsymbol{u} - \zeta \bd{e}_{y}) \cdot \boldsymbol{n}) r \\
= \int_{\Omega_{f}(0)} \boldsymbol{u}(0) \cdot \boldsymbol{v}(0) + \rho_{p} \int_{\hat{\Gamma}} \partial_{t}\hat{\omega}(0) \cdot \hat{\varphi}(0) + \rho_{b} \int_{\hat{\Omega}_{b}} \partial_{t}\hat{\boldsymbol{\eta}}(0) \cdot \hat{\boldsymbol{\psi}}(0) + c_{0} \int_{\hat{\Omega}_{b}} \hat{p}(0) \cdot \hat{r}(0),
\end{multline}
where $\frac{D^{\delta}}{Dt} = \frac{d}{dt} + (\boldsymbol{\xi}^{\delta} \cdot \nabla)$ with {{$\bd{\xi}^{\delta}(t, \cdot) = \partial_{t} \hat{\bd{\eta}}^{\delta}(t, (\bd{\Phi}^{\eta^{\delta}}_{b})^{-1}(t, \cdot))$}} is the material derivative with respect to the regularized displacement, $\boldsymbol{n}$ denotes the upward pointing normal vector to $\Gamma(t)$, and $\bd{n}^{\delta}$ denotes the upward pointing normal vector to $\Gamma^{\delta}(t)$. 
\end{definition}

Notice that only four terms contain regularization via convolution with parameter $\delta$. While there are many different ways to write the regularized 
weak formulation, the regularization presented above is a regularization that deviates from the original, nonregularized problem, in the smallest possible
number of terms, and is still consistent  with the original, nonregularized problem, as we show later.

\begin{remark}
While the solution to the regularized problem above depends on the regularization parameter $\delta$ implicitly, to simplify notation we will drop the
$\delta$ notation whenever it is clear from the context that we are working with the solution to the regularized problem. 
\end{remark}

\begin{remark}
We simplify notation by omitting the explicit compositions with the maps $\hat{\bd{\Phi}}^{\omega}_{f}$, $\hat{\bd{\Phi}}^{\omega}_{\Gamma}$, $\hat{\bd{\Phi}}^{\eta}_{b}$, and $\hat{\bd{\Phi}}^{\eta^{\delta}}_{b}$,  and their inverses. The necessary compositions with such mappings will be clear from the context. For example,  
\begin{equation*}
-\alpha \int_{0}^{T} \int_{\Omega_{b}^{\delta}(t)} p \nabla \cdot \bd{\psi}
\quad  {\rm means} \quad 
-\alpha \int_{0}^{T} \int_{\Omega_{b}^{\delta}(t)} \left(\hat{p} \circ (\bd{\Phi}^{\eta^{\delta}}_{b})^{-1}\right) \nabla \cdot \left(\hat{\bd{\psi}} \circ (\bd{\Phi}^{\eta^{\delta}}_{b})^{-1}\right),
\end{equation*}
and 
\begin{equation*}
-\int_{0}^{T} \int_{\Gamma(t)} ((\bd{u} - \zeta \bd{e}_{y}) \cdot \bd{n}) r 
\quad {\rm means} \quad 
-\int_{0}^{T} \int_{\Gamma(t)} \left(\left(\bd{u} - (\zeta \circ (\bd{\Phi}^{\omega}_{\Gamma})^{-1})\bd{e}_{y}\right) \cdot \bd{n}\right) \left(\hat{r} \circ (\bd{\Phi}^{\eta}_{b})^{-1}\right).
\end{equation*}
\end{remark}
{Notice that here we tacitly assume that $\bd{\Phi}^{\eta^{\delta}}_{b}$ is invertible. We will later justify this assumption by proving that it holds on some time interval $[0,T_{\delta}]$, where the time $T_{\delta}$ may depend on the regularization parameter $\delta$.}
Next, we reformulate the definition of a regularized weak solution on the {\bf{fixed reference domain}}. Recall that the Jacobians
$\hat{\mathcal{J}}^{\omega}_{f}$, $\hat{\mathcal{J}}^{\eta}_{b}$, and $\hat{\mathcal{J}}^{\omega}_{\Gamma}$ in \eqref{Jf}
 will appear upon using a change of variables to map the problem onto the reference domain.
To transform the first term in the weak formulation  \eqref{deltaweakphysical} above, we use  \eqref{womega} to transform the time derivatives  and  assume that $|\hat{\omega}| < R$ so that there is no domain degeneracy. {{After using \eqref{transformedgradient} and \eqref{womega},}} we get
\begin{align}\label{transferref1}
&\int_{\Omega_{f}(t)} \boldsymbol{u} \cdot \partial_{t}\boldsymbol{v} = \int_{\hat{\Omega}_{f}} \left(1 + \frac{\hat{\omega}}{R}\right) \hat{\boldsymbol{u}} \cdot \partial_{t} \hat{\boldsymbol{v}} - \int_{\hat{\Omega}_{f}} \left(1 + \frac{\hat{\omega}}{R}\right) \hat{\boldsymbol{u}} \cdot [(\hat{\boldsymbol{w}} \cdot \hat{\nabla}^{\omega}_{f}) \hat{\boldsymbol{v}}] 
\nonumber\\
&= \int_{\hat{\Omega}_{f}} \left(1 + \frac{\hat{\omega}}{R}\right) \hat{\bd{u}} \cdot \partial_{t} \hat{\bd{v}} - \frac{1}{R} \int_{\hat{\Omega}_{f}} \hat{\bd{u}} \cdot [(R + \hat{y}) \partial_{t}\hat{\omega} \partial_{\hat{y}} \hat{\bd{v}}] 
\nonumber\\
&= \int_{\hat{\Omega}_{f}} \left(1 + \frac{\hat{\omega}}{R}\right) \hat{\boldsymbol{u}} \cdot \partial_{t} \hat{\boldsymbol{v}} - \frac{1}{2R} \int_{\hat{\Omega}_{f}} \hat{\boldsymbol{u}} \cdot [(R + \hat{y}) \partial_{t}\hat{\omega} \partial_{\hat{y}} \hat{\boldsymbol{v}}] + \frac{1}{2R} \int_{\hat{\Omega}_{f}} (\partial_{t} \hat{\omega}) \hat{\bd{u}} \cdot \hat{\bd{v}} 
\nonumber\\
&+ \frac{1}{2R} \int_{\hat{\Omega}_{f}} [(R + \hat{y}) \partial_{t}\hat{\omega} \partial_{\hat{y}} \hat{\boldsymbol{u}}] \cdot \hat{\boldsymbol{v}} - \frac{1}{2} \int_{\hat{\Gamma}} (\hat{\boldsymbol{u}} \cdot \hat{\boldsymbol{v}}) \partial_{t}\hat{\omega} 
\nonumber\\
&= \int_{\hat{\Omega}_{f}} \left(1 + \frac{\hat{\omega}}{R}\right) \hat{\boldsymbol{u}} \cdot \partial_{t} \hat{\boldsymbol{v}} - \frac{1}{2} \int_{\hat{\Omega}_{f}} \left(1 + \frac{\hat{\omega}}{R}\right) [((\hat{\boldsymbol{w}} \cdot \hat{\nabla}^{\omega}_{f}) \hat{\boldsymbol{v}}) \cdot \hat{\boldsymbol{u}} - ((\hat{\boldsymbol{w}} \cdot \hat{\nabla}^{\omega}_{f}) \hat{\boldsymbol{u}}) \cdot \hat{\boldsymbol{v}}] 
\nonumber\\
&+ \frac{1}{2R}\int_{\hat{\Omega}_{f}} (\partial_{t}\hat{\omega}) \hat{\boldsymbol{u}} \cdot \hat{\boldsymbol{v}} - \frac{1}{2} \int_{\hat{\Gamma}} (\hat{\boldsymbol{u}} \cdot \hat{\boldsymbol{v}}) \partial_{t}\hat{\omega},
\end{align}
where we integrated by parts in the $\hat{y}$ direction. %Recall that $\hat{\boldsymbol{w}}$ is defined by \eqref{womega}. 
Note that the final term in \eqref{transferref1} will combine with the following term in \eqref{deltaweakphysical}:
\begin{equation}\label{equality}
\int_{0}^{T} \int_{\Gamma(t)} (\zeta \bd{e}_{y} \cdot \boldsymbol{n}) \boldsymbol{u} \cdot \boldsymbol{v} = \int_{0}^{T} \int_{\hat{\Gamma}} (\hat{\boldsymbol{u}} \cdot \hat{\boldsymbol{v}}) \partial_{t}\hat{\omega},
\end{equation}
where we used $\displaystyle \boldsymbol{n} = (-\partial_{\hat{x}}\hat{\omega}, 1)/{\hat{\mathcal{J}}^{\omega}_{\Gamma}}$ for the normal vector to the interface and $\zeta \bd{e}_{y}|_{\Gamma(t)} = \partial_{t}\hat{\omega} \bd{e}_{y}$. Because the transformation from $\Gamma(t)$ to $\hat{\Gamma}$ cancels out the factor of $\hat{\mathcal{J}}^{\omega}_{\Gamma}$ in the unit normal vector, it is useful to define the following renormalized normal and tangent vectors:
\begin{equation}\label{ntomega}
\hat{\boldsymbol{n}}^{\omega} = (-\partial_{\hat{x}}\hat{\omega}, 1), \qquad \hat{\boldsymbol{\tau}}^{\omega} = (1, \partial_{\hat{x}}\hat{\omega}).
\end{equation}
We similarly define
\begin{equation}\label{nomegadelta}
\hat{\bd{n}}^{\omega^{\delta}} = (-\partial_{\hat{x}} (\hat{\bd{\eta}}^{\delta}|_{\hat{\Gamma}}), 1).
\end{equation}
We are now ready to state the definition of a weak solution to the regularized problem on the fixed reference domain. 

\begin{definition}\label{regularizedfixed}(Weak solution to the regularized problem, {\bf{fixed fluid domain formulation}})
An ordered four-tuple $(\hat{\boldsymbol{u}}, \hat{\omega}, \hat{\boldsymbol{\eta}}, \hat{p}) \in \mathcal{V}^{\omega}_{\text{sol}}$ is a \textit{weak solution to the regularized nonlinearly coupled FPSI problem with regularization parameter $\delta$} if for all test functions $(\hat{\boldsymbol{v}}, \hat{\varphi}, \hat{\boldsymbol{\psi}}, \hat{r}) \in \mathcal{V}^{\omega}_{\text{test}}$, the following equality holds:
\begin{multline}\label{regularweakref}
-\int_{0}^{T} \int_{\hat{\Omega}_{f}} \left(1 + \frac{\hat{\omega}}{R}\right) \hat{\boldsymbol{u}} \cdot \partial_{t}\hat{\boldsymbol{v}} + \frac{1}{2} \int_{0}^{T} \int_{\hat{\Omega}_{f}} \left(1 + \frac{\hat{\omega}}{R}\right) [((\hat{\boldsymbol{u}} - \hat{\boldsymbol{w}}) \cdot \hat{\nabla}^{\omega}_{f} \hat{\boldsymbol{u}}) \cdot \hat{\boldsymbol{v}} - ((\hat{\boldsymbol{u}} - \hat{\boldsymbol{w}}) \cdot \hat{\nabla}^{\omega}_{f} \hat{\boldsymbol{v}}) \cdot \hat{\boldsymbol{u}}] \\
- \frac{1}{2R} \int_{0}^{T} \int_{\hat{\Omega}_{f}} (\partial_{t}\hat{\omega}) \hat{\boldsymbol{u}} \cdot \hat{\boldsymbol{v}} + \frac{1}{2} \int_{0}^{T} \int_{\hat{\Gamma}} (\hat{\boldsymbol{u}} \cdot \hat{\boldsymbol{n}}^{\omega} - \hat{\zeta} \bd{e}_{y} \cdot \hat{\boldsymbol{n}}^{\omega}) \hat{\boldsymbol{u}} \cdot \hat{\boldsymbol{v}} + 2\nu \int_{0}^{T} \int_{\hat{\Omega}_{f}} \left(1 + \frac{\hat{\omega}}{R}\right) \hat{\boldsymbol{D}}(\hat{\boldsymbol{u}}) : \hat{\boldsymbol{D}}(\hat{\boldsymbol{v}}) \\
+ \int_{0}^{T} \int_{\hat{\Gamma}} \left(\frac{1}{2}|\hat{\boldsymbol{u}}|^{2} - \hat{p}\right)(\hat{\boldsymbol{\psi}} - \hat{\boldsymbol{v}})\cdot \hat{\boldsymbol{n}}^{\omega} + \frac{\beta}{\hat{\mathcal{J}}^{\omega}_{\Gamma}} \int_{0}^{T} \int_{\hat{\Gamma}} (\hat{\zeta}\bd{e}_{y} - \hat{\boldsymbol{u}}) \cdot \hat{\boldsymbol{\tau}}^{\omega} (\hat{\boldsymbol{\psi}} - \hat{\boldsymbol{v}}) \cdot \hat{\boldsymbol{\tau}}^{\omega} \\
- \rho_{p} \int_{0}^{T} \int_{\hat{\Gamma}} \partial_{t} \hat{\omega} \cdot \partial_{t} \hat{\varphi} + \int_{0}^{T} \int_{\hat{\Gamma}} \hat{\Delta} \hat{\omega} \cdot \hat{\Delta} \hat{\varphi} - \rho_{b} \int_{0}^{T} \int_{\hat{\Omega}_{b}} \partial_{t} \hat{\boldsymbol{\eta}} \cdot \partial_{t}\hat{\boldsymbol{\psi}} + 2\mu_{e} \int_{0}^{T} \int_{\hat{\Omega}_{b}} \hat{\boldsymbol{D}}(\hat{\boldsymbol{\eta}}) : \hat{\boldsymbol{D}}(\hat{\boldsymbol{\psi}}) \\
+ \lambda_{e} \int_{0}^{T} \int_{\hat{\Omega}_{b}} (\hat{\nabla} \cdot \hat{\boldsymbol{\eta}})(\hat{\nabla} \cdot \hat{\boldsymbol{\psi}}) + 2\mu_{v} \int_{0}^{T} \int_{\hat{\Omega}_{b}} \hat{\bd{D}}(\partial_{t}\hat{\bd{\eta}}) : \hat{\bd{D}}(\hat{\bd{\psi}}) + \lambda_{v} \int_{0}^{T} \int_{\hat{\Omega}_{b}} (\hat{\nabla} \cdot \partial_{t}\hat{\bd{\eta}})(\hat{\nabla} \cdot \hat{\bd{\psi}}) \\
- \alpha \int_{0}^{T} \int_{\hat{\Omega}_{b}} \hat{\mathcal{J}}^{\eta^{\delta}}_{b} \hat{p} \hat{\nabla}^{\eta^{\delta}}_{b} \cdot \hat{\boldsymbol{\psi}} - c_{0} \int_{0}^{T} \int_{\hat{\Omega}_{b}} \hat{p} \cdot \partial_{t}\hat{r}
- \alpha \int_{0}^{T} \int_{\hat{\Omega}_{b}} \hat{\mathcal{J}}^{\eta^{\delta}}_{b} \partial_{t}\hat{\bd{\eta}} \cdot \hat{\nabla}^{\eta^{\delta}}_{b} \hat{r} \\
- \alpha \int_{0}^{T} \int_{\hat{\Gamma}} (\hat{\zeta} \bd{e}_{y} \cdot \hat{\boldsymbol{n}}^{\omega^{\delta}}) \hat{r} + \kappa \int_{0}^{T} \int_{\hat{\Omega}_{b}} \hat{\mathcal{J}}^{\eta^{\delta}}_{b} \hat{\nabla}^{\eta^{\delta}}_{b} \hat{p} \cdot \hat{\nabla}^{\eta^{\delta}}_{b} \hat{r} - \int_{0}^{T} \int_{\hat{\Gamma}} ((\hat{\boldsymbol{u}} - \hat{\zeta}\bd{e}_{y}) \cdot \hat{\boldsymbol{n}}^{\omega}) \hat{r} \\
= \int_{\Omega_{f}(0)} \boldsymbol{u}(0) \cdot \boldsymbol{v}(0) + \rho_{p} \int_{\hat{\Gamma}} \partial_{t}\hat{\omega}(0) \cdot \hat{\varphi}(0) + \rho_{b} \int_{\hat{\Omega}_{b}} \partial_{t}\hat{\boldsymbol{\eta}}(0) \cdot \hat{\boldsymbol{\psi}}(0) + c_{0} \int_{\hat{\Omega}_{b}} \hat{p}(0) \cdot \hat{r}(0),
\end{multline}
\end{definition}
\noindent where $\hat{\mathcal{J}}^{\eta^{\delta}}_{b}$ and $\hat{\mathcal{J}}^{\omega}_{\Gamma}$ are defined in \eqref{Jf},
$\hat{\boldsymbol{w}}$ is defined in \eqref{womega}, 
$\hat{\nabla}^{\omega}_{f}$ in \eqref{transformedgradient},
$\hat{\nabla}^{\eta^{\delta}}_{b}\hat{g}$ in \eqref{nablaeta}, and
$\hat{\zeta}$ in \eqref{zeta}.

\if 1 = 0 %%%%%%%%%%%%%
We list the definitions of all of the relevant expressions below.
\begin{equation*}
\hat{\boldsymbol{w}} = \frac{R + \hat{y}}{R} \partial_{t}\hat{\omega}\boldsymbol{e}_{y}, \qquad \hat{\nabla}^{\omega}_{f} = \left(\partial_{\hat{x}} - \frac{(R + \hat{y})\partial_{\hat{x}}\hat{\omega}}{R + \hat{\omega}} \partial_{\hat{y}}, \frac{R}{R + \hat{\omega}} \partial_{\hat{y}}\right), \qquad \hat{\nabla}^{\eta^{\delta}}_{b}\hat{g} = \left(\frac{\partial \hat{g}}{\partial \hat{x}}, \frac{\partial \hat{g}}{\partial \hat{y}}\right) \cdot (\boldsymbol{I} + \hat{\nabla} \hat{\boldsymbol{\eta}}^{\delta})^{-1}
\end{equation*}
\begin{equation*}
\hat{\boldsymbol{n}}^{\omega} = (-\partial_{\hat{x}}\hat{\omega}, 1), \qquad \hat{\boldsymbol{\tau}}^{\omega} = (1, \partial_{\hat{x}}\hat{\omega}), \qquad \hat{\bd{n}}^{\omega^{\delta}} = (-\partial_{\hat{x}}(\hat{\bd{\eta}}^{\delta}|_{\hat{\Gamma}}), 1),
\end{equation*}
\begin{equation*}
\hat{\mathcal{J}}^{\eta^{\delta}}_{b} = \det(\boldsymbol{I} + \hat{\nabla} \hat{\boldsymbol{\eta}}^{\delta}), \qquad \hat{\mathcal{J}}^{\omega}_{\Gamma} = \sqrt{1 + |\partial_{\hat{x}} \hat{\omega}|^{2}}.
\end{equation*}
\fi

\subsection{Formal energy inequality}\label{energy}

Here we show that the regularized problem is defined in a way that {{still gives rise to an energy equality, which in fact is the same energy inequality that one would formally obtain for the original problem, except with integrals over the moving Biot domain $\Omega_{b}(t)$ becoming integrals over the regularized moving Biot domain $\Omega^{\delta}_{b}(t)$}}. More precisely, we formally prove that a weak solution to the regularized problem satisfies the following energy equality.
\begin{lemma}
Assuming that a weak solution exists, the following energy equality holds:
\begin{equation}
E^K(T) + {{E}}^E(T) +
\int_0^T \left( {{D}}^V_f(t)  + {{D}}^V_b(t) + D^V_{f_b}(t) + D^V_\beta(t) \right)dt
 = E^K(0)+ {{E}}^E(0)
\end{equation}
where
\begin{align*}
{{E}}^K(t) = \frac{1}{2} \int_{\Omega_{f}(t)} |\boldsymbol{u}(t)|^{2} + 
\frac{1}{2} \rho_{b} \int_{\hat{\Omega}_{b}} |\partial_{t}\hat{\boldsymbol{\eta}}(t)|^{2} +
\frac{1}{2} c_{0} \int_{\hat{\Omega}_{b}} |\hat{p}(t)|^{2} +
\frac{1}{2}\rho_{p} \int_{\hat{\Gamma}} |\partial_{t} \hat{\omega}(t)|^{2}
\end{align*}
is the sum of the kinetic energy of the fluid, the kinetic energy of the Biot poroviscoelastic matrix motion, the kinetic energy of the filtrating fluid flow in the Biot medium,
and the kinetic energy of the plate motion, ${{E}}^E(t)$ is defined by
\begin{align*}
{{E}}^E(t)  = 2\mu_{e} \int_{\hat{\Omega}_{b}} |\hat{\bd{D}}(\hat{\bd{\eta}})(t)|^{2} + 2\lambda_{e} \int_{\hat{\Omega}_{b}} |\hat{\nabla} \cdot \hat{\bd{\eta}}(t)|^{2}
+ \int_{\hat{\Gamma}} |\hat{\Delta} \hat{\omega}(t)|^{2},
\end{align*}
which corresponds to the elastic energy of the Biot poroviscoelastic matrix and the elastic energy of the plate,
and 
\begin{align*}
{{D}}^V_f(t) = 2\nu  \int_{\Omega_{f}(t)} |\boldsymbol{D}(\boldsymbol{u})|^{2},\quad
{{D}}^V_b(t) = 2\mu_{v}  \int_{\hat{\Omega}_{b}} |\hat{\boldsymbol{D}}(\partial_{t} \hat{\bd{\eta}})|^{2} 
	+ \lambda_{v}  \int_{\hat{\Omega}_{b}} |\hat{\nabla} \cdot \partial_{t}\hat{\boldsymbol{\eta}}|^{2},
\end{align*}
\begin{align*}
D^V_{f_b}(t) = 	\kappa \int_{\Omega_{b}^{\delta}(t)} |\nabla p|^{2},\quad
D^V_\beta(t) = 	\beta  \int_{\Gamma(t)} |(\boldsymbol{\xi} - \boldsymbol{u}) \cdot \boldsymbol{\tau}|^{2}
\end{align*}
correspond to dissipation due to fluid viscosity, viscosity of the Biot poroviscoelastic matrix, dissipation due to permeability effects,
and dissipation due to friction in the Beavers-Joseph-Saffman slip condition.
\end{lemma}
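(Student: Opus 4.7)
The plan is to formally substitute the solution itself as a test function in the weak formulation \eqref{deltaweakphysical}: I would take $(\bd{v},\hat{\varphi},\hat{\bd{\psi}},\hat{r})=(\bd{u},\partial_{t}\hat{\omega},\partial_{t}\hat{\bd{\eta}},\hat{p})$. Because these candidates do not belong to $\mathcal{V}_{\text{test}}$, which requires compactly supported $C^{1}$ time regularity, I would first regularize in time by a standard Friedrichs mollifier and test against the mollification of $\mathbf{1}_{[0,t]}$ for $t\in(0,T)$, then pass to the limit in the mollification parameter. The spatial Dirichlet boundary conditions satisfied by the solution guarantee that these candidates lie in the correct spatial functional spaces $V_{f}(t)$, $H_{0}^{2}(\hat{\Gamma})$, $V_{d}$, $V_{p}$ pointwise in time. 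Since the lemma is labeled ``formal,'' I would not dwell on this mollification step beyond noting that it is standard.

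Each group of terms then reproduces one of the items in the claimed equality. The fluid inertia contribution $-\int\!\!\int\bd{u}\cdot\partial_{t}\bd{v}$, combined with the skew-symmetrized convective term and the interface term $\frac{1}{2}\int\!\!\int_{\Gamma(t)}(\bd{u}\cdot\bd{n}-2\zeta\bd{e}_{y}\cdot\bd{n})\bd{u}\cdot\bd{v}$, recovers $\int_{0}^{T}\frac{d}{dt}\frac{1}{2}\int_{\Omega_{f}(t)}|\bd{u}|^{2}$ via Reynolds' transport theorem, because the interior cubic terms from the convective part cancel by antisymmetry at $\bd{v}=\bd{u}$. The fluid viscous term immediately produces $\int_{0}^{T}2\nu\int_{\Omega_{f}(t)}|\bd{D}(\bd{u})|^{2}$. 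Analogously, the Biot elastodynamic terms yield the time derivative of $\frac{1}{2}\rho_{b}\|\partial_{t}\hat{\bd{\eta}}\|_{L^{2}}^{2}+\mu_{e}\|\hat{\bd{D}}(\hat{\bd{\eta}})\|_{L^{2}}^{2}+\frac{\lambda_{e}}{2}\|\hat{\nabla}\cdot\hat{\bd{\eta}}\|_{L^{2}}^{2}$ together with the viscoelastic dissipation $D_{b}^{V}$; the plate terms yield the time derivative of $\frac{1}{2}\rho_{p}\|\partial_{t}\hat{\omega}\|_{L^{2}}^{2}+\frac{1}{2}\|\hat{\Delta}\hat{\omega}\|_{L^{2}}^{2}$; and the pore-pressure terms yield $c_{0}\frac{d}{dt}\frac{1}{2}\|\hat{p}\|_{L^{2}}^{2}+\kappa\int_{\Omega_{b}^{\delta}(t)}|\nabla p|^{2}$.

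The main obstacle is showing that all the remaining interface and coupling contributions telescope to leave only the Beavers--Joseph--Saffman dissipation $\beta\int_{\Gamma(t)}|(\bd{\xi}-\bd{u})\cdot\bd{\tau}|^{2}$. Pushing $\hat{\bd{\psi}}=\partial_{t}\hat{\bd{\eta}}$ and $\hat{r}=\hat{p}$ forward to $\Omega_{b}^{\delta}(t)$ via $\hat{\bd{\Phi}}^{\eta^{\delta}}_{b}$, an integration by parts in the pressure-coupling terms gives
\begin{equation*}
-\alpha\int_{\Omega_{b}^{\delta}(t)}p\,\nabla\cdot\bd{\psi}-\alpha\int_{\Omega_{b}^{\delta}(t)}\tfrac{D^{\delta}}{Dt}\bd{\eta}\cdot\nabla p = -\alpha\int_{\Gamma^{\delta}(t)}p\,\bd{\psi}\cdot\bd{n}^{\delta}+\alpha\int_{\Omega_{b}^{\delta}(t)}p\bigl(\nabla\cdot\tfrac{D^{\delta}}{Dt}\bd{\eta}-\nabla\cdot\bd{\psi}\bigr),
\end{equation*}
and the $\delta$-regularization has been arranged precisely so that this bulk remainder combines with the explicit boundary coupling $-\alpha\int_{\Gamma^{\delta}(t)}(\zeta\bd{e}_{y}\cdot\bd{n}^{\delta})p$ to cancel. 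Simultaneously, the fluid--interface contribution $\frac{1}{2}|\bd{u}|^{2}(\psi_{n}-v_{n})$ vanishes using the kinematic coupling \eqref{mass} together with $\bd{v}=\bd{u}$, while the pressure-balance term $-p(\psi_{n}-v_{n})$ pairs with the Biot boundary term $-\int_{\Gamma(t)}((\bd{u}-\zeta\bd{e}_{y})\cdot\bd{n})p$. The slip-friction coupling, evaluated at $\bd{\psi}|_{\hat{\Gamma}}=\hat{\zeta}\bd{e}_{y}$ and $\bd{v}=\bd{u}$, produces exactly $D_{\beta}^{V}(t)$. Integrating the resulting identity in time from $0$ to $T$ and reading off the initial values from the right-hand side of \eqref{deltaweakphysical} yields the stated energy equality.
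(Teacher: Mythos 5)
Your proposal follows essentially the same route as the paper: substitute the solution itself as the test function in the regularized weak formulation (the paper does this in the fixed-domain form \eqref{regularweakref}, you in the moving-domain form \eqref{deltaweakphysical}, which is equivalent), cancel the convective terms by antisymmetry, recover the fluid kinetic energy via Reynolds' transport theorem, cancel the $\alpha$-coupling terms by the divergence theorem on $\Omega_b^{\delta}(t)$ against the boundary term $-\alpha\int_{\Gamma^{\delta}(t)}(\zeta\bd{e}_y\cdot\bd{n}^{\delta})p$, and read off $D^V_\beta$ from the Beavers--Joseph--Saffman term; the paper does not even bother with the time-mollification remark since the identity is stated formally.

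One bookkeeping point in your interface accounting is off as written: the term $\int_{\Gamma(t)}\frac12|\bd{u}|^2(\psi_n-v_n)=\frac12\int_{\Gamma(t)}|\bd{u}|^2(\zeta\bd{e}_y-\bd{u})\cdot\bd{n}$ does \emph{not} vanish, and the kinematic condition \eqref{mass} (which involves the Darcy velocity $\bd{q}$) plays no role here; likewise, inertia plus convection plus $\frac12\int_{\Gamma(t)}(\bd{u}\cdot\bd{n}-2\zeta\bd{e}_y\cdot\bd{n})|\bd{u}|^2$ alone does not give $\frac{d}{dt}\frac12\int_{\Omega_f(t)}|\bd{u}|^2$. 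The correct (and purely algebraic) cancellation, as in the paper's one-line identity, is
\begin{equation*}
\frac12\int_{\Gamma(t)}(\bd{u}-\zeta\bd{e}_y)\cdot\bd{n}\,|\bd{u}|^2+\int_{\Gamma(t)}\Bigl(\frac12|\bd{u}|^2-p\Bigr)(\zeta\bd{e}_y-\bd{u})\cdot\bd{n}-\int_{\Gamma(t)}\bigl((\bd{u}-\zeta\bd{e}_y)\cdot\bd{n}\bigr)p=0,
\end{equation*}
which leaves exactly the boundary flux $\frac12\int_{\Gamma(t)}(\zeta\bd{e}_y\cdot\bd{n})|\bd{u}|^2$ needed to close the Reynolds transport identity. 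Your final energy equality is correct, but this step should be restated accordingly.
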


\begin{proof}
To derive this energy equality we start by 
substituting 
$
	(\hat{\boldsymbol{v}}, \hat{\varphi}, \hat{\boldsymbol{\psi}}, \hat{r}) = (\hat{\boldsymbol{u}}, \hat{\zeta}, \partial_{t}\hat{\bd{\eta}}, \hat{p})
$
into the regularized weak formulation \eqref{regularweakref} defined on the fixed reference domain 
and calculate 
\begin{equation*}
	\frac{1}{2}\int_{\hat{\Gamma}} (\hat{\boldsymbol{u}} - \hat{\zeta}\bd{e}_{y}) \cdot \hat{\boldsymbol{n}}^{\omega} |\hat{\boldsymbol{u}}|^{2} + \int_{\hat{\Gamma}} \left(\frac{1}{2}|\hat{\boldsymbol{u}}|^{2} - \hat{p}\right) (\hat{\zeta}\bd{e}_{y} - \hat{\boldsymbol{u}}) \cdot \hat{\boldsymbol{n}}^{\omega} - \int_{\hat{\Gamma}} ((\hat{\boldsymbol{u}} - \hat{\zeta}\bd{e}_{y}) \cdot \hat{\boldsymbol{n}}^{\omega}) \hat{p} = 0.
\end{equation*}
Furthermore, using integration by parts one obtains
\begin{align*}
	&\alpha\left(\int_{\hat{\Omega}_{b}} \hat{\mathcal{J}}^{\eta^{\delta}}_{b} \hat{p} \hat{\nabla}^{\eta^{\delta}}_{b} \cdot \partial_{t} \hat{\boldsymbol{\eta}} + \int_{\hat{\Omega}_{b}} \hat{\mathcal{J}}^{\eta^{\delta}}_{b} \partial_{t}\hat{\boldsymbol{\eta}} \cdot \hat{\nabla}^{\eta^{\delta}}_{b} \hat{p} + \int_{\hat{\Gamma}} (\hat{\zeta}\bd{e}_{y} \cdot \hat{\boldsymbol{n}}^{\omega^{\delta}}) \hat{p} \right) \\
	&= \alpha\left(\int_{\Omega_{b}^{\delta}(t)} p \nabla \cdot \bd{\xi} + \int_{\Omega_{b}^{\delta}(t)} \bd{\xi} \cdot \nabla p + \int_{\Gamma^{\delta}(t)} (\zeta \bd{e}_{y} \cdot \boldsymbol{n}^{\delta}) p\right) = 0,
\end{align*}
where $\boldsymbol{n}^{\delta}$ is the upward pointing unit normal vector to $\Gamma^{\delta}(t)$. Finally, by the Reynold's transport theorem
\begin{equation*}
	\int_{0}^{T} \int_{\Omega_{f}(t)} \bd{u} \cdot \partial_{t} \bd{u} + \frac{1}{2} \int_{0}^{T} \int_{\Gamma(t)} (\zeta \bd{e}_{y} \cdot \bd{n}) |\bd{u}|^{2} = \frac{1}{2} \int_{\Omega_{f}(T)} |\bd{u}|^{2} - \frac{1}{2} \int_{\Omega_{f}(0)} |\bd{u}|^{2}.
\end{equation*}
By combining these calculations one {{obtains}} the final energy estimate:
\begin{multline*}
	\frac{1}{2} \int_{\Omega_{f}(T)} |\boldsymbol{u}(T)|^{2} + 2\nu \int_{0}^{T} \int_{\Omega_{f}(t)} |\boldsymbol{D}(\boldsymbol{u})|^{2} + \beta \int_{0}^{T} \int_{\Gamma(t)} |(\boldsymbol{\xi} - \boldsymbol{u}) \cdot \boldsymbol{\tau}|^{2} + \frac{1}{2}\rho_{p} \int_{\hat{\Gamma}} |\partial_{t} \hat{\omega}(T)|^{2} + \int_{\hat{\Gamma}} |\hat{\Delta} \hat{\omega}(T)|^{2} \\
	+ \frac{1}{2} \rho_{b} \int_{\hat{\Omega}_{b}} |\partial_{t}\hat{\boldsymbol{\eta}}(T)|^{2} + 2\mu_{e} \int_{\hat{\Omega}_{b}} |\hat{\bd{D}}(\hat{\bd{\eta}})(T)|^{2} + 2\lambda_{e} \int_{\hat{\Omega}_{b}} |\hat{\nabla} \cdot \hat{\bd{\eta}}(T)|^{2} + 2\mu_{v} \int_{0}^{T} \int_{\hat{\Omega}_{b}} |\hat{\boldsymbol{D}}(\partial_{t} \hat{\bd{\eta}})|^{2} \\
	+ \lambda_{v} \int_{0}^{T} \int_{\hat{\Omega}_{b}} |\hat{\nabla} \cdot \partial_{t}\hat{\boldsymbol{\eta}}|^{2}
	+ \frac{1}{2} c_{0} \int_{\hat{\Omega}_{b}} |\hat{p}(T)|^{2} + \kappa \int_{0}^{T} \int_{\Omega_{b}^{\delta}(t)} |\nabla p|^{2} = \frac{1}{2} \int_{\Omega_{f}(0)} |\boldsymbol{u}(0)|^{2} + \frac{1}{2} \rho_{p} \int_{\hat{\Gamma}} |\partial_{t}\hat{\omega}(0)|^{2} \\
	+ \int_{\hat{\Gamma}} |\hat{\Delta} \hat{\omega}(0)|^{2} + \frac{1}{2} \rho_{b} \int_{\hat{\Omega}_{b}} |\partial_{t}\hat{\bd{\eta}}(0)|^{2} + 2\mu_{e} \int_{\hat{\Omega}_{b}} |\hat{\bd{D}}(\hat{\bd{\eta}})(0)|^{2} + 2\lambda_{e} \int_{\hat{\Omega}_{b}} |\hat{\nabla} \cdot \hat{\bd{\eta}}(0)|^{2} + \frac{1}{2} c_{0} \int_{\hat{\Omega}_{b}} |\hat{p}(0)|^{2}.
\end{multline*}
\end{proof}

\subsection{Statement of the main existence result for the regularized problem}

We now state the main result on the existence of a weak solution to the regularized problem.

\begin{theorem}\label{MainThm1}
{{Let $\rho_b,\mu_e,\lambda_e,\alpha,\rho_p,\nu>0$ and $\mu_v,\lambda_v\geq 0$. Consider initial data for the plate displacement $\hat{\omega}_{0} \in H_{0}^{2}(\hat{\Gamma})$, plate velocity $\hat{\zeta}_{0} \in L^{2}(\hat{\Gamma})$, Biot displacement $\hat{\bd{\eta}}_{0} \in H^{1}(\hat{\Omega}_{b})$, Biot velocity $\hat{\bd{\xi}}_{0} \in H^{1}(\hat{\Omega}_{b})$ in the case of a viscoelastic Biot medium $\mu_{v}, \lambda_{v} > 0$ and $\hat{\bd{\xi}}_{0} \in L^{2}(\hat{\Omega}_{b})$ otherwise for the case of a purely elastic Biot medium,}} Biot pore pressure $\hat{p}_{0} \in L^{2}(\hat{\Omega}_{b})$, and fluid velocity $\bd{u}_{0} \in H^{1}(\Omega_{f}(0))$ which is divergence-free. Suppose further that $|\hat{\omega}_{0}| \le R_{0} < R$ for some $R_{0}$, $\hat{\bd{\eta}}_{0}|_{\Gamma} = \hat{\omega}_{0}\bd{e}_{y}$, and $\hat{\bd{\xi}}_{0}|_{\Gamma} = \hat{\zeta}_{0} \bd{e}_{y}$, and for some arbitrary but fixed regularization parameter $\delta > 0$, suppose that $\text{Id} + \hat{\bd{\eta}}_{0}^{\delta}$ is an invertible map with $\det(\bd{I} + \nabla\hat{\bd{\eta}}_{0}^{\delta}) > 0$. Then, there exists a weak solution $(\bd{u}, \hat{\omega}, \hat{\bd{\eta}}, \hat{p})$ to the regularized FPSI problem with regularization parameter $\delta$ on some time interval $[0, T]$, for some $T > 0$. 
\end{theorem}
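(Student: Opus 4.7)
The plan is to follow the constructive semi-discretization and operator-splitting strategy already outlined in the introduction. Fix the regularization parameter $\delta > 0$, partition $[0,T]$ into $N$ equal subintervals of length $\dt = T/N$, and at each discrete time $t_n = n\dt$ apply a Lie splitting that decouples the coupled system into two subproblems. The first updates the reticular plate displacement and velocity using the plate equation loaded by the jump in normal stress. The second is a semi-discretized linearized regularized fluid-Biot problem on the current approximate domains $\Omega_f^n$ and $\Omega_b^{n,\delta}$, in which the structure displacement is treated implicitly, the convective term is linearized about the previous iterate, and the nonlinear geometric quantities are frozen at the previous step using the regularized Lagrangian map $\hat{\bd{\Phi}}_b^{\eta^{n,\delta}}$ and the ALE map $\hat{\bd{\Phi}}_f^{\omega^n}$ from \eqref{phif}. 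Well-posedness of each subproblem is obtained by Lax-Milgram applied to the appropriate bilinear form on $V_f^{\omega^n} \times V_d \times V_p$; here the regularization is essential, since the convolution gives $\hat{\mathcal{J}}_b^{\eta^{n,\delta}}$ and $(\bd{I} + \hat{\nabla}\hat{\bd{\eta}}^{n,\delta})^{-1}$ uniform $C^k$ bounds from $L^2$ data on $\hat{\bd{\eta}}^n$, ensuring coercivity and nondegeneracy of the problem. By continuity and the hypothesis on $\hat{\bd{\eta}}_0^\delta$, the regularized Lagrangian map remains injective and both fluid and Biot domains remain nondegenerate on a short time interval $[0,T]$ uniformly in $\dt$ (with $T$ potentially depending on $\delta$).

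Next, I derive discrete energy estimates that mimic the formal energy equality of Section \ref{energy}, by testing the semi-discrete weak formulation with $(\hat{\bd{u}}^n, \hat{\zeta}^n, \partial_t\hat{\bd{\eta}}^n, \hat{p}^n)$ at each step and telescoping the discrete kinetic and elastic contributions. This yields bounds, uniform in $\dt$, for $\hat{\bd{\eta}}^N$ in $L^\infty(0,T; H^1) \cap W^{1,\infty}(0,T; L^2)$, for $\hat{\omega}^N$ in $L^\infty(0,T; H_0^2) \cap W^{1,\infty}(0,T; L^2)$, for $\hat{p}^N$ in $L^\infty(0,T; L^2)$ with $\nabla p^N$ in $L^2$ over the regularized Biot domain, and for $\bd{u}^N$ in $L^\infty(0,T; L^2) \cap L^2(0,T; H^1)$ on the approximate fluid domains. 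From these I extract weakly and weakly-$*$ convergent subsequences. Because the coupling and convective terms are nonlinear, strong convergence is indispensable; it is obtained from the classical Aubin-Lions lemma for $\hat{\bd{\eta}}^N$ in $C([0,T]; L^2(\hat{\Omega}_b))$, from Arzelà-Ascoli for $\hat{\omega}^N$ in $C([0,T]; C^1(\hat\Gamma))$ (exploiting the $H_0^2$ regularity coming from the plate bi-Laplacian), from the Dreher-J\"ungel lemma \cite{DreherJungel} for the piecewise-constant interpolants of the structure velocity and pore pressure, and from the generalized Aubin-Lions-Simon compactness result of Muha-\v{C}ani\'c \cite{MuhaCanic13} for the fluid velocity defined on the $N$-dependent time-varying domains.

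The main obstacle is the fluid-velocity compactness together with passage to the limit in the geometric terms, since the approximate fluid domains $\Omega_f^N(t)$ depend on $N$ through $\hat{\omega}^N$; crucially, the uniform regularity $\hat{\omega}^N \in C([0,T]; C^1)$ coming from the plate equation (see Remark \ref{RemarkAboutInterface}) ensures Hausdorff convergence of these domains and places us within the hypotheses of \cite{MuhaCanic13}. To pass to the limit in the regularized weak formulation \eqref{regularweakref}, I must construct, for each limiting test function $(\hat{\bd{v}}, \hat{\varphi}, \hat{\bd{\psi}}, \hat{r}) \in \mathcal{V}^{\omega}_{\text{test}}$, a sequence of approximate test functions adapted to the discrete domains that converge uniformly; because in our setup the fluid test functions decouple from the structure via the dynamic pressure condition \eqref{pressurebalance}, this essentially reduces to composing with the ALE maps $\hat{\bd{\Phi}}_f^{\omega^N}$ and using the uniform convergence of $\omega^N$. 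The terms involving convolution with $\sigma_\delta$ pass through the limit $\dt \to 0$ continuously because, at fixed $\delta$, convolution is bounded from $L^2$ into $C^k$ for every $k$; this is precisely why the regularization was introduced. Finally, the extension to the maximal interval stated in Theorem \ref{InformalThm1} (stopping only when $T=\infty$, a domain degenerates, or $\hat{\bd{\Phi}}_b^{\eta^\delta}$ loses injectivity) is obtained by iterating the local construction in the spirit of \cite[Section 5]{CDEM}.
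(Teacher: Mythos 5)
Your overall strategy -- Lie splitting, Lax--Milgram for the subproblems, uniform energy estimates, and a compactness battery ending with a generalized Aubin--Lions theorem for moving domains -- is the paper's route. Two substantive points, though, do not survive as you have stated them. First, you describe the plate subproblem as being ``loaded by the jump in normal stress.'' In the paper's splitting the plate subproblem is the \emph{unforced} homogeneous plate dynamics (cf.~\eqref{plateweak1}--\eqref{plateweak2}); all interface coupling enters in the second (fluid--Biot) subproblem, which also updates the plate \emph{velocity} via the kinematic coupling. This ordering is not a stylistic choice: the fluid and Biot tractions at $\Gamma$ are simply not available when the plate step is taken, so your version would require data that the splitting has not yet produced.

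Second, and more seriously, the fluid-velocity compactness is not a straight application of the generalized Aubin--Lions result. The semidiscrete time-difference estimate for $\bd u^{n+1}_N - \bd u^n_N$ produced by the convective and boundary terms yields a bound of the form $C(a^n_N + \|\bd u^n_N\|_{V^n_N} + \|\bd u^{n+1}_N\|_{V^{n+1}_N})^{3/2}$, with $\{a^n_N\}$ square-summable, not the linear bound that the original Property B of the moving-domain compactness theorem requires. The paper must formulate and prove a \emph{Generalized Property B} (exponent $1\le p<2$ plus the auxiliary sequence) and redo the equicontinuity lemma so the modulus becomes $C(l\Delta t)^{1-p/2}$, still vanishing since $p<2$. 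Without this modification the compactness step does not close; your proposal treats it as a direct citation. Two further technical points worth noting that your sketch omits but the paper singles out as non-trivial: coercivity of the fluid--Biot bilinear form requires rescaling the $\bd v$ and $r$ test functions by $\Delta t$ to reconcile the hyperbolic and parabolic time scalings, and it requires a Korn inequality on $\Omega_b$ with the mixed boundary conditions $\eta_x|_\Gamma=0$, $\bd\eta|_{\partial\Omega_b\setminus\Gamma}=0$ (Proposition~\ref{Korn}), which is not a standard citation and is proved by an explicit integration by parts.
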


While $T$ in general depends on $\delta$, we will show  that if there exists a smooth solution to the nonregularized FPSI problem, then this time $T$ for the regularized problem is independent of $\delta$. This will allow us to pass to the limit as $\delta \to 0$ and show that weak solutions to the regularized FPSI problems constructed in this manuscript,
converge to a smooth solution 
of the original, nonregularized problem, when a smooth solution to the nonregularized problem exists.
{On the other hand, without the additional assumption of the existence of a strong solution, one cannot draw any conclusions about the limit as $\delta\to 0$. This assumption plays a crucial role in demonstrating that $T$ remains independent of $\delta$. Furthermore, as elucidated in Remark \ref{RemarkNonregularized}, energy estimates alone are insufficient to take the limit in certain terms in the weak formulation. Therefore, in order to pass to the limit as $\delta \to 0$ one would need to prove additional regularity estimates (beyond energy estimates) for weak solutions, which appears to be beyond the current state-of-the-art techniques.}

\begin{remark}
The result above is a local result, since it holds up to some time $T > 0$, which needs to be sufficiently small. However, it is easy to show that this $T > 0$ can be made maximal, in the sense that it holds until the time for which $\text{Id} + \hat{\bd{\eta}}^{\delta}$ fails to be invertible or $|\hat{\omega}(\cdot, x)| = R$ for some $x \in \hat{\Gamma}$ when the reticular plate collides with the boundary. This
can be shown using a standard method, see e.g., pg.~397-398 of \cite{CDEM}, or the proof of Theorem 7.1 in \cite{MuhaCanic13}.
\end{remark}

\textbf{An important notational convention.} For notational simplicity, we will no longer use the ``hat" notation to distinguish between functions and domains in the physical or reference configuration: for example, we will denote both the pore pressure $p$ on $\Omega_{b}(t)$ and $\hat{p}$ on $\hat{\Omega}_{b}$ by $p$, as the distinction between these two will be clear from context. In addition, we will remove the ``hat" convention from the reference domains, and for example, we will denote the reference domain $\hat{\Omega}_{b}$ for the Biot medium by $\Omega_{b}$. We will follow this notational convention for the rest of the manuscript.

The proof of Theorem~\ref{MainThm1} is constructive, and based on an operator splitting scheme. 
This is an approach that has been used in constructive existence proofs of weak solutions for a variety of FSI problems, see for example \cite{MuhaCanic13}.

\section{The splitting scheme}\label{splitting}

The splitting scheme is defined as follows.
First, semidiscretize the problem in time by introducing  the time step $\Delta t = T/N$, and subdivide the time interval $[0,T]$ into $N$ subintervals, 
each of width $\Delta t$. {{On each subinterval, we will run two subproblems: (1) a plate subproblem which takes into account the elastodynamics of the reticular plate and updates the plate displacement and the plate velocity, and (2) a fluid/Biot subproblem which updates the fluid velocity, the Biot displacement, the Biot pore pressure, and the plate velocity. Hence, each subinterval $[n\Delta t, (n + 1)\Delta t]$ involves an iteration of the plate subproblem and then an iteration of the fluid/Biot subproblem, and the solution from the previous subproblem is used as data for the subsequent subproblem. The approximations of 
 the fluid velocity, plate displacement and velocity, and Biot poroviscoelastic material displacement and pressure will be denoted by
\begin{equation*}
(\boldsymbol{u}^{n + \frac{i}{2}}_{N}, \omega^{n + \frac{i}{2}}_{N}, \zeta^{n + \frac{i}{2}}_{N}, \boldsymbol{\eta}^{n + \frac{i}{2}}_{N}, p^{n + \frac{i}{2}}_{N}), \qquad \text{ for } n = 0, 1, ...., N \text{ and } i = 0, 1,
\end{equation*}
where they are all defined on the given time subinterval $[n\Delta t, (n + 1)\Delta t]$. Here, the quantities with the superscript $n + 1/2$ denote the resulting approximate solutions obtained after the plate subproblem is solved, and the quantities with the superscript $n + 1$ denote the resulting approximate solutions obtained after the fluid/Biot subproblem is solved.}}
For the splitting scheme we will work on the fixed reference domain and hence, we will semi-discretize the regularized weak formulation \eqref{regularweakref} on the fixed reference domain. Backward Euler discretization will be used to approximate time derivatives, with the following shorthand notation:
{{
\begin{equation}\label{dotdiscretization}
\dot{f}^{n + \frac{i}{2}}_{N} = \frac{f^{n + \frac{i}{2}}_{N} - f^{n + \frac{i}{2} - 1}_{N}}{\Delta t}.
\end{equation}
As a technical comment, in the description of the subproblems below, the backward Euler discretization \eqref{dotdiscretization} can potentially give rise to negative subscripts, so when relevant, we will explicitly define $f_{-1}$ and $f_{-1/2}$ depending on the context.}}

\subsection{The plate subproblem} Only the plate displacement and velocity $\omega^{n + \frac{1}{2}}_{N}$ and $\zeta^{n + \frac{1}{2}}_{N}$ are updated in this subproblem, leaving the remaining variables unchanged:
\begin{equation*}
\boldsymbol{u}^{n + \frac{1}{2}}_{N} = \boldsymbol{u}^{n}_{N}, \qquad \boldsymbol{\eta}^{n + \frac{1}{2}}_{N} = \boldsymbol{\eta}^{n}_{N}, \qquad p^{n + \frac{1}{2}}_{N} = p^{n}_{N}.
\end{equation*}
The new plate displacement and velocity are calculated from the following weak formulation of the plate subproblems:
 find $\omega^{n + \frac{1}{2}}_{N} \in H_{0}^{2}(\Gamma)$ and $\zeta^{n + \frac{1}{2}}_{N} \in H_{0}^{2}(\Gamma)$, such that
\begin{equation}\label{plateweak1}
\int_{\Gamma} \left(\frac{\omega^{n + \frac{1}{2}}_{N} - \omega^{n - \frac{1}{2}}_{N}}{\Delta t}\right) \cdot \phi = \int_{\Gamma} \zeta^{n + \frac{1}{2}}_{N} \cdot \phi, \qquad \text{ for all } \phi \in L^{2}(\Gamma),
\end{equation}
\begin{equation}\label{plateweak2}
\rho_{p} \int_{\Gamma} \left(\frac{\zeta^{n + \frac{1}{2}}_{N} - \zeta^{n}_{N}}{\Delta t}\right) \cdot \varphi + \int_{\Gamma} \Delta \omega^{n + \frac{1}{2}}_{N} \cdot \Delta \varphi = 0, \qquad \text{ for all } \varphi \in H_{0}^{2}(\Gamma).
\end{equation}
When $n = 0$, we set $\omega^{-\frac{1}{2}}_{N} = \omega(0)$ and $\zeta^{0}_{N} = \zeta(0)$. In particular, $\omega(0) \boldsymbol{e}_{y} = \boldsymbol{\eta}(0)|_{\Gamma}$ and $\zeta(0) \boldsymbol{e}_{y} = \bd{\xi}(0)$.

\begin{lemma}
Problem \eqref{plateweak1}, \eqref{plateweak2} has a unique {{solution}} which satisfies the following energy equality:
\begin{align}\label{plateenergy}
&\frac{1}{2}\rho_{p} \int_{\Gamma} |\zeta^{n + \frac{1}{2}}_{N}|^{2} + \frac{1}{2}\rho_{p} \int_{\Gamma} |\zeta^{n + \frac{1}{2}}_{N} - \zeta^{n}_{N}|^{2} + \frac{1}{2} \int_{\Gamma} |\Delta \omega^{n + \frac{1}{2}}_{N}|^{2} + \frac{1}{2} \int_{\Gamma} |\Delta (\omega^{n + \frac{1}{2}}_{N} - \omega^{n - \frac{1}{2}}_{N})|^{2} 
\nonumber\\
&= \frac{1}{2}\rho_{p} \int_{\Gamma} |\zeta^{n - \frac{1}{2}}_{N}|^{2} + \frac{1}{2} \int_{\Gamma} |\Delta \omega^{n - \frac{1}{2}}_{N}|^{2}.
\end{align}
\end{lemma}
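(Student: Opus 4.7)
The plan is to decouple the system \eqref{plateweak1}--\eqref{plateweak2} into a single elliptic problem for $\zeta^{n+\frac{1}{2}}_N$. Since \eqref{plateweak1} holds for every $\phi \in L^2(\Gamma)$, it is equivalent to the pointwise identity $\omega^{n+\frac{1}{2}}_N = \omega^{n-\frac{1}{2}}_N + \Delta t\, \zeta^{n+\frac{1}{2}}_N$, so $\omega^{n+\frac{1}{2}}_N$ automatically inherits the regularity of $\zeta^{n+\frac{1}{2}}_N$ (plus that of $\omega^{n-\frac{1}{2}}_N$, which is controlled inductively). Substituting this expression for $\omega^{n+\frac{1}{2}}_N$ into \eqref{plateweak2} yields a problem of the form: find $\zeta^{n+\frac{1}{2}}_N \in H_0^2(\Gamma)$ such that
\begin{equation*}
\rho_p \int_\Gamma \zeta^{n+\frac{1}{2}}_N \cdot \varphi + (\Delta t)^2 \int_\Gamma \Delta \zeta^{n+\frac{1}{2}}_N \cdot \Delta \varphi = \rho_p \int_\Gamma \zeta^n_N \cdot \varphi - \Delta t \int_\Gamma \Delta \omega^{n-\frac{1}{2}}_N \cdot \Delta \varphi,
\end{equation*}
for every $\varphi \in H_0^2(\Gamma)$.

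I would then verify the hypotheses of the Lax--Milgram lemma: the bilinear form on the left is symmetric, continuous on $H_0^2(\Gamma) \times H_0^2(\Gamma)$, and coercive on $H_0^2(\Gamma)$ because $\|\Delta \cdot\|_{L^2(\Gamma)}$ is an equivalent norm on $H_0^2(\Gamma)$ (by the Poincar\'e inequality applied twice, using the homogeneous Dirichlet trace). The right-hand side is a bounded linear functional on $H_0^2(\Gamma)$ provided $\zeta^n_N \in L^2(\Gamma)$ and $\omega^{n-\frac{1}{2}}_N \in H_0^2(\Gamma)$, both of which hold inductively from the initial data assumptions and the regularity in the definition of $\mathcal{V}_\omega$. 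This produces a unique $\zeta^{n+\frac{1}{2}}_N$, and then $\omega^{n+\frac{1}{2}}_N$ is uniquely recovered from \eqref{plateweak1}.

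For the energy identity, I would test \eqref{plateweak2} with $\varphi = \zeta^{n+\frac{1}{2}}_N \in H_0^2(\Gamma)$ and use \eqref{plateweak1} (applied with $\phi = \Delta^2 \omega^{n+\frac{1}{2}}_N$ justified by density, or more directly by the pointwise identity above) to rewrite $\Delta \zeta^{n+\frac{1}{2}}_N = (\Delta \omega^{n+\frac{1}{2}}_N - \Delta \omega^{n-\frac{1}{2}}_N)/\Delta t$. The key algebraic tool is the discrete product identity $(a - b)\cdot a = \tfrac{1}{2}|a|^2 - \tfrac{1}{2}|b|^2 + \tfrac{1}{2}|a-b|^2$, applied once to the kinetic pairing with $a = \zeta^{n+\frac{1}{2}}_N$, $b = \zeta^n_N$, and once to the elastic pairing with $a = \Delta \omega^{n+\frac{1}{2}}_N$, $b = \Delta \omega^{n-\frac{1}{2}}_N$. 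After multiplying through by $\Delta t$ and rearranging, the desired equality falls out directly, including the numerical dissipation terms $\tfrac{1}{2}\rho_p \|\zeta^{n+\frac{1}{2}}_N - \zeta^n_N\|^2$ and $\tfrac{1}{2}\|\Delta(\omega^{n+\frac{1}{2}}_N - \omega^{n-\frac{1}{2}}_N)\|^2$ that characterize implicit Euler schemes.

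There is no real obstacle in this proof; the only minor subtlety is legitimizing the choice of test function in \eqref{plateweak1} to extract the needed pointwise relation between $\zeta^{n+\frac{1}{2}}_N$ and the discrete time difference of $\omega$, which follows from the arbitrariness of $\phi \in L^2(\Gamma)$ and the fact that both sides are $L^2$ functions. Everything else is standard semidiscrete energy bookkeeping for a conservative linear plate equation.
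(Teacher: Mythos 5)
Your proposal is correct and follows essentially the same route as the paper: decouple via \eqref{plateweak1}, reduce to a single Lax--Milgram problem, then test with $\zeta^{n+\frac{1}{2}}_N$ and apply the discrete identity $(a-b)\cdot a = \tfrac{1}{2}(|a|^2 + |a-b|^2 - |b|^2)$ to both pairings. The only cosmetic difference is that you eliminate $\omega^{n+\frac{1}{2}}_N$ and solve the Lax--Milgram problem for $\zeta^{n+\frac{1}{2}}_N$, while the paper eliminates $\zeta^{n+\frac{1}{2}}_N$ and solves for $\omega^{n+\frac{1}{2}}_N$; the two are related by the affine change of unknown given by \eqref{plateweak1} and are entirely equivalent.
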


\begin{proof}
To prove this, we first notice that {{\eqref{plateweak1} immediately implies that}}
\begin{equation}\label{zeta}
\zeta^{n + \frac{1}{2}}_{N} = \frac{\omega^{n + \frac{1}{2}}_{N} - \omega^{n - \frac{1}{2}}_{N}}{\Delta t}
\end{equation}
{{so that, by substituting into \eqref{plateweak2}, it suffices to find $\omega^{n + \frac{1}{2}}_{N} \in H_{0}^{2}(\Gamma)$ which satisfies:}}
\begin{equation*}
\rho_{p} \int_{\Gamma} \omega^{n + \frac{1}{2}}_{N} \cdot \varphi + (\Delta t)^{2} \int_{\Gamma} \Delta \omega^{n + \frac{1}{2}}_{N} \cdot \Delta \varphi = \rho_{p} \int_{\Gamma} (\omega^{n - \frac{1}{2}}_{N} + (\Delta t) \zeta^{n}_{N}) \cdot \varphi, \qquad \text{ for all } \varphi \in H_{0}^{2}(\Gamma).
\end{equation*}
The bilinear form
\begin{equation*}
B[\omega, \varphi] = \rho_{p} \int_{\Gamma} \omega \cdot \varphi + (\Delta t)^{2} \int_{\Gamma} \Delta \omega \cdot \Delta \varphi
\end{equation*}
is coercive on $H^{2}_{0}(\Gamma)$, and 
\begin{equation*}
\varphi \to \rho_{p} \int_{\Gamma} \left(\omega^{n - \frac{1}{2}}_{N} + (\Delta t) \zeta^{n}_{N}\right) \cdot \varphi
\end{equation*}
is a continuous linear functional on $H^{2}_{0}(\Gamma)$, since we will have $\omega^{n - \frac{1}{2}}_{N} \in H_{0}^{2}(\Gamma)$ and $\zeta^{n}_{N} \in L^{2}(\Gamma)$ by the way our splitting scheme is defined. Thus, by the Lax-Milgram lemma, there exists a unique solution $\omega^{n + \frac{1}{2}}_{N} \in H_{0}^{2}(\Gamma)$, from which we also recover {{$\zeta^{n + \frac{1}{2}}_{N} \in H^{2}_{0}(\Gamma)$}} using \eqref{zeta} above. 

The energy equality above follows by substituting $\varphi = \zeta^{n + \frac{1}{2}}_{N} = \frac{\omega^{n + \frac{1}{2}}_{N} - \omega^{n - \frac{1}{2}}_{N}}{\Delta t} \in H_{0}^{2}(\Gamma)$ into the weak formulation and using  the identity
\begin{equation*}
(a - b) \cdot a = \frac{1}{2}(|a|^{2} + |a - b|^{2} - |b|^{2}).
\end{equation*}
\end{proof}

\subsection{The fluid and Biot subproblem} For the fluid and Biot subproblem, we update the quantities related to the fluid and the Biot medium. Due to the kinematic coupling between the Biot medium displacement and the plate displacement, we must also update the plate velocity, as the dynamics of the Biot medium affect the kinematics of the plate. In this step, only the plate displacement remains unchanged:
\begin{equation*}
\omega^{n + 1}_{N} = \omega^{n + \frac{1}{2}}_{N}.
\end{equation*}
%We will use $\omega^{n}_{N}$ calculated from the previous step for the ALE mapping for the fluid.

To state the weak formulation of the fluid and Biot subproblem, we define the solution and test spaces, respectively:
\begin{eqnarray}\label{semisolnspace}
\mathcal{V}^{n + 1}_{N} &=& \{(\bd{u}, \zeta, \bd{\eta}, p) \in \mathcal{V}^{\omega^{n}_{N}}_{f} \times H_{0}^{2}(\Gamma) \times V_{d} \times V_{p}\},
\\
%\end{equation}
%\begin{equation}
\label{semitestspace}
\mathcal{Q}^{n + 1}_{N} &=& \{(\boldsymbol{v}, \varphi, \boldsymbol{\psi}, r) \in V^{\omega^{n}_{N}}_{f} \times H_{0}^{2}(\Gamma) \times V_{d} \times V_{p} : \boldsymbol{\psi} = \varphi \boldsymbol{e}_{y} \text{ on } \Gamma\},
\end{eqnarray}
where $V^{\omega}_{f}$, $V_{d}$, and $V_{p}$ are defined in \eqref{Vf}, \eqref{Vd}, and \eqref{Vp}. 

The weak formulation now reads: find  $(\boldsymbol{u}^{n + 1}_{N}, \zeta^{n + 1}_{N}, \boldsymbol{\eta}^{n + 1}_{N}, p^{n + 1}_{N}) \in \mathcal{V}^{n + 1}_{N}$ defined on the reference domain, such that for all test functions $(\boldsymbol{v}, \varphi, \boldsymbol{\psi}, r) \in \mathcal{Q}^{n + 1}_{N}$ defined on the reference domain, the following holds:

\begin{equation}\label{weakBiot1}
\begin{aligned}
&\int_{\Omega_{f}} \left(1 + \frac{\omega^{n}_{N}}{R}\right) \boldsymbol{\dot{u}}^{n + 1}_{N} \cdot \boldsymbol{v} 
+ 2\nu \int_{\Omega_{f}} \left(1 + \frac{\omega^{n}_{N}}{R}\right) \boldsymbol{D}^{\omega^{n}_{N}}_{f}(\boldsymbol{u}^{n + 1}_{N}) : \boldsymbol{D}^{\omega^{n}_{N}}_{f}(\boldsymbol{v}) + \int_{\Gamma} \left(\frac{1}{2}\boldsymbol{u}^{n + 1}_{N} \cdot \boldsymbol{u}^{n}_{N} - p^{n + 1}_{N}\right)(\boldsymbol{\psi} - \boldsymbol{v})\cdot \boldsymbol{n}^{\omega^{n}_{N}} 
\\
&+ \frac{1}{2} \int_{\Omega_{f}} \left(1 + \frac{\omega^{n}_{N}}{R}\right) \left[\left(\left(\boldsymbol{u}^{n}_{N} - \zeta^{n + \frac{1}{2}}_{N} \frac{R + y}{R} \boldsymbol{e}_{y}\right) \cdot \nabla^{\omega^{n}_{N}}_{f} \boldsymbol{u}^{n + 1}_{N}\right) \cdot \boldsymbol{v} - \left(\left(\boldsymbol{u}^{n}_{N} - \zeta^{n + \frac{1}{2}}_{N} \frac{R + y}{R} \boldsymbol{e}_{y}\right) \cdot \nabla^{\omega^{n}_{N}}_{f} \boldsymbol{v}\right) \cdot \boldsymbol{u}^{n + 1}_{N}\right] 
\\
&+ \frac{1}{2R} \int_{\Omega_{f}} \zeta^{n + \frac{1}{2}}_{N} \boldsymbol{u}^{n + 1}_{N} \cdot \boldsymbol{v} + \frac{1}{2} \int_{\Gamma} (\boldsymbol{u}^{n + 1}_{N} - \boldsymbol{\dot{\eta}}^{n + 1}_{N}) \cdot \boldsymbol{n}^{\omega^{n}_{N}} (\boldsymbol{u}^{n}_{N} \cdot \boldsymbol{v}) 
\\
&+ \frac{\beta}{\mathcal{J}^{\omega^{n}_{N}}_{\Gamma}} \int_{\Gamma} (\boldsymbol{\dot{\eta}}^{n + 1}_{N} - \boldsymbol{u}^{n + 1}_{N}) \cdot \boldsymbol{\tau}^{\omega^{n}_{N}} (\boldsymbol{\psi} - \boldsymbol{v}) \cdot \boldsymbol{\tau}^{\omega^{n}_{N}} + \rho_{b} \int_{\Omega_{b}} \left(\frac{\boldsymbol{\dot{\eta}}^{n + 1}_{N} - \boldsymbol{\dot{\eta}}^{n}_{N}}{\Delta t}\right) \cdot \boldsymbol{\psi} 
\\
&+ \rho_{p} \int_{\Gamma} \left(\frac{\zeta^{n + 1}_{N} - \zeta^{n + \frac{1}{2}}_{N}}{\Delta t}\right) \varphi + 2\mu_{e} \int_{\Omega_{b}} \boldsymbol{D}(\boldsymbol{\eta}^{n + 1}_{N}) : \boldsymbol{D}(\boldsymbol{\psi}) + \lambda_{e} \int_{\Omega_{b}} (\nabla \cdot \boldsymbol{\eta}^{n + 1}_{N})(\nabla \cdot \boldsymbol{\psi}) 
\\
&+ 2\mu_{v} \int_{\Omega_{b}} \bd{D}(\dot{\bd{\eta}}^{n + 1}_{N}) : \bd{D}(\bd{\psi}) + \lambda_{v} \int_{\Omega_{b}} (\nabla \cdot \dot{\bd{\eta}}^{n + 1}_{N}) (\nabla \cdot \bd{\psi}) - \alpha \int_{\Omega_{b}} \mathcal{J}^{(\eta^{n}_{N})^{\delta}}_{b} p^{n + 1}_{N} \nabla^{(\eta^{n}_{N})^{\delta}}_{b} \cdot \boldsymbol{\psi} 
\\
&+ c_{0}  \int_{\Omega_{b}} \frac{p^{n + 1}_{N} - p^{n}_{N}}{\Delta t} r - \alpha \int_{\Omega_{b}} \mathcal{J}^{(\eta^{n}_{N})^{\delta}}_{b} \boldsymbol{\dot{\eta}}^{n + 1}_{N} \cdot \nabla^{(\eta^{n}_{N})^{\delta}}_{b} r - \alpha \int_{\Gamma} (\boldsymbol{\dot{\eta}}^{n + 1}_{N} \cdot \boldsymbol{n}^{(\omega^{n}_{N})^{\delta}}) r 
\\
&+ \kappa \int_{\Omega_{b}} \mathcal{J}^{(\eta^{n}_{N})^{\delta}}_{b} \nabla^{(\eta^{n}_{N})^{\delta}}_{b} p^{n + 1}_{N} \cdot \nabla^{(\eta^{n}_{N})^{\delta}}_{b} r - \int_{\Gamma} [(\boldsymbol{u}^{n + 1}_{N} - \boldsymbol{\dot{\eta}}^{n + 1}_{N}) \cdot \boldsymbol{n}^{\omega^{n}_{N}}]r = 0,
\end{aligned}
\end{equation}
and
\begin{equation}\label{weakBiot2}
\int_{\Gamma} \left(\frac{\boldsymbol{\eta}^{n + 1}_{N} - \boldsymbol{\eta}^{n}_{N}}{\Delta t} \right) \cdot \boldsymbol{\phi} = \int_{\Gamma} \zeta^{n + 1}_{N} \bd{e}_{y} \cdot \boldsymbol{\phi}, \qquad \text{ for all } \boldsymbol{\phi} \in L^{2}(\Gamma).
\end{equation}
{{We remark that when $n = 0$, the backwards Euler discretization \eqref{dotdiscretization} involves a negative subscript in the definition of $\dot{\bd{\eta}}^{0}_{N}$, so in this case, we will instead set $\dot{\bd{\eta}}^{0}_{N}$ to be the initial plate velocity $\zeta_{0}$.}}

\begin{lemma}\label{Subproblem2}
Problem \eqref{weakBiot1}, \eqref{weakBiot2}  has a unique solution provided that the following assumptions hold:
\begin{enumerate}
\item {\sc{Assumption 1A}}: \textit{Boundedness of the plate displacement away from $R$.} There exists a positive constant $R_{max}$ such that 
\begin{equation}\label{1A}
|\omega^{k + \frac{i}{2}}_{N}| \le R_{max} < R, \qquad \text{ for all } k = 0, 1, ..., n \text{ and } i = 0, 1.
\end{equation}
\item {\sc{Assumption 2A:}} \textit{Invertibility of the map from fixed to moving Biot domain.} The map
\begin{equation}\label{2A}
\text{Id} + (\bd{\eta}^{n}_{N})^{\delta}: \Omega_{b} \to (\Omega_{b})^{n, \delta}_{N} \qquad \text{ is invertible},
\end{equation}
where we define $(\Omega_{b})^{n, \delta}_{N}$ to be the image of $\Omega_{b}$ under the map $\text{Id} + (\bd{\eta}^{n}_{N})^{\delta}$. 
\end{enumerate}
Additionally, the weak solution satisfies the following energy equality:
\begin{align*}
&\frac{1}{2} \int_{\Omega_{f}} \left(1 + \frac{\omega^{n + 1}_{N}}{R}\right) |\boldsymbol{u}^{n + 1}_{N}|^{2} + \frac{1}{2} \rho_{b} \int_{\Omega_{b}} |\dot{\bd{\eta}}^{n + 1}_{N}|^{2} + \frac{1}{2} c_{0} \int_{\Omega_{b}} |p^{n + 1}_{N}|^{2} + \mu_{e} \int_{\Omega_{b}} |\bd{D}(\bd{\eta}^{n + 1}_{N})|^{2} + \frac{1}{2} \lambda_{e} \int_{\Omega_{b}} |\nabla \cdot \bd{\eta}^{n + 1}_{N}|^{2} \\
&+ \frac{1}{2} \rho_{p} \int_{\Gamma} |\zeta^{n + 1}_{N}|^{2} + 2\mu_{v}(\Delta t) \int_{\Omega_{b}} |\bd{D}(\dot{\bd{\eta}}^{n + 1}_{N})|^{2} + \lambda_{v}(\Delta t) \int_{\Omega_{b}} |\nabla \cdot \dot{\bd{\eta}}^{n + 1}_{N}|^{2} + \kappa (\Delta t) \int_{\Omega_{b}} \mathcal{J}^{(\eta^{n}_{N})^{\delta}}_{b} |\nabla^{(\eta^{n}_{N})^{\delta}}_{b} p^{n + 1}_{N}|^{2} \\
&+ \frac{\beta (\Delta t)}{\mathcal{J}^{\omega^{n}_{N}}_{\Gamma}} \int_{\Gamma} |(\boldsymbol{\dot{\eta}}^{n + 1}_{N} - \boldsymbol{u}^{n + 1}_{N}) \cdot \boldsymbol{\tau}^{\omega^{n}_{N}}|^{2} + \frac{1}{2} \rho_{b} \int_{\Omega_{b}} |\bd{\dot{\eta}}^{n + 1}_{N} - \bd{\dot{\eta}}^{n}_{N}|^{2} + \frac{1}{2} c_{0} \int_{\Omega_{b}} |p^{n + 1}_{N} - p^{n}_{N}|^{2} + \mu_{e} \int_{\Omega_{b}} |\bd{D}(\bd{\eta}^{n + 1}_{N} - \bd{\eta}^{n}_{N})|^{2} \\
&+ \frac{1}{2} \lambda_{e} \int_{\Omega_{b}} |\nabla \cdot (\bd{\eta}^{n + 1}_{N} - \bd{\eta}^{n}_{N})|^{2} = \frac{1}{2} \int_{\Omega_{f}} \left(1 + \frac{\omega^{n}_{N}}{R}\right) |\bd{u}^{n}_{N}|^{2} + \frac{1}{2} \rho_{b} \int_{\Omega_{b}} |\dot{\bd{\eta}}^{n}_{N}|^{2} + \frac{1}{2} c_{0} \int_{\Omega_{b}} |p^{n}_{N}|^{2} + \mu_{e} \int_{\Omega_{b}} |\bd{D}(\bd{\eta}^{n}_{N})|^{2} \\
&+ \frac{1}{2} \lambda_{e} \int_{\Omega_{b}} |\nabla \cdot \bd{\eta}^{n}_{N}|^{2} + \frac{1}{2} \rho_{p} \int_{\Gamma} |\zeta^{n + \frac{1}{2}}_{N}|^{2}.
\end{align*}
\end{lemma}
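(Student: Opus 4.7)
The plan is to reduce the coupled subproblem \eqref{weakBiot1}--\eqref{weakBiot2} to a single linear variational equation on the product Hilbert space $\mathcal{H} := V_f^{\omega_N^n} \times V_d \times V_p$, apply Lax--Milgram for existence and uniqueness, and then recover the energy equality by testing with the solution itself.

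For the reduction, \eqref{weakBiot2} with arbitrary $\bd\phi\in L^2(\Gamma)$ forces the pointwise identity
\begin{equation*}
\zeta_N^{n+1}\bd e_y = \frac{\bd\eta_N^{n+1} - \bd\eta_N^n}{\Delta t}\quad\text{on }\Gamma,
\end{equation*}
making $\zeta_N^{n+1}$ a continuous linear functional of $\bd\eta_N^{n+1}|_\Gamma$, and the test-space constraint $\bd\psi = \varphi\bd e_y$ on $\Gamma$ identifies $\varphi$ with $\bd\psi|_\Gamma\cdot\bd e_y$. After this elimination, \eqref{weakBiot1} takes the form $a(\bd U,\bd V) = L(\bd V)$ with $\bd U = (\bd u_N^{n+1},\bd\eta_N^{n+1},p_N^{n+1})$ and $\bd V = (\bd v,\bd\psi,r)$ ranging over $\mathcal{H}$. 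Continuity of both forms is a routine estimate once Assumption 1A bounds $1 + \omega_N^n/R$ above and away from zero, and Assumption 2A provides pointwise bounds on the smooth multipliers $\mathcal{J}_b^{(\eta_N^n)^\delta}$ and $(\bd I + \nabla(\bd\eta_N^n)^\delta)^{-1}$.

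The heart of the argument is coercivity, which I would establish by testing on the diagonal $\bd V = (\bd u_N^{n+1}, \dot{\bd\eta}_N^{n+1}, p_N^{n+1})$ and observing three successive cancellations. First, the antisymmetrized fluid convection vanishes on the diagonal by construction. Second, the three $\alpha$-coupling terms (pressure, displacement-velocity divergence, and interface flux through $\Gamma$) combine, after applying the Piola-type identity
\begin{equation*}
\int_{\Omega_b}\mathcal{J}_b^{\eta^\delta} f\,\nabla_b^{\eta^\delta}\cdot \bd g = \int_{(\Omega_b)^\delta}(f\circ(\bd\Phi_b^{\eta^\delta})^{-1})\,\nabla\cdot(\bd g\circ(\bd\Phi_b^{\eta^\delta})^{-1}),
\end{equation*}
to $-\alpha\int_{(\Omega_b)_N^{n,\delta}}\nabla\cdot(p_N^{n+1}\dot{\bd\eta}_N^{n+1}) - \alpha\int_{\Gamma_N^{n,\delta}}(\dot{\bd\eta}_N^{n+1}\cdot\bd n) p_N^{n+1}$, which vanishes by the divergence theorem. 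This cancellation is precisely why Assumption 2A is needed: without injectivity of $\text{Id} + (\bd\eta_N^n)^\delta$ the physical regularized Biot domain $(\Omega_b)_N^{n,\delta}$ is not well defined and the Piola identity cannot be applied. Third, the fluid inertia $\int_{\Omega_f}(1 + \omega_N^n/R)\dot{\bd u}_N^{n+1}\cdot\bd u_N^{n+1}$ together with the term $\tfrac{1}{2R}\int_{\Omega_f}\zeta_N^{n+1/2}|\bd u_N^{n+1}|^2$ and the interface integral $\tfrac12\int_\Gamma(\bd u_N^{n+1} - \dot{\bd\eta}_N^{n+1})\cdot\bd n^{\omega_N^n}(\bd u_N^n\cdot\bd u_N^{n+1})$ reassemble, via the backward-Euler polarization $(a - b)\cdot a = \tfrac12(|a|^2 + |a - b|^2 - |b|^2)$ and the identity $\omega_N^{n+1} = \omega_N^n + \Delta t\,\zeta_N^{n+1/2}$ inherited from the plate step, into $\tfrac12\int_{\Omega_f}(1 + \omega_N^{n+1}/R)|\bd u_N^{n+1}|^2 - \tfrac12\int_{\Omega_f}(1 + \omega_N^n/R)|\bd u_N^n|^2$ plus a nonnegative numerical-dissipation square. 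What remains on the diagonal is a sum of manifestly nonnegative quadratic terms controlling $\|\bd u_N^{n+1}\|_{H^1(\Omega_f)}^2$ (Korn plus Assumption 1A), $\|\bd\eta_N^{n+1}\|_{V_d}^2$ (Korn on $V_d$), $\|p_N^{n+1}\|_{L^2}^2$, and $\|\nabla_b^{(\eta_N^n)^\delta} p_N^{n+1}\|_{L^2}^2$ (which upgrades to $\|p_N^{n+1}\|_{V_p}^2$ by Poincar\'e, using $p = 0$ on three sides of $\partial\Omega_b$), giving the required coercivity on $\mathcal{H}$.

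Once Lax--Milgram is applied, the energy equality is read off from exactly the same diagonal test by polarizing each backward-Euler difference; in particular the plate term $\rho_p\int_\Gamma((\zeta_N^{n+1} - \zeta_N^{n+1/2})/\Delta t)\,\varphi$ with $\varphi = \zeta_N^{n+1}$ (forced by $\bd\psi|_\Gamma = \dot{\bd\eta}_N^{n+1}|_\Gamma = \zeta_N^{n+1}\bd e_y$) polarizes into the plate kinetic-energy differences on $\Gamma$. I expect the main obstacle to be the second cancellation: the $\alpha$-coupling identity, while structurally just an ``integration by parts,'' crucially rests on Assumption 2A to legitimize the pushforward of reference-domain integrals to the regularized moving Biot domain, and it is the single place where injectivity of $\text{Id} + (\bd\eta_N^n)^\delta$ and the smoothness of the convolution-regularized displacement play a truly essential role.
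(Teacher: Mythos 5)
Your overall architecture matches the paper's: eliminate $\zeta^{n+1}_{N}$ via \eqref{weakBiot2}, set up a linear variational problem for $(\bd{u}^{n+1}_{N},\bd{\eta}^{n+1}_{N},p^{n+1}_{N})$ on $\mathcal{V}^{\omega^{n}_{N}}_{f}\times V_{d}\times V_{p}$, use the Piola/pushforward identity (legitimized by Assumption 2A) to cancel the three $\alpha$-terms, invoke Korn on $V_{d}$ and the pointwise bounds on $\mathcal{J}^{(\eta^{n}_{N})^{\delta}}_{b}$ and $(\bd{I}+\nabla(\bd{\eta}^{n}_{N})^{\delta})^{-1}$ coming from continuity of the regularized displacement on $\overline{\Omega_{b}}$, apply Lax--Milgram, and then derive the energy equality by testing with $(\bd{u}^{n+1}_{N},\zeta^{n+1}_{N},\dot{\bd{\eta}}^{n+1}_{N},p^{n+1}_{N})$ and polarizing with $(a-b)\cdot a=\tfrac12(|a|^{2}+|a-b|^{2}-|b|^{2})$, using $\omega^{n+1}_{N}=\omega^{n}_{N}+(\Delta t)\zeta^{n+\frac12}_{N}$.

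There is, however, one genuine gap, and it sits exactly at the point the paper flags as the main difficulty: the hyperbolic/parabolic scaling mismatch. You declare the unknown to be $\bd{U}=(\bd{u}^{n+1}_{N},\bd{\eta}^{n+1}_{N},p^{n+1}_{N})$ but claim to establish coercivity ``by testing on the diagonal $\bd{V}=(\bd{u}^{n+1}_{N},\dot{\bd{\eta}}^{n+1}_{N},p^{n+1}_{N})$.'' That is not the diagonal of the bilinear form with your choice of unknown ($\dot{\bd{\eta}}^{n+1}_{N}$ is an affine function of $\bd{\eta}^{n+1}_{N}$ involving data and a factor $1/\Delta t$), so the computation you describe is the a priori energy identity for the eventual solution, not the estimate $a(\bd{U},\bd{U})\ge c\|\bd{U}\|^{2}$ for arbitrary $\bd{U}$ that Lax--Milgram requires; your third ``cancellation'' (reassembling fluid inertia, the $\zeta^{n+\frac12}_{N}$ term and the interface convection into a kinetic-energy difference) likewise belongs to the energy equality, since in the coercivity computation $\bd{u}^{n}_{N}$, $\omega^{n}_{N}$, $\zeta^{n+\frac12}_{N}$ are data. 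If you test the \emph{unrescaled} form on its true diagonal $(\bd{v},\bd{\psi},r)=(\bd{u},\bd{\eta},p)$, the interface cross terms do not cancel and are indefinite: e.g.\ the Beavers--Joseph--Saffman term produces $\frac{\beta}{\mathcal{J}^{\omega^{n}_{N}}_{\Gamma}}\int_{\Gamma}\bigl(\tfrac{\bd{\eta}}{\Delta t}-\bd{u}\bigr)\cdot\bd{\tau}\,(\bd{\eta}-\bd{u})\cdot\bd{\tau}$, a quadratic form in $(\bd{\eta}\cdot\bd{\tau},\bd{u}\cdot\bd{\tau})$ with nonnegative discriminant for $\Delta t\neq1$, hence not positive semidefinite, and it cannot be absorbed unconditionally. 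The paper repairs exactly this by rescaling the fluid-velocity and pressure test functions, $\bd{v}\to(\Delta t)\bd{v}$, $r\to(\Delta t)r$, so that on the genuine diagonal the BJS term becomes the perfect square $|(\bd{\eta}-(\Delta t)\bd{u})\cdot\bd{\tau}^{\omega^{n}_{N}}|^{2}$ and the remaining interface terms cancel; an equivalent fix is to change the structure unknown to the velocity $\dot{\bd{\eta}}^{n+1}_{N}$ (substituting $\bd{\eta}^{n+1}_{N}=\bd{\eta}^{n}_{N}+(\Delta t)\dot{\bd{\eta}}^{n+1}_{N}$), in which case your ``diagonal'' really is the diagonal. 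Without one of these reformulations your coercivity claim does not follow as written, even though every cancellation you list is the correct computation once the setup is adjusted.
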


The proof is based on using the Lax-Milgram Lemma. However, in this case the proof is more involved for two reasons. First, the bilinear form
associated with problem \eqref{weakBiot1} and \eqref{weakBiot2} is not coercive  on the Hilbert space 
$
\mathcal{V}^{\omega^{n}_{N}}_{f} \times V_{d} \times V_{p},
$ because of a mismatch between the hyperbolic and parabolic 
scaling in the problem.  The second reason is that it is not {\emph{a priori}} clear that Korn's inequality,
which is needed in the proof of the existence, holds for the Biot domain.
To deal with the first difficulty and recover the coercive structure of the problem, the test functions can be rescaled {{by the factor $\Delta t$}} so that
\begin{equation}\label{rescalling}
\boldsymbol{v} \to (\Delta t)\boldsymbol{v}, \qquad r \to (\Delta t)r.
\end{equation}
This scaling of the test functions is valid because if $(\boldsymbol{v}, \varphi, \boldsymbol{\psi}, v) \in \mathcal{Q}^{n + 1}_{N}$, then the rescaled test function satisfies $((\Delta t)^{-1}\boldsymbol{v}, \varphi, \boldsymbol{\psi}, (\Delta t)^{-1}r) \in \mathcal{Q}^{n + 1}_{N}$ also. 
To deal with the second difficulty, one can show by explicit calculation that the following Korn's inequality holds for this {{problem. We refer the reader to Section 11.2 and Corollary 11.2.22 in \cite{Brenner} for a more general proof of the Korn inequality.}}
\begin{proposition}\label{Korn}{\sc{Korn's inequality for the Biot poroviscoelastic domain.}}
For all $\bd{\eta} \in V_{d}$,
\begin{equation*}
\int_{\Omega_{b}} |\bd{D}(\bd{\eta})|^{2} \ge \frac{1}{2} \int_{\Omega_{b}} |\nabla \bd{\eta}|^{2}.
\end{equation*}
\end{proposition}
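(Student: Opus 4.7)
The plan is to reduce the inequality to showing $\int_{\Omega_b} \nabla\bd{\eta} : (\nabla\bd{\eta})^{t} \ge 0$ for $\bd{\eta}\in V_d$, and to obtain this by exploiting a null Lagrangian identity together with the boundary conditions built into $V_d$. The elementary pointwise identity
\begin{equation*}
|\bd{D}(\bd{\eta})|^{2} = \tfrac{1}{2}|\nabla\bd{\eta}|^{2} + \tfrac{1}{2}\,\nabla\bd{\eta}:(\nabla\bd{\eta})^{t}
\end{equation*}
reduces the claim to proving that the integral of the cross term is nonnegative, since then $\int_{\Omega_b}|\bd{D}(\bd\eta)|^2 \ge \tfrac{1}{2}\int_{\Omega_b}|\nabla\bd\eta|^2$.

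Writing out coordinates on $\Omega_b = (0,L)\times(0,R)$, the cross term expands as $\int_{\Omega_b}\bigl((\partial_x\eta_x)^{2} + 2(\partial_x\eta_y)(\partial_y\eta_x) + (\partial_y\eta_y)^{2}\bigr)$, so the whole game is to show that
\begin{equation*}
\int_{\Omega_b}(\partial_x\eta_y)(\partial_y\eta_x)\,dx\,dy \;=\; \int_{\Omega_b}(\partial_x\eta_x)(\partial_y\eta_y)\,dx\,dy,
\end{equation*}
because that would yield $\int_{\Omega_b}\nabla\bd\eta:(\nabla\bd\eta)^t = \int_{\Omega_b}(\nabla\cdot\bd\eta)^2 \ge 0$, finishing the proof. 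First I would establish this identity for $\bd\eta\in C^{\infty}(\overline{\Omega_b})^2$ satisfying the $V_d$ boundary conditions, using the null Lagrangian
\begin{equation*}
(\partial_x\eta_y)(\partial_y\eta_x) - (\partial_x\eta_x)(\partial_y\eta_y) \;=\; \partial_x(\eta_y\,\partial_y\eta_x) - \partial_y(\eta_y\,\partial_x\eta_x),
\end{equation*}
which is a direct computation. Applying the divergence theorem then leaves only boundary contributions to analyze, and the extension to general $\bd\eta\in V_d$ follows by a standard density argument since both sides are continuous bilinear forms in $\bd\eta$ on $H^{1}(\Omega_b)$.

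The main technical step, and the only place the particular geometry of $V_d$ enters, is checking that the boundary contribution vanishes on each of the four sides of $\partial\Omega_b$. On $\{\hat x=0\}$, $\{\hat x=L\}$, and $\{\hat y=R\}$ the full vector $\bd\eta$ vanishes, so both terms in the boundary integrand are zero there. The delicate side is the interface $\hat\Gamma = \{\hat y=0\}$, where only the tangential component $\eta_x$ vanishes; here the outward normal is $(0,-1)$, so the first term $(\eta_y\partial_y\eta_x)n_x$ is killed by $n_x=0$, while the second term $-(\eta_y\partial_x\eta_x)n_y$ is killed because $\eta_x\equiv 0$ along $\hat\Gamma$ forces the tangential derivative $\partial_x\eta_x$ to vanish on $\hat\Gamma$ as well. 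This is the only substantive point in the argument, and it is precisely why the condition $\hat\eta_x = 0$ on $\hat\Gamma$ (rather than $\hat\eta_y = 0$) was built into the definition of $V_d$. Once the boundary integral is seen to vanish, the identity above, combined with the pointwise decomposition of $|\bd D(\bd\eta)|^2$, delivers
\begin{equation*}
\int_{\Omega_b}|\bd{D}(\bd{\eta})|^{2} = \tfrac{1}{2}\int_{\Omega_b}|\nabla\bd{\eta}|^{2} + \tfrac{1}{2}\int_{\Omega_b}(\nabla\cdot\bd{\eta})^{2} \;\ge\; \tfrac{1}{2}\int_{\Omega_b}|\nabla\bd{\eta}|^{2},
\end{equation*}
which is the stated inequality.
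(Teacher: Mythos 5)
Your proof is correct and follows essentially the same route as the paper: after a density reduction, both arguments establish the identity $\int_{\Omega_b}\partial_y\eta_x\,\partial_x\eta_y=\int_{\Omega_b}\partial_x\eta_x\,\partial_y\eta_y$ by integration by parts, using exactly the boundary conditions built into $V_d$ (in particular $\eta_x=0$ on $\hat\Gamma$), and then conclude by elementary algebra. The only cosmetic difference is that the paper moves derivatives onto $\eta_x$ in two successive integrations by parts and finishes with $a^2+2ab+b^2\ge0$, whereas you package the cross term as a null-Lagrangian divergence and identify the residual integral with $\int_{\Omega_b}(\nabla\cdot\bd{\eta})^2\ge0$.
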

\begin{proof}
By a standard approximation argument, it suffices to assume that $\bd{\eta}$ is smooth. Because $\eta_{x} = 0$ on $\Gamma$ and because $\bd{\eta} = 0$ on the left, top, and right boundaries of $\Omega_{b}$, we have from integration by parts, that
\begin{equation*}
\int_{\Omega_{b}} \frac{\partial \eta_{x}}{\partial y} \frac{\partial \eta_{y}}{\partial x} = -\int_{\Omega_{b}} \eta_{x} \frac{\partial^{2} \eta_{y}}{\partial x \partial y} = \int_{\Omega_{b}} \frac{\partial \eta_{x}}{\partial x} \frac{\partial \eta_{y}}{\partial y}.
\end{equation*}
Therefore, by using the inequality $a^{2} + 2ab + b^{2} \ge 0$, we obtain
{{\begin{align*}
\int_{\Omega_{b}} |\bd{D}(\bd{\eta})|^{2} &= \int_{\Omega_{b}} \left(\frac{\partial \eta_{x}}{\partial x}\right)^{2} + \left(\frac{\partial \eta_{y}}{\partial y}\right)^{2} + 2\left[\frac{1}{2}\left(\frac{\partial \eta_{x}}{\partial y} + \frac{\partial \eta_{y}}{\partial x}\right)\right]^{2} \\
&= \int_{\Omega_{b}} \left(\frac{\partial \eta_{x}}{\partial x}\right)^{2} + \left(\frac{\partial \eta_{y}}{\partial y}\right)^{2} + \frac{1}{2}\left(\frac{\partial \eta_{x}}{\partial y} + \frac{\partial \eta_{y}}{\partial x}\right)^{2} \\
&= \int_{\Omega_{b}} \left(\frac{\partial \eta_{x}}{\partial x}\right)^{2} + \left(\frac{\partial \eta_{y}}{\partial y}\right)^{2} + \frac{1}{2}\left[\left(\frac{\partial \eta_{x}}{\partial y}\right)^{2} + \left(\frac{\partial \eta_{y}}{\partial x}\right)^{2}\right] + \frac{\partial\eta_{x}}{\partial y} \frac{\partial \eta_{y}}{\partial x} \\
&= \int_{\Omega_{b}} \left(\frac{\partial \eta_{x}}{\partial x}\right)^{2} + \frac{\partial\eta_{x}}{\partial x} \frac{\partial \eta_{y}}{\partial y} +\left(\frac{\partial \eta_{y}}{\partial y}\right)^{2} + \frac{1}{2}\left[\left(\frac{\partial \eta_{x}}{\partial y}\right)^{2} + \left(\frac{\partial \eta_{y}}{\partial x}\right)^{2}\right] \ge \frac{1}{2} \int_{\Omega_{b}} |\nabla \bd{\eta}|^{2}.
\end{align*}}}
\end{proof}

\begin{proof}{\bf{Proof of Lemma~\ref{Subproblem2}.}}
Rewrite the weak formulation \eqref{weakBiot1} and \eqref{weakBiot2} so that all of the functions at the {{$(n + 1)$st}} time step are on the left hand side while all other quantities are on the right hand side. In addition, we rewrite $\zeta^{n + 1}_{N}$ in terms of $\bd{\eta}^{n}_{N}$ and $\bd{\eta}^{n + 1}_{N}$ by using \eqref{weakBiot2}:
\begin{equation*}
\zeta^{n + 1}_{N} \bd{e}_{y} = \frac{\boldsymbol{\eta}^{n + 1}_{N} - \boldsymbol{\eta}^{n}_{N}}{\Delta t}\Big\vert_{\Gamma}.
\end{equation*}
After using the rescaling \eqref{rescalling} of the test functions, 
the weak formulation involves the following coercive and continuous bilinear form {{$B: H \times H \to \mathbb{R}$, where $H$ is the Hilbert space $\mathcal{V}^{\omega^{n}_{N}}_{f} \times V_{d} \times V_{p}$:}}
\begin{align*}
&{{B([\bd{u}, \bd{\eta}, p), (\bd{v}, \bd{\psi}, r)]}} := (\Delta t)^{2} \int_{\Omega_{f}} \left(1 + \frac{\omega^{n}_{N}}{R}\right) \boldsymbol{u} \cdot \boldsymbol{v} \\
&+ \frac{1}{2} (\Delta t)^{3} \int_{\Omega_{f}} \left(1 + \frac{\omega^{n}_{N}}{R}\right) \left[\left(\left(\boldsymbol{u}^{n}_{N} - \zeta^{n + \frac{1}{2}}_{N} \frac{R + y}{R} \boldsymbol{e}_{y}\right) \cdot \nabla^{\omega^{n}_{N}} \boldsymbol{u}\right) \cdot \boldsymbol{v} - \left(\left(\boldsymbol{u}^{n}_{N} - \zeta^{n + \frac{1}{2}}_{N} \frac{R + y}{R} \boldsymbol{e}_{y}\right) \cdot \nabla^{\omega^{n}_{N}} \boldsymbol{v}\right) \cdot \boldsymbol{u}\right] \\
&+ (\Delta t)^{3} \cdot \frac{1}{2R} \int_{\Omega_{f}} \zeta^{n + \frac{1}{2}}_{N} \boldsymbol{u} \cdot \boldsymbol{v} + \frac{1}{2} (\Delta t)^{3} \int_{\Gamma} (\boldsymbol{u} - (\Delta t)^{-1} \boldsymbol{\eta}) \cdot \boldsymbol{n}^{\omega^{n}_{N}} (\boldsymbol{u}^{n}_{N} \cdot \boldsymbol{v}) \\
&+ 2\nu (\Delta t)^{3} \int_{\Omega_{f}} \left(1 + \frac{\omega^{n}_{N}}{R}\right) \boldsymbol{D}^{\omega^{n}_{N}}_{f}(\boldsymbol{u}) : \boldsymbol{D}^{\omega^{n}_{N}}_{f}(\boldsymbol{v}) + (\Delta t)^{2} \int_{\Gamma} \left(\frac{1}{2}\boldsymbol{u} \cdot \boldsymbol{u}^{n}_{N} - p\right)(\boldsymbol{\psi} - (\Delta t) \boldsymbol{v})\cdot \boldsymbol{n}^{\omega^{n}_{N}} \\
&+ \frac{\beta}{\mathcal{J}^{\omega^{n}_{N}}_{\Gamma}} (\Delta t)^{2} \int_{\Gamma} [(\Delta t)^{-1} \boldsymbol{\eta} - \boldsymbol{u}] \cdot \boldsymbol{\tau}^{\omega^{n}_{N}} (\boldsymbol{\psi} - (\Delta t) \boldsymbol{v}) \cdot \boldsymbol{\tau}^{\omega^{n}_{N}} + \rho_{b} \int_{\Omega_{b}} \boldsymbol{\eta} \cdot \boldsymbol{\psi} + \rho_{p} \int_{\Gamma} \boldsymbol{\eta} \cdot \boldsymbol{\psi} \\
&+ (2\mu_{e} (\Delta t)^{2} + 2\mu_{v}(\Delta t)) \int_{\Omega_{b}} \boldsymbol{D}(\boldsymbol{\eta}) : \boldsymbol{D}(\boldsymbol{\psi}) + (\lambda_{e} (\Delta t)^{2} + \lambda_{v}(\Delta t)) \int_{\Omega_{b}} (\nabla \cdot \boldsymbol{\eta})(\nabla \cdot \boldsymbol{\psi}) \\
&- \alpha (\Delta t)^{2} \int_{\Omega_{b}} \mathcal{J}^{(\eta^{n}_{N})^{\delta}}_{b} p \nabla^{(\eta^{n}_{N})^{\delta}}_{b} \cdot \boldsymbol{\psi} + c_{0} (\Delta t)^{2} \int_{\Omega_{b}} p r - \alpha (\Delta t)^{2} \int_{\Omega_{b}} \mathcal{J}^{(\eta^{n}_{N})^{\delta}}_{b} \boldsymbol{\eta} \cdot \nabla^{(\eta^{n}_{N})^{\delta}}_{b} r \\
&- \alpha (\Delta t)^{2} \int_{\Gamma} (\boldsymbol{\eta} \cdot \boldsymbol{n}^{(\omega^{n}_{N})^{\delta}}) r + \kappa (\Delta t)^{3} \int_{\Omega_{b}} \mathcal{J}^{(\eta^{n}_{N})^{\delta}}_{b} \nabla^{(\eta^{n}_{N})^{\delta}}_{b} p \cdot \nabla^{(\eta^{n}_{N})^{\delta}}_{b} r \\
&- (\Delta t)^{3} \int_{\Gamma} [(\boldsymbol{u} - (\Delta t)^{-1} \boldsymbol{\eta})\cdot \boldsymbol{n}^{\omega^{n}_{N}}]r.
\end{align*}
With this notation, the weak formulation reads: find  $(\boldsymbol{u}^{n + 1}_{N},  \boldsymbol{\eta}^{n + 1}_{N}, p^{n + 1}_{N}) \in \mathcal{V}^{\omega^{n}_{N}}_{f} \times V_{d} \times V_{p}$ such that for all test functions $(\boldsymbol{v}, \boldsymbol{\psi}, r) \in \mathcal{V}^{\omega^{n}_{N}}_{f} \times V_{d} \times V_{p}$, 

\begin{multline}\label{BiotFluidLaxMilgram}
{{B[(\bd{u}^{n + 1}_{N}, \bd{\eta}^{n + 1}_{N}, p^{n + 1}_{N}), (\bd{v}, \bd{\psi}, r)]}} = (\Delta t)^{2} \int_{\Omega_{f}} \left(1 + \frac{\omega^{n}_{N}}{R}\right) \boldsymbol{u}^{n}_{N} \cdot \boldsymbol{v} - \frac{1}{2} (\Delta t)^{2} \int_{\Gamma} \boldsymbol{\eta}^{n}_{N} \cdot \boldsymbol{n}^{\omega^{n}_{N}} (\boldsymbol{u}^{n}_{N} \cdot \boldsymbol{v}) \\
+ \frac{\beta}{\mathcal{J}^{\omega^{n}_{N}}_{\Gamma}} (\Delta t) \int_{\Gamma} \boldsymbol{\eta}^{n}_{N} \cdot \boldsymbol{\tau}^{\omega^{n}_{N}} (\boldsymbol{\psi} - (\Delta t) \boldsymbol{v}) \cdot \boldsymbol{\tau}^{\omega^{n}_{N}} + \rho_{b} \int_{\Omega_{b}} (2\boldsymbol{\eta}^{n}_{N} - \boldsymbol{\eta}^{n - 1}_{N}) \cdot \boldsymbol{\psi} + \rho_{p} \int_{\Gamma} (\boldsymbol{\eta}^{n}_{N} + (\Delta t) \zeta^{n + \frac{1}{2}}_{N} \bd{e}_{y}) \cdot \boldsymbol{\psi} \\
+ 2\mu_{v}(\Delta t) \int_{\Omega_{b}} \bd{D}(\bd{\eta}^{n}_{N}) : \bd{D}(\bd{\psi}) + \lambda_{v} (\Delta t) \int_{\Omega_{b}} (\nabla \cdot \bd{\eta}^{n}_{N}) (\nabla \cdot \bd{\psi}) \\
+ c_{0} (\Delta t)^{2} \int_{\Omega_{b}} p^{n}_{N} r - \alpha (\Delta t)^{2} \int_{\Omega_{b}} \mathcal{J}^{(\eta^{n}_{N})^{\delta}}_{b} \boldsymbol{\eta}^{n}_{N} \cdot \nabla^{(\eta^{n}_{N})^{\delta}}_{b} r  - \alpha (\Delta t)^{2} \int_{\Gamma} (\boldsymbol{\eta}^{n}_{N} \cdot \boldsymbol{n}^{(\omega^{n}_{N})^{\delta}}) r + (\Delta t)^{2} \int_{\Gamma} (\bd{\eta}^{n}_{N} \cdot \bd{n}^{\omega^{n}_{N}})r.
\end{multline}

We now show that the bilinear form {{$B[(\bd{u}, \bd{\eta}, p), (\bd{v}, \bd{\psi}, r)]$}} is coercive and continuous as a bilinear form on the Hilbert space 
$
\mathcal{V}^{\omega^{n}_{N}}_{f} \times V_{d} \times V_{p},
$
with the inner product given by
\begin{equation*}
\langle (\bd{u}, \bd{\eta}, p), (\bd{v}, \bd{\psi}, r)\rangle = \int_{\Omega_{f}} (\bd{u} \cdot \bd{v} + \nabla \bd{u} : \nabla \bd{v}) + \int_{\Omega_{b}} (\bd{\eta} \cdot \bd{\psi} + \nabla \bd{\eta} : \nabla \bd{\psi}) + \int_{\Omega_{b}} (p \cdot r + \nabla p \cdot \nabla r).
\end{equation*}
We focus on establishing coercivity, since continuity follows by standard arguments. 
To show coercivity we calculate {{$B[(\boldsymbol{u}, \boldsymbol{v}, p), (\bd{u}, \boldsymbol{\eta}, p)]$}}.
In this calculation we note that after integration by parts, the sum of the following terms becomes zero:
\begin{equation*}
-\alpha(\Delta t)^{2} \int_{\Omega_{b}} \mathcal{J}^{(\eta^{n}_{N})^{\delta}}_{b} p \nabla^{(\eta^{n}_{N})^{\delta}}_{b} \cdot \boldsymbol{\eta} - \alpha(\Delta t)^{2} \int_{\Omega_{b}} \mathcal{J}^{(\eta^{n}_{N})^{\delta}}_{b} \boldsymbol{\eta} \cdot \nabla^{(\eta^{n}_{N})^{\delta}}_{b} p - \alpha (\Delta t)^{2} \int_{\Gamma} \left(\boldsymbol{\eta} \cdot \boldsymbol{n}^{(\omega^{n}_{N})^{\delta}}\right) p = 0.
\end{equation*}
Indeed, to see this, 
we bring the integrals back to the time-dependent physical domain, \textbf{which we can do as long as $(\bd{\eta}^{n}_{N})^{\delta}$ is a bijection from $\Omega_{b}$ to $(\Omega_{b})^{n, \delta}_{N}$}, which is provided by Assumption 2A \eqref{2A}, and perform the following computation:
\begin{align*}
&-\alpha(\Delta t)^{2} \int_{\Omega_{b}} \mathcal{J}^{(\eta^{n}_{N})^{\delta}}_{b} p \nabla^{(\eta^{n}_{N})^{\delta}}_{b} \cdot \boldsymbol{\eta} - \alpha(\Delta t)^{2} \int_{\Omega_{b}} \mathcal{J}^{(\eta^{n}_{N})^{\delta}}_{b} \boldsymbol{\eta} \cdot \nabla^{(\eta^{n}_{N})^{\delta}}_{b} p - \alpha (\Delta t)^{2} \int_{\Gamma} \left(\boldsymbol{\eta} \cdot \boldsymbol{n}^{(\omega^{n}_{N})^{\delta}}\right) p \\
&= -\alpha (\Delta t)^{2} \left(\int_{(\Omega_{b})^{n, \delta}_{N}} p \nabla \cdot \boldsymbol{\eta} + \int_{(\Omega_{b})^{n, \delta}_{N}} \boldsymbol{\eta} \cdot \nabla p + \int_{\Gamma^{n, \delta}_{N}} (\boldsymbol{\eta} \cdot \boldsymbol{n}) p\right) = 0,
\end{align*}
where we used integration by parts, the fact that $\boldsymbol{n}$ points outwards from $\Omega_{f}$ and hence inwards towards $\Omega_{b}$, and also use  
that $\bd{\eta} = 0$ on the left, right, and top boundaries of $\Omega_{b}$. Combining this with the fact that $(\Delta t) \zeta^{n + \frac{1}{2}}_{N} = \omega^{n + \frac{1}{2}}_{N} - \omega^{n - \frac{1}{2}}_{N} = \omega^{n + 1}_{N} - \omega^{n}_{N}$, we obtain
\begin{multline*}
{{B[(\boldsymbol{u}, \boldsymbol{\eta}, p), (\boldsymbol{u}, \bd{\eta}, p)]}} := (\Delta t)^{2} \int_{\Omega_{f}} \left(1 + \frac{\omega^{n}_{N} + \omega^{n + 1}_{N}}{2R}\right) |\boldsymbol{u}|^{2} + 2\nu (\Delta t)^{3} \int_{\Omega_{f}} \left(1 + \frac{\omega^{n}_{N}}{R}\right) \left|\boldsymbol{D}^{\omega^{n}_{N}}_{f} (\boldsymbol{u})\right|^{2} \\
+ \frac{\beta}{\mathcal{J}^{\omega^{n}_{N}}_{\Gamma}}(\Delta t) \int_{\Gamma} \left|(\boldsymbol{\eta} - (\Delta t)\boldsymbol{u}) \cdot \boldsymbol{\tau}^{\omega^{n}_{N}}\right|^{2} + \rho_{b} \int_{\Omega_{b}} |\boldsymbol{\eta}|^{2} + \rho_{p} \int_{\Gamma} |\bd{\eta}|^{2} + (2\mu_{e} (\Delta t)^{2} + 2\mu_{v}(\Delta t)) \int_{\Omega_{b}} |\boldsymbol{D}(\boldsymbol{\eta})|^{2} \\
+ (\lambda_{e} (\Delta t)^{2} + \lambda_{v} (\Delta t)) \int_{\Omega_{b}} \left|\nabla \cdot \boldsymbol{\eta}\right|^{2} + c_{0} (\Delta t)^{2} \int_{\Omega_{b}} |p|^{2} + \kappa (\Delta t)^{3} \int_{\Omega_{b}} \mathcal{J}^{(\eta^{n}_{N})^{\delta}}_{b} |\nabla^{(\eta^{n}_{N})^{\delta}}_{b} p|^{2}.
\end{multline*}

\if 1 = 0 %%%%%%%%%%%%%%%%
We must show that the symmetrized gradient can be expressed in terms of the usual gradient. It is well-known that one can do this for the fluid, and one even has a Korn \textit{equality} in this case. However, this is not immediate for the Biot material. We thus prove the following explicit Korn inequality for the Biot domain. 
\begin{proposition}[Korn inequality for the Biot poroviscoelastic domain]\label{Korn}
For all $\bd{\eta} \in V_{d}$,
\begin{equation*}
\int_{\Omega_{b}} |\bd{D}(\bd{\eta})|^{2} \ge \frac{1}{2} \int_{\Omega_{b}} |\nabla \bd{\eta}|^{2}.
\end{equation*}
\end{proposition}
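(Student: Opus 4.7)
The plan is to reduce the inequality to a pointwise algebraic estimate plus an integration-by-parts identity that exploits the specific boundary conditions built into $V_{d}$. By density, I would first assume $\bd{\eta}$ is smooth on $\overline{\Omega}_{b}$ while still satisfying the $V_{d}$ boundary conditions, and then pass to the limit at the end.

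The starting point is the pointwise expansion
\begin{equation*}
|\bd{D}(\bd{\eta})|^{2} = (\partial_{x}\eta_{x})^{2} + (\partial_{y}\eta_{y})^{2} + \tfrac{1}{2}(\partial_{y}\eta_{x})^{2} + \tfrac{1}{2}(\partial_{x}\eta_{y})^{2} + (\partial_{y}\eta_{x})(\partial_{x}\eta_{y}),
\end{equation*}
together with $|\nabla\bd{\eta}|^{2} = (\partial_{x}\eta_{x})^{2} + (\partial_{y}\eta_{y})^{2} + (\partial_{y}\eta_{x})^{2} + (\partial_{x}\eta_{y})^{2}$. Comparing the two expressions, the desired inequality reduces to showing
\begin{equation*}
\int_{\Omega_{b}}\Big[(\partial_{x}\eta_{x})^{2} + (\partial_{y}\eta_{y})^{2} + 2(\partial_{y}\eta_{x})(\partial_{x}\eta_{y})\Big] \;\geq\; 0.
\end{equation*}

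The key step is the identity $\int_{\Omega_{b}} (\partial_{y}\eta_{x})(\partial_{x}\eta_{y}) = \int_{\Omega_{b}} (\partial_{x}\eta_{x})(\partial_{y}\eta_{y})$, obtained by integrating by parts twice (first in $x$, then in $y$, or equivalently applying the mixed-partials identity $\partial_{y}\partial_{x}\eta_{y}=\partial_{x}\partial_{y}\eta_{y}$ after one integration by parts). Every boundary term produced during these integrations by parts carries a factor of $\eta_{x}$ evaluated on $\partial\Omega_{b}$: on the left, top, and right boundaries this vanishes because $\bd{\eta}=0$ there, and on $\hat{\Gamma}=\{y=0\}$ it vanishes because of the specific constraint $\eta_{x}=0$ encoded in $V_{d}$ (note that $\eta_{y}$ is free on $\hat{\Gamma}$, so one really needs this cross-term to be the one that is killed). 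Verifying carefully that no unwanted boundary contributions survive is the main technical point; the rest of the argument is algebraic.

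Once the identity is in place, the integrand on the left of the reduced inequality becomes $(\partial_{x}\eta_{x})^{2} + 2(\partial_{x}\eta_{x})(\partial_{y}\eta_{y}) + (\partial_{y}\eta_{y})^{2} = (\partial_{x}\eta_{x}+\partial_{y}\eta_{y})^{2} = (\nabla\cdot\bd{\eta})^{2} \geq 0$, which immediately yields $\int|\bd{D}(\bd{\eta})|^{2}\geq\tfrac{1}{2}\int|\nabla\bd{\eta}|^{2}$. Finally, a standard density argument (approximating a general $\bd{\eta}\in V_{d}$ by smooth functions satisfying the same Dirichlet conditions) extends the estimate to all of $V_{d}$, completing the proof. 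The hard part is not any computation but rather the bookkeeping of boundary terms that shows the mixed Dirichlet structure of $V_{d}$ — full clamping on three sides, only tangential clamping on $\hat{\Gamma}$ — is exactly enough to make the integration by parts close.
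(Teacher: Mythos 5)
Your proposal is correct and follows essentially the same route as the paper: a density reduction to smooth $\bd{\eta}$, then integration by parts of the cross term $\int_{\Omega_{b}}(\partial_{y}\eta_{x})(\partial_{x}\eta_{y})$ using exactly the boundary structure of $V_{d}$ ($\bd{\eta}=0$ on the left, top, and right sides and $\eta_{x}=0$ on $\hat{\Gamma}$) to convert it into $\int_{\Omega_{b}}(\partial_{x}\eta_{x})(\partial_{y}\eta_{y})$, followed by the algebraic completion of the square. The only cosmetic difference is that you phrase the final step as $(\nabla\cdot\bd{\eta})^{2}\ge 0$ after subtracting $\tfrac{1}{2}|\nabla\bd{\eta}|^{2}$, whereas the paper invokes $a^{2}+2ab+b^{2}\ge 0$ directly, which is the same inequality.
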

\begin{proof}
By a standard approximation argument, it suffices to assume that $\bd{\eta}$ is smooth. Because $\eta_{x} = 0$ on $\Gamma$ and because $\bd{\eta} = 0$ on the left, top, and right boundaries of $\Omega_{b}$, we have from integration by parts, that
\begin{equation*}
\int_{\Omega_{b}} \frac{\partial \eta_{x}}{\partial y} \frac{\partial \eta_{y}}{\partial x} = -\int_{\Omega_{b}} \eta_{x} \frac{\partial^{2} \eta_{y}}{\partial x \partial y} = \int_{\Omega_{b}} \frac{\partial \eta_{x}}{\partial x} \frac{\partial \eta_{y}}{\partial y}.
\end{equation*}
Therefore,
\begin{multline*}
\int_{\Omega_{b}} |\bd{D}(\bd{\eta})|^{2} = \int_{\Omega_{b}} \left(\frac{\partial \eta_{x}}{\partial x}\right)^{2} + \left(\frac{\partial \eta_{y}}{\partial y}\right)^{2} + \frac{1}{2}\left(\frac{\partial \eta_{x}}{\partial y} + \frac{\partial \eta_{y}}{\partial x}\right)^{2} \\
= \int_{\Omega_{b}} \left(\frac{\partial \eta_{x}}{\partial x}\right)^{2} + \left(\frac{\partial \eta_{y}}{\partial y}\right)^{2} + \frac{1}{2}\left[\left(\frac{\partial \eta_{x}}{\partial y}\right)^{2} + \left(\frac{\partial \eta_{y}}{\partial x}\right)^{2}\right] + \frac{\partial\eta_{x}}{\partial y} \frac{\partial \eta_{y}}{\partial x} \\
= \int_{\Omega_{b}} \left(\frac{\partial \eta_{x}}{\partial x}\right)^{2} + \frac{\partial\eta_{x}}{\partial x} \frac{\partial \eta_{y}}{\partial y} +\left(\frac{\partial \eta_{y}}{\partial y}\right)^{2} + \frac{1}{2}\left[\left(\frac{\partial \eta_{x}}{\partial y}\right)^{2} + \left(\frac{\partial \eta_{y}}{\partial x}\right)^{2}\right] \ge \frac{1}{2} \int_{\Omega_{b}} |\nabla \bd{\eta}|^{2},
\end{multline*}
by using the inequality $a^{2} + 2ab + b^{2} \ge 0$.
\end{proof}
\fi %%%%%%%%%%%%%%%%

Coercivity of this form follows from the fact that $|\omega^{k + \frac{i}{2}}_{N}| < R$, see Assumption 1A in \eqref{1A}, 
and Korn inequality, see Proposition~\ref{Korn}, once we handle the last term and show that 
\begin{equation*}
\kappa (\Delta t)^{3} \int_{\Omega_{b}} \mathcal{J}^{(\eta^{n}_{N})^{\delta}}_{b} |\nabla^{(\eta^{n}_{N})^{\delta}}_{b} p|^{2} \ge c\int_{\Omega_{b}} |\nabla p|^{2}, 
\end{equation*}
for some positive constant $c > 0$. To show this, we first recall the definitions
\begin{equation*}
\mathcal{J}^{(\eta^{n}_{N})^{\delta}}_{b} = \det(\bd{I} + \nabla (\bd{\eta}^{n}_{N})^{\delta}), \qquad \nabla^{(\eta^{n}_{N})^{\delta}}_{b} p = \nabla p \cdot (\bd{I} + \nabla (\bd{\eta}^{n}_{N})^{\delta})^{-1}.
\end{equation*}
Then, letting $| \cdot |$ denote the matrix norm, we have 
\begin{equation}\label{coercivepressure}
\kappa (\Delta t)^{3} \int_{\Omega_{b}} \mathcal{J}^{(\eta^{n}_{N})^{\delta}}_{b} |\nabla^{(\eta^{n}_{N})^{\delta}}_{b} p|^{2} \ge \kappa(\Delta t)^{3} \int_{\Omega_{b}} \mathcal{J}^{(\eta^{n}_{N})^{\delta}}_{b} |\bd{I} + \nabla (\bd{\eta}^{n}_{N})^{\delta}|^{-2} |\nabla p|^{2}.
\end{equation}
Assumption 2A \eqref{2A} implies that $\bd{I} + (\bd{\eta}^{n}_{N})^{\delta}$ is an invertible map from $\Omega_{b}$ to $(\Omega_{b})^{n, \delta}_{N}$, and 
we further note that $|\bd{I} + \nabla (\bd{\eta}^{n}_{N})^{\delta}|$ is continuous on $\overline{\Omega_{b}}$ and hence is bounded from above. Thus, $|\bd{I} + \nabla (\bd{\eta}^{n}_{N})^{\delta}|^{-2} \ge c_{0} > 0$ for some positive constant $c_0$. The assumption that $\bd{I} + (\bd{\eta}^{n}_{N})^{\delta}$ is invertible implies that $\det(\bd{I} + \nabla (\bd{\eta}^{n}_{N})^{\delta}) > 0$. However, since this determinant is a continuous function on the compact set $\overline{\Omega_{b}}$, we conclude that there exists a positive constant $c_{1} > 0$ such that $\det(\bd{I} + \nabla (\bd{\eta}^{n}_{N})^{\delta}) \ge c_{1} > 0$. This establishes coercivity. 

Existence of a unique weak solution $(\bd{u}^{n + 1}_{N}, \bd{\eta}^{n + 1}_{N}, p^{n + 1}_{N}) \in \mathcal{V}^{\omega^{n}_{N}}_{f} \times V_{d} \times V_{p}$  now follows from the Lax-Milgram lemma. 
From here, we recover $\zeta^{n + 1}_{N}$, by using $\displaystyle \zeta^{n + 1}_{N} \bd{e}_{y} = \frac{\bd{\eta}^{n + 1}_{N} - \bd{\eta}^{n}_{N}}{\Delta t} \Big\vert_{\Gamma}$. Note that $\displaystyle \frac{\bd{\eta}^{n + 1}_{N} - \bd{\eta}^{n}_{N}}{\Delta t} \Big\vert_{\Gamma}$ points in the $y$ direction because the trace of any function $\bd{\eta} \in V_{d}$ on $\Gamma$ points in the $y$ direction by definition, see \eqref{Vd}.

\textbf{Energy equality:} We substitute $\boldsymbol{v} = \boldsymbol{u}^{n + 1}_{N}$, $\varphi = \zeta^{n + 1}_{N}$, $\boldsymbol{\psi} = \boldsymbol{\dot{\eta}}^{n + 1}_{N}$, and $r = p^{n + 1}_{N}$ into \eqref{weakBiot1}, and use the identity
\begin{equation*}
(a - b) \cdot a = \frac{1}{2}(|a|^{2} + |a - b|^{2} - |b|^{2}).
\end{equation*}
Since $\omega^{n + 1}_{N} = \omega^{n + \frac{1}{2}}_{N}$ and $(\Delta t) \zeta^{n + \frac{1}{2}}_{N} = \omega^{n + \frac{1}{2}}_{N} - \omega^{n}_{N}$, we obtain the following energy equality:
\begin{align*}
&\frac{1}{2} \int_{\Omega_{f}} \left(1 + \frac{\omega^{n + 1}_{N}}{R}\right) |\boldsymbol{u}^{n + 1}_{N}|^{2} + \frac{1}{2} \rho_{b} \int_{\Omega_{b}} |\dot{\bd{\eta}}^{n + 1}_{N}|^{2} + \frac{1}{2} c_{0} \int_{\Omega_{b}} |p^{n + 1}_{N}|^{2} + \mu_{e} \int_{\Omega_{b}} |\bd{D}(\bd{\eta}^{n + 1}_{N})|^{2} + \frac{1}{2} \lambda_{e} \int_{\Omega_{b}} |\nabla \cdot \bd{\eta}^{n + 1}_{N}|^{2} \\
&+ \frac{1}{2} \rho_{p} \int_{\Gamma} |\zeta^{n + 1}_{N}|^{2} + 2\mu_{v}(\Delta t) \int_{\Omega_{b}} |\bd{D}(\dot{\bd{\eta}}^{n + 1}_{N})|^{2} + \lambda_{v}(\Delta t) \int_{\Omega_{b}} |\nabla \cdot \dot{\bd{\eta}}^{n + 1}_{N}|^{2} + \kappa (\Delta t) \int_{\Omega_{b}} \mathcal{J}^{(\eta^{n}_{N})^{\delta}}_{b} |\nabla^{(\eta^{n}_{N})^{\delta}}_{b} p^{n + 1}_{N}|^{2} \\
&+ \frac{\beta (\Delta t)}{\mathcal{J}^{\omega^{n}_{N}}_{\Gamma}} \int_{\Gamma} |(\boldsymbol{\dot{\eta}}^{n + 1}_{N} - \boldsymbol{u}^{n + 1}_{N}) \cdot \boldsymbol{\tau}^{\omega^{n}_{N}}|^{2} + \frac{1}{2} \rho_{b} \int_{\Omega_{b}} |\bd{\dot{\eta}}^{n + 1}_{N} - \bd{\dot{\eta}}^{n}_{N}|^{2} + \frac{1}{2} c_{0} \int_{\Omega_{b}} |p^{n + 1}_{N} - p^{n}_{N}|^{2} + \mu_{e} \int_{\Omega_{b}} |\bd{D}(\bd{\eta}^{n + 1}_{N} - \bd{\eta}^{n}_{N})|^{2} \\
&+ \frac{1}{2} \lambda_{e} \int_{\Omega_{b}} |\nabla \cdot (\bd{\eta}^{n + 1}_{N} - \bd{\eta}^{n}_{N})|^{2} = \frac{1}{2} \int_{\Omega_{f}} \left(1 + \frac{\omega^{n}_{N}}{R}\right) |\bd{u}^{n}_{N}|^{2} + \frac{1}{2} \rho_{b} \int_{\Omega_{b}} |\dot{\bd{\eta}}^{n}_{N}|^{2} + \frac{1}{2} c_{0} \int_{\Omega_{b}} |p^{n}_{N}|^{2} + \mu_{e} \int_{\Omega_{b}} |\bd{D}(\bd{\eta}^{n}_{N})|^{2} \\
&+ \frac{1}{2} \lambda_{e} \int_{\Omega_{b}} |\nabla \cdot \bd{\eta}^{n}_{N}|^{2} + \frac{1}{2} \rho_{p} \int_{\Gamma} |\zeta^{n + \frac{1}{2}}_{N}|^{2},
\end{align*}
where the terms containing parameter $\alpha$ cancel out after 
bringing the integrals back to the time-dependent domain, integrating by parts, and recalling that the normal vector points inward towards the Biot domain:
\begin{multline*}
-\alpha \int_{\Omega_{b}} \mathcal{J}^{(\eta^{n}_{N})^{\delta}}_{b} p^{n + 1}_{N} \nabla^{(\eta^{n}_{N})^{\delta}}_{b} \cdot \boldsymbol{\dot{\eta}}^{n + 1}_{N} - \alpha \int_{\Omega_{b}} \mathcal{J}^{(\eta^{n}_{N})^{\delta}}_{b} \boldsymbol{\dot{\eta}}^{n + 1}_{N} \cdot \nabla^{(\eta^{n}_{N})^{\delta}}_{b} p^{n + 1}_{N} - \alpha \int_{\Gamma} \left(\boldsymbol{\dot{\eta}}^{n + 1}_{N} \cdot \boldsymbol{n}^{(\omega^{n}_{N})^{\delta}}\right) p^{n + 1}_{N} \\ 
= -\alpha\int_{(\Omega_{b})^{n, \delta}_{N}} p^{n + 1}_{N} (\nabla \cdot \boldsymbol{\dot{\eta}}^{n + 1}_{N}) - \alpha \int_{(\Omega_{b})^{n, \delta}_{N}} \boldsymbol{\dot{\eta}}^{n + 1}_{N} \cdot \nabla p^{n + 1}_{N} - \alpha \int_{\Gamma^{n, \delta}_{N}} (\boldsymbol{\dot{\eta}}^{n + 1}_{N} \cdot \boldsymbol{n}) p^{n + 1}_{N} = 0.
\end{multline*}
This completes the proof of the Lemma.
\end{proof}

\subsection{The coupled semi-discrete problem: weak formulation and energy}
To obtain uniform energy estimates for approximate solutions of our semidiscretized scheme it is useful to present the scheme in monolithic form:
\begin{equation}\label{semi1}
\begin{aligned}
&\int_{\Omega_{f}} \left(1 + \frac{\omega^{n}_{N}}{R}\right) \boldsymbol{\dot{u}}^{n + 1}_{N} \cdot \boldsymbol{v} 
+ 2\nu \int_{\Omega_{f}} \left(1 + \frac{\omega^{n}_{N}}{R}\right) \boldsymbol{D}^{\omega^{n}_{N}}_{f}(\boldsymbol{u}^{n + 1}_{N}) : \boldsymbol{D}^{\omega^{n}_{N}}_{f}(\boldsymbol{v}) + \int_{\Gamma} \left(\frac{1}{2}\boldsymbol{u}^{n + 1}_{N} \cdot \boldsymbol{u}^{n}_{N} - p^{n + 1}_{N}\right)(\boldsymbol{\psi} - \boldsymbol{v})\cdot \boldsymbol{n}^{\omega^{n}_{N}} 
\\
&+ \frac{1}{2} \int_{\Omega_{f}} \left(1 + \frac{\omega^{n}_{N}}{R}\right) \left[\left(\left(\boldsymbol{u}^{n}_{N} - \zeta^{n + \frac{1}{2}}_{N} \frac{R + y}{R} \boldsymbol{e}_{y}\right) \cdot \nabla^{\omega^{n}_{N}}_{f} \boldsymbol{u}^{n + 1}_{N}\right) \cdot \boldsymbol{v} - \left(\left(\boldsymbol{u}^{n}_{N} - \zeta^{n + \frac{1}{2}}_{N} \frac{R + y}{R} \boldsymbol{e}_{y}\right) \cdot \nabla^{\omega^{n}_{N}}_{f} \boldsymbol{v}\right) \cdot \boldsymbol{u}^{n + 1}_{N}\right] 
\\
&+ \frac{1}{2R} \int_{\Omega_{f}} \zeta^{n + \frac{1}{2}}_{N} \boldsymbol{u}^{n + 1}_{N} \cdot \boldsymbol{v} + \frac{1}{2} \int_{\Gamma} (\boldsymbol{u}^{n + 1}_{N} - \boldsymbol{\dot{\eta}}^{n + 1}_{N}) \cdot \boldsymbol{n}^{\omega^{n}_{N}} (\boldsymbol{u}^{n}_{N} \cdot \boldsymbol{v})
+ \frac{\beta}{\mathcal{J}^{\omega^{n}_{N}}_{\Gamma}} \int_{\Gamma} (\boldsymbol{\dot{\eta}}^{n + 1}_{N} - \boldsymbol{u}^{n + 1}_{N}) \cdot \boldsymbol{\tau}^{\omega^{n}_{N}} (\boldsymbol{\psi} - \boldsymbol{v}) \cdot \boldsymbol{\tau}^{\omega^{n}_{N}}
 \\
& + \rho_{b} \int_{\Omega_{b}} \left(\frac{\boldsymbol{\dot{\eta}}^{n + 1}_{N} - \boldsymbol{\dot{\eta}}^{n}_{N}}{\Delta t}\right) \cdot \boldsymbol{\psi}
 + \rho_{p} \int_{\Gamma} \left(\frac{\zeta^{n + 1}_{N} - \zeta^{n}_{N}}{\Delta t}\right) \varphi + 2\mu_{e} \int_{\Omega_{b}} \boldsymbol{D}(\boldsymbol{\eta}^{n + 1}_{N}) : \boldsymbol{D}(\boldsymbol{\psi}) + \lambda_{e} \int_{\Omega_{b}} (\nabla \cdot \boldsymbol{\eta}^{n + 1}_{N})(\nabla \cdot \boldsymbol{\psi}) 
\\
&+ 2\mu_{v} \int_{\Omega_{b}} \bd{D}(\dot{\bd{\eta}}^{n + 1}_{N}) : \bd{D}(\bd{\psi}) + \lambda_{v} \int_{\Omega_{b}} (\nabla \cdot \dot{\bd{\eta}}^{n + 1}_{N}) (\nabla \cdot \bd{\psi}) - \alpha \int_{\Omega_{b}} \mathcal{J}^{(\eta^{n}_{N})^{\delta}}_{b} p^{n + 1}_{N} \nabla^{(\eta^{n}_{N})^{\delta}}_{b} \cdot \boldsymbol{\psi} 
+ c_{0}  \int_{\Omega_{b}} \frac{p^{n + 1}_{N} - p^{n}_{N}}{\Delta t} r 
\\
&- \alpha \int_{\Omega_{b}} \mathcal{J}^{(\eta^{n}_{N})^{\delta}}_{b} \boldsymbol{\dot{\eta}}^{n + 1}_{N} \cdot \nabla^{(\eta^{n}_{N})^{\delta}}_{b} r - \alpha \int_{\Gamma} (\boldsymbol{\dot{\eta}}^{n + 1}_{N} \cdot \boldsymbol{n}^{(\omega^{n}_{N})^{\delta}}) r 
+ \kappa \int_{\Omega_{b}} \mathcal{J}^{(\eta^{n}_{N})^{\delta}}_{b} \nabla^{(\eta^{n}_{N})^{\delta}}_{b} p^{n + 1}_{N} \cdot \nabla^{(\eta^{n}_{N})^{\delta}}_{b} r
\\
& - \int_{\Gamma} [(\boldsymbol{u}^{n + 1}_{N} - \boldsymbol{\dot{\eta}}^{n + 1}_{N}) \cdot \boldsymbol{n}^{\omega^{n}_{N}}]r + \int_{\Gamma} \Delta \omega^{n + \frac{1}{2}}_{N} \cdot \Delta \varphi = 0,\ 
\forall (\bd{v}, \varphi, \bd{\psi}, r) \in \mathcal{Q}^{n + 1}_{N},
\end{aligned}
\end{equation}
\begin{equation}\label{semi2}
\int_{\Gamma} \left(\frac{\omega^{n + \frac{1}{2}}_{N} - \omega^{n - \frac{1}{2}}_{N}}{\Delta t}\right) \phi = \int_{\Gamma} \zeta^{n + \frac{1}{2}}_{N} \phi, \qquad \int_{\Gamma} \left(\frac{\boldsymbol{\eta}^{n + 1}_{N} - \boldsymbol{\eta}^{n}_{N}}{\Delta t}\right) \cdot \boldsymbol{\phi} = \int_{\Gamma} \zeta^{n + 1}_{N} \bd{e}_{y} \cdot \boldsymbol{\phi}, \ \forall \phi, \boldsymbol{\phi} \in L^{2}(\Gamma).
\end{equation}

{{Next, we will obtain uniform energy estimates for the approximate solutions generated from the splitting scheme. To do this, we define the discrete energy $E^{n + \frac{i}{2}}_{N}$ and discrete dissipation $D^{n + 1}_{N}$ as follows:}}
\begin{equation}\label{discenergy}
\begin{aligned}
E^{n + \frac{i}{2}}_{N} &= \frac{1}{2}\int_{\Omega_{f}} \left(1 + \frac{\omega^{n}_{N}}{R}\right) |\boldsymbol{u}^{n + \frac{i}{2}}_{N}|^{2} + \frac{1}{2}\rho_{b} \int_{\Omega_{b}} 
|\boldsymbol{\dot{\eta}}^{n + \frac{i}{2}}_{N}|^{2} + \frac{1}{2}c_{0}\int_{\Omega_{b}} |p^{n + \frac{i}{2}}_{N}|^{2} + \mu_{e} \int_{\Omega_{b}} |\boldsymbol{D}(\boldsymbol{\eta}^{n + \frac{i}{2}}_{N})|^{2}, . \\
&+ \frac{1}{2} \lambda_{e} \int_{\Omega_{b}} |\nabla \cdot \boldsymbol{\eta}^{n + \frac{i}{2}}_{N}|^{2} + \frac{1}{2} \rho_{p} \int_{\Gamma} |\zeta^{n + \frac{i}{2}}_{N} |^{2} + \frac{1}{2} \int_{\Gamma} |\Delta\omega^{n + \frac{i}{2}}_{N}|^{2}, \ i = 0,1.
\\
%\label{discdissipation}
D^{n + 1}_{N} &= 2\nu(\Delta t) \int_{\Omega_{f}} \left(1 + \frac{\omega^{n}_{N}}{R}\right) \left|\boldsymbol{D}^{\omega^{n}_{N}}_{f}(\boldsymbol{u}^{n + 1}_{N})\right|^{2} + 2\mu_{v} (\Delta t) \int_{\Omega_{b}} |\bd{D}(\dot{\bd{\eta}}^{n + 1}_{N})|^{2} + \lambda_{v} (\Delta t) \int_{\Omega_{b}} |\nabla \cdot \dot{\bd{\eta}}^{n + 1}_{N}|^{2}  \\
&+ \kappa(\Delta t) \int_{\Omega_{b}} \mathcal{J}^{(\eta^{n}_{N})^{\delta}}_{b} |\nabla^{(\eta^{n}_{N})^{\delta}}_{b} p^{n + 1}_{N}|^{2} + \frac{\beta(\Delta t)}{\mathcal{J}^{\omega^{n}_{N}}_{\Gamma}} \int_{\Gamma} \left|(\boldsymbol{\dot{\eta}}^{n + 1}_{N} - \boldsymbol{u}^{n + 1}_{N}) \cdot \boldsymbol{\tau}^{\omega^{n}_{N}}\right|^{2}.
\end{aligned}
\end{equation}

{{Then, the semidiscrete weak formulation \eqref{semi1} and \eqref{semi2} implies the following uniform estimates on the discretized energy and dissipation.}}
\begin{lemma}\label{discreteenergyestimates}
The following {\bf{discrete energy equalities}} hold for the semi-discretized formulation \eqref{semi1}, \eqref{semi2}:
\begin{equation}\label{energyeq1}
E^{n + \frac{1}{2}}_{N} + \frac{1}{2} \rho_{p} \int_{\Gamma} \left|\zeta^{n + \frac{1}{2}}_{N} - \zeta^{n}_{N}\right|^{2} + \frac{1}{2} \int_{\Gamma} \left|\Delta(\omega^{n + \frac{1}{2}}_{N} - \omega^{n - \frac{1}{2}}_{N})\right|^{2} = E^{n}_{N}
\end{equation}
\begin{multline}\label{energyeq2}
E^{n + 1}_{N} + D^{n + 1}_{N} + \frac{1}{2} \int_{\Omega_{f}} \left(1 + \frac{\omega^{n}_{N}}{R}\right) \left|\boldsymbol{u}^{n + 1}_{N} - \boldsymbol{u}^{n}_{N}\right|^{2} + \frac{1}{2} \rho_{b} \int_{\Omega_{b}} \left|\boldsymbol{\dot{\eta}}^{n + 1}_{N} - \boldsymbol{\dot{\eta}}^{n}_{N}\right|^{2} + \frac{1}{2} c_{0} \int_{\Omega_{b}} \left|p^{n + 1}_{N} - p^{n}_{N} \right|^{2} \\
+ \mu_{e} \int_{\Omega_{b}} \left|\boldsymbol{D}(\boldsymbol{\eta}^{n + 1}_{N} - \boldsymbol{\eta}^{n}_{N})\right|^{2} + \frac{1}{2}\lambda_{e} \int_{\Omega_{b}} \left|\nabla \cdot (\boldsymbol{\eta}^{n + 1}_{N} - \boldsymbol{\eta}^{n}_{N})\right|^{2} + \frac{1}{2}\rho_{p} \int_{\Gamma} |\zeta^{n + 1}_{N} - \zeta^{n + \frac{1}{2}}_{N}|^{2} = E^{n + \frac{1}{2}}_{N},
\end{multline}
{{where the discrete energy $E^{n + \frac{i}{2}}_{N}$ and the discrete dissipation $D^{n + 1}_{N}$ are defined in \eqref{discenergy}.}}
\end{lemma}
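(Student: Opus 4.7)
The plan is to obtain \eqref{energyeq1} and \eqref{energyeq2} by combining the two subproblem energy identities that have already been established, and observing that each subproblem leaves most components of the discrete energy unchanged, so the two identities differ from the subproblem identities only by trivially cancelling contributions.

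For \eqref{energyeq1}, I would exploit the fact that the plate subproblem is defined so that $\bd{u}^{n+\frac{1}{2}}_N = \bd{u}^n_N$, $\bd{\eta}^{n+\frac{1}{2}}_N = \bd{\eta}^n_N$ and $p^{n+\frac{1}{2}}_N = p^n_N$, which implies that all fluid and Biot contributions to $E^{n+\frac{1}{2}}_N$ and $E^n_N$ cancel identically (the weight $1+\omega^n_N/R$ in the fluid kinetic term is the same on both sides, since it is indexed by $n$, not by $n+\frac{1}{2}$). What remains is exactly \eqref{plateenergy}, which has been derived by testing \eqref{plateweak2} with $\varphi=\zeta^{n+\frac{1}{2}}_N=(\omega^{n+\frac{1}{2}}_N-\omega^{n-\frac{1}{2}}_N)/\Delta t$ and applying the polarization identity $(a-b)\cdot a=\tfrac{1}{2}(|a|^2+|a-b|^2-|b|^2)$ twice, once for the $\zeta$ increment and once for the $\Delta\omega$ increment.

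For \eqref{energyeq2}, I would test the fluid/Biot weak formulation \eqref{weakBiot1} with $(\bd{v},\varphi,\bd{\psi},r)=(\bd{u}^{n+1}_N,\zeta^{n+1}_N,\dot{\bd{\eta}}^{n+1}_N,p^{n+1}_N)$ and track each class of terms. The polarization identity applied to the discrete time derivatives $\dot{\bd{u}}^{n+1}_N$, $(\dot{\bd{\eta}}^{n+1}_N-\dot{\bd{\eta}}^n_N)/\Delta t$, $(\zeta^{n+1}_N-\zeta^{n+\frac{1}{2}}_N)/\Delta t$, $(p^{n+1}_N-p^n_N)/\Delta t$, together with analogous treatment of the Biot elastic term $2\mu_e\bd{D}(\bd{\eta}^{n+1}_N):\bd{D}(\dot{\bd{\eta}}^{n+1}_N)$ and $\lambda_e(\nabla\cdot\bd{\eta}^{n+1}_N)(\nabla\cdot\dot{\bd{\eta}}^{n+1}_N)$, produces all the claimed squared-difference terms. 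The $2\nu$-viscous, Beavers--Joseph--Saffman, Biot viscoelastic and Darcy terms give the dissipation $D^{n+1}_N$ outright. The skew-symmetric convective trilinear form in the fluid cancels, and the three interface terms—$\tfrac{1}{2}\int_\Gamma(\bd{u}^{n+1}_N-\dot{\bd{\eta}}^{n+1}_N)\cdot\bd{n}^{\omega^n_N}\,(\bd{u}^n_N\cdot\bd{u}^{n+1}_N)$, the normal pressure-balance term, and the mass-flux term $\int_\Gamma((\bd{u}^{n+1}_N-\dot{\bd{\eta}}^{n+1}_N)\cdot\bd{n}^{\omega^n_N})p^{n+1}_N$—combine so that the nonlinear boundary contributions vanish, in the same way as for the continuous formal energy identity. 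Finally, the change of weight in the fluid kinetic energy from $1+\omega^n_N/R$ (in $E^{n+\frac{1}{2}}_N$) to $1+\omega^{n+1}_N/R$ (in $E^{n+1}_N$) is recovered by combining the inertial telescoping with the convective term $\frac{1}{2R}\int_{\Omega_f}\zeta^{n+\frac{1}{2}}_N|\bd{u}^{n+1}_N|^2$, using $(\Delta t)\zeta^{n+\frac{1}{2}}_N=\omega^{n+1}_N-\omega^n_N$. The bi-Laplacian term does not appear in the final balance because $\omega^{n+1}_N=\omega^{n+\frac{1}{2}}_N$, so it is identical in $E^{n+\frac{1}{2}}_N$ and $E^{n+1}_N$.

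The main technical point—rather than a true obstacle—is the cancellation of the three $\alpha$-coupling terms
$-\alpha\int_{\Omega_b}\mathcal{J}^{(\eta^n_N)^\delta}_b p^{n+1}_N\nabla^{(\eta^n_N)^\delta}_b\!\cdot\dot{\bd{\eta}}^{n+1}_N$, $-\alpha\int_{\Omega_b}\mathcal{J}^{(\eta^n_N)^\delta}_b\dot{\bd{\eta}}^{n+1}_N\cdot\nabla^{(\eta^n_N)^\delta}_b p^{n+1}_N$ and $-\alpha\int_\Gamma(\dot{\bd{\eta}}^{n+1}_N\cdot\bd{n}^{(\omega^n_N)^\delta})p^{n+1}_N$. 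These cancel after pulling back to the regularized physical domain $(\Omega_b)^{n,\delta}_N$ via the Piola transform and integrating by parts, which is valid precisely because Assumption 2A guarantees that $\mathrm{Id}+(\bd{\eta}^n_N)^\delta$ is a bijection from $\Omega_b$ onto $(\Omega_b)^{n,\delta}_N$. This is exactly the cancellation already carried out in the energy computation at the end of the proof of Lemma~\ref{Subproblem2}, so once the above tests are organised the two identities \eqref{energyeq1} and \eqref{energyeq2} are immediate.
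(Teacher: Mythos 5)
Your proposal is correct and follows essentially the same route as the paper: the two identities are obtained from the subproblem energy equalities (testing \eqref{plateweak2} with $\zeta^{n+\frac{1}{2}}_N$ and \eqref{weakBiot1} with $(\bd{u}^{n+1}_N,\zeta^{n+1}_N,\dot{\bd{\eta}}^{n+1}_N,p^{n+1}_N)$), using the polarization identity, the cancellation of the skew-symmetric convective and interface terms, the relation $(\Delta t)\zeta^{n+\frac{1}{2}}_N=\omega^{n+1}_N-\omega^n_N$ for the kinetic-energy weight, and the cancellation of the $\alpha$-terms after pulling back to $(\Omega_b)^{n,\delta}_N$ under Assumption 2A, exactly as in the paper's subproblem computations.
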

We remark that the terms not included in the definition of $E^{n + \frac{i}{2}}_{N}$ and $D^{n + 1}_{N}$, appearing in \eqref{energyeq1} and \eqref{energyeq2}, are {\bf{numerical dissipation terms}}. 

These energy identities immediately imply that $E^{n + \frac{i}{2}}_{N}$ and $\sum_{n = 1}^{N} D^{n}_{N}$ are {\bf{uniformly bounded}} by a constant $C$ independent of $n$ and $N$.

The semidiscretized splitting scheme defines semidiscretized approximations of the solution to the regularized problem at discrete time points. To work with approximate functions
and show that they converge to the solution of the continuous problem, we need to extend the semidiscrete approximations to the entire time interval
and investigate uniform boundedness of those approximate solution functions. This is done next.

\section{Approximate solutions}\label{approximate}

Now that we have defined the numerical solutions at each time step, we collect the solutions into approximate solutions defined on the whole time interval $[0, T]$, for which we will obtain uniform estimates from our previous energy estimates. 

We define the following two extensions of the approximate functions to the entire interval $[0,T]$:

\begin{itemize}
\item Piecewise constant approximate solutions, for $(n - 1) \Delta t < t \le n\Delta t$:
\begin{multline}\label{approxconstant}
\boldsymbol{u}_{N}(t) = \boldsymbol{u}^{n}_{N}, \quad \boldsymbol{\eta}_{N}(t) = \boldsymbol{\eta}^{n}_{N}, \quad p_{N}(t) = p^{n}_{N}, \quad \omega_{N}(t) = \omega^{n - \frac{1}{2}}_{N}, \quad \zeta_{N}(t) = \zeta^{n - \frac{1}{2}}_{N}, \quad \zeta_{N}^{*}(t) = \zeta^{n}_{N};
\end{multline}
\item Linear interpolations:
{{
\begin{equation}\label{linearinterpolation}
\boldsymbol{\overline{\eta}}_{N}(n\Delta t) = \boldsymbol{\eta}^{n}_{N}, \qquad \overline{p}_{N}(n\Delta t) = p^{n}_{N}, \qquad \overline{\omega}_{N}(n\Delta t) = \omega^{n - \frac{1}{2}}_{N}, \qquad \text{ for } n = 0, 1, ..., N,
\end{equation}}}
where we formally set $\omega^{-\frac{1}{2}}_{N} = \omega_{0}$. 
\end{itemize}
Note that by construction, we have that
{{
\begin{equation}\label{approxdt}
\partial_{t} \overline{\omega}_{N} = \zeta_{N}, \qquad \partial_{t} \boldsymbol{\overline{\eta}}_{N}|_{\Gamma} = \zeta^{*}_{N} \boldsymbol{e}_{y}.
\end{equation}}}

From the preceding energy estimates, we have the following lemma on uniform boundedness.

\begin{lemma}\label{uniform}{\sc{Uniform boundedness of approximate solutions.}}
Assume:
\begin{enumerate}
\item {\sc{Assumption 1B: Uniform boundedness of plate displacements.}} There exists a positive constant $R_{max}$ such that for all $N$,
\begin{equation}\label{1Anew}
|\omega^{n - \frac{1}{2}}_{N}| \le R_{max} < R, \qquad \text{ for all } n = 0, 1, ..., N,
\end{equation}
\begin{equation}\label{1B}
\left| (\bd{\eta}^{n}_{N})^{\delta}|_{\Gamma} \right| \le R_{max} < R, \qquad \text{ for all } n = 0, 1, ..., N.
\end{equation}
\item {\sc{Assumption 2B: Uniform invertibility of the {{Lagrangian map}} (Jacobian).}} There exists a positive constant $c_{0}$ such that for all $N$,
\begin{equation}\label{2B}
\det(\bd{I} + \nabla(\bd{\eta}^{n}_{N})^{\delta}) \ge c_{0} > 0, \qquad \text{ for all } n = 0, 1, ..., N. 
\end{equation}
\item {\sc{Assumption 2C: Uniform boundedness of the {{Lagrangian map}} (matrix norm).}} There exists positive constants $c_{1}$ and $c_{2}$ such that for all $N$,
\begin{equation}\label{2C}
|(\bd{I} + \nabla(\bd{\eta}^{n}_{N})^{\delta})^{-1}| \le c_{1}, \qquad |\bd{I} + \nabla(\bd{\eta}^{n}_{N})^{\delta}| \le c_{2}, \qquad \text{ for all } n = 0, 1, ..., N. 
\end{equation}
\end{enumerate}
Then for all $N$:
\begin{itemize}
\item $\boldsymbol{u}_{N}$ is uniformly bounded in $L^{\infty}(0, T; L^{2}(\Omega_{f}))$ and $L^{2}(0, T; H^{1}(\Omega_{f}))$.
\item $\boldsymbol{\eta}_{N}$ is uniformly bounded in $L^{\infty}(0, T; H^{1}(\Omega_{b}))$.
\item $p_{N}$ is uniformly bounded in $L^{\infty}(0, T; L^{2}(\Omega_{b}))$ and $L^{2}(0, T; H^{1}(\Omega_{b}))$.
\item $\omega_{N}$ is uniformly bounded in $L^{\infty}(0, T; H_{0}^{2}(\Gamma))$.
\end{itemize}
In addition, we have the following estimates on the linear interpolations.
\begin{itemize}
\item $\boldsymbol{\overline{\eta}}_{N}$ is uniformly bounded in $W^{1, \infty}(0, T; L^{2}(\Omega_{b}))$.
\item $\boldsymbol{\overline{\omega}}_{N}$ is uniformly bounded in $W^{1, \infty}(0, T; L^{2}(\Gamma))$.
\end{itemize}
\end{lemma}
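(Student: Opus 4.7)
The plan is to derive the stated bounds directly from the discrete energy identities in Lemma \ref{discreteenergyestimates}, combined with the three structural assumptions 1B, 2B, 2C, and the Korn inequality of Proposition \ref{Korn}. First I would telescope \eqref{energyeq1} and \eqref{energyeq2}: adding them over $n=0,1,\dots,N-1$ and discarding the nonnegative numerical-dissipation terms yields
\begin{equation*}
\max_{0\le n\le N}\bigl(E^n_N+E^{n+\frac{1}{2}}_N\bigr)+\sum_{n=0}^{N-1}D^{n+1}_N \;\le\; E^0_N,
\end{equation*}
and $E^0_N$ is controlled by the initial data, hence independent of $N$ and of the discretization.

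Next I would read off the $L^\infty$-in-time bounds from $E^n_N$. Assumption 1B gives $1-R_{\max}/R \le 1+\omega^n_N/R \le 1+R_{\max}/R$, so the first term of $E^n_N$ controls $\|\bu^n_N\|_{L^2(\Omega_f)}$; the remaining summands in $E^n_N$ immediately give uniform bounds on $\|\dot{\bd{\eta}}^n_N\|_{L^2(\Omega_b)}$, $\|p^n_N\|_{L^2(\Omega_b)}$, $\|\zeta^{n\pm 1/2}_N\|_{L^2(\Gamma)}$, $\|\bd D(\bd\eta^n_N)\|_{L^2(\Omega_b)}$, $\|\nabla\cdot\bd\eta^n_N\|_{L^2(\Omega_b)}$ and $\|\Delta\omega^{n\pm 1/2}_N\|_{L^2(\Gamma)}$. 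Applying Korn's inequality (Proposition \ref{Korn}) upgrades the $\bd D(\bd\eta^n_N)$ bound to an $H^1(\Omega_b)$ bound on $\bd\eta^n_N$, while $H^2_0(\Gamma)$-Poincar\'e promotes $\|\Delta\omega\|_{L^2}$ to the full $H^2_0(\Gamma)$ norm. Passing to the piecewise-constant extensions \eqref{approxconstant} turns these pointwise-in-$n$ bounds into the stated $L^\infty(0,T;\,\cdot\,)$ estimates.

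For the $L^2(0,T;H^1)$ bounds on $\bu_N$ and $p_N$ I would work with the total discrete dissipation $\sum_n D^{n+1}_N$. For the fluid, I would rewrite $\bd D^{\omega^n_N}_f(\bu^{n+1}_N)$ using \eqref{transformedgradient}: since $\omega^n_N\in H^2_0(\Gamma)$ is $C^1(\Gamma)$-bounded and $1+\omega^n_N/R\ge 1-R_{\max}/R>0$ by Assumption 1B, the coefficients in $\hat\nabla^{\omega^n_N}_f$ are uniformly bounded and the map relating $\bd D^{\omega^n_N}_f$ to the ordinary symmetric gradient on $\hat\Omega_f$ is uniformly bilipschitz; Korn's inequality on $\hat\Omega_f$ (with $\bu=0$ on a portion of $\partial\hat\Omega_f$) then yields
\begin{equation*}
\sum_{n=0}^{N-1}\Delta t\,\|\bu^{n+1}_N\|^2_{H^1(\hat\Omega_f)} \;\le\; C\sum_{n=0}^{N-1}D^{n+1}_N.
\end{equation*}
For the pressure, Assumptions 2B and 2C give $\mathcal{J}^{(\eta^n_N)^\delta}_b\ge c_0>0$ and $|(\bd I+\nabla(\bd\eta^n_N)^\delta)^{-1}|^{-2}\ge c_1^{-2}>0$, hence, just as in \eqref{coercivepressure},
\begin{equation*}
\mathcal{J}^{(\eta^n_N)^\delta}_b\bigl|\hat\nabla^{(\eta^n_N)^\delta}_b p^{n+1}_N\bigr|^2 \;\ge\; c_0 c_1^{-2}\,|\nabla p^{n+1}_N|^2,
\end{equation*}
so summing and combining with the $L^\infty(L^2)$ bound on $p_N$ produces the $L^2(0,T;H^1(\Omega_b))$ estimate.

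Finally, for the linear interpolations, the identities \eqref{approxdt} give $\partial_t\overline{\bd\eta}_N=\dot{\bd\eta}^n_N$ and $\partial_t\overline{\omega}_N=\zeta^{n-1/2}_N$ on $((n-1)\Delta t,n\Delta t]$, both of which are controlled in $L^2$ by $E^n_N$, hence uniformly in $L^\infty(0,T;L^2)$. Combined with the $L^\infty(L^2)$ bounds on the values themselves, this yields the $W^{1,\infty}(0,T;L^2)$ bounds. The main obstacle is the bookkeeping around the \emph{transformed} operators $\bd D^{\omega^n_N}_f$ and $\hat\nabla^{(\eta^n_N)^\delta}_b$: recovering genuine $H^1$ control of $\bu_N$ and $p_N$ requires uniform-in-$N$ ellipticity of these operators, which is exactly where Assumptions 1B, 2B, 2C enter in an essential way and where the regularization of $\bd\eta$ (ensuring $\mathcal{J}^{(\eta^n_N)^\delta}_b$ is continuous and thus bounded below on $\overline{\Omega_b}$) is used.
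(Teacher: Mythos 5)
Your proposal is correct and follows essentially the same route as the paper: telescoping the discrete energy identities, using Assumption 1B for the $L^{\infty}(0,T;L^{2})$ fluid bound, Korn's inequality (on the fluid domain and Proposition \ref{Korn} on $\Omega_{b}$) for the $H^{1}$ bounds, and Assumptions 2B and 2C to convert the weighted dissipation bound on $\nabla^{(\eta^{n}_{N})^{\delta}}_{b} p^{n+1}_{N}$ into the $L^{2}(0,T;H^{1}(\Omega_{b}))$ bound on $p_{N}$. The additional details you supply (the Poincar\'e step on $H_{0}^{2}(\Gamma)$ and the interpolation bounds via \eqref{approxdt}) simply make explicit what the paper leaves implicit.
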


\begin{remark}\label{invertible}{\sc{A crucial remark about invertibility.}}
At first, it would appear that to show the uniform boundedness results above, we also need to have a fourth assumption, which is Assumption 2A \eqref{2A} from before, that the map $\text{Id} + (\bd{\eta}^{n}_{N})^{\delta}: \Omega_{b} \to \mathbb{R}^{2}$ is injective (and is hence a bijection onto its image), for each $n = 0, 1, ..., N$ and for all $N$. However, this is implied by an injectivity theorem, see Ciarlet \cite{Ciarlet} Theorem 5-5-2. Note also that Assumption 1A \eqref{1A} from before is automatically satisfied once we verify Assumption 1B \eqref{1Anew}, \eqref{1B}. 
In particular, this injectivity theorem is as follows. Since $\det(\bd{I} + \nabla(\bd{\eta}^{n}_{N})^{\delta}) > 0$ by Assumption 2B \eqref{2B}, it suffices to show that $\text{Id} + (\bd{\eta}^{n}_{N})^{\delta} = \bd{\varphi}_{0}$ on $\partial \Omega_{b}$, for some injective mapping $\bd{\varphi}_{0}:\overline{\Omega_{b}} \to \mathbb{R}^{2}$, for example
a standard ALE mapping $\bd{\varphi}_{0}(x, y) = \left(x, y + \left(1 - \frac{y}{R}\right)\omega \right)$ can be used.
This implies the very useful fact that $(\text{Id} + (\bd{\eta}^{n}_{N})^{\delta})(\overline{\Omega_{b}}) = \bd{\varphi}_{0}(\overline{\Omega_{b}})$, which means that the \textit{deformed configuration is fully determined by the behavior on the boundary}. 

%To construct the mapping $\bd{\varphi}_{0}$, we use a standard ALE mapping. Because $(\bd{\eta}^{n}_{N})^{\delta} = \omega \bd{e}_{y}$ on $\Gamma$ for some function $\omega$ with $|\omega| \le R_{max} < R$ (by Assumption 1B \eqref{1Anew}) and $\omega = 0$ on $\partial \Gamma$, with $(\bd{\eta}^{n}_{N})^{\delta}$ satisfying Dirichlet boundary conditions on all other parts of the boundary $\partial \Omega_{b}$, we can define the following injective mapping $\bd{\varphi}_{0}$ satisfying the necessary conditions on $\Omega_{b} = (0, L) \times (0, R)$:
%\begin{equation*}
%\bd{\varphi}_{0}(z, r) = \left(x, y + \left(1 - \frac{y}{R}\right)\omega \right).
%\end{equation*}
%Note also that $\omega$, which is the trace of $(\bd{\eta}^{n}_{N})^{\delta}$ on $\Gamma$, is a continuous function. We observe that this map $\bd{\varphi}_{0}$ is an injective map. Thus, we conclude that $\text{Id} + (\bd{\eta}^{n}_{N})^{\delta}$ is injective for all $n = 0, 1, ..., N$ and for all $N$, if Assumptions 1B and 2B, given by \eqref{1Anew}, \eqref{1B}, \eqref{2B}, hold.
\end{remark}

\begin{proof}
The uniform boundedness of approximate solutions follows from the uniform energy estimates. 
More precisely,  the uniform boundedness of $\bd{u}_{N}$ in $L^{\infty}(0, T; L^{2}(\Omega_{f}))$ follows from Assumption 1B \eqref{1Anew}. The uniform boundedness of $\bd{u}_{N}$ in $L^{2}(0, T; H^{1}(\Omega_{f}))$ follows from Korn's inequality on the fluid domain. 
The uniform boundedness of $\bd{\eta}_{N}$ in $L^{\infty}(0, T; H^{1}(\Omega_{b}))$ follows from combining the uniform energy estimates with Korn's inequality, stated in Proposition \ref{Korn}. To establish the uniform boundedness of $p_{N}$ in $L^{2}(0, T; H^{1}(\Omega_{b}))$, {{we note that the discrete energy estimates in Lemma \ref{discreteenergyestimates} imply the following uniform discrete dissipation bound:}}
\begin{equation*}
{{\sum_{n = 0}^{N - 1}}} \kappa (\Delta t) \int_{\Omega_{b}} \mathcal{J}^{(\eta^{n}_{N})^{\delta}}_{b} |\nabla^{(\eta^{n}_{N})^{\delta}}_{b} p^{n + 1}_{N}|^{2} \le C,
\end{equation*}
for some constant $C$ uniform in $N$, where
$
\mathcal{J}^{(\eta^{n}_{N})^{\delta}}_{b} = \det(\bd{I} + \nabla (\bd{\eta}^{n}_{N})^{\delta}),
$
and
$
\nabla^{(\eta^{n}_{N})^{\delta}}_{b} r = \nabla r \cdot (\bd{I} + \nabla(\bd{\eta}^{n}_{N})^{\delta})^{-1} \ \text{ on } \Omega_{b}.
$
By Assumption 2B \eqref{2B}, we conclude that
\begin{equation*}
(\Delta t) {{\sum_{n = 0}^{N - 1}}} \int_{\Omega_{b}} |\nabla^{(\eta^{n}_{N})^{\delta}}_{b} p^{n + 1}_{N}|^{2} \le C.
\end{equation*}
Since on $\Omega_{b}$, we have that $\nabla p^{n + 1}_{N} = \nabla^{(\eta^{n}_{N})^{\delta}}_{b} p^{n + 1}_{N} \cdot (\bd{I} + \nabla (\bd{\eta}^{n}_{N})^{\delta})$, we use Assumption 2C \eqref{2C}, which implies $|\bd{I} + \nabla (\bd{\eta}^{n}_{N})^{\delta}| \le c_{2}$,  and obtain the estimate
\begin{equation*}
(\Delta t) {{\sum_{n = 0}^{N - 1}}} \int_{\Omega_{b}} |\nabla p^{n + 1}_{N}|^{2} \le |\bd{I} + \nabla (\bd{\eta}^{n}_{N})^{\delta}|^{2} \cdot (\Delta t) {{\sum_{n = 0}^{N - 1}}} \int_{\Omega_{b}} |\nabla^{(\eta^{n}_{N})^{\delta}}_{b} p^{n + 1}_{N}|^{2} \le C,
\end{equation*}
for a constant $C$ independent of $N$. {{Thus, $p_{N}$ is uniformly bounded in $L^{2}(0, T; H^{1}(\Omega_{b}))$, since by the definition of the piecewise constant approximate solution $p_{N}$ in \eqref{approxconstant}, we have that
\begin{equation*}
||p_{N}||^{2}_{L^{2}(0, T; H^{1}(\Omega_{b}))} = (\Delta t) \sum_{n = 0}^{N - 1} \int_{\Omega_{b}} |\nabla p_{N}^{n + 1}|^{2}.
\end{equation*}
}}

\if 1 = 0
Using the cofactor formula for the inverse of a matrix and Assumption 2B, as the determinant appears in the formula for the inverse of a matrix, we conclude that the entries of $(\bd{I} + \nabla (\bd{\eta}^{n}_{N})^{\delta})^{-1}$ are uniformly bounded independently of $n$ and $N$, and hence the matrix norm $|\bd{I} + \nabla (\bd{\eta}^{n}_{N})^{\delta}|^{-1}$ is uniformly bounded independently of $n$ and $N$. This, combined with the fact that $\mathcal{J}^{(\eta^{n}_{N})^{\delta}}$ is uniformly bounded in $n$ and $N$, allows us to conclude that $p_{N}$ is uniformly bounded in $L^{2}(0, T; H^{1}(\Omega_{B}))$. 
\fi 

\end{proof}

The above uniform boundedness result implies the following weak convergence results.

\begin{proposition}\label{prop:weak}
Assume that the three assumptions listed in Lemma \ref{uniform} hold.
Then, there exists a subsequence such that the following weak convergence results hold:
\begin{itemize}
\item $\bd{u}_{N} \rightharpoonup \bd{u}$ weakly* in $L^{\infty}(0, T; L^{2}(\Omega_{f}))$,
\qquad $\bd{u}_{N} \rightharpoonup \bd{u}$ weakly in $L^{2}(0, T; H^{1}(\Omega_{f}))$,
\item $\bd{\eta}_{N} \rightharpoonup \bd{\eta}$ weakly* in $L^{\infty}(0, T; H^{1}(\Omega_{b}))$,
\qquad $\overline{\bd{\eta}}_{N} \rightharpoonup \overline{\bd{\eta}}$ weakly* in $W^{1, \infty}(0, T; L^{2}(\Omega_{b}))$,
\item $p_{N} \rightharpoonup p$ weakly* in $L^{\infty}(0, T; L^{2}(\Omega_{b}))$,
\qquad  $p_{N} \rightharpoonup p$ weakly in $L^{2}(0, T; H^{1}(\Omega_{b}))$, 
\item $\omega_{N} \rightharpoonup \omega$ weakly* in $L^{\infty}(0, T; H_{0}^{2}(\Gamma))$,
\qquad  $\overline{\omega}_{N} \rightharpoonup \overline{\omega}$ weakly* in $W^{1, \infty}(0, T; L^{2}(\Gamma))$.
\end{itemize}
Furthermore, $\bd{\eta} = \bd{\overline{\eta}}$ and $\omega = \overline{\omega}$.
\end{proposition}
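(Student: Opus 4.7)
The plan is to extract convergent subsequences via the Banach--Alaoglu theorem from the uniform bounds in Lemma \ref{uniform}, and then show that the piecewise constant approximations and their linear-interpolation counterparts have the same weak limit by controlling their difference in a suitable norm using $\Delta t$.

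First, I would obtain each weak/weak-$*$ convergence statement as a standard consequence of Banach--Alaoglu. Since the spaces $H^1(\Omega_f)$, $H^1(\Omega_b)$, $L^2$, and $H^2_0(\Gamma)$ are reflexive (and separable), the uniform bounds from Lemma \ref{uniform} yield, along a common subsequence extracted by a diagonal argument, weak convergence $\bd{u}_N\rightharpoonup \bd{u}$ in $L^2(0,T;H^1(\Omega_f))$ and $p_N\rightharpoonup p$ in $L^2(0,T;H^1(\Omega_b))$. For the $L^\infty$-type bounds, I would use that $L^\infty(0,T;X)$ is the dual of $L^1(0,T;X^*)$ whenever $X$ is a separable dual (or reflexive) space, which gives the weak-$*$ limits for $\bd{u}_N$, $\bd{\eta}_N$, $p_N$, $\omega_N$, $\overline{\bd{\eta}}_N$, $\overline{\omega}_N$ in the indicated spaces. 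Passing to a common subsequence a finite number of times preserves all previously extracted weak limits.

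The nontrivial point is to identify the weak limits of the piecewise constant approximations with those of the linear interpolations, namely $\bd{\eta}=\bd{\overline{\eta}}$ and $\omega=\overline{\omega}$. For this I would estimate $\bd{\eta}_N-\bd{\overline{\eta}}_N$ pointwise in time. For $t\in((n-1)\Delta t,n\Delta t]$, by construction \eqref{approxconstant}--\eqref{linearinterpolation} one has $\bd{\eta}_N(t)=\bd{\eta}^n_N$ while $\bd{\overline{\eta}}_N(t)$ lies on the segment joining $\bd{\eta}^{n-1}_N$ and $\bd{\eta}^n_N$, so
\begin{equation*}
\|\bd{\eta}_N(t)-\bd{\overline{\eta}}_N(t)\|_{L^2(\Omega_b)}\leq \|\bd{\eta}^n_N-\bd{\eta}^{n-1}_N\|_{L^2(\Omega_b)}\leq \Delta t\,\|\partial_t\bd{\overline{\eta}}_N\|_{L^\infty(0,T;L^2(\Omega_b))}.
\end{equation*}
Since Lemma \ref{uniform} provides a uniform bound $\|\partial_t\bd{\overline{\eta}}_N\|_{L^\infty(0,T;L^2(\Omega_b))}\leq C$, this yields $\|\bd{\eta}_N-\bd{\overline{\eta}}_N\|_{L^\infty(0,T;L^2(\Omega_b))}\leq C\,\Delta t\to 0$. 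The same argument with $\partial_t\overline{\omega}_N=\zeta_N$ (see \eqref{approxdt}) and the uniform bound on $\overline{\omega}_N$ in $W^{1,\infty}(0,T;L^2(\Gamma))$ gives $\|\omega_N-\overline{\omega}_N\|_{L^\infty(0,T;L^2(\Gamma))}\leq C\,\Delta t\to 0$.

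Finally, testing these strong differences against an arbitrary element of $L^1(0,T;L^2(\Omega_b))$ (respectively $L^1(0,T;L^2(\Gamma))$) and combining with the previously established weak-$*$ convergences in the weaker topology of $L^\infty(0,T;L^2)$ shows that $\bd{\eta}_N$ and $\bd{\overline{\eta}}_N$ share the same weak-$*$ limit in $L^\infty(0,T;L^2(\Omega_b))$, and similarly for $\omega_N,\overline{\omega}_N$. By uniqueness of weak-$*$ limits, we conclude $\bd{\eta}=\bd{\overline{\eta}}$ a.e.\ and $\omega=\overline{\omega}$ a.e. I expect no substantive obstacle here; the only subtlety is bookkeeping which uniform bounds from Lemma \ref{uniform} justify which weak-$*$ compactness claim, which is routine once the relevant spaces are identified as duals of separable spaces.
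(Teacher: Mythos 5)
Your proposal is correct and follows essentially the route the paper intends: the paper simply asserts that Proposition \ref{prop:weak} follows from the uniform bounds of Lemma \ref{uniform}, i.e.\ weak/weak-$*$ compactness in the stated (separable, reflexive) Bochner spaces plus a diagonal extraction, which is exactly what you carry out. Your identification of the limits $\bd{\eta}=\overline{\bd{\eta}}$ and $\omega=\overline{\omega}$ via the $O(\Delta t)$ bound on $\|\bd{\eta}_N-\overline{\bd{\eta}}_N\|_{L^\infty(0,T;L^2(\Omega_b))}$ and $\|\omega_N-\overline{\omega}_N\|_{L^\infty(0,T;L^2(\Gamma))}$, using $\partial_t\overline{\bd{\eta}}_N$, $\partial_t\overline{\omega}_N=\zeta_N$ bounded in $L^\infty(0,T;L^2)$, is the standard argument (as in the cited splitting-scheme literature) and closes the proof.
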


To use these results and to be able to construct approximate solutions, it is essential to show that the assumptions from  Lemma \ref{uniform} hold. 
This is given by the following lemma.

\begin{lemma}\label{assumptions}
Suppose that the initial data satisfies $|\omega_{0}| \le R_{0} < R$ for some $R_{0}$, and suppose that $\bd{\eta}_{0}$ has the property that $\text{Id} + (\bd{\eta}_{0})^{\delta}$ is invertible with $\det(\bd{I} + \nabla(\bd{\eta}_{0})^{\delta})\ge c_0 > 0$ on $\Omega_{b}$ for some positive constant $c_{0}$. 
Then,{ \bf{there exists a sufficiently small time $T > 0$}}  such that for all $N$, all three assumptions in Lemma \ref{uniform} hold and the splitting scheme is well defined until time $T$.
\end{lemma}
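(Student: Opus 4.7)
I would argue by induction on $n$ via a standard bootstrap/continuity argument, exploiting the fact that the three assumptions in Lemma~\ref{uniform} are exactly the hypotheses needed to run the subproblem existence Lemma~\ref{Subproblem2} and to derive the discrete energy identities \eqref{energyeq1}--\eqref{energyeq2}. The strategy: choose strictly better constants at $n=0$ using the initial data assumptions, so that a uniform slack is available, then use the discrete energy bounds together with the smoothing properties of the convolution $\sigma_{\delta}$ to show that over a short time $T>0$ (independent of $N$ but possibly depending on $\delta$) the constants cannot drift out of the slack.

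\textbf{Base case.} Since $|\omega_{0}| \le R_{0} < R$ and $\det(\bd{I}+\nabla \bd{\eta}_{0}^{\delta}) \ge c_0 > 0$, we set $R_{\max} := (R_{0}+R)/2$ and, using continuity of $\bd{\eta}_{0}^{\delta}$ on the compact set $\overline{\Omega}_{b}$, we pick constants $c_{0}',c_{1}',c_{2}'$ with $\det(\bd{I}+\nabla \bd{\eta}_{0}^{\delta}) \ge 2c_{0}'$, $|(\bd{I}+\nabla \bd{\eta}_{0}^{\delta})^{-1}|\le c_{1}'/2$, and $|\bd{I}+\nabla \bd{\eta}_{0}^{\delta}|\le c_{2}'/2$ pointwise on $\overline{\Omega}_{b}$. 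Moreover $|\bd{\eta}_{0}^{\delta}|_{\hat{\Gamma}}| \le R_{\max}/2$ after a possible further shrinking. These strict inequalities will be the targets of the induction.

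\textbf{Inductive step.} Assume all three assumptions hold with the above constants for $n=0,1,\ldots,k$, where $(k+1)\Delta t \le T^{*}$. Then Lemma~\ref{Subproblem2} produces $(\bd{u}^{k+1}_{N},\bd{\eta}^{k+1}_{N},p^{k+1}_{N})$, the discrete energy identities apply, and, as in the proof of Lemma~\ref{uniform}, one gets an $N$-independent bound $E^{k+1}_{N}+\sum_{n\le k+1} D^{n}_{N}\le C_{0}$ depending only on the initial energy. From the linear-interpolation bounds $\bd{\overline{\eta}}_{N}\in W^{1,\infty}(0,T^{*};L^{2}(\Omega_{b}))$ and $\overline{\omega}_{N}\in W^{1,\infty}(0,T^{*};L^{2}(\Gamma))$, together with the uniform $L^{\infty}(0,T^{*};H^{2}_{0}(\Gamma))$ bound on $\omega_{N}$, interpolation and the $1$D embedding $H^{1}(\Gamma)\hookrightarrow C(\Gamma)$ give
\begin{equation*}
\|\omega_{N}(t)-\omega_{0}\|_{C(\Gamma)} \le C\, t^{1/2}, \qquad t\in[0,T^{*}].
\end{equation*}
For the Biot-side assumptions, Young's inequality on the convolution yields, for any multi-index $\alpha$,
\begin{equation*}
\|\partial^{\alpha}(\bd{\eta}_{N}^{\delta})(t)-\partial^{\alpha}\bd{\eta}_{0}^{\delta}\|_{C(\overline{\Omega}_{b})} \le \|\partial^{\alpha}\sigma_{\delta}\|_{L^{2}}\,\|\bd{\eta}_{N}(t)-\bd{\eta}_{0}\|_{L^{2}(\widetilde{\Omega}_{b})} \le C_{\delta}\, t,
\end{equation*}
where I used the uniform $L^{\infty}(0,T^{*};L^{2})$ bound on $\partial_{t}\bd{\overline{\eta}}_{N}$ and the fact that the odd extension is bounded in $L^{2}(\widetilde{\Omega}_{b})$ by a multiple of $\|\bd{\eta}\|_{L^{2}(\Omega_{b})}$. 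By continuity of the determinant, matrix inverse, matrix norm, and trace on $\hat{\Gamma}$ as functions of $\nabla \bd{\eta}^{\delta}$, each of the three quantities in Assumptions 1B--2C can be kept within half of its initial slack provided $T^{*}$ is chosen small enough so that $C_{\delta}T^{*}$ is controlled. Choosing such a $T$ once and for all (independent of $N$ and $k$) closes the induction and also justifies the invertibility needed in the $\alpha$-terms via Ciarlet's injectivity theorem as noted in Remark~\ref{invertible}.

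\textbf{Main obstacle.} The only delicate point is decoupling the constants: the slack chosen at $n=0$ depends on $\bd{\eta}_{0}^{\delta}$ (hence on $\delta$), and the drift constant $C_{\delta}$ in the convolution estimate also depends on $\delta$. Both remain finite for fixed $\delta$, so the resulting $T>0$ is positive and, crucially, $N$-independent, which is exactly what the lemma asserts. The $\delta$-dependence of $T$ is inherent and consistent with Theorem~\ref{MainThm1}, and it is precisely why the weak-classical consistency proof in Section~\ref{weakstrong} must perform a separate bootstrap argument to obtain a $\delta$-uniform time of existence.
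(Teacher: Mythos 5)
Your proposal is correct and follows essentially the same route as the paper: uniform discrete energy bounds, the interpolation/H\"older-in-time estimate for $\overline{\omega}_{N}$ giving $\|\omega_{N}(t)-\omega_{0}\|_{C(\Gamma)}\le Ct^{1/2}$, and the convolution's smoothing (Young's inequality, equivalently the paper's $H^{3}$-plus-embedding step) converting the $O(t)$ drift of $\bd{\eta}_{N}$ in $L^{2}(\tilde{\Omega}_{b})$ into a uniform $C^{1}$ drift of $\bd{\eta}_{N}^{\delta}$ of size $C(\delta,E_{0})t$, closed by continuity of the determinant and matrix norms and a small choice of $T$ independent of $N$. The only small correction: the odd extension of Definition \ref{extension} involves the plate trace, so the bound on $\|\bd{\eta}_{N}(t)-\bd{\eta}_{0}\|_{L^{2}(\tilde{\Omega}_{b})}$ requires $\|\bd{\eta}_{N}(t)-\bd{\eta}_{0}\|_{L^{2}(\Omega_{b})}+\|\omega_{N}(t)-\omega_{0}\|_{L^{2}(\Gamma)}$ rather than the interior norm alone, which your Lipschitz estimate on $\overline{\omega}_{N}$ already controls.
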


\begin{proof}
First, notice that the assumptions on the initial data immediately imply 
that the three assumptions from Lemma \ref{uniform} hold for the initial data, i.e., for $n=0$.
In particular, there exist constants $\alpha_{0}$, $\alpha_{1}$, and $\alpha_{2}$ such that
\begin{equation}\label{initial2C}
\det(\bd{I} + \nabla(\bd{\eta}_{0})^{\delta}) \ge \alpha_{0} > 0,
\end{equation}
\begin{equation}\label{initial2B}
|\bd{I} + \nabla(\bd{\eta}_{0})^{\delta}| \ge \alpha_{1} > 0, \qquad |(\bd{I} + \nabla(\bd{\eta}_{0})^{\delta})^{-1}| \ge \alpha_{2} > 0.
\end{equation}
This is because $\det(\bd{I} + \nabla(\bd{\eta}_{0})^{\delta})$, $|\bd{I} + \nabla (\bd{\eta}_{0})^{\delta}|$, and $|(\bd{I} + \nabla(\bd{\eta}_{0})^{\delta})^{-1}|$ are positive continuous functions on the compact set $\overline{\Omega_{b}}$.

Next, we want to define an appropriate time $T>0$ such that the three assumptions hold uniformly for all $N$ and $n\Delta t$ up to time $T$. To do this, we use the energy estimates. Define the initial energy determined by the initial data by $E_{0}$. Then, by the uniform energy estimates, we have that 
{{\begin{equation*}
E^{k + \frac{1}{2}}_{N} \le E_{0}, \qquad E^{k + 1}_{N} \le E_{0}, \qquad \text{ for all } k = 0, 1, ..., N - 1.
\end{equation*}}}
Therefore, after completing both subproblems of the scheme on the time step $[k\Delta t, (k + 1)\Delta t]$, we obtain that 
\begin{equation}\label{energybound1}
||\bd{\dot{\eta}}^{n}_{N}||_{L^{2}(\Omega_{b})} \le C, \qquad \text{ for } n = 0, 1, ..., k + 1,
\end{equation}
\begin{equation}\label{energybound2}
||\omega^{n + \frac{1}{2}}_{N}||_{H_{0}^{2}(\Gamma)} \le C, \qquad \text{ for } n = 0, 1, ..., k,
\end{equation}
\begin{equation}\label{energybound3}
||\zeta^{n + \frac{i}{2}}_{N}||_{L^{2}(\Gamma)} \le C, \qquad \text{ for } 0 \le n + \frac{i}{2} \le k + 1 \qquad \text{ and } i = 0, 1,
\end{equation}
for a constant $C$ depending only on the initial energy $E_{0}$. 

\noindent \textbf{{{Step 1 (Uniform bound on the plate displacements $\omega^{n - \frac{1}{2}}_{N}$)}}.} We first find a condition on $T$ such that Assumption 1B \eqref{1Anew}  is satisfied. Suppose that the linear interpolation $\overline{\omega}_{N}$ is {{defined up to time $(k + 1)\Delta t$, where we recall that the linear interpolation is defined via \eqref{linearinterpolation}}}. Then, by \eqref{energybound2} and \eqref{energybound3} {{and the fact that $\partial_{t} \overline{\omega}_{N} = \zeta_{N}$ from \eqref{approxdt}}}, we have
\begin{equation}\label{Lipschitzomega}
||\overline{\omega}_{N}||_{W^{1, \infty}(0, (k + 1)\Delta t; L^{2}(\Gamma))} \le C,
\end{equation}
\begin{equation}\label{boundedomega}
||\overline{\omega}_{N}||_{L^{\infty}(0, (k + 1)\Delta t; H_{0}^{2}(\Gamma))} \le C,
\end{equation}
where $C$ depends only on $E_{0}$ and is independent of $N$. Thus, following the method in \cite{MuhaCanic13} {{(see in particular equation (73) in \cite{MuhaCanic13})}}, we obtain by an interpolation inequality that for all $t, t + \tau \in [0, (k + 1)\Delta t]$ with $\tau > 0$, 
\begin{equation}\label{interpolation}
||\overline{\omega}_{N}(t + \tau) - \overline{\omega}_{N}(t)||_{H^{1}(\Gamma)} \le C||\overline{\omega}_{N}(t + \tau) - \overline{\omega}_{N}(t)||^{1/2}_{L^{2}(\Gamma)} ||\overline{\omega}_{N}(t + \tau) - \overline{\omega}_{N}(t)||^{1/2}_{H^{2}(\Gamma)}.
\end{equation}
Here, we used a Sobolev interpolation inequality, see for example Theorem 4.17 (pg.~79) of \cite{Adams}. {{By the Lipschitz continuity of $\overline{\omega}_{N}$ taking values in $L^{2}(\Gamma)$ given by \eqref{Lipschitzomega} and by the boundedness of $\overline{\omega}_{N}$ in $H_{0}^{2}(\Gamma)$ given by \eqref{boundedomega},}}
\begin{equation}\label{holderplate}
||\overline{\omega}_{N}(t + \tau) - \overline{\omega}_{N}(t)||_{H^{1}(\Gamma)} \le C \cdot \tau^{1/2} 
\end{equation}
for a constant $C$ depending only on $E_{0}$ (and in particular, not depending on $k$ or $N$). Therefore, setting $t = 0$ and $\tau = (k + 1)\Delta t$ and using the continuous embedding of $H^{1}(\Gamma)$ into $C(\Gamma)$ {(see e.g. \cite[Chapter V]{Adams} for Sobolev embeddings)},
\begin{equation}\label{estT1}
||\omega^{k + 1}_{N} - \omega_{0}||_{C(\Gamma)} \le C \cdot [(k + 1){{\Delta t}}]^{1/2} \le C \cdot T^{1/2},
\end{equation}
where $C$ depends only $E_{0}$. Because $|\omega_{0}| < R$, we can choose $T > 0$ sufficiently small so that 
\begin{equation}\label{Tchoice1}
C \cdot T^{1/2} < R - ||\omega_{0}||_{C(\Gamma)}.
\end{equation}
This will give the first part of Assumption 1B, which is \eqref{1Anew}. 

\medskip

\noindent {{\textbf{Step 2 (Bound on the trace of the Biot displacements $\bd{\eta}^{n}_{N}$ and the Lagrangian map)}}.} Next, we find a condition on $T$ so that the remaining assumptions \eqref{1B}, \eqref{2B}, and \eqref{2C} are satisfied. 
We do this by controlling the behavior of the structure displacement $\bd{\eta}$. First note that 
\begin{equation*}
||\bd{\eta}^{k + 1}_{N} - \bd{\eta}_{0}||_{L^{2}(\Omega_{b})} \le (\Delta t) \sum_{n = 1}^{k + 1} ||\bd{\dot{\eta}}^{n}_{N}||_{L^{2}(\Omega_{b})} \le C (k + 1)(\Delta t) \le CT,
\end{equation*}
for $C$ depending only on $E_{0}${{, where the first inequality follows from the triangle inequality and the definition of $\displaystyle \dot{\bd{\eta}}^{n}_{N} = \frac{\eta^{n}_{N} - \eta^{n - 1}_{N}}{\Delta t}$, and the second inequality follows from \eqref{energybound1}}}. By the odd extension defined in Definition \ref{extension},
\begin{equation*}
||\bd{\eta}^{k + 1}_{N} - \bd{\eta}_{0}||_{L^{2}(\tilde{\Omega}_{b})} \le C\left(||\bd{\eta}^{k + 1}_{N} - \bd{\eta}_{0}||_{L^{2}(\Omega_{b})} + ||\omega^{k + 1}_{N} - \omega_{0}||_{L^{2}(\Gamma)}\right) \le CT,
\end{equation*}
for a constant $C$ depending only on $E_{0}$, where the estimate $||\omega^{k + 1}_{N} - \omega_{0}||_{L^{2}(\Gamma)} \le CT$ follows from the {{Lipschitz estimate}} \eqref{Lipschitzomega}. By regularization, we then have that for a constant depending only on $\delta$ and $E_{0}$,
\begin{equation*}
||(\bd{\eta}_{N}^{k + 1})^{\delta} - (\bd{\eta}_{0})^{\delta}||_{H^{3}(\Omega_{b})} \le C(\delta, E_{0}) \cdot T.
\end{equation*}
By using the trace theorem and the continuous embedding of $H^{2}(\Gamma)$ into $C(\Gamma)$, we thus conclude that
\begin{equation}\label{estT2}
||(\bd{\eta}_{N}^{k + 1})^{\delta}|_{\Gamma} - (\bd{\eta}_{0})^{\delta}|_{\Gamma}||_{C(\Gamma)} \le C(\delta, E_{0}) \cdot T.
\end{equation}
Since $H^{2}(\Omega_{b})$ embeds continuously into $C(\Omega_{b})$, we also have that 
\begin{equation}\label{estT3}
||\nabla (\bd{\eta}_{N}^{k + 1})^{\delta} - \nabla (\bd{\eta}_{0})^{\delta}||_{C(\Omega_{b})} \le C(\delta, E_{0}) \cdot T.
\end{equation}

Note that $\det(\bd{I} + \bd{A})$ is a continuous function of the entries of $\bd{A}$. Also note that the matrix norms $|\bd{I} + \bd{A}|$ and $|(\bd{I} + \bd{A})^{-1}|$ are continuous functions of the matrix $\bd{A}$. Furthermore, we emphasize that the constant $C(\delta, E_{0})$ depends only on $\delta$ and $E_{0}$ and hence is independent of $k$ and $N$. This dependence on $\delta$ is allowable, since for this existence proof, $\delta$ is an arbitrary but fixed regularization parameter.

Thus, there exists $T$ sufficiently small so that by \eqref{estT2} and \eqref{estT3}, the remaining assumptions \eqref{1B}, \eqref{2B}, and \eqref{2C} are satisfied, since these assumptions are all satisfied for the initial displacement $\bd{\eta}_{0}$. Furthermore, we can choose the constants $c_{0}$, $c_{1}$, $c_{2}$, and $R_{max}$ (defined in the statement of those assumptions) independently of $N$ and $n = 0, 1, ..., N$, because of the fact that the constant $C(\delta, E_{0})$ in our estimates does not depend on $k$ (satisfying $(k + 1)\Delta t \le T$) or $N$. 
\end{proof}

\section{Compactness arguments}\label{compactness}

We next want to pass to the limit in the semidiscrete formulation for the approximate solutions, stated in \eqref{semi1} and \eqref{semi2}. Because this is a nonlinear problem with geometric nonlinearities, we must obtain stronger convergence than just weak and weak* convergence in Proposition \ref{prop:weak}, in order to pass to the limit. To do this, we will use compactness arguments of two types: the classical Aubin-Lions compactness theorem for functions defined on fixed domains, and {\emph{generalized Aubin-Lions compactness arguments}} introduced in \cite{AubinLions} for functions defined on moving domains, see also \cite{MuhaCanic13}. We will first deal with compactness arguments for the plate displacement and the Biot domain displacement. Then, we will deal with compactness arguments for the fluid velocity defined on moving domains.

\subsection{Compactness for Biot poroelastic medium displacement}

We show strong convergence of the Biot structure displacements $\overline{\bd{\eta}}_{N}$ by using a standard Aubin-Lions compactness argument. In particular, we have the following strong convergence result for the Biot medium displacement: 

\begin{lemma}\label{AubinLions}
The following compact embedding holds true
$
W^{1, \infty}(0, T; L^{2}(\Omega_{b})) \cap L^{\infty}(0, T; H^{1}(\Omega_{b})) \subset \subset C(0, T; L^{2}(\Omega_{b})),
$
which implies the existence of a subsequence such that 
$$\bd{\overline{\eta}}_{N} \to \bd{\eta} \ {\rm{strongly\  in}} \  C(0, T; L^{2}(\Omega_{b})).$$
\end{lemma}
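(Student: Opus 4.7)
The plan is to deduce this compact embedding from the Arzela-Ascoli theorem in $C([0,T];L^{2}(\Omega_{b}))$ combined with the Rellich-Kondrachov compact embedding $H^{1}(\Omega_{b}) \subset\subset L^{2}(\Omega_{b})$. To this end I would verify the two standard hypotheses for a family in $C([0,T];L^{2}(\Omega_{b}))$ to be precompact: uniform equicontinuity in time and pointwise precompactness in $L^{2}(\Omega_{b})$.

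For equicontinuity, the uniform bound on $\bd{\overline{\eta}}_{N}$ in $W^{1,\infty}(0,T;L^{2}(\Omega_{b}))$ established in Lemma~\ref{uniform} (which applies to the linear interpolant because $\partial_{t}\bd{\overline{\eta}}_{N}$ coincides with the discrete velocity $\bd{\dot{\eta}}^{n}_{N}$ on each subinterval) gives, via the fundamental theorem of calculus, the uniform Lipschitz estimate $\|\bd{\overline{\eta}}_{N}(t) - \bd{\overline{\eta}}_{N}(s)\|_{L^{2}(\Omega_{b})} \le C|t-s|$ with $C$ independent of $N$. For pointwise precompactness, fix $t \in [0,T]$: since $\bd{\overline{\eta}}_{N}(t)$ is a convex combination of two consecutive $\bd{\eta}^{n}_{N}$, the $L^{\infty}(0,T;H^{1}(\Omega_{b}))$ bound from Lemma~\ref{uniform} together with convexity of the $H^{1}$-norm yields a uniform $H^{1}(\Omega_{b})$ bound on $\bd{\overline{\eta}}_{N}(t)$, and Rellich-Kondrachov then furnishes precompactness of $\{\bd{\overline{\eta}}_{N}(t)\}_{N}$ in $L^{2}(\Omega_{b})$.

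Arzela-Ascoli then produces a subsequence of $\bd{\overline{\eta}}_{N}$ converging strongly in $C([0,T];L^{2}(\Omega_{b}))$ to some limit $\tilde{\bd{\eta}}$. To identify $\tilde{\bd{\eta}}$ with $\bd{\eta}$ I would invoke uniqueness of limits: strong convergence in $C([0,T];L^{2}(\Omega_{b}))$ implies weak convergence in $L^{2}(0,T;L^{2}(\Omega_{b}))$, which by Proposition~\ref{prop:weak} must agree with the weak$^{*}$ limit $\bd{\eta} = \bd{\overline{\eta}}$ already extracted there. The argument is essentially a textbook application of Aubin-Lions-Simon type reasoning and I anticipate no real obstacle; the only mild subtlety is ensuring the uniform bounds pass from the piecewise constant approximants $\bd{\eta}_{N}$ to the linear interpolants $\bd{\overline{\eta}}_{N}$, which is immediate from convexity of norms.
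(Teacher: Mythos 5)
Your argument is correct, but it takes a more self-contained route than the paper. The paper's proof is essentially a citation: it observes that the embedding is the $p=\infty$ case of the standard Aubin--Lions(--Simon) compactness lemma, which yields compactness into $C(0,T;L^{2}(\Omega_{b}))$ rather than merely $L^{\infty}(0,T;L^{2}(\Omega_{b}))$, and then applies it to the uniformly bounded sequence $\overline{\bd{\eta}}_{N}$. You instead re-derive that compactness from first principles: the $W^{1,\infty}(0,T;L^{2})$ bound gives uniform Lipschitz equicontinuity in time, the $L^{\infty}(0,T;H^{1})$ bound together with Rellich--Kondrachov gives pointwise precompactness of the time slices in $L^{2}(\Omega_{b})$, and Arzel\`a--Ascoli for Banach-space-valued functions concludes. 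This is exactly the mechanism underlying Simon's theorem in the $p=\infty$ case, so the two proofs are mathematically equivalent; what your version buys is transparency (no black-box citation) and it mirrors the Arzel\`a--Ascoli argument the paper itself uses later for the plate displacement in Proposition~\ref{platedisplacement}. You also handle two small points the paper leaves implicit and does so correctly: the transfer of the $H^{1}$ bound from the piecewise-constant approximants $\bd{\eta}_{N}$ to the linear interpolants $\overline{\bd{\eta}}_{N}$ via convexity of the norm, and the identification of the Arzel\`a--Ascoli limit with the weak$^{*}$ limit $\bd{\eta}$ of Proposition~\ref{prop:weak} by uniqueness of limits. No gap; the only cosmetic remark is that your argument directly proves precompactness of the particular bounded sequence, which is all the lemma is used for, and the abstract embedding statement follows by the same reasoning applied to an arbitrary bounded set.
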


\begin{proof}
The compact embedding above is a direct consequence of the standard Aubin-Lions compactness lemma in the case of $p = \infty$, which gives a stronger compact embedding into $C(0, T; L^{2}(\Omega_{b}))$ rather than just $L^{\infty}(0, T; L^{2}(\Omega_{b}))$. The fact that we can find a strongly convergent subsequence follows from this compact embedding, once we recall that $\{\overline{\bd{\eta}}_{N}\}_{N = 1}^{\infty}$ are uniformly bounded in the Banach space $W^{1, \infty}(0, T; L^{2}(\Omega_{b})) \cap L^{\infty}(0, T; H^{1}(\Omega_{b}))$ by the uniform energy estimates. 
\end{proof}

\subsection{Compactness for the plate displacement}
The uniform boundedness of the linear interpolation of the plate displacement $\overline{\omega}_{N}$ 
 in $W^{1, \infty}(0, T; L^{2}(\Gamma))$ and $L^{\infty}(0, T; H_{0}^{2}(\Gamma))$ implies {{strong convergence of $\overline{\omega}_{N}$ in $C(0, T; H^{s}(\Gamma))$. Even though the plate displacements are uniformly bounded in $L^{\infty}(0, T; H_{0}^{2}(\Gamma))$ we only get convergence in $C(0, T; H^{s}(\Gamma))$ for $0<s<2$. This is because we will be using Arzela-Ascoli theorem and hence we will lose regularity due to the compact embedding of $H^{2}(\Gamma)$ into $H^{s}(\Gamma)$ for $0 < s < 2$. The precise statement of the compactness results for the approximate plate displacements is as follows:}}

\begin{proposition}\label{platedisplacement}
Given arbitrary $0 < s < 2$, there exists a subsequence such that the following strong convergences hold:
\begin{equation*}
\overline{\omega}_{N} \to \omega, \qquad \text{ in } C(0, T; H^{s}(\Gamma)),
\end{equation*}
\begin{equation*}
\omega_{N} \to \omega, \qquad \text{ in } L^{\infty}(0, T; H^{s}(\Gamma)). 
\end{equation*}
\end{proposition}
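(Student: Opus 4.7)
The plan is to prove the two convergence statements by combining an Arzela-Ascoli argument in $C(0,T;H^s(\Gamma))$ for the linear interpolation $\overline{\omega}_N$, and then transferring the convergence to the piecewise-constant approximant $\omega_N$ via an elementary control on $\|\omega_N - \overline{\omega}_N\|_{H^s}$ of order $(\Delta t)^{1-s/2}$.

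For the first convergence, I would apply the vector-valued Arzela-Ascoli theorem to the family $\{\overline{\omega}_N\}$ viewed as functions in $C([0,T];H^s(\Gamma))$. Two ingredients are required. Pointwise relative compactness is immediate from the uniform bound $\|\overline{\omega}_N(t)\|_{H_0^2(\Gamma)} \le C$ established in Lemma \ref{uniform} together with the compact Sobolev embedding $H_0^2(\Gamma) \hookrightarrow\hookrightarrow H^s(\Gamma)$ valid for every $0<s<2$, since $\Gamma$ is one-dimensional. For equicontinuity in time, I would refine the argument already used to derive \eqref{holderplate}: interpolating between the Lipschitz bound $\|\overline{\omega}_N(t+\tau)-\overline{\omega}_N(t)\|_{L^2(\Gamma)} \le C\tau$ (from $\overline{\omega}_N\in W^{1,\infty}(0,T;L^2(\Gamma))$) and the uniform bound in $H^2(\Gamma)$, the inequality
\begin{equation*}
\|\overline{\omega}_N(t+\tau)-\overline{\omega}_N(t)\|_{H^s(\Gamma)} \le C\,\|\overline{\omega}_N(t+\tau)-\overline{\omega}_N(t)\|_{L^2(\Gamma)}^{1-s/2}\,\|\overline{\omega}_N(t+\tau)-\overline{\omega}_N(t)\|_{H^2(\Gamma)}^{s/2} \le C\,\tau^{1-s/2}
\end{equation*}
follows, yielding Hölder-in-time equicontinuity with an exponent $1-s/2>0$ that is uniform in $N$. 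Arzela-Ascoli then delivers a subsequence $\overline{\omega}_N \to \omega$ strongly in $C([0,T];H^s(\Gamma))$, and the limit must coincide with the one obtained by weak-* convergence in Proposition \ref{prop:weak} by uniqueness of limits.

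For the second convergence, I would compare $\omega_N$ and $\overline{\omega}_N$ directly. On each subinterval $((n-1)\Delta t,n\Delta t]$, the difference $\omega_N - \overline{\omega}_N$ equals $\omega_N^{n-1/2}$ minus the linear interpolant between $\omega_N^{n-3/2}$ and $\omega_N^{n-1/2}$, so its $L^2(\Gamma)$ norm is bounded by $\|\omega_N^{n-1/2}-\omega_N^{n-3/2}\|_{L^2(\Gamma)} = \Delta t\,\|\zeta_N^{n-1/2}\|_{L^2(\Gamma)} \le C\,\Delta t$, uniformly in $n$ and $N$ by the discrete energy estimate. Since both $\omega_N$ and $\overline{\omega}_N$ are uniformly bounded in $L^\infty(0,T;H^2(\Gamma))$, the same Sobolev interpolation gives
\begin{equation*}
\|\omega_N - \overline{\omega}_N\|_{L^\infty(0,T;H^s(\Gamma))} \le C\,(\Delta t)^{1-s/2} \to 0.
\end{equation*}
Combining this with the first convergence yields $\omega_N \to \omega$ strongly in $L^\infty(0,T;H^s(\Gamma))$.

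There is no essential obstacle here; the only subtlety is the loss of half a derivative forced by Arzela-Ascoli (hence only $s<2$ rather than $s=2$), and the need to pair the two estimates correctly so that the same subsequence works for both $\overline{\omega}_N$ and $\omega_N$. Since the subsequence selection happens at the $\overline{\omega}_N$ stage and the difference estimate is deterministic (no further extraction needed), the two statements hold along a common subsequence.
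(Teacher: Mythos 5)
Your proof is correct and follows essentially the same route as the paper: Sobolev interpolation between the uniform Lipschitz-in-time $L^{2}$ bound and the uniform $H^{2}$ bound gives Hölder equicontinuity of $\overline{\omega}_{N}$, Arzela--Ascoli with the compact embedding into $H^{s}$ yields the convergence in $C(0,T;H^{s}(\Gamma))$, and the comparison $\|\omega_{N}-\overline{\omega}_{N}\|_{L^{\infty}(0,T;H^{s}(\Gamma))}\le C(\Delta t)^{1-s/2}$ transfers it to $\omega_{N}$. The only differences (stating equicontinuity directly in $H^{s}$ rather than in $H^{2\alpha}$ with a subsequent compact embedding, and bounding the jump via $\Delta t\|\zeta_{N}^{n-1/2}\|_{L^{2}}$ instead of the Hölder-in-time estimate) are cosmetic.
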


\begin{proof}
Using the same argument as in Step 1 of the proof of Lemma \ref{assumptions}, one can show
the following uniform estimate for
the linear interpolations $\overline{\omega}_{N}$ and  $\tau > 0$, $t, t + \tau \in [0, T]$:
{{
\begin{equation}
\begin{array}{rl}\label{holderest8.1}
||\overline{\omega}_{N}(t + \tau) - \overline{\omega}_{N}(t)||_{H^{2\alpha}(\Gamma)} &\le ||\overline{\omega}_{N}(t + \tau) - \overline{\omega}_{N}(t)||^{1 - \alpha}_{L^{2}(\Gamma)} ||\overline{\omega}_{N}(t + \tau) - \overline{\omega}_{N}(t)||^{\alpha}_{H^{2}(\Gamma)} \\
&\le C\tau^{1 - \alpha}, \qquad \text{ for } 0 < \alpha < 1,
\end{array}
\end{equation}}}
where the constant $C$ is independent of $N$, but can depend on the choice of $\alpha$. {{The first inequality in \eqref{holderest8.1} follows from an interpolation estimate for Sobolev spaces (see Theorem 4.17, pg.~79 of \cite{Adams}) and as in the proof of estimate \eqref{interpolation} from Lemma \ref{assumptions}, the second inequality follows from the uniform Lipschitz estimate \eqref{Lipschitzomega} and the uniform boundedness estimate \eqref{boundedomega}.}}
Because {{the constant $C$ in \eqref{holderest8.1}}} is independent of $N$, {{this}} estimate implies that for a given arbitrary $\alpha \in (0, 1)$, the functions $\overline{\omega}_{N}$ are uniformly bounded as functions in $C^{0, 1 - \alpha}(0, T; H^{2\alpha}(\Gamma))$. Hence, the strong convergence of $\overline{\omega}_{N}$ follows directly from the Arzela-Ascoli theorem and the fact that $H^{2\alpha}$ embeds compactly into any $H^{2\alpha - \epsilon}$ for $\epsilon > 0$, once we choose $\alpha \in (0, 1)$ and $\epsilon > 0$ appropriately so that $2\alpha - \epsilon = s$ for a given arbitrary $0 < s < 2$. Hence, we obtain the desired strong convergence, as the equicontinuity condition for the Arzela-Ascoli theorem follows from the above estimate. 

To show a similar strong convergence result for $\omega_{N}$, we must show that
\begin{equation*}
||\omega_{N}(t) - \overline{\omega}_{N}(t)||_{L^{\infty}(0, T; H^{s}(\Gamma))} \to 0,
\end{equation*}
for arbitrary $0 < s < 2$. Once we observe that $\overline{\omega}_{N}(n\Delta t) = \omega_{N}(t)$ for $n\Delta t \le t < (n + 1)\Delta t$, this follows immediately from the above {{H\"{o}lder continuity estimate \eqref{holderest8.1}}}, as
\begin{equation*}
||\omega_{N}(t) - \overline{\omega}_{N}(t)||_{L^{\infty}(0, T; H^{s}(\Gamma))} \le C(\Delta t)^{1 - \frac{s}{2}} \to 0, \qquad \text{ as } N \to \infty.
\end{equation*}
Thus, $\omega_{N}$ and $\overline{\omega}_{N}$ have the same limit in $L^{\infty}(0, T; H^{s}(\Gamma))$ for $0 < s < 2$. 

\end{proof}

Next, we will obtain compactness for the Biot velocity, plate velocity, pore pressure, and fluid velocity. Because the test space \eqref{semitestspace} has the pore pressure and fluid velocity decoupled from the Biot/plate velocity, we can handle the compactness argument for each of these quantities separately.
In particular, we recall the definition of the discrete test space from \eqref{semitestspace}
and note that we can decouple this test space into three smaller test spaces, one for the Biot/plate displacement/velocity, one for the pore pressure, and one for the fluid velocity. In the next section we show compactness results for the Biot velocity and plate velocity, which must be treated together since they are coupled by a kinematic coupling condition at the plate interface $\Gamma$.

\subsection{Compactness for the Biot velocity and plate velocity}

{{Here, we will state and prove a  compactness result for the Biot and plate velocities $(\bd{\xi}_{N}, \zeta_{N})$, by showing the existence of convergent subsequences that converge in $L^{2}(0, T; H^{-s}(\Omega_{b}) \times H^{-s}(\Gamma))$ for $-1/2 < s < 0$. We remark that we must consider negative spatial Sobolev spaces for the Biot/plate velocities for the following two reasons:
\begin{itemize}
\item First, our existence result in Theorem \ref{MainThm1} includes the purely elastic case in which the viscoelasticity coefficients $\mu_{v}, \lambda_{v}$ are allowed to be zero. Hence, we can only expect the Biot velocities in the finite-energy spaces to have spatial regularity of at most $L^{2}(\Omega_{b})$. 
\item {{Second, for the plate velocities $\zeta_{N}$, we must consider negative spatial Sobolev spaces on $\Gamma$ since by the coupling conditions \eqref{mass} and \eqref{bjs}, it is {\bf{not true}} that the plate velocities {{$\zeta_{N}$}} {{are equal to the}} traces of the fluid velocities 
$\bd{u}_{N} \in H^{1}(\Omega_{f})$, which is typically the case in FSI with purely elastic structures and no-slip condition. Therefore, we do not get any higher regularity of the plate velocities than what we get from the finite energy spaces, which implies that
 the plate velocities $\zeta_{N}$ are only at most $L^{2}(\Gamma)$.}}
\end{itemize}
The main compactness result for the Biot/plate velocities is as follows:}}

\begin{theorem}\label{compactvelocities}
For $-1/2 < s < 0$, there exists a subsequence such that
\begin{equation*}
(\bd{\xi}_{N}, \zeta_{N}) \to (\bd{\xi}, \zeta) \  \text{ strongly in } L^{2}(0, T; H^{-s}(\Omega_{b}) \times H^{-s}(\Gamma)).
\end{equation*}
\end{theorem}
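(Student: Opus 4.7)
The plan is to apply the Dreher-Jüngel compactness theorem \cite{DreherJungel}, which is tailored to sequences of piecewise constant-in-time approximations for which one only has uniform control on a discrete difference quotient in a weak topology, rather than a genuine Bochner-space bound on the time derivative. From the energy equalities in Lemma \ref{discreteenergyestimates}, the pair $(\bd{\xi}_N, \zeta_N)$ is uniformly bounded in $L^\infty(0,T; L^2(\Omega_b) \times L^2(\Gamma))$ even in the purely elastic case $\mu_v = \lambda_v = 0$, and the compact embedding $L^2(\Omega_b) \times L^2(\Gamma) \hookrightarrow\hookrightarrow H^{-s}(\Omega_b) \times H^{-s}(\Gamma)$ holds throughout the claimed range of $s$ by duality. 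Since $\bd{\xi}_N$ and $\zeta_N$ are tied by the kinematic coupling $\bd{\xi}_N|_\Gamma = \zeta_N \bd{e}_y$, they must be handled as a single vector in the product space, and compatible test pairs $(\bd{\psi}, \varphi)$ satisfying $\bd{\psi}|_\Gamma = \varphi\bd{e}_y$ will play the central role.

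The core step is to derive a uniform time-translation estimate of the form
$$\int_0^{T-h} \|(\bd{\xi}_N(t+h) - \bd{\xi}_N(t),\, \zeta_N(t+h) - \zeta_N(t))\|^2_{Y^*}\, dt \le C\, h^{\alpha},$$
for some $\alpha > 0$, where $Y$ is the space of compatible test pairs $(\bd{\psi}, \varphi) \in V_d \times H^2_0(\Gamma)$ with $\bd{\psi}|_\Gamma = \varphi\bd{e}_y$. To obtain this, I would restrict the semidiscrete formulation \eqref{semi1} to velocity-type test tuples $(0, \varphi, \bd{\psi}, 0)$. Only the two terms with coefficients $\rho_b$ and $\rho_p$ generate the second-order discrete time differences of the structure and plate displacements; summed across the steps lying in $[t, t+h]$, these telescope precisely into the pairing of $(\bd{\xi}_N(t+h) - \bd{\xi}_N(t),\, \zeta_N(t+h) - \zeta_N(t))$ against $(\bd{\psi}, \varphi)$, up to a multiplicative $\Delta t$ that cancels. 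Each of the remaining contributions in \eqref{semi1}, namely the elastic and viscoelastic stresses, the pressure term $-\alpha \int \mathcal{J}^{(\eta^n_N)^\delta}_b p^{n+1}_N \nabla^{(\eta^n_N)^\delta}_b \cdot \bd{\psi}$, the Beavers-Joseph-Saffman boundary term, the plate bending term, and the interface integrals, is controlled uniformly in $N$ by the energy and dissipation bounds of Lemma \ref{discreteenergyestimates} together with Assumptions 1B, 2B, and 2C. Taking the supremum over $(\bd{\psi}, \varphi) \in Y$ with norm at most one yields the estimate.

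Combining the uniform $L^\infty(L^2)$ bound with the time-translation estimate via Dreher-Jüngel produces a subsequence converging strongly in $L^2(0,T; H^{-s}(\Omega_b) \times H^{-s}(\Gamma))$ for each $s$ in the specified range. The main technical obstacle will be handling the nonlinear geometric terms uniformly in $N$: the divergence and filtration-type terms involve $\mathcal{J}^{(\eta^n_N)^\delta}_b$ and $\nabla^{(\eta^n_N)^\delta}_b$ which depend on $n$, so one must invoke Assumptions 2B and 2C together with the fact that the regularization parameter $\delta > 0$ is fixed throughout this section to keep these coefficients uniformly bounded in $L^\infty$. The admissible range $|s| < 1/2$ arises precisely because in this range the trace of $H^{|s|}_0(\Omega_b)$ lands in $H^{|s|}(\Gamma)$, which is what allows the coupling constraint $\bd{\psi}|_\Gamma = \varphi\bd{e}_y$ to define a non-degenerate class of admissible test functions and the two velocity components to be tested simultaneously against compatible data.
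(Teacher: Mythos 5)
Your proposal follows essentially the same route as the paper: the paper also proves this by the Dreher--J\"{u}ngel criterion, using the uniform $L^{\infty}(0,T;L^{2}(\Omega_{b})\times L^{2}(\Gamma))$ bound from the energy estimates together with a dual-norm estimate on the discrete time differences, obtained by testing the semidiscrete formulation \eqref{semi1} with tuples $(0,\varphi,\bd{\psi},0)$ from a coupled test space $\mathcal{Q}_{v}$ (the paper takes $\bd{\psi}\in V_{d}\cap H^{2}(\Omega_{b})$, $\varphi\in H^{2}_{0}(\Gamma)$, $\bd{\psi}|_{\Gamma}=\varphi\bd{e}_{y}$) and controlling the geometric coefficients $\mathcal{J}^{(\eta^{n}_{N})^{\delta}}_{b}$, $\nabla^{(\eta^{n}_{N})^{\delta}}_{b}$ via Assumptions 1B, 2B, 2C for fixed $\delta$. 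So the structure of your argument is the one used in the paper, and it goes through.

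Two inaccuracies are worth fixing. First, the Dreher--J\"{u}ngel criterion is stated in terms of the one-step difference quotient, i.e.\ one verifies
\begin{equation*}
\Big\|\frac{\tau_{\Delta t}(\bd{\xi}_{N},\zeta_{N})-(\bd{\xi}_{N},\zeta_{N})}{\Delta t}\Big\|_{L^{1}(\Delta t,T;\mathcal{Q}_{v}')}\le C,
\end{equation*}
which is exactly what your telescoping/testing argument produces step by step; the H\"{o}lder-type translation estimate with an arbitrary shift $h$ and exponent $\alpha$ that you write down is a Simon-type condition and is not what the cited theorem asks for (it can be derived afterwards, but it is not the hypothesis you need to verify). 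Second, your explanation of the restriction $-1/2<s<0$ via ``the trace of $H^{|s|}_{0}(\Omega_{b})$ lands in $H^{|s|}(\Gamma)$'' is incorrect: there is no trace operator on $H^{\sigma}(\Omega_{b})$ for $\sigma\le 1/2$. The restriction enters through the functional-analytic chain $L^{2}(\Omega_{b})\times L^{2}(\Gamma)\subset\subset H^{-s}(\Omega_{b})\times H^{-s}(\Gamma)\subset \mathcal{Q}_{v}'$: for $|s|<1/2$ one has $H^{|s|}_{0}=H^{|s|}$, so the boundary-coupled test space is dense in $H^{|s|}(\Omega_{b})\times H^{|s|}(\Gamma)$ and the intermediate space embeds continuously into $\mathcal{Q}_{v}'$, which is precisely the triple of spaces the Dreher--J\"{u}ngel criterion requires. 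Neither point invalidates the core of your argument, but both should be stated correctly.
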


\begin{proof}
We will establish this result by using a compactness criterion for piecewise constant functions due to Dreher and J\"{u}ngel \cite{DreherJungel}. To simplify arguments, we define a slightly more regular Biot/plate velocity test space:
\begin{equation}\label{Qv}
\mathcal{Q}_{v} = \{(\bd{\psi}, \varphi) \in (V_{d} \cap H^{2}(\Omega_{b})) \times H_{0}^{2}(\Gamma) : \bd{\psi} = \varphi \bd{e}_{y} \text{ on } \Gamma\}.
\end{equation}
We will use the following chain of embeddings
\begin{equation*}
L^{2}(\Omega_{b}) \times L^{2}(\Gamma) \subset \subset H^{-s}(\Omega_{b}) \times H^{-s}(\Gamma) \subset \mathcal{Q}_{v}',
\end{equation*}
where the first embedding is compact, as {{required for the}} Dreher-J\"{u}ngel compactness criterion \cite{DreherJungel}.

Let $\tau_{\Delta t}$ denote the time shift $\tau_{\Delta t} f(t, \cdot) = f(t - \Delta t, \cdot)$ for a function $f$ defined on $[0, T]$. 
As required by the Dreher-J\"{u}ngel compactness criterion \cite{DreherJungel}, to obtain compactness we must verify that the following inequality is satisfied for 
a uniform constant 
$C$ and for all $\Delta t = T/N$:
\begin{equation}\label{DreherJungelcondition}
\left|\left|\frac{\tau_{\Delta t}(\bd{\xi}_{N}, \zeta_{N}) - (\bd{\xi}_{N}, \zeta_{N})}{\Delta t}\right|\right|_{L^{1}({{\Delta t}}, T; \mathcal{Q}_{v}')} + ||(\bd{\xi}_{N}, \zeta_{N})||_{L^{\infty}(0, T; L^{2}(\Omega_{b}) \times L^{2}(\Gamma))} \le C.
\end{equation}

The second term in this inequality is uniformly bounded by  Lemma \ref{uniform},  which gives exactly the uniform boundednenss of 
$(\bd{\xi}_{N}, \zeta_{N})$ in $L^{\infty}(0, T; L^{2}(\Omega_{b}) \times L^{2}(\Gamma))$.

To deal with the first term in \eqref{DreherJungelcondition} we use the coupled semidiscrete formulation \eqref{semi1}, \eqref{semi2} and
set the test functions $\bd{v}$ and $r$ for the fluid velocity and Biot pore pressure to be zero because  we are considering only the Biot and plate velocities.
 We obtain that for all test functions $(\bd{\psi}, \varphi) \in \mathcal{Q}_{v}$, where $\mathcal{Q}_{v}$ is defined in \eqref{Qv}, the following holds:
\begin{align*}
&\rho_{b} \int_{\Omega_{b}} \left(\frac{\bd{\xi}^{n + 1}_{N} - \bd{\xi}^{n}_{N}}{\Delta t}\right) \cdot \bd{\psi} + \rho_{p} \int_{\Gamma} \left(\frac{\zeta^{n + 1}_{N} - \zeta^{n}_{N}}{\Delta t}\right) \cdot \varphi \\
&= -\int_{\Gamma} \left(\frac{1}{2} \bd{u}^{n + 1}_{N} \cdot \bd{u}^{n}_{N} - p^{n + 1}_{N}\right) (\bd{\psi} \cdot \bd{n}^{\omega^{n}_{N}}) - \int_{\Gamma} \frac{\beta}{\mathcal{J}^{\omega^{n}_{N}}_{\Gamma}} (\zeta^{n + 1}_{N} \bd{e}_{y} - \bd{u}^{n + 1}_{N}) \cdot \bd{\tau}^{\omega^{n}_{N}} (\bd{\psi} \cdot \bd{\tau}^{\omega^{n}_{N}}) \\
&- 2\mu_{e}\int_{\Omega_{b}} \bd{D}(\bd{\eta}^{n + 1}_{N}) : \bd{D}(\bd{\psi}) - \lambda_{e} \int_{\Omega_{b}} (\nabla \cdot \bd{\eta}^{n + 1}_{N}) (\nabla \cdot \bd{\psi}) - 2\mu_{v} \int_{\Omega_{b}} \bd{D}(\bd{\xi}^{n + 1}_{N}) : \bd{D}(\bd{\psi}) \\
&- \lambda_{v} \int_{\Omega_{b}} (\nabla \cdot \bd{\xi}^{n + 1}_{N}) (\nabla \cdot \bd{\psi}) + \alpha \int_{\Omega_{b}} \mathcal{J}^{(\eta^{n}_{N})^{\delta}}_{b} p^{n + 1}_{N} \nabla^{(\eta^{n}_{N})^{\delta}}_{b} \cdot \bd{\psi} - \int_{\Gamma} \Delta \omega^{n + \frac{1}{2}}_{N} \cdot \Delta \varphi.
\end{align*}
The estimate for the first term in \eqref{DreherJungelcondition} will follow if we can estimate the {{right-hand side}} in terms of the ${\mathcal{Q}_{v}'}$ norm.
For this purpose consider an arbitrary $||(\bd{\psi}, \varphi)||_{\mathcal{Q}_{v}} \le 1$, so that $||\bd{\psi}||_{H^{2}(\Omega_{b})} \le 1$ and $||\varphi||_{H_{0}^{2}(\Gamma)} \le 1$. By the uniform estimates in Lemma \ref{uniform} and the regularity of the test functions in \eqref{Qv}, it is clear that the terms on the right hand side are all uniformly bounded by a constant $C$, independent of $||(\bd{\psi}, \varphi)||_{\mathcal{Q}_{v}} \le 1$, 
except possibly the term 
\begin{equation*}
\displaystyle \alpha \int_{\Omega_{b}} \mathcal{J}^{(\eta^{n}_{N})^{\delta}}_{b} p^{n + 1}_{N} \nabla^{(\eta^{n}_{N})^{\delta}}_{b} \cdot \bd{\psi}.
\end{equation*}
To estimate this term we recall the definitions
\begin{equation*}
\mathcal{J}^{(\eta^{n}_{N})^{\delta}}_{b} = \det(\bd{I} + \nabla (\bd{\eta}^{n}_{N})^{\delta}), \qquad \nabla^{(\eta^{n}_{N})^{\delta}} \cdot \bd{\psi} = \text{tr}\left[\nabla \bd{\psi} \cdot (I + \nabla (\bd{\eta}^{n}_{N})^{\delta})^{-1}\right].
\end{equation*}
By assumption 2C \eqref{2C} and the fact that $||\bd{\psi}||_{H^{1}(\Omega_{b})} \le 1$, we have that $||\nabla^{(\eta^{n}_{N})^{\delta}} \cdot \bd{\psi}||_{L^{2}(\Omega_{b})}$ is uniformly bounded, while by the boundedness of $\bd{\eta}^{n}_{N}$ in $H^{1}(\Omega_{b})$, we have that $|\mathcal{J}^{(\eta^{n}_{N})^{\delta}}_{b}| \le C$. Therefore, using the fact that $p_{N}$ is uniformly bounded in $L^{\infty}(0, T; L^{2}(\Omega_{b}))$, we obtain the desired estimate
\begin{equation*}
\left|\alpha \int_{\Omega_{b}} \mathcal{J}^{(\eta^{n}_{N})^{\delta}}_{b} p^{n + 1}_{N} \nabla^{(\eta^{n}_{N})^{\delta}}_{b} \cdot \bd{\psi}\right| \le C.
\end{equation*}
Finally, we conclude that
\begin{equation*}
\left|\left|\frac{(\bd{\xi}^{n + 1}_{N}, \zeta^{n + 1}_{N}) - (\bd{\xi}^{n}_{N}, \zeta^{n}_{N})}{\Delta t}\right|\right|_{\mathcal{Q}_{v}'} \le C, \  \text{ for a constant $C$ that is independent of $n$ and $N$},
\end{equation*}
and since 
\begin{equation*}
\sum_{n = 1}^{N - 1} (\Delta t) \left|\left|\frac{(\bd{\xi}^{n + 1}_{N}, \zeta^{n + 1}_{N}) - (\bd{\xi}^{n}_{N}, \zeta^{n}_{N})}{\Delta t}\right|\right|_{\mathcal{Q}_{v}'} \\
\le (\Delta t) \sum_{n = 1}^{N - 1} C \le CT,
\end{equation*}
we conclude that \eqref{DreherJungelcondition} holds for a uniform constant $C$. This establishes the desired result. 

\end{proof}

\subsection{Compactness for the pore pressure}

\begin{theorem}
There exists a subsequence such that
\begin{equation*}
p_{N} \to p \  \text{strongly in $L^{2}(0, T; L^{2}(\Omega_{b}))$}. 
\end{equation*}
\end{theorem}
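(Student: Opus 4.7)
The plan is to apply the Dreher-J\"{u}ngel compactness criterion for piecewise constant functions \cite{DreherJungel}, exactly in the spirit used in Theorem~\ref{compactvelocities}, but now along the chain of embeddings
\begin{equation*}
H^{1}(\Omega_{b}) \subset\subset L^{2}(\Omega_{b}) \subset V_{p}',
\end{equation*}
where the first embedding is compact (Rellich-Kondrachov) and the second is continuous. From Lemma~\ref{uniform} and Proposition~\ref{prop:weak}, we already know that $p_{N}$ is uniformly bounded in $L^{2}(0,T;H^{1}(\Omega_{b}))$ and in $L^{\infty}(0,T;L^{2}(\Omega_{b}))$. Hence it remains to produce a uniform bound of the form
\begin{equation*}
\left\|\frac{\tau_{\Delta t}p_{N} - p_{N}}{\Delta t}\right\|_{L^{2}(\Delta t,T;V_{p}')} \le C,
\end{equation*}
independent of $N$, which when combined with the $L^{2}(0,T;H^{1}(\Omega_{b}))$ bound yields strong convergence of a subsequence in $L^{2}(0,T;L^{2}(\Omega_{b}))$.

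To extract the needed discrete time-derivative estimate, I would isolate the pore pressure equation from the coupled semidiscrete formulation \eqref{semi1} by choosing test functions $(\boldsymbol{v},\varphi,\boldsymbol{\psi},r) = (0,0,0,r)$ with $r \in V_{p}$ and $\|r\|_{H^{1}(\Omega_{b})} \le 1$. This yields, after rearranging,
\begin{multline*}
c_{0}\int_{\Omega_{b}}\frac{p^{n+1}_{N}-p^{n}_{N}}{\Delta t}\, r
= \alpha\int_{\Omega_{b}}\mathcal{J}^{(\eta^{n}_{N})^{\delta}}_{b}\,\dot{\boldsymbol{\eta}}^{n+1}_{N}\cdot\nabla^{(\eta^{n}_{N})^{\delta}}_{b}r
+ \alpha\int_{\Gamma}(\dot{\boldsymbol{\eta}}^{n+1}_{N}\cdot\boldsymbol{n}^{(\omega^{n}_{N})^{\delta}})\,r \\
-\kappa\int_{\Omega_{b}}\mathcal{J}^{(\eta^{n}_{N})^{\delta}}_{b}\nabla^{(\eta^{n}_{N})^{\delta}}_{b}p^{n+1}_{N}\cdot\nabla^{(\eta^{n}_{N})^{\delta}}_{b}r
+ \int_{\Gamma}\bigl[(\boldsymbol{u}^{n+1}_{N}-\dot{\boldsymbol{\eta}}^{n+1}_{N})\cdot\boldsymbol{n}^{\omega^{n}_{N}}\bigr]r.
\end{multline*}
Assumptions 2B--2C from Lemma~\ref{uniform} bound $\mathcal{J}^{(\eta^{n}_{N})^{\delta}}_{b}$ and $(\boldsymbol{I}+\nabla(\boldsymbol{\eta}^{n}_{N})^{\delta})^{-1}$ uniformly, so the geometric factors can be estimated pointwise in $n$ and $N$. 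The interior volume terms are then controlled by $\|\dot{\boldsymbol{\eta}}^{n+1}_{N}\|_{L^{2}(\Omega_{b})}\|r\|_{H^{1}}$ and $\|\nabla p^{n+1}_{N}\|_{L^{2}(\Omega_{b})}\|r\|_{H^{1}}$, while the boundary terms are controlled by the trace inequality and by $\|\dot{\boldsymbol{\eta}}^{n+1}_{N}|_{\Gamma}\|_{L^{2}(\Gamma)} = \|\zeta^{n+1}_{N}\|_{L^{2}(\Gamma)}$ and $\|\boldsymbol{u}^{n+1}_{N}|_{\Gamma}\|_{L^{2}(\Gamma)}$, which are uniformly bounded by the energy estimates of Lemma~\ref{discreteenergyestimates}.

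Squaring, multiplying by $\Delta t$, and summing in $n$ then yields
\begin{equation*}
\sum_{n=1}^{N-1}(\Delta t)\left\|\frac{p^{n+1}_{N}-p^{n}_{N}}{\Delta t}\right\|^{2}_{V_{p}'}
\le C\Bigl(\|\dot{\boldsymbol{\eta}}_{N}\|^{2}_{L^{2}(0,T;L^{2})} + \|\nabla p_{N}\|^{2}_{L^{2}(0,T;L^{2})} + \|\zeta_{N}\|^{2}_{L^{2}(0,T;L^{2}(\Gamma))} + \|\boldsymbol{u}_{N}\|^{2}_{L^{2}(0,T;H^{1}(\Omega_{f}))}\Bigr)\le C,
\end{equation*}
which is exactly the Dreher-J\"{u}ngel hypothesis. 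Applying their compactness theorem produces a subsequence with $p_{N}\to p$ strongly in $L^{2}(0,T;L^{2}(\Omega_{b}))$.

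The main obstacle is bookkeeping around the geometric-nonlinearity terms in $\mathcal{J}^{(\eta^{n}_{N})^{\delta}}_{b}$ and $\nabla^{(\eta^{n}_{N})^{\delta}}_{b}$: one must verify that these coefficients, which live on the regularized moving Biot domain, are uniformly bounded in $L^{\infty}$ together with their inverses, which is precisely what Assumptions 2B--2C in Lemma~\ref{uniform} and the regularization by convolution with $\sigma_{\delta}$ are designed to provide. A secondary subtlety is that the boundary term involving $\dot{\boldsymbol{\eta}}^{n+1}_{N}\cdot\boldsymbol{n}^{(\omega^{n}_{N})^{\delta}}$ requires a trace estimate compatible with the regularized normal vector; uniform control of $\partial_{\hat{x}}(\hat{\boldsymbol{\eta}}^{\delta}|_{\hat{\Gamma}})$ through the convolution regularization and the energy bound on $\hat{\omega}$ in $H^{2}_{0}(\hat{\Gamma})$ handles this. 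Once these verifications are in place, the compactness argument is parallel to the one already carried out for the Biot and plate velocities in Theorem~\ref{compactvelocities}.
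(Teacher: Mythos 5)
Your proof follows essentially the same route as the paper: isolate the pore-pressure equation from the semidiscrete formulation by testing only with $r$, use Assumptions 2B--2C to control the regularized geometric coefficients $\mathcal{J}^{(\eta^{n}_{N})^{\delta}}_{b}$ and $(\boldsymbol{I}+\nabla(\boldsymbol{\eta}^{n}_{N})^{\delta})^{-1}$, bound the discrete time derivative in a dual space, and conclude by the Dreher--J\"{u}ngel criterion combined with the uniform $L^{2}(0,T;H^{1}(\Omega_{b}))$ bound on $p_{N}$. The only cosmetic differences are that the paper works with the slightly more regular test space $V_{p}\cap H^{2}(\Omega_{b})$ (hence the dual $(V_{p}\cap H^{2}(\Omega_{b}))'$) and an $L^{1}$-in-time bound on the difference quotient, whereas you use $V_{p}'$ and an $L^{2}$-in-time bound; both variants satisfy the hypotheses of the criterion, and your summed (quadratic) estimate correctly accounts for the fact that $\|\boldsymbol{u}^{n+1}_{N}\|_{H^{1}(\Omega_{f})}$ and $\|\nabla p^{n+1}_{N}\|_{L^{2}(\Omega_{b})}$ are only square-summable in time rather than bounded uniformly in $n$.
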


\begin{proof}
The proof is based on a similar application of the Dreher-J\"{u}ngel compactness criterion for piecewise constant functions \cite{DreherJungel} as in the previous compactness result. 
We first observe that we have the following chain of embeddings {{$H^{1}(\Omega_{b}) \subset\subset L^{2}(\Omega_{b}) \subset (V_{p} \cap H^{2}(\Omega_{b}))'$}},
and so by the Dreher-J\"{u}ngel compactness criterion \cite{DreherJungel} it suffices to show that the following inequality holds for a constant $C$ independent of $N$:
\begin{equation}\label{DreherJungelpressure}
\left|\left|\frac{\tau_{\Delta t}p_{N} - p_{N}}{\Delta t} \right|\right|_{L^{1}(\Delta t, T; (V_{p} \cap H^{2}(\Omega_{b}))')} + ||p_{N}||_{L^{2}(0, T; H^{1}(\Omega_{b}))} \le C.
\end{equation}

To obtain this estimate, 
we observe that the approximate solutions for the pore pressure satisfy the following weak formulation for all test functions $r \in V_{p}$, where $V_{p}$ is defined by \eqref{Vp}:
\begin{align*}\label{weakpore}
&c_{0} \int_{\Omega_{b}} \left(\frac{p^{n + 1}_{N} - p^{n}_{N}}{\Delta t}\right) \cdot r - \alpha \int_{\Omega_{b}} \mathcal{J}^{(\eta^{n}_{N})^{\delta}}_{b} \dot{\bd{\eta}}^{n + 1}_{N} \cdot \nabla^{(\eta^{n}_{N})^{\delta}}_{b} r - \alpha \int_{\Gamma} (\dot{\bd{\eta}}^{n + 1}_{N} \cdot \bd{n}^{(\omega^{n}_{N})^{\delta}}) r \\
&+ \kappa \int_{\Omega_{b}} \mathcal{J}^{(\eta^{n}_{N})^{\delta}}_{b} \nabla^{(\eta^{n}_{N})^{\delta}}_{b} p^{n + 1}_{N} \cdot \nabla^{(\eta^{n}_{N})^{\delta}}_{b} r - \int_{\Gamma} [(\bd{u}^{n + 1}_{N} - \dot{\bd{\eta}}^{n + 1}_{N}) \cdot \bd{n}^{\omega^{n}_{N}}] r = 0.
\end{align*}
We use more regularity for the test space $V_{p} \cap H^{2}(\Omega_{b})$ to make the following estimates simpler. 
We compute that for any $r \in V_{p} \cap H^{2}(\Omega_{b})$ we have
\begin{align*}
&c_{0} \int_{\Omega_{b}} \left(\frac{p^{n + 1}_{N} - p^{n}_{N}}{\Delta t}\right) \cdot r = \alpha \int_{\Omega_{b}} \mathcal{J}^{(\eta^{n}_{N})^{\delta}}_{b} \bd{\xi}^{n + 1}_{N} \cdot \nabla^{(\eta^{n}_{N})^{\delta}}_{b} r + \alpha \int_{\Gamma} (\zeta^{n + 1}_{N} \bd{e}_{y} \cdot \bd{n}^{(\omega^{n}_{N})^{\delta}}) r \\
&- \kappa \int_{\Omega_{b}} \mathcal{J}^{(\eta^{n}_{N})^{\delta}}_{b} \nabla^{(\eta^{n}_{N})^{\delta}}_{b} p^{n + 1}_{N} \cdot \nabla^{(\eta^{n}_{N})^{\delta}}_{b} r + \int_{\Gamma} [(\bd{u}^{n + 1}_{N} - \zeta^{n + 1}_{N} \bd{e}_{y}) \cdot \bd{n}^{\omega^{n}_{N}}] r.
\end{align*}
We estimate the right hand side for $||r||_{V_{p} \cap H^{2}(\Omega_{b})} \le 1$. Recall that $\mathcal{J}^{(\eta^{n}_{N})^{\delta}}_{b} = \det(\bd{I} + \nabla (\bd{\eta}^{n}_{N})^{\delta})$, 
\begin{equation*}
\nabla^{(\eta^{n}_{N})^{\delta}}_{b} r = \left(\frac{\partial r}{\partial \tilde{x}}, \frac{\partial r}{\partial \tilde{y}}\right) \cdot (\bd{I} + \nabla (\bd{\eta}^{n}_{N})^{\delta})^{-1}, \quad \text{ and } \quad \nabla^{(\eta^{n}_{N})^{\delta}}_{b} p^{n + 1}_{N} = \left(\frac{\partial p^{n + 1}_{N}}{\partial \tilde{x}}, \frac{\partial p^{n + 1}_{N}}{\partial \tilde{y}}\right) \cdot (\bd{I} + \nabla (\bd{\eta}^{n}_{N})^{\delta})^{-1}.
\end{equation*}
We have by Assumption 2C \eqref{2C} that $|(\bd{I} + \nabla(\bd{\eta}^{n}_{N})^{\delta})^{-1}|$ is uniformly bounded, and furthermore, $\mathcal{J}^{(\eta^{n}_{N})^{\delta}}_{b}$ is positive and bounded above. By combining these facts with standard estimates we obtain that 
\begin{equation*}
\left|\left|\frac{p^{n + 1}_{N} - p^{n}_{N}}{\Delta t}\right|\right|_{(V_{p} \cap H^{2}(\Omega_{b}))'} \le C \  \text{ for a constant $C$ that is independent of $n$ and $N$.}
\end{equation*}
Combining this with the fact that $p_{N}$ is uniformly bounded in $L^{2}(0, T; H^{1}(\Omega_{b}))$ gives the desired estimate  \eqref{DreherJungelpressure}.
\end{proof}

\subsection{Compactness for the fluid velocity}

We will obtain convergence of the fluid velocity along a subsequence by using a {\emph{generalized Aubin-Lions compactness theorem}} for functions defined on moving domains, stated as Theorem 3.1 in \cite{AubinLions}. {{To help the reader, we state Theorem 3.1 from \cite{AubinLions} at the end of this manuscript, in the appendix, Section~\ref{appendix2}.}} The reason we must use the {generalized Aubin-Lions compactness theorem} is that the approximate fluid velocities are defined on different time-dependent fluid domains. To prepare for an application of the generalized Aubin-Lions compactness argument we will 
{\emph{map our approximate fluid problem back onto 
the physical domain}} 
\begin{equation*}
\Omega^{n}_{f, N} = \{(x, y) \in \mathbb{R}^{2} : 0 \le x \le L, -R \le y \le \omega^{n}_{N}(x)\},
\end{equation*}
where we redefine the fluid velocity solution and  test spaces as follows:
\begin{equation}\label{VnN}%and \label{QnN}
V^{n + 1}_{N} = \{\bd{u} \in H^{1}(\Omega^{n}_{f, N}) : \nabla \cdot \bd{u} = 0 \text{ on } \Omega^{n}_{f, N}, \bd{u} = 0 \text{ on } \partial \Omega^{n}_{f, N} \setminus \Gamma^{n}_{N}\}, \quad Q^{n}_{N} = V^{n + 1}_{N} \cap H^{3}(\Omega^{n}_{f, N}).
\end{equation}
The approximate fluid velocity  $\bd{u}^{n + 1}_{N} \in V^{n + 1}_{N}$ on the physical domain satisfies the following semidiscrete formulation:
\begin{align}\label{semifluid}
&\int_{\Omega^{n}_{f, N}} \frac{\bd{u}^{n + 1}_{N} - \tilde{\bd{u}}^{n}_{N}}{\Delta t} \cdot \boldsymbol{v} + 2\nu \int_{\Omega^{n}_{f, N}} \boldsymbol{D}(\boldsymbol{u}_{N}^{n + 1}) : \boldsymbol{D}(\boldsymbol{v}) 
\nonumber
\\
&+ \frac{1}{2} \int_{\Omega^{n}_{f, N}} \left[\left(\left(\tilde{\boldsymbol{u}}^{n}_{N} - \zeta^{n + \frac{1}{2}}_{N} \frac{R + y}{R + \omega^{n}_{N}} \boldsymbol{e}_{y}\right) \cdot \nabla \boldsymbol{u}^{n + 1}_{N}\right) \cdot \boldsymbol{v} - \left(\left(\tilde{\boldsymbol{u}}^{n}_{N} - \zeta^{n + \frac{1}{2}}_{N} \frac{R + y}{R + \omega^{n}_{N}} \boldsymbol{e}_{y}\right) \cdot \nabla \boldsymbol{v}\right) \cdot \boldsymbol{u}^{n + 1}_{N}\right] 
\nonumber
\\
&+ \frac{1}{2R} \int_{\Omega^{n}_{f, N}} \frac{R}{R + \omega^{n}_{N}} \zeta^{n + \frac{1}{2}}_{N} \boldsymbol{u}^{n + 1}_{N} \cdot \boldsymbol{v} + \frac{1}{2} \int_{\Gamma^{n}_{N}} (\boldsymbol{u}^{n + 1}_{N} - \boldsymbol{\dot{\eta}}^{n + 1}_{N}) \cdot \boldsymbol{n} (\tilde{\boldsymbol{u}}^{n}_{N} \cdot \boldsymbol{v}) 
\nonumber
\\
&- \int_{\Gamma^{n}_{N}} \left(\frac{1}{2}\boldsymbol{u}^{n + 1}_{N} \cdot \tilde{\boldsymbol{u}}^{n}_{N} - p^{n + 1}_{N}\right) (\boldsymbol{v} \cdot \boldsymbol{n}) - \beta \int_{\Gamma^{n}_{N}} (\boldsymbol{\dot{\eta}}^{n + 1}_{N} - \boldsymbol{u}^{n + 1}_{N}) \cdot \boldsymbol{\tau} (\bd{v} \cdot \boldsymbol{\tau}) = 0,
\ \forall \bd{v} \in Q^{n}_{N}
\end{align}
where 
{{
\begin{equation}\label{tildeudef}
\tilde{\bd{u}}^{n}_{N} = \bd{u}^{n}_{N} \circ \bd{\Phi}^{\omega^{n - 1}_{N}}_{f} \circ (\bd{\Phi}^{\omega^{n}_{N}}_{f})^{-1},
\end{equation}}}
 $\bd{u}^{n}_{N}$ is originally defined on $\Omega^{n - 1}_{f, N}$, and 
the ALE map $\bd{\Phi}^{\omega^{n}_{N}}_{f}: \Omega_{f} \to \Omega^{n}_{f, N}$ is defined by \eqref{phif}.

To be able to compare functions on different physical domains we introduce a maximal domain $\Omega^{M}_{f}$ which contains all the physical domains. 
The existence of such a domain, and the extensions of the velocity functions onto the maximal domain are discussed next.

\subsubsection{Extension to maximal domain}

We consider the following {\emph{maximal fluid domain}} which contains all the physical fluid domains:
%The result in Proposition \ref{extension} allows us to define a maximal domain $\Omega^{M}_{f}$ defined by the function $M(x)$, containing all of the physical approximate fluid domains $\Omega^{n}_{f, N}$:
\begin{equation*}
\Omega^{M}_{f} = \{(x, y) \in \mathbb{R}^{2} : 0 \le x \le L, -R \le y \le M(x)\},
\end{equation*}
where the function $M(x)$ is obtained from the following proposition,  established in Lemma 2.5 in \cite{CanicLectureNotes} and Lemma 4.5 in \cite{AubinLions}
in the context of fluid-structure interaction between an incompressible viscous Newtonian fluid and an elastic Koiter shell:
\begin{proposition}
There exists smooth functions $m(x)$ and $M(x)$ defined on $\Gamma = [0, L]$, {{satisfying $m(0) = m(L) = M(0) = M(L) = 0$}}, such that 
\begin{equation*}
m(x) \le \omega^{n}_{N}(x) \le M(x), \qquad \text{ for all } x \in [0, L], N, \text{ and } n = 0, 1, ..., N.
\end{equation*}
Furthermore, there exist smooth functions $m^{n, l}_{N}(x)$ and $M^{n, l}_{N}(x)$ defined for positive integers $N$, $n = 0, 1, ..., N - 1$ and $l = 0, 1, ..., N - n$, such that
\begin{enumerate}
\item $m^{n, l}_{N}(x) \le \omega^{n + i}_{N}(x) \le M^{n, l}_{N}(x), \qquad$ for all $x \in [0, L]$ and $i = 0, 1, ..., l$.
\item $M^{n, l}_{N}(x) - m^{n, l}_{N}(x) \le C\sqrt{l\Delta t}, \qquad$ for all $x \in [0, L]$.
\item $||M^{n, l}_{N}(x) - m^{n, l}_{N}(x)||_{L^{2}(\Gamma)} \le C(l\Delta t)$,
\end{enumerate}
where $C$ is independent of $n$, $l$, and $N$. Finally, the functions $M^{n, l}_{N}(x)$ and $m^{n, l}_{N}(x)$ for all $n$, $l$, and $N$, are Lipschitz continuous with a Lipschitz constant that is uniformly bounded above by some constant $L > 0$ independent of $n$, $l$, and $N$. 
\end{proposition}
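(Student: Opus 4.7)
The proof rests on three uniform estimates for the approximate plate displacements $\{\omega^n_N\}$ already established in the manuscript: (i) the uniform bound $||\omega^n_N||_{H_0^2(\Gamma)} \le C$ coming from \eqref{energybound2}, which via the one-dimensional embedding $H_0^2(\Gamma) \hookrightarrow C^1(\Gamma)$ yields a uniform Lipschitz estimate $||\omega^n_N||_{C^1(\Gamma)} \le K$; (ii) the pointwise temporal H\"older estimate $|\omega^{n+i}_N(x) - \omega^n_N(x)| \le C(i\Delta t)^{1/2}$, obtained by taking $\alpha = 1/2$ in \eqref{holderest8.1} and applying $H^1(\Gamma) \hookrightarrow C(\Gamma)$; and (iii) the $L^2$-Lipschitz in time estimate $||\omega^{n+i}_N - \omega^n_N||_{L^2(\Gamma)} \le C\, i\Delta t$, which follows directly from \eqref{Lipschitzomega}. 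All constants here are independent of $n$, $i$, and $N$.

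For the global bounds, since each $\omega^n_N$ vanishes at $x = 0, L$ and is uniformly $K$-Lipschitz, we have $|\omega^n_N(x)| \le K \min(x, L-x)$ uniformly in $n, N$. I would therefore fix once and for all a smooth function $\psi \in C^\infty([0,L])$ with $\psi(0) = \psi(L) = 0$, $\psi > 0$ on $(0, L)$, $\psi(x) \ge K\min(x, L-x)$ near the endpoints and $\psi \ge R_{max}$ elsewhere (using the uniform bound from Lemma \ref{assumptions}), and set $M := \psi$ and $m := -\psi$. These are smooth, satisfy the required boundary conditions, and pointwise dominate $\omega^n_N$ from above and below for every $n, N$.

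For the localized envelopes I would proceed in two steps. First, define the discrete pointwise envelopes
\begin{equation*}
\overline{\omega}^{n,l}_N(x) := \max_{0 \le i \le l} \omega^{n+i}_N(x), \qquad \underline{\omega}^{n,l}_N(x) := \min_{0 \le i \le l} \omega^{n+i}_N(x),
\end{equation*}
each of which is $K$-Lipschitz in $x$ as a finite max/min of $K$-Lipschitz functions; estimates (ii) and (iii) then immediately give
\begin{equation*}
0 \le \overline{\omega}^{n,l}_N(x) - \underline{\omega}^{n,l}_N(x) \le C\sqrt{l\Delta t}, \qquad ||\overline{\omega}^{n,l}_N - \underline{\omega}^{n,l}_N||_{L^2(\Gamma)} \le C l \Delta t.
\end{equation*}
Second, to obtain smooth dominators I would extend by reflection across $x = 0$ and $x = L$ so that mollification is well-defined up to the boundary, mollify at a small scale $\varepsilon = (\Delta t)^2$ with a standard kernel $\rho_\varepsilon$, and offset by $\pm K\varepsilon$ to maintain the one-sided inequalities:
\begin{equation*}
M^{n,l}_N := \rho_\varepsilon * \overline{\omega}^{n,l}_N + K\varepsilon, \qquad m^{n,l}_N := \rho_\varepsilon * \underline{\omega}^{n,l}_N - K\varepsilon.
\end{equation*}
Since $|\rho_\varepsilon * f - f| \le K\varepsilon$ pointwise for any $K$-Lipschitz $f$, the required inequalities $m^{n,l}_N(x) \le \omega^{n+i}_N(x) \le M^{n,l}_N(x)$ hold for all $0 \le i \le l$. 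The $K\varepsilon = K(\Delta t)^2$ correction is negligible compared with $\sqrt{l\Delta t}$ and $l\Delta t$, so properties (2) and (3) survive with mildly enlarged constants; and because mollification does not enlarge the Lipschitz seminorm, we obtain the uniform Lipschitz bound $||M^{n,l}_N||_{C^{0,1}}, ||m^{n,l}_N||_{C^{0,1}} \le K$ independent of $n, l, N$.

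The main obstacle is simultaneously (a) preserving the pointwise one-sided inequalities for every discrete displacement in the family, (b) retaining the sharp $O(\sqrt{l\Delta t})$ and $O(l\Delta t)$ gap estimates after smoothing, and (c) keeping all constants uniform in $n, l, N$. All three are ultimately controlled by the single uniform $C^1$ bound (i), which lets the mollification error be made subleading at any chosen polynomial-in-$\Delta t$ scale; the reflection extension at $\{0, L\}$ before mollifying is the essential technical maneuver to prevent spurious errors near the boundary from degrading these estimates.
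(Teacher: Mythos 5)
Your proof is correct, and it follows the same overall blueprint as the paper's argument (which, in the cited lemmas, runs as follows): form the pointwise-in-$x$ max/min envelopes over the time window, extract the $O(\sqrt{l\Delta t})$ gap from the interpolation/H\"older estimate \eqref{holderplate} and the $O(l\Delta t)$ gap in $L^{2}$ from the uniform time-Lipschitz bound \eqref{Lipschitzomega}, extend evenly across $x=0,L$, and mollify. The one genuine difference is how the one-sided domination survives the smoothing: the paper first \emph{fattens the envelopes spatially}, replacing $Q^{n,l}_{N}(x)=\max_{i}\omega^{n+i}_{N}(x)$ by $R^{n,l}_{N}(x)=\max_{i}\max_{|\zeta-x|\le l\Delta t}\omega^{n+i}_{N}(\zeta)$, and then mollifies at scale exactly $l\Delta t$, so every point sampled by the kernel already dominates $\omega^{n+i}_{N}(x)$; the spatial fattening costs an extra $O(l\Delta t)$ (Lipschitz constant times window), which is absorbed into the gap bounds. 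You instead mollify the raw envelopes at a tiny scale $\varepsilon=(\Delta t)^{2}$ and shift vertically by $\pm K\varepsilon$, using $|\rho_{\varepsilon}*f-f|\le K\varepsilon$ for $K$-Lipschitz $f$; this avoids the second max/min layer and its bookkeeping, and since $K(\Delta t)^{2}\le C\sqrt{l\Delta t}$ and $\le C\,l\Delta t$ for $l\ge 1$, properties (2)--(3) and the uniform Lipschitz bound follow with uniform constants. Your construction of the global envelopes $M=\psi$, $m=-\psi$ via $|\omega^{n}_{N}(x)|\le\min\bigl(K\min(x,L-x),\,R_{max}\bigr)$ is also a clean way to meet the stated boundary conditions $M(0)=M(L)=m(0)=m(L)=0$.

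Two small points worth tightening. First, the $L^{2}$ gap bound is not literally ``immediate'' from $\|\omega^{n+i}_{N}-\omega^{n}_{N}\|_{L^{2}(\Gamma)}\le Ci\Delta t$, because the max and min are taken pointwise in $x$ and may be achieved by different indices at different $x$; the standard fix (and the paper's) is the telescoping bound
\begin{equation*}
\overline{\omega}^{n,l}_{N}(x)-\underline{\omega}^{n,l}_{N}(x)\;\le\;\sum_{k=n}^{n+l-1}\bigl|\omega^{k+1}_{N}(x)-\omega^{k}_{N}(x)\bigr|,
\end{equation*}
followed by the triangle inequality and $\|\omega^{k+1}_{N}-\omega^{k}_{N}\|_{L^{2}(\Gamma)}=\Delta t\,\|\zeta^{k+\frac12}_{N}\|_{L^{2}(\Gamma)}\le C\Delta t$ from \eqref{energybound3}. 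Second, as stated the proposition formally allows $l=0$, for which conditions (2)--(3) degenerate; like the paper's argument, yours should be read for $l\ge 1$ (which is all that is used in the compactness theorem), so this is a defect of the statement rather than of your proof.
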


\if 1 = 0

\begin{proof}
The existence of the functions $m(x)$ and $M(x)$ is immediately guaranteed by the inequality \eqref{estT1}, and they can just be taken to be constant functions on $\Gamma$. 

The construction of the functions $m^{n, l}_{N}(x)$ and $M^{n, l}_{N}(x)$ is more involved. Let us define the following functions:
\begin{equation*}
q^{n, l}_{N}(x) = \min_{n \le k \le n + l} \omega^{k}_{N}(x), \qquad Q^{n, l}_{N}(x) = \max_{n \le k \le n + l} \omega^{k}_{N}(x),
\end{equation*}
and
\begin{equation*}
r^{n, l}_{N}(x) = \min_{\substack{n \le k \le n + l, \\ \zeta \in [0, L] \cap [x - l\Delta t, x + l\Delta t]}} \omega^{k}_{N}(\zeta), \qquad R^{n, l}_{N}(x) = \max_{\substack{n \le k \le n + l, \\ \zeta \in [0, L] \cap [z - l\Delta t, z + l\Delta t]}} \omega^{k}_{N}(\zeta).
\end{equation*}

Consider $t, t + \tau \in [0, T]$, with $\tau > 0$. Using the interpolation inequality \eqref{interpolation}, we can prove that
\begin{equation*}
||\overline{\omega}_{N}(t + \tau) - \overline{\omega}_{N}(t)||_{H^{1}(\Gamma)} \le C\tau^{1/2},
\end{equation*}
for a constant $C$ independent of $t$ and $\tau$. Thus, using the continuous embedding of $H^{1}(\Gamma)$ into $C(\Gamma)$, we conclude that
\begin{equation*}
Q^{n, l}_{N}(x) - q^{n, l}_{N}(x) \le C\sqrt{l\Delta t}, \qquad \text{ for all } x \in [0, L],
\end{equation*}
for a constant $C$ that is independent of $n$, $l$, and $N$.  

Next, we want to show a corresponding inequality for the functions $r^{n, l}_{N}(x)$ and $R^{n, l}_{N}(x)$. To do this, we note that by the uniform boundedness of the approximate functions $\omega_{N}$ in $L^{\infty}(0, T; H^{2}(\Gamma))$, we have that the structure displacements $\omega^{n}_{N}$ are all uniformly Lipschitz with a Lipschitz constant uniformly bounded above by $\alpha$. Therefore, we conclude that
\begin{equation}\label{QRineq}
Q^{n, l}_{N}(x) \le R^{n, l}_{N}(x) \le Q^{n, l}_{N}(x) + \alpha (l\Delta t), \qquad q^{n, l}_{N}(x) - \alpha (l \Delta t) \le r^{n, l}_{N}(x) \le q^{n, l}_{N}(x), \qquad \text{ for all } x \in [0, L].
\end{equation}
Therefore, since $\alpha l\Delta t \le C\sqrt{l\Delta t}$ for a constant $C$ independent of $n$, $l$, and $N$ (since $l\Delta t \le T$), we obtain that
\begin{equation*}
R^{n, l}_{N}(x) - r^{n, l}_{N}(x) \le C\sqrt{l\Delta t}, \qquad \text{ for all } x \in [0, L]. 
\end{equation*}

Next, we claim that
\begin{equation*}
||R^{n, l}_{N}(x) - r^{n, l}_{N}(x)||_{L^{2}(\Gamma)} \le C(l\Delta t).
\end{equation*}
To see this, first observe that for all $z \in [0, L]$,
\begin{equation*}
|Q^{n, l}_{N}(x) - q^{n, l}_{N}(x)| \le \sum_{k = n}^{n + l - 1} |\omega^{k + 1}_{N}(x) - \omega^{k}_{N}(x)|.
\end{equation*}
Hence, by the uniform boundedness estimates,
\begin{equation*}
||Q^{n, l}_{N}(x) - q^{n, l}_{N}(x)||_{L^{2}(\Gamma)} \le \sum_{k = n}^{n + l - 1} ||\omega^{k + 1}_{N}(x) - \omega^{k}_{N}(x)||_{L^{2}(\Gamma)} = \Delta t \sum_{k = n}^{n + l - 1} ||v^{k + \frac{1}{2}}_{N}||_{L^{2}(\Gamma)} \le C(l\Delta t).
\end{equation*}
By \eqref{QRineq}, we have that 
\begin{equation*}
|R^{n, l}_{N}(x) - r^{n, l}_{N}(x)| \le |Q^{n, l}_{N}(x) - q^{n, l}_{N}(x)| + 2\alpha (l\Delta t),
\end{equation*}
and hence,
\begin{equation*}
||R^{n, l}_{N}(x) - r^{n, l}_{N}(x)||_{L^{2}(\Gamma)} \le ||Q^{n, l}(x) - q^{n, l}_{N}(x)||_{L^{2}(\Gamma)} + C(l\Delta t).
\end{equation*}
Thus, we conclude that for a constant $C$ independent of $n$, $l$, and $N$,
\begin{equation*}
||R^{n, l}_{N}(x) - r^{n, l}_{N}(x)||_{L^{2}(\Gamma)} \le C(l\Delta t).
\end{equation*}

This would complete the proof, except that $R^{n, l}_{N}(x)$ and $r^{n, l}_{N}(x)$ are not necessarily smooth functions on $\Gamma$. Thus, we will convolve with a smooth compactly supported function, but we must convolve in such a way so that the resulting smooth functions still bound the functions $\omega^{n}_{N}(x)$ appropriately. Let $\varphi$ be a smooth, symmetric, compactly supported function on $\mathbb{R}$ with support in $[-1, 1]$. Then, we define
\begin{equation*}
\varphi_{\epsilon}(x) = \epsilon^{-1} \varphi(\epsilon^{-1}x).
\end{equation*}
To have the convolution defined on all of $[0, L]$, we extend $R^{n, l}_{N}(x)$ and $r^{n, l}_{N}(x)$ to the larger domain $[-L, 2L]$ by using an even reflection at both ends $x = 0$ and $x = L$. We will then define, for $x \in [0, L]$, 
\begin{equation*}
M^{n, l}_{N}(x) = \varphi_{l\Delta t} * R^{n, l}_{N}(x), \qquad m^{n, l}_{N}(x) = \varphi_{l\Delta t} * r^{n, l}_{N}(x).
\end{equation*}
Because $||\phi_{\epsilon}||_{L^{1}(\Gamma)} = 1$ for all $\epsilon$, by Young's convolution inequality, we conclude that
\begin{equation*}
M^{n, l}_{N}(x) - m^{n, l}_{N}(x) \le C\sqrt{l\Delta t}, \qquad \text{ and } \qquad ||M^{n, l}_{N}(x) - m^{n, l}_{N}(x)||_{L^{2}(\Gamma)} \le C(l\Delta t),
\end{equation*}
for a constant $C$ independent of $n$, $l$, and $N$. Furthermore, since the definitions of $R^{n, l}_{N}(x)$ and $r^{n, l}_{N}(x)$ contain maxima/minima of $\omega^{k}_{N}(\zeta)$, $k = n, n + 1, ..., n + l$, over $\zeta \in [0, L] \cap [x - l\Delta t, x + l\Delta t]$, it follows by construction that $M^{n, l}_{N} \ge \omega^{n + i}_{N} \ge m^{n, l}_{N}$ for all $i = 0, 1, ..., l$. 

Finally, we show the uniform Lipschitz property of $M^{n, l}_{N}(x)$, noting that the same argument will work for $m^{n, l}_{N}(x)$. We recall that by the uniform boundedness of $\omega^{n}_{N}$ in $H_{0}^{2}(\Gamma)$, the structure displacements $\omega^{n}_{N}$ are uniformly Lipschitz with Lipschitz constant less than or equal to some $L > 0$. We can then conclude that the functions $Q^{n, l}_{N}$ are uniformly Lipschitz with Lipschitz constant less than or equal to $L$ also. 

We next transfer this uniform Lipschitz estimate to the function $R^{n, l}_{N}(x)$. By definition,
\begin{equation*}
R^{n, l}_{N}(x) = \max_{\zeta \in [0, L] \cap [x - l\Delta t, x + l\Delta t]} Q^{n, l}_{N}(\zeta).
\end{equation*}
To show the Lipschitz property, consider $x_{1}, x_{2} \in [0, L]$. Without loss of generality, suppose that $x_{1} < x_{2}$. There exists $x_{1}^{*}$ in $[0, L] \cap [x_{1} - l\Delta t, x_{1} + l\Delta t]$ such that
\begin{equation*}
R^{n, l}_{N}(x_{1}) = Q^{n, l}_{N}(x_{1}^{*}).
\end{equation*}
Then, there exists a point $x_{2}^{*} \in [0, L] \cap [x_{2} - l\Delta t, x_{2} + l\Delta t]$ such that 
\begin{equation*}
|x_{2}^{*} - x_{1}^{*}| \le |x_{2} - x_{1}|.
\end{equation*}
Thus, 
\begin{equation*}
|Q^{n, l}_{N}(x_{2}^{*}) - Q^{n, l}_{N}(x_{1}^{*})| \le L|x_{2} - x_{1}|,
\end{equation*}
and hence, by the definition of the function $R^{n, l}_{N}(x)$, 
\begin{equation*}
R^{n, l}_{N}(x_{2}) \ge R^{n, l}_{N}(x_{1}) - L|x_{2} - x_{1}|.
\end{equation*}
By interchanging the roles of $x_{1}$ and $x_{2}$, we then obtain
\begin{equation*}
|R^{n, l}_{N}(x_{1}) - R^{n, l}_{N}(x_{2})| \le L|x_{2} - x_{1}|.
\end{equation*}
So $R^{n, l}_{N}$ are all uniformly Lipschitz with Lipschitz constant bounded above by $L$. Finally, the same property holds for $M^{n, l}_{N}(x)$, since the Lipschitz constant bound is not changed by convolution with any $\varphi_{\epsilon}$.

\end{proof}

\fi

Once the maximal fluid domain is defined, we can extend the fluid velocities $\bd{u}^{n}_{N}$ from $\Omega^{n}_{f, N}$ to this common maximal domain $\Omega^{M}_{f}$, using extensions by zero in $\Omega^{M}_{f} \cap (\Omega^{n}_{f, N})^{c}$. 
 Notice that since  $\omega^{n}_{N}(x)$ are all uniformly Lipschitz,
 the {extensions by zero} of the $H^{1}$ functions 
$\bd{u}^{n}_{N}$ defined on Lipschitz domains to $\Omega^{M}_{f}$ are uniformly bounded in $H^{s}(\Omega^{M}_{f})$ for all $s$ such that $0 < s < 1/2$. 
{{Indeed, we have the following lemma, which follows from Theorem 2.7 in \cite{Mikhailov}.}}
\begin{lemma}
The approximate fluid velocities $\{\bd{u}_{N}\}_{N = 1}^{\infty}$ defined on the maximal fluid domain $\Omega^{M}_{f}$ by extension by zero are uniformly bounded in $L^{2}(0, T; H^{s}(\Omega^{M}_{f}))$ for $s \in (0, 1/2)$.
\end{lemma}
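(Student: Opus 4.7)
The plan is to invoke Theorem 2.7 of \cite{Mikhailov} pointwise in time and then integrate in $t$. The essential analytic point is that for $s \in (0, 1/2)$ functions in $H^s$ over a Lipschitz domain do not admit well-defined trace operators on the boundary, so extending an $H^1$ function by zero across a Lipschitz interface to a larger domain yields an $H^s$ function, with the $H^s$ norm of the extension controlled by the $H^1$ norm of the original and by the Lipschitz constant of the boundary alone.

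First I would fix $s \in (0, 1/2)$ and, at each discrete time $n$, consider $\bd{u}_N^n \in H^1(\Omega_{f,N}^{n-1})$. Since $\bd{u}_N^n$ vanishes on the rigid portions of $\partial \Omega_{f,N}^{n-1}$, extending by zero across the bottom and lateral walls trivially preserves the $H^1$ structure there. The delicate extension is across the moving interface $\Gamma_N^{n-1}$, where the trace of $\bd{u}_N^n$ is generally nonzero and the extension will have a jump; here the condition $s < 1/2$ is essential. Applying Theorem 2.7 of \cite{Mikhailov} to the extension by zero $\tilde{\bd{u}}_N^n$ yields an estimate of the form
\[
\|\tilde{\bd{u}}_N^n\|_{H^s(\Omega_f^M)} \le C(s, L_0)\,\|\bd{u}_N^n\|_{H^1(\Omega_{f,N}^{n-1})},
\]
where $C(s, L_0)$ depends only on $s$ and on an upper bound $L_0$ for the Lipschitz constant of $\Gamma_N^{n-1}$. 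This Lipschitz constant is uniformly bounded independently of $N$ and $n$ by the uniform $L^\infty(0,T; H^2_0(\Gamma))$ bound on the plate displacements $\omega_N$ from Lemma~\ref{uniform}, combined with the Sobolev embedding $H^2_0(\Gamma) \hookrightarrow C^1(\Gamma)$.

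Having the pointwise-in-time estimate, I would square both sides and integrate in $t \in [0,T]$, using the piecewise constant structure of $\bd{u}_N$ from \eqref{approxconstant}, to obtain
\[
\int_0^T \|\tilde{\bd{u}}_N(t)\|_{H^s(\Omega_f^M)}^2\,dt \;\le\; C(s, L_0)^2 \int_0^T \|\bd{u}_N(t)\|_{H^1}^2\,dt \;\le\; C,
\]
where the final uniform bound is exactly the uniform $L^2(0, T; H^1)$ estimate of $\bd{u}_N$ from Lemma~\ref{uniform}, itself obtained from the energy estimates together with Korn's inequality on the fluid subdomains (whose Korn constant is again controlled only by $L_0$).

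The main obstacle is verifying that the extension constant $C(s, L_0)$ really depends only on the Lipschitz constant and not on the finer geometry of each $\Omega_{f,N}^{n-1}$, since the domains change with $n$ and $N$. This is precisely the content of the cited theorem of \cite{Mikhailov}; a direct proof would cover $\Gamma_N^{n-1}$ by charts of a size dictated solely by $L_0$, flatten each chart by the corresponding Lipschitz map, apply the known extension-by-zero estimate in the half-space model (where the $s < 1/2$ restriction appears sharply through the non-existence of a bounded trace into $L^2$), and patch the local estimates via a partition of unity whose constants again depend only on $L_0$ and on the geometric covering.
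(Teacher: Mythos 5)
Your proposal is correct and follows essentially the same route as the paper: the paper also extends $\bd{u}^{n}_{N}$ by zero to $\Omega^{M}_{f}$, invokes Theorem 2.7 of \cite{Mikhailov} together with the uniform Lipschitz property of the domains $\Omega^{n}_{f,N}$ (coming from the uniform $H_{0}^{2}(\Gamma)$ bound on $\omega_{N}$), and concludes via the uniform $L^{2}(0,T;H^{1})$ energy bound. Your additional remarks on the uniformity of the extension constant simply make explicit what the paper leaves to the cited reference.
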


\subsubsection{Velocity convergence via a generalized Aubin-Lions compactness argument}\label{VelConv}

We now show strong convergence as $N\to\infty$ along a subsequence of the approximate fluid velocities $\bd{u}_{N}$,  which are now functions in time defined on the fixed maximal domain $\Omega^{M}_{f}$.

\begin{proposition}\label{convergence}
The sequence $\bd{u}_{N}$ is relatively compact in $L^{2}(0, T; L^{2}(\Omega^{M}_{f}))$.
\end{proposition}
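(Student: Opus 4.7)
The plan is to apply the generalized Aubin-Lions compactness theorem for functions on moving domains from \cite{MuhaCanic13, AubinLions}, whose precise statement is recalled in the appendix. This theorem allows us to replace the standard fractional time-derivative bound with an integral equicontinuity-in-time estimate that is compatible with the fact that $\bd{u}_N$ is originally defined on the time-dependent approximate domains $\Omega_{f,N}^n$ and is then extended by zero to the fixed maximal domain $\Omega^{M}_{f}$. Since the interface displacements $\omega_N^n$ are uniformly Lipschitz by \eqref{boundedomega}, the extension by zero preserves $H^s$-regularity for $0<s<1/2$, which provides the required ``spatial compactness'' ingredient of the theorem.

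First, I would record the uniform estimates we already have available. From Lemma~\ref{uniform} and its weak-convergence consequences in Proposition~\ref{prop:weak}, the extensions of $\bd{u}_N$ are uniformly bounded in $L^{\infty}(0,T;L^2(\Omega^M_f))$ and in $L^2(0,T;H^s(\Omega^M_f))$ for every $s\in(0,1/2)$ (after extension by zero to the maximal domain, using the uniform Lipschitz bound on the interfaces). These two bounds furnish the ``space'' hypothesis of the generalized Aubin-Lions theorem. The remaining ingredient is an estimate of the form
\begin{equation*}
\int_{0}^{T-h} \|\bd{u}_N(t+h,\cdot) - \bd{u}_N(t,\cdot)\|_{(Q^n_N)'}^{2}\, dt \;\le\; C\,h^{\alpha},
\end{equation*}
for some $\alpha>0$ and for appropriate test spaces $Q^n_N$ defined in \eqref{VnN}, with the constant $C$ uniform in $N$.

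To establish the time-shift estimate, I would follow the strategy used for the Koiter-shell FSI problem in \cite{MuhaCanic13}. The idea is: for a fixed number of time steps $k$, pair the discrete relation \eqref{semifluid} against a test function $\bd{v}$ that lies in the intersection test space $Q^{n,k}_N$ of test functions which are divergence-free and vanish on the appropriate portion of the boundary for every intermediate domain $\Omega^{n+j}_{f,N}$, $j=0,1,\dots,k$. Such a test space is non-trivial because the maps $M^{n,l}_N$ and $m^{n,l}_N$ (from the proposition preceding this section) bound all the intermediate displacements from above and below. For such a test function one can sum the weak formulations \eqref{semifluid} over the $k$ consecutive time steps, cancel telescopically, and obtain an estimate of $\langle \bd{u}_N^{n+k} - \bd{u}_N^{n},\bd{v}\rangle$ in terms of the right-hand side of \eqref{semifluid}. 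Using the uniform energy estimates from Lemma~\ref{discreteenergyestimates}, Korn's inequality, Cauchy-Schwarz, and the uniform pressure bound in $L^2(0,T;H^1(\Omega_b))$, each term on the right-hand side is bounded by $C\sqrt{k\Delta t}\,\|\bd{v}\|_{Q^{n,k}_N}$, with $C$ independent of $n$, $k$, and $N$. Summing over $n$, $(n+k)\Delta t\le T$, and identifying $k\Delta t$ with the time-shift $h$ yields the required integral estimate with $\alpha=1/2$.

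The main obstacle, as in \cite{MuhaCanic13}, is the construction of the common test space $Q^{n,k}_N$: given the nonlinearity of the divergence-free constraint on moving domains, one cannot simply take the intersection of divergence-free spaces on different domains. The standard remedy is to use the squeezing function approach, mapping a fixed reference velocity via an ALE-type map onto each intermediate domain in such a way that the divergence-free condition and the vanishing trace conditions on $\partial\Omega^{n+j}_{f,N}\setminus \Gamma^{n+j}_N$ are preserved. One then verifies that the inverse Jacobians of these maps are uniformly bounded thanks to the uniform lower bound $R-R_{\max}$ on $R+\omega_N^n$ from Assumption 1B \eqref{1Anew}. Once the time-shift estimate is obtained, an application of the generalized Aubin-Lions theorem yields relative compactness of $\{\bd{u}_N\}$ in $L^2(0,T;L^2(\Omega^M_f))$, completing the argument.
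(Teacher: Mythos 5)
Your overall strategy coincides with the paper's: extend by zero to the maximal domain $\Omega^{M}_{f}$ (using the uniform Lipschitz bounds on $\omega^{n}_{N}$ to keep uniform $H^{s}$ bounds for $0<s<1/2$), and then invoke the generalized Aubin--Lions theorem of \cite{AubinLions} with the test spaces $Q^{n}_{N}$ from \eqref{VnN} and the bounding functions $M^{n,l}_{N}$, $m^{n,l}_{N}$ for the intermediate domains. However, there is a genuine gap in the central step: you claim that, after pairing the semidiscrete relation \eqref{semifluid} with a common test function and summing over $k$ steps, ``each term on the right-hand side is bounded by $C\sqrt{k\Delta t}\,\|\bd{v}\|_{Q^{n,k}_{N}}$ with $C$ independent of $n$, $k$, and $N$.'' This per-step bound is exactly the original Property B of \cite{AubinLions}, and it fails here. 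The interface terms $\int_{\Gamma^{n}_{N}}\bigl(\tfrac12\,\bd{u}^{n+1}_{N}\cdot\tilde{\bd{u}}^{n}_{N}-p^{n+1}_{N}\bigr)(\bd{v}\cdot\bd{n})$ and $\tfrac12\int_{\Gamma^{n}_{N}}(\bd{u}^{n+1}_{N}-\dot{\bd{\eta}}^{n+1}_{N})\cdot\bd{n}\,(\tilde{\bd{u}}^{n}_{N}\cdot\bd{v})$ are quadratic in the fluid velocity traces; since the plate velocity is \emph{not} the trace of $\bd{u}$ (BJS slip and flow through the interface), these traces carry no better control than what trace plus interpolation give, namely bounds of the form $\|\bd{u}^{n}_{N}\|_{H^{1}}^{3/4}\|\bd{u}^{n+1}_{N}\|_{H^{1}}^{3/4}$, which are only $L^{4/3}$ summable in time; likewise the pressure trace is controlled only through $\|p^{n+1}_{N}\|_{H^{1}(\Omega_{b})}$, which is merely square-summable in time, not bounded per step. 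Hence the uniform per-step constant and the $\sqrt{k\Delta t}$ modulus you assert cannot be extracted from the available energy estimates.

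The paper resolves precisely this point by formulating a \emph{Generalized Property B}: the dual-norm bound is allowed to be superlinear, $\bigl(a^{n}_{N}+\|\bd{u}^{n}_{N}\|_{V^{n}_{N}}+\|\bd{u}^{n+1}_{N}\|_{V^{n+1}_{N}}\bigr)^{p}$ with $p=3/2$ and square-summable weights $a^{n}_{N}=1+\|p^{n+1}_{N}\|_{H^{1}(\Omega_{b})}$, and then by re-running the proof of Lemma 3.1 in \cite{AubinLions} to show that the equicontinuity modulus degrades from $\sqrt{l\Delta t}$ to $(l\Delta t)^{1-p/2}=(l\Delta t)^{1/4}$, which still tends to zero and so still yields compactness. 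Your proposal would be repaired by replacing the claimed $\sqrt{k\Delta t}$ estimate with this weaker (but sufficient) one, and by verifying that the modified modulus is still admissible in the abstract theorem; as written, the key estimate on which your argument rests is not attainable, and the adaptation of the abstract compactness result that the paper carries out is missing.
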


\begin{proof}
The proof is based on using the generalized Aubin-Lions compactness theorem for problems on moving domains, {{which is Theorem 3.1 of  \cite{AubinLions}, restated in this manuscript for the reader's convenience as Theorem~\ref{Compactness} in Section \ref{appendix2}.}}
For this purpose we define the Hilbert spaces $V$ and $H$  from the statement of the theorem to be 
\begin{equation*}
H = L^{2}(\Omega^{M}_{f}), \qquad V = H^{s}(\Omega^{M}_{f}), \qquad \text{ for } 0 < s < 1/2,
\end{equation*}
where we note that  indeed $V \subset \subset H$ as required by Theorem 3.1 in \cite{AubinLions}. 
Additionally, the spaces $(V^{n}_{\Delta t},Q^{n}_{\Delta t})$ from the statement of the theorem correspond to our spaces 
$(V^{n}_{N},Q^{n}_{N})$ defined by \eqref{VnN}. Notice that $V^{n}_{N}\times Q^{n}_{N}$
embeds continuously into $V \times V$ as required by the statement of Theorem 3.1 in \cite{AubinLions}, 
where the embedding can be achieved by the extension by zero operator to the maximal domain $\Omega^{M}_{f}$, uniformly in $n$ and $N$. 

To obtain compactness of the sequence $\bd{u}_{N}$ in $L^2(0,T,H)$, by Theorem 3.1 in \cite{AubinLions}, seven properties need to be satisfied
by the sequence $\bd{u}_{N}$ and the spaces $V^{n}_{N}$ and $Q^{n}_{N}$. They are called Properties A1-3, B, and C1-3.

The proof that approximate solutions $\bd{u}_{N}$ satisfy Properties A1-3 and C1-3 is analogous to the corresponding proof
 in \cite{AubinLions} (Section 4.2).
 The main difficulty is to verify Property B, which is a condition on equicontinuity of $\bd{u}_{N}$, stated as follows:
 
 {\bf{Property B, \cite{AubinLions}}}. There exists a constant $C > 0$ independent of $N$ such that
\begin{equation}\label{originalpropB}
\left|\left|P^{n}_{N} \frac{\bd{u}^{n + 1}_{N} - \bd{u}^{n}_{N}}{\Delta t}\right|\right|_{(Q^{n}_{N})'} \le C\left(1 + ||\bd{u}^{n + 1}_{N}||_{V^{n + 1}_{N}}\right), \qquad \text{ for all } n = 0, 1, ..., N - 1,
\end{equation}
{ where $P^{n}_{N}$ denotes the orthogonal projection onto the closed subspace $\overline{Q^{n}_{N}}^{H}$ of the Hilbert space $H$.}

The sequence $\bd{u}_{N}$ constructed in this manuscript, however, {\emph{does not satisfy this property}}. {{Nevertheless}}, $\bd{u}_{N}$ satisfy the following
{\emph{generalized Property B}} which implies the desired equicontinuity under which the generalized Aubin-Lions theorem from \cite{AubinLions} still holds:
 
{\bf{Generalized Property B.}} There exist a constant $C$ independent of $n$ and $N$, an exponent $p$, $1 \le p < 2$, and a sequence of nonnegative 
numbers $\{a^{n}_{N}\}_{n = 0}^{N - 1}$ for each $N$, satisfying $(\Delta t) \sum_{n = 0}^{N - 1} |a^{n}_{N}|^{2} \le C$ uniformly in $N$, such that
\begin{equation}\label{propB}
\left|\left|P^{n}_{N} \frac{\bd{u}^{n + 1}_{N} - \bd{u}^{n}_{N}}{\Delta t}\right|\right|_{(Q^{n}_{N})'} \le C\left(a^{n}_{N} + ||\bd{u}^{n}_{N}||_{V^{n}_{N}} + ||\bd{u}^{n + 1}_{N}||_{V^{n + 1}_{N}}\right)^{p}, \qquad \text{ for all } n = 0, 1, ..., N - 1.
\end{equation}
%where $P^{n}_{N}$ denotes the orthogonal projection onto the closed subspace $\overline{Q^{n}_{N}}^{H}$ of the Hilbert space $H$. 
{{Recall the statement of  \cite[Theorem 3.1]{AubinLions}, which can be found in the Appendix of the present manuscript in Section~\ref{appendix2}, Theorem~\ref{Compactness}. 
}}
{
\begin{theorem}[Generalized Aubin-Lions Compactness Result II]
	Assume that properties A1-3 and C1-3 of \cite[Theorem 3.1]{AubinLions} are satisfied. Furthermore,  assume that Generalized Property B above is satisfied. Then conclusion of \cite[Theorem 3.1]{AubinLions} holds, {{namely, $\{\bu_{N}\}$ is relatively compact in $L^2(0,T;H)$}}.
\end{theorem}
}
\begin{proof}
%Indeed, with this Generalized Property B  the compactness theorem, Theorem 3.1 in \cite{AubinLions} still holds, 
%as we still obtain the essential equicontinuity estimate needed in the proof. 
{We just need to prove that the essential equicontinuity estimate in the proof of \cite[Theorem 3.1.]{AubinLions} still holds under the modified assumption.}
In particular, for the original form of Property B in \eqref{originalpropB}, one has from Lemma 3.1 in \cite{AubinLions} the following equicontinuity estimate for a constant $C > 0$ that is independent of $N$:
\begin{equation*}
||P^{n, l}_{\Delta t} (\bd{u}^{n + l}_{N} - \bd{u}^{n}_{N})||_{(Q^{n, l}_{N})'} \le C\sqrt{l\Delta t}.
\end{equation*}
With the generalized form of Property B that we use above in \eqref{propB}, the same arguments as in the proof of Lemma 3.1 in \cite{AubinLions} will still give rise to the following equicontinuity estimate for a constant $C > 0$ that is independent of $N$:
\begin{equation*}
||P^{n, l}_{\Delta t} (\bd{u}^{n + l}_{N} - \bd{u}^{n}_{N})||_{(Q^{n, l}_{N})'} \le C(l\Delta t)^{1 - \frac{p}{2}},
\end{equation*}
where the generalized Aubin-Lions compactness theorem on moving domains still holds with this new equicontinuity estimate.
This is because $1 \le p < 2 $ and hence, $C(l\Delta t)^{1 - \frac{p}{2}}$ still converges to zero as $\Delta t \to 0$. 
\end{proof}

We can now complete the proof of Proposition~\ref{convergence} by verifying that our sequence $\bd{u}^{n}_{N}$ indeed {{satisfies}} the Generalized Property B.
\vskip 0.1in
\noindent
{\emph{Verification that $\bd{u}^{n}_{N}$ satisfies the Generalized Property B.}}
First, recall that by definition,
\begin{equation}\label{dualspace}
\left|\left|P^{n}_{N} \frac{\bd{u}^{n + 1}_{N} - \bd{u}^{n}_{N}}{\Delta t}\right|\right|_{(Q^{n}_{N})'} = \max_{||\bd{v}||_{Q^{n}_{N}} \le 1} \left|\int_{\Omega^{n}_{f, N}} \frac{\bd{u}^{n + 1}_{N} - \bd{u}^{n}_{N}}{\Delta t} \cdot \bd{v} d\bd{x}\right|.
\end{equation}
To estimate the right hand-side, we use
\begin{equation}\label{dualest}
\left|\int_{\Omega^{n}_{f, N}} \frac{\bd{u}^{n + 1}_{N} - \bd{u}^{n}}{\Delta t} \cdot \bd{v} d\bd{x}\right| \le \left|\int_{\Omega^{n}_{f, N}} \frac{\bd{u}^{n + 1}_{N} - \tilde{\bd{u}}^{n}_{N}}{\Delta t} \cdot \bd{v} d\bd{x}\right| + \left|\int_{\Omega^{n}_{f, N}} \frac{\tilde{\bd{u}}^{n}_{N} - \bd{u}^{n}_{N}}{\Delta t} \cdot \bd{v} d\bd{x}\right|.
\end{equation}
To estimate the first term on the right hand-side we use the semidiscrete formulation for the fluid velocity on the physical domain given by
\eqref{semifluid} to obtain
\begin{align}\label{aux}
&\left|\int_{\Omega^{n}_{f, N}} \frac{\bd{u}^{n + 1}_{N} - \tilde{\bd{u}}^{n}_{N}}{\Delta t} \cdot \bd{v} d\bd{x}\right| \le 2\nu \left|\int_{\Omega^{n}_{f, N}} \boldsymbol{D}(\boldsymbol{u}_{N}^{n + 1}) : \boldsymbol{D}(\boldsymbol{v})\right| 
\nonumber\\
&+ \frac{1}{2} \left|\int_{\Omega^{n}_{f, N}} \left[\left(\left(\tilde{\boldsymbol{u}}^{n}_{N} - \zeta^{n + \frac{1}{2}}_{N} \frac{R + y}{R + \omega^{n}_{N}} \boldsymbol{e}_{y}\right) \cdot \nabla \boldsymbol{u}^{n + 1}_{N}\right) \cdot \boldsymbol{v} - \left(\left(\tilde{\boldsymbol{u}}^{n}_{N} - \zeta^{n + \frac{1}{2}}_{N} \frac{R + y}{R + \omega^{n}_{N}} \boldsymbol{e}_{y} \right) \cdot \nabla \boldsymbol{v}\right) \cdot \boldsymbol{u}^{n + 1}_{N}\right]\right| \nonumber\\
&+ \frac{1}{2R} \left|\int_{\Omega^{n}_{f, N}} \frac{R}{R + \omega^{n}_{N}} \zeta^{n + \frac{1}{2}}_{N} \boldsymbol{u}^{n + 1}_{N} \cdot \boldsymbol{v}\right| + \frac{1}{2} \left|\int_{\Gamma^{n}_{N}} (\boldsymbol{u}^{n + 1}_{N} - \boldsymbol{\dot{\eta}}^{n + 1}_{N}) \cdot \boldsymbol{n} (\tilde{\boldsymbol{u}}^{n}_{N} \cdot \boldsymbol{v})\right| \nonumber\\
&+ \left|\int_{\Gamma^{n}_{N}} \left(\frac{1}{2}\boldsymbol{u}^{n + 1}_{N} \cdot \tilde{\boldsymbol{u}}^{n}_{N} - p^{n + 1}_{N}\right) (\bd{v} \cdot \boldsymbol{n})\right| + \beta \left|\int_{\Gamma^{n}_{N}} (\boldsymbol{\dot{\eta}}^{n + 1}_{N} - \boldsymbol{u}^{n + 1}_{N}) \cdot \boldsymbol{\tau} (\bd{v} \cdot \boldsymbol{\tau}) \right|.
\end{align}
We can bound the terms on the right hand-side uniformly in $n$, $N$, and $||\bd{v}||_{Q^{n}_{N}} \le 1$ as follows. By the boundedness of $\bd{u}^{n + 1}_{N}$ in the uniform energy estimates we immediately have 
\begin{equation*}
2\nu \left|\int_{\Omega^{n}_{f, N}} \bd{D}(\bd{u}^{n + 1}_{N}) : \bd{D}(\bd{v})\right| \le C||\bd{u}^{n + 1}_{N}||_{H^{1}(\Omega^{n}_{f, N})}.
\end{equation*}
The second term on the right hand-side of the above inequality is bounded as follows. 
First notice that because $||\bd{v}||_{Q^{n}_{N}} \le 1$, and by the definition of $Q^{n}_{N}$ in \eqref{VnN}, we have that $\bd{v}$ is bounded in $H^{3}(\Omega^{n}_{f, N})$, and hence, $\bd{v}$ and $\nabla \bd{v}$ are bounded {in $L^{\infty}(\Omega^{n}_{f, N})$}. Furthermore, by the boundedness of the fluid velocity $\bd{u}^{n}_{N}$ on the reference domain due to the uniform energy estimate, and by the uniform boundedness of the Jacobian of the ALE map $\Phi^{\omega^{n}_{N}}_{f}$, we obtain the following bound:
\begin{align*}
&\frac{1}{2} \left|\int_{\Omega^{n}_{f, N}} \left[\left(\left(\tilde{\boldsymbol{u}}^{n}_{N} - \zeta^{n + \frac{1}{2}}_{N} \frac{R + y}{R + \omega^{n}_{N}} \boldsymbol{e}_{y}\right) \cdot \nabla \boldsymbol{u}^{n + 1}_{N}\right) \cdot \boldsymbol{v} - \left(\left(\tilde{\boldsymbol{u}}^{n}_{N} - \zeta^{n + \frac{1}{2}}_{N} \frac{R + y}{R + \omega^{n}_{N}} \boldsymbol{e}_{y} \right) \cdot \nabla \boldsymbol{v}\right) \cdot \boldsymbol{u}^{n + 1}_{N}\right]\right| \\
&\le C \left(||\tilde{\bd{u}}^{n}_{N}||_{L^{2}(\Omega^{n}_{f, N})} + ||\zeta^{n + \frac{1}{2}}||_{L^{2}(\Gamma)}\right) ||\bd{u}^{n + 1}_{N}||_{H^{1}(\Omega^{n}_{f, N})} \cdot ||\bd{v}||_{H^{3}(\Omega^{n}_{f, N})} \le C||\bd{u}^{n + 1}_{N}||_{H^{1}(\Omega^{n}_{f, N})}.
\end{align*}
Similarly, the next term in \eqref{aux} is bounded as follows:
\begin{equation*}
\frac{1}{2R} \left|\int_{\Omega^{n}_{f, N}} \frac{R}{R + \omega^{n}_{N}} \zeta^{n + \frac{1}{2}}_{N} \boldsymbol{u}^{n + 1}_{N} \cdot \boldsymbol{v}\right| \le C ||\zeta^{n + \frac{1}{2}}_{N}||_{L^{2}(\Gamma)} ||\bd{u}^{n + 1}_{N}||_{L^{2}(\Omega^{n}_{f, N})} \cdot ||\bd{v}||_{H^{3}(\Omega^{n}_{f, N})} \le C ||\bd{u}^{n + 1}_{N}||_{L^{2}(\Omega^{n}_{f, N})}.
\end{equation*}
To bound the next term we observe that $||\bd{\dot{\eta}}^{n + 1}_{N}||_{L^{2}(\Gamma)}$ is bounded uniformly and furthermore, the arc length element on $\Gamma^{n}_{N}$ is uniformly bounded pointwise since $\eta^{n}_{N}$ is uniformly bounded in $H_{0}^{2}(\Gamma)$. Therefore, by using the trace inequality on $\Omega_{f}$ we have the following estimate:
\begin{align*}
\frac{1}{2} &\left|\int_{\Gamma^{n}_{N}} (\boldsymbol{u}^{n + 1}_{N} - \boldsymbol{\dot{\eta}}^{n + 1}_{N}) \cdot \boldsymbol{n} (\tilde{\boldsymbol{u}}^{n}_{N} \cdot \boldsymbol{v})\right| \\
&\le C\left(||\bd{u}^{n + 1}_{N}||_{L^{4}(\Gamma)} \cdot ||\bd{u}^{n}_{N}||_{L^{4}(\Gamma)} \cdot ||\bd{v}||_{L^{2}(\Gamma)} + ||\bd{\dot{\eta}}^{n + 1}_{N}||_{L^{2}(\Gamma)} \cdot ||\bd{u}^{n}_{N}||_{L^{4}(\Gamma)} \cdot ||\bd{v}||_{L^{4}(\Gamma)}\right) \\
&\le C\left(||\bd{u}^{n + 1}_{N}||_{H^{1/4}(\Gamma)} \cdot ||\bd{u}^{n}_{N}||_{H^{1/4}(\Gamma)} \cdot ||\bd{v}||_{H^{1}(\Omega_{f})} + ||\bd{\dot{\eta}}^{n + 1}_{N}||_{L^{2}(\Gamma)} \cdot ||\bd{u}^{n}_{N}||_{H^{1/4}(\Gamma)} \cdot ||\bd{v}||_{H^{1/4}(\Gamma)}\right) \\
&\le C\left(||\bd{u}^{n + 1}_{N}||_{H^{3/4}(\Omega_{f})} \cdot ||\bd{u}^{n}_{N}||_{H^{3/4}(\Omega_{f})}  + ||\bd{\dot{\eta}}^{n + 1}_{N}||_{L^{2}(\Gamma)} \cdot ||\bd{u}^{n}_{N}||_{H^{3/4}(\Omega_{f})} \right) \cdot ||\bd{v}||_{H^{{{1}}}(\Omega_{f})} \\
&\le C\left(||\bd{u}^{n + 1}_{N}||_{L^{2}(\Omega_{f})}^{1/4} ||\bd{u}^{n + 1}_{N}||_{H^{1}(\Omega_{f})}^{3/4} ||\bd{u}^{n}_{N}||_{L^{2}(\Omega_{f})}^{1/4} ||\bd{u}^{n}_{N}||_{H^{1}(\Omega_{f})}^{3/4}  + ||\bd{\dot{\eta}}^{n + 1}_{N}||_{L^{2}(\Gamma)} ||\bd{u}^{n}_{N}||_{L^{2}(\Omega_{f})}^{1/4} ||\bd{u}^{n}_{N}||_{H^{1}(\Omega_{f})}^{3/4} \right) \\
&\le C\left(||\bd{u}^{n + 1}_{N}||_{H^{1}(\Omega_{f})}^{3/4} \cdot ||\bd{u}^{n}_{N}||_{H^{1}(\Omega_{f})}^{3/4}  + ||\bd{u}^{n}_{N}||_{H^{1}(\Omega_{f})}^{3/4} \right) \le C\left[1 + \left(||\bd{u}^{n}_{N}||_{V^{n}_{N}} + ||\bd{u}^{n + 1}_{N}||_{V^{n}_{N}}\right)^{3/2}\right].
\end{align*}
{{{{More precisely, in}} the above chain of inequalities, we used the following results to justify the steps:
\begin{itemize}
\item In the first inequality, we used H\"{o}lder's inequality with the exponents $2$, $4$, and $4$, the fact that $\tilde{\bd{u}}^{n}_{N}$ and $\bd{u}^{n}_{N}$ have the same trace along the interface $\Gamma$ by \eqref{tildeudef}, and the uniform boundedness of the Jacobian $\mathcal{J}^{\omega^{n}_{N}}_{\Gamma}$ defined in \eqref{Jf}.
\item In the second inequality, we use Sobolev embedding on the one-dimensional domain $\Gamma$. 
\item In the third inequality, we use the trace theorem on $\Omega_{f}$ to bound the trace along $\Gamma$.
\item In the fourth inequality, we use interpolation, and the fact that since the test function satisfies $||\bd{v}||_{Q^{n}_{N}} \le 1$ where the test space $Q^{n}_{N}$
 is defined in \eqref{VnN}, we have that $||\bd{v}||_{H^{1}(\Omega_{f})}$ is uniformly bounded  on the reference domain  independently of $n$ and $N$ by the uniform bounds on $\omega_{N}$. {{The uniform bounds on $\omega_{N}$ ensure uniform boundedness  of $\mathcal{J}^{\omega}_{f}$ defined in \eqref{Jf}, which is the term that appears when transforming integrals back to the reference domain.}} 
\item In the fifth inequality, we use uniform boundedness of $||\bd{u}^{n}_{N}||_{L^{2}(\Omega_{f})}$ and $||\dot{\bd{\eta}}^{n}_{N}||_{L^{2}(\Gamma)}$ which is implied by the previous uniform energy estimates, see Lemma \ref{uniform}.
\item We use the inequality $ab \le \frac{1}{2}(a^{2} + b^{2})$. 
\end{itemize}
}}

{{\noindent Next, we estimate the second-to-last term in \eqref{aux} in a way similar to the immediately preceding term, using similar justifications as given directly above:}}
\begin{align*}
\Big|\int_{\Gamma^{n}_{N}} &\left(\frac{1}{2} \bd{u}^{n + 1}_{N} \cdot \tilde{\bd{u}}^{n}_{N} - p^{n + 1}_{N}\right) (\bd{v} \cdot \bd{n}) \Big| \\
&\le C\left(||\bd{u}^{n + 1}_{N}||_{L^{4}(\Gamma)} \cdot ||\bd{u}^{n}_{N}||_{L^{4}(\Gamma)} \cdot ||\bd{v}||_{L^{2}(\Gamma)} + ||p^{n + 1}_{N}||_{L^{2}(\Gamma)} \cdot ||\bd{v}||_{L^{2}(\Gamma)}\right) \\
&\le C\left(||\bd{u}^{n + 1}_{N}||_{H^{1/4}(\Gamma)} \cdot ||\bd{u}^{n}_{N}||_{H^{1/4}(\Gamma)} \cdot ||\bd{v}||_{H^{1}(\Omega_{f})} + ||p^{n + 1}_{N}||_{H^{1}(\Omega_{b})} \cdot ||\bd{v}||_{H^{1}(\Omega_{f})}\right) \\
&\le C\left(||\bd{u}^{n + 1}_{N}||_{H^{3/4}(\Omega_{f})} \cdot ||\bd{u}^{n}_{N}||_{H^{3/4}(\Omega_{f})} + ||p^{n + 1}_{N}||_{H^{1}(\Omega_{b})}\right) \\
&\le C\left(||\bd{u}^{n + 1}_{N}||_{L^{2}(\Omega_{f})}^{1/4} ||\bd{u}^{n + 1}_{N}||_{H^{1}(\Omega_{f})}^{3/4} \cdot ||\bd{u}^{n}_{N}||_{L^{2}(\Omega_{f})}^{1/4} ||\bd{u}^{n}_{N}||_{H^{1}(\Omega_{f})}^{3/4} + ||p^{n + 1}_{N}||_{H^{1}(\Omega_{b})}\right) \\
&\le C\left[1 + \left(||p^{n + 1}_{N}||_{H^{1}(\Omega_{b})} + ||\bd{u}^{n}_{N}||_{V^{n}_{N}} + ||\bd{u}^{n + 1}_{N}||_{V^{n + 1}_{N}}\right)^{3/2}\right].
\end{align*}
Finally, we estimate the last term
\begin{align*}
&\beta\left|\int_{\Gamma^{n}_{N}} (\bd{\dot{\eta}}^{n + 1}_{N} - \bd{u}^{n + 1}_{N}) \cdot \bd{\tau} (\bd{v} \cdot \bd{\tau})\right| \le C\left(||\dot{\bd{\eta}}^{n + 1}_{N}||_{L^{2}(\Gamma)} \cdot ||\bd{v}||_{L^{2}(\Gamma)} + ||\bd{u}^{n + 1}_{N}||_{L^{2}(\Gamma)} \cdot ||\bd{v}||_{L^{2}(\Gamma)}\right) \\
&\le C\left(1 + ||\bd{u}^{n + 1}_{N}||_{H^{1}(\Omega_{f})}\right).
\end{align*}
Therefore, we obtain the final estimate of the first term in \eqref{dualest} which implies the existence of a constant $C$ independent of $n$ and $N$, such that
\begin{multline}\label{dualest1}
\max_{||\bd{v}||_{Q^{n}_{N}} \le 1} \left|\int_{\Omega^{n}_{f, N}} \frac{\bd{u}^{n + 1}_{N} - \tilde{\bd{u}}^{n}_{N}}{\Delta t} \cdot \bd{v} d\bd{x}\right| \le C \left(a^{n}_{N} + ||\bd{u}^{n}_{N}||_{V^{n}_{N}} + ||\bd{u}^{n + 1}_{N}||_{V^{n + 1}_{N}}\right)^{3/2}, \\
\text{ for } a^{n}_{N} := 1 + ||p^{n + 1}_{N}||_{H^{1}(\Omega_{b})}, \text{ where } (\Delta t) \sum_{n = 0}^{N - 1} |a^{n}_{N}|^{2} \le 2\left[(\Delta t)N + ||p_{N}||^{2}_{L^{2}(0, T; H^{1}(\Omega_{b}))}\right] \le C.
\end{multline}

To complete the estimate \eqref{dualest}, it remains to show that the second term $\displaystyle \left|\int_{\Omega^{n}_{f, N}} \frac{\tilde{\bd{u}}^{n}_{N} - \bd{u}^{n}_{N}}{\Delta t} \cdot \bd{v} d\bd{x}\right|$ is uniformly bounded. This follows from the same estimates as those 
presented in \cite{AubinLions} which show that there exists a constant $C$ independent of $n$ and $N$, such that 
\begin{equation}\label{dualest2}
\max_{||\bd{v}||_{Q^{n}_{N}} \le 1} \left|\int_{\Omega_{f, N}^{n}} \frac{\tilde{\bd{u}}^{n}_{N} - \bd{u}^{n}_{N}}{\Delta t} \cdot \bd{v} d\bd{x}\right| \le C.
\end{equation}
Combining \eqref{dualest1} and \eqref{dualest2} with \eqref{dualspace} and \eqref{dualest} establishes Generalized Property B
and completes the proof of Proposition \ref{convergence}.
\end{proof}
 
\section{Passing to the limit in the regularized weak formulation}\label{limitpassage}

We have so far established the following strong convergence results:
\begin{align*}
&\bd{\eta}_{N} \to \bd{\eta}, \quad \text{ in } C(0, T; L^{2}(\Omega_{b})),
\\
&\omega_{N} \to \omega, \quad \text{ in } L^{\infty}(0, T; H^{s}(\Gamma)) \text{ for } 0 < s < 2,
\\
&\zeta_{N}^{*} \to \zeta, \quad \text{ in } L^{2}(0, T; H^{-s}(\Gamma)), \text{ for } -1/2 < s < 0, 
\\
&\zeta_{N} \to \zeta, \quad \text{ in } L^{2}(0, T; H^{-s}(\Gamma)), \text{ for } -1/2 < s < 0, 
\\
&\bd{\xi}_{N} \to \bd{\xi}, \quad \text{ in } L^{2}(0, T; H^{-s}(\Omega_{b})), \text{ for } -1/2 < s < 0, 
\\
&\bd{u}_{N} \to \bd{u}, \quad \text{ in } L^{2}(0, T; L^{2}(\Omega^{M}_{f})), \qquad
p_{N} \to p, \quad \text{ in } L^{2}(0, T; L^{2}(\Omega_{b})),
\end{align*}
where $\zeta_{N}^{*}$ and $\zeta_{N}$ converge to the same limit in $L^{2}(0, T; H^{-s}(\Gamma))$ for $-1/2 < s < 0$ due to the numerical dissipation estimates $\sum_{n = 1}^{N} ||\zeta^{n}_{N} - \zeta^{n - \frac{1}{2}}_{N}||_{L^{2}(\Gamma)}^{2} \le C$, which imply that $||\zeta_{N} - \zeta_{N}^{*}||_{L^{2}(0, T; L^{2}(\Gamma))} \to 0$. 

These strong convergence results will be used to pass to the limit in the semidiscrete formulation of the coupled problem \eqref{semi1} and show that the limit satisfies the weak formulation
of the regularized problem. Before we can do this, there are two more convergence results that need to be established.
One is a strong convergence result for the traces for the fluid velocity on the boundary of the fluid domain, and the other is a convergence result for
the test functions, which are defined on approximate moving domains. 

We start with the convergence result for the trace of the fluid velocity $\hat{\bd{u}}_{N}|_{\Gamma}$  along $\Gamma$.

\subsection{Strong convergence of the fluid velocity traces on $\Gamma$}
\begin{proposition}\label{traceconvergence}
The traces  $\hat{\bd{u}}_{N}|_{\Gamma}$ of the approximate fluid velocities on $\Gamma$ converge to the trace of the limiting fluid velocity on $\Gamma$
as $N\to\infty$:
\begin{equation*}
\hat{\bd{u}}_{N}|_{\Gamma} \to \hat{\bd{u}}|_{\Gamma}, \qquad \text{ in } L^{2}(0, T; H^{s - \frac{1}{2}}(\Gamma)), \qquad \text{ for } s \in (0, 1),
\end{equation*}
where $\hat{\bd{u}}_{N} = \bd{u}_{N} \circ \Phi^{\tau_{\Delta t} \omega_{N}}_{f}$ and $\hat{\bd{u}} = \bd{u} \circ \Phi^{\omega}_{f}$. 
\end{proposition}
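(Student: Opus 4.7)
The plan is to transfer the problem to the fixed reference domain $\hat{\Omega}_{f}$ via the ALE map and exploit the already-established strong convergence $\bd{u}_{N} \to \bd{u}$ in $L^{2}(0,T; L^{2}(\Omega^{M}_{f}))$ from Proposition~\ref{convergence} together with a Sobolev interpolation/trace argument. The main obstacle will be handling the composition with a time-varying, $N$-dependent ALE map in the $L^{2}$ topology while keeping the Jacobians and normals uniformly controlled.

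First I would verify uniform boundedness of $\hat{\bd{u}}_{N}$ in $L^{2}(0,T;H^{1}(\hat{\Omega}_{f}))$. By the uniform bounds on $\omega_{N}$ from Lemma~\ref{uniform} (in particular $|\omega_{N}| < R$), the Jacobian $\hat{\mathcal{J}}^{\omega_{N}}_{f} = 1 + \omega_{N}/R$ is bounded from above and below uniformly in $N$, and the transformed gradient operator $\hat{\nabla}^{\omega_{N}}_{f}$ has uniformly bounded coefficients. Combined with the uniform $L^{2}(0,T; H^{1})$ bound on $\bd{u}_{N}$ from the energy estimates, a change of variables gives the claimed uniform $H^{1}$ bound on the reference domain.

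Next I would establish strong convergence $\hat{\bd{u}}_{N} \to \hat{\bd{u}}$ in $L^{2}(0,T; L^{2}(\hat{\Omega}_{f}))$. Write
\begin{equation*}
\hat{\bd{u}}_{N} - \hat{\bd{u}} = (\bd{u}_{N} - \bd{u}) \circ \hat{\bd{\Phi}}^{\tau_{\Delta t}\omega_{N}}_{f} + \bigl(\bd{u} \circ \hat{\bd{\Phi}}^{\tau_{\Delta t}\omega_{N}}_{f} - \bd{u} \circ \hat{\bd{\Phi}}^{\omega}_{f}\bigr).
\end{equation*}
For the first term, a change of variables together with the uniform Jacobian bounds reduces it to $\|\bd{u}_{N} - \bd{u}\|_{L^{2}(0,T; L^{2}(\Omega^{M}_{f}))}$, which vanishes by Proposition~\ref{convergence} (after extending $\bd{u}_{N}$ and $\bd{u}$ by zero to the maximal domain). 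For the second term, Proposition~\ref{platedisplacement} gives $\omega_{N} \to \omega$ in $C(0,T; C(\hat{\Gamma}))$ (using $H^{s}(\hat{\Gamma}) \hookrightarrow C(\hat{\Gamma})$ for $s > 1/2$), and since $\tau_{\Delta t}\omega_{N} - \omega_{N} \to 0$ in the same topology by the H\"older estimate \eqref{holderest8.1}, the maps $\hat{\bd{\Phi}}^{\tau_{\Delta t}\omega_{N}}_{f}$ converge uniformly to $\hat{\bd{\Phi}}^{\omega}_{f}$. Continuity of composition in $L^{2}$ under uniform convergence of bi-Lipschitz maps (obtained by density of $C_{c}^{\infty}$ in $L^{2}$ and the uniform Jacobian bounds) then closes the argument.

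Finally, I would use Sobolev interpolation $\|f\|_{H^{s}(\hat{\Omega}_{f})} \le C\|f\|_{L^{2}(\hat{\Omega}_{f})}^{1-s}\|f\|_{H^{1}(\hat{\Omega}_{f})}^{s}$ for $s \in (0,1)$, combined with the uniform $H^{1}$ bound and the strong $L^{2}$ convergence above, to obtain $\hat{\bd{u}}_{N} \to \hat{\bd{u}}$ strongly in $L^{2}(0,T; H^{s}(\hat{\Omega}_{f}))$ for every $s \in (0,1)$. Applying the continuous trace operator $H^{s}(\hat{\Omega}_{f}) \to H^{s - 1/2}(\hat{\Gamma})$ (valid for $s > 1/2$ in the classical sense, and extended to the full range $s \in (0,1)$ by duality/embedding since convergence in $H^{\sigma}$ implies convergence in $H^{\sigma'}$ for $\sigma' \le \sigma$) yields the desired strong convergence of traces in $L^{2}(0,T; H^{s - 1/2}(\hat{\Gamma}))$ for all $s \in (0,1)$. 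The hardest technical point will be justifying the composition continuity carefully in the second term of Step~2, since one cannot directly invoke Lebesgue's dominated convergence without first approximating $\bd{u}$ by continuous functions and then uniformizing in time.
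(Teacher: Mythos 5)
Your proposal follows the same skeleton as the paper's proof: decompose $\hat{\bd{u}}_{N}-\hat{\bd{u}}$ into $(\bd{u}_{N}-\bd{u})\circ\hat{\bd{\Phi}}^{\tau_{\Delta t}\omega_{N}}_{f}$ plus a term measuring the change of ALE map, obtain strong $L^{2}(0,T;L^{2}(\hat{\Omega}_{f}))$ convergence, combine with a uniform $L^{2}(0,T;H^{1})$ bound, and conclude by Sobolev interpolation and the trace theorem (this is exactly the paper's Lemma~\ref{Hstrace}, including your remark that the trace statement really only applies for $s>1/2$ and is extended to smaller $s$ by embedding). The one place where you genuinely diverge is the second term: the paper estimates $\bd{u}\circ\hat{\bd{\Phi}}^{\tau_{\Delta t}\omega_{N}}_{f}-\bd{u}\circ\hat{\bd{\Phi}}^{\omega}_{f}$ quantitatively by writing the difference as an integral of $\partial_{y}\bd{u}$ along vertical segments (splitting into the regions $I_{2,1}$, $I_{2,2}$ and using absolute continuity of Sobolev functions on almost every line together with $\|\nabla\bd{u}\|_{L^{2}(\Omega_{f}(t))}$ and the uniform convergence $\tau_{\Delta t}\omega_{N}\to\omega$), whereas you invoke a soft composition-continuity argument: approximate the zero extension of $\bd{u}$ by continuous functions, use the uniform upper and lower Jacobian bounds on $1+\omega_{N}/R$ to control the error terms, and use uniform convergence of the ALE maps for the approximant. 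Both work; the paper's route uses the $H^{1}$ regularity of $\bd{u}$ and gives a rate, while yours is more elementary in that it only needs $\bd{u}\in L^{\infty}(0,T;L^{2})$ on the maximal domain and the uniform bi-Lipschitz bounds. One small point you should make explicit (the paper does, in its Step~2): the interpolation inequality is applied to $\hat{\bd{u}}_{N}-\hat{\bd{u}}$, so you also need $\hat{\bd{u}}\in L^{2}(0,T;H^{1}(\hat{\Omega}_{f}))$; this follows in one line by identifying $\hat{\bd{u}}$ with the weak $L^{2}(0,T;H^{1})$ limit of the uniformly bounded sequence $\hat{\bd{u}}_{N}$, but as written your argument only asserts the uniform bound for the approximating sequence.
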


To prove Proposition \ref{traceconvergence}, we will use the following elementary lemma.

\begin{lemma}\label{Hstrace}
Suppose that the functions $\{f_{n}\}_{n = 1}^{\infty}$ and $f$ are all uniformly bounded in $L^{2}(0, T; H^{1}(\Omega_{f}))$ and $f_{n} \to f$ in $L^{2}(0, T; L^{2}(\Omega_{f}))$. Then, $f_{n} \to f$ in $L^{2}(0, T; H^{s}(\Omega_{f}))$ {{for $s \in (0, 1)$}} and hence $f_{n}|_{\Gamma} \to f|_{\Gamma}$ in $L^{2}(0, T; H^{s - \frac{1}{2}}(\Gamma))$ for {{$s \in (1/2, 1)$}}. 
\end{lemma}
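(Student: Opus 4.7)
The plan is to use a Sobolev interpolation inequality together with the continuity of the trace operator on fractional Sobolev spaces. The key fact is the interpolation estimate
\begin{equation*}
\|g\|_{H^s(\Omega_f)} \le C \|g\|_{L^2(\Omega_f)}^{1-s} \|g\|_{H^1(\Omega_f)}^s, \qquad s \in (0,1),
\end{equation*}
valid on the Lipschitz domain $\Omega_f$ (see e.g.\ Theorem 4.17 in \cite{Adams}, already used above in the proof of Proposition~\ref{platedisplacement}). Applying this to $g = f_n - f$ and squaring gives
\begin{equation*}
\|f_n - f\|_{H^s(\Omega_f)}^2 \le C \|f_n - f\|_{L^2(\Omega_f)}^{2(1-s)} \|f_n - f\|_{H^1(\Omega_f)}^{2s}.
\end{equation*}

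Next I would integrate in time and apply Hölder's inequality with conjugate exponents $\tfrac{1}{1-s}$ and $\tfrac{1}{s}$ (valid for $s \in (0,1)$) to split the right-hand side:
\begin{equation*}
\int_0^T \|f_n - f\|_{H^s(\Omega_f)}^2 \, dt \le C \left(\int_0^T \|f_n - f\|_{L^2(\Omega_f)}^{2} \, dt\right)^{1-s} \left(\int_0^T \|f_n - f\|_{H^1(\Omega_f)}^{2} \, dt\right)^s.
\end{equation*}
By hypothesis, the second factor is uniformly bounded because both $\{f_n\}$ and $f$ belong to $L^2(0,T;H^1(\Omega_f))$, while the first factor tends to zero by the assumed strong convergence in $L^2(0,T;L^2(\Omega_f))$. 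This establishes the first claim, $f_n \to f$ in $L^2(0,T;H^s(\Omega_f))$ for every $s \in (0,1)$.

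Finally, to pass to the boundary, I would invoke the continuity of the trace operator $\mathrm{tr} : H^s(\Omega_f) \to H^{s-\tfrac{1}{2}}(\Gamma)$, which holds precisely for $s \in (\tfrac{1}{2}, 1)$ on a Lipschitz domain. Applying this pointwise in time to $f_n - f$ and integrating yields
\begin{equation*}
\|f_n|_\Gamma - f|_\Gamma\|_{L^2(0,T;H^{s-1/2}(\Gamma))} \le C \|f_n - f\|_{L^2(0,T;H^s(\Omega_f))} \longrightarrow 0,
\end{equation*}
which gives the desired trace convergence. The argument is entirely routine; I do not anticipate any real obstacle beyond checking that the interpolation estimate is available on our Lipschitz domain $\Omega_f$ (which it is, since the plate displacement $\omega$ belongs to $H^2_0(\Gamma) \hookrightarrow C^1(\Gamma)$, making $\Omega_f$ at least Lipschitz uniformly).
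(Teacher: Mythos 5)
Your argument is correct and is essentially identical to the paper's proof: the same Sobolev interpolation inequality $\|g\|_{H^s} \le C\|g\|_{L^2}^{1-s}\|g\|_{H^1}^{s}$, the same H\"older splitting in time with exponents $\tfrac{1}{1-s}$ and $\tfrac{1}{s}$, and the same trace embedding $H^{s}(\Omega_f) \to H^{s-\frac12}(\Gamma)$ for $s \in (\tfrac12,1)$. The only cosmetic difference is your closing remark about the Lipschitz regularity of the moving domain, which is unnecessary here since the lemma is posed on the fixed reference rectangle $\Omega_f$.
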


\begin{proof}[Proof of Lemma \ref{Hstrace}]
{{For $s \in (0, 1)$, we compute using Sobolev interpolation that 
\begin{multline*}
||f_{n} - f||_{L^{2}(0, T; H^{s}(\Omega_{f}))}^{2} = \int_{0}^{T} ||(f_{n} - f)(t)||_{H^{s}(\Omega_{f})}^{2} dt \\
\le \int_{0}^{T} ||(f_{n} - f)(t)||_{L^{2}(\Omega_{f})}^{2(1 - s)} \cdot ||(f_{n} - f)(t)||_{H^{1}(\Omega_{f})}^{2s} dt \le ||f_{n} - f||_{L^{2}(0, T; L^{2}(\Omega_{f}))}^{2(1 - s)} \cdot ||f_{n} - f||_{L^{2}(0, T; H^{1}(\Omega_{f}))}^{2s}.
\end{multline*}
The result then follows from the fact that $||f_{n} - f||_{L^{2}(0, T; H^{1}(\Omega_{f}))} \le C$ for a constant $C$ that does not depend on $N$, the assumption that $||f_{n} - f||_{L^{2}(0, T; L^{2}(\Omega_{f}))} \to 0$ as $N \to \infty$, and the trace embedding which gives that $||f_{n}|_{\Gamma} - f|_{\Gamma}||_{L^{2}(0, T; H^{s - \frac{1}{2}}(\Gamma))}^{2} \le ||f_{n} - f||_{L^{2}(0, T; H^{s}(\Omega_{f}))}^{2}$ for $s \in (1/2, 1)$.}} 
\end{proof}

We can use the elementary lemma above to show the desired strong convergence of the fluid velocity traces. 

\begin{proof}[Proof of Proposition \ref{traceconvergence}]
We would like to combine the fact that $\bd{u}_{N} \to \bd{u}$ in $L^{2}(0, T; L^{2}(\Omega^{M}_{f}))$ 
with the fact that $\bd{u}_{N}$ and $\bd{u}$ are all uniformly bounded in $L^{2}(0, T; H^{1}(\Omega_{f}(t)))$ for all $N$,
 to deduce strong convergence of the traces of the fluid velocities using {{Lemma \ref{Hstrace}}}. We do this in the following steps.

\medskip

\noindent \textbf{Step 1.} We show that $\hat{\bd{u}}_{N} \to \hat{\bd{u}}$ on $L^{2}(0, T; L^{2}(\Omega_{f}))$, for $\hat{\bd{u}}_{N}$ and $\hat{\bd{u}}$ 
defined on the reference fluid domain. 

To prove this, we compute $||\hat{\bd{u}}_{N} - \hat{\bd{u}}||_{L^{2}(0, T; L^{2}(\Omega_{f}))}^{2}$ using the functions
 $\bd{u}_{N}$ and $\bd{u}$ which are defined on the maximal domain $\Omega^{M}_{f}$:
\begin{align*}
&||\hat{\bd{u}}_{N} - \hat{\bd{u}}||_{L^{2}(0, T; L^{2}(\Omega_{f}))}^{2} = \int_{0}^{T} \int_{\Omega_{f}} \left|\bd{u}_{N}\left(t, x, y + \left(1 + \frac{y}{R}\right) \tau_{\Delta t} \omega_{N}\right) - \bd{u}\left(t, x, y + \left(1 + \frac{y}{R}\right) \omega\right)\right|^{2} \\
&\le 2(I_{1} + I_{2}), 
\end{align*}
where
\begin{equation*}
I_{1} = \int_{0}^{T} \int_{\Omega_{f}} \left|\bd{u}_{N}\left(t, x, y + \left(1 + \frac{y}{R}\right) \tau_{\Delta t} \omega_{N}\right) - \bd{u}\left(t, x, y + \left(1 + \frac{y}{R}\right) \tau_{\Delta t} \omega_{N}\right)\right|^{2},
\end{equation*}
\begin{equation*}
I_{2} = \int_{0}^{T} \int_{\Omega_{f}} \left|\bd{u}\left(t, x, y + \left(1 + \frac{y}{R}\right) \tau_{\Delta t} \omega_{N}\right) - \bd{u}\left(t, x, y + \left(1 + \frac{y}{R}\right) \omega\right)\right|^{2}.
\end{equation*}
We show that $I_1 \to 0$ as $N\to\infty$ by using the fact that $1 + \frac{\omega^{n}_{N}}R$ is uniformly bounded from above by a positive constant, 
and the fact that $\Omega^{M}_{f}$ contains all of the domains $\Omega^{n}_{f, N}$, so that we can estimate:
{{\begin{align*}
&I_{1} = \sum_{n = 0}^{N - 1} \int_{n\Delta t}^{(n + 1)\Delta t} \int_{\Omega^{n}_{f, N}} \left(1 + \frac{\omega^{n}_{N}}{R}\right) |\bd{u}^{n + 1}_{N} - \bd{u}|^{2} \le C \sum_{n = 0}^{N - 1} \int_{n\Delta t}^{(n + 1)\Delta t} \int_{\Omega^{n}_{f, N}} |\bd{u}^{n + 1}_{N} - \bd{u}|^{2} \\
&\le C ||\bd{u}_{N} - \bd{u}||^{2}_{L^{2}(0, T; L^{2}(\Omega^{M}_{f}))} \to 0.
\end{align*}}}
For $I_{2}$, we break up the integral into two parts:
\begin{equation*}
I_{2} = I_{2, 1} + I_{2, 2},
\end{equation*}
where 
\begin{align*}
&I_{2, 1} = \int_{0}^{T} \int_{0}^{L} \int_{-R}^{\min(0, y^{*}(t, x))}  \left|\bd{u}\left(t, x, y + \left(1 + \frac{y}{R}\right) \tau_{\Delta t} \omega_{N}\right) - \bd{u}\left(t, x, y + \left(1 + \frac{y}{R}\right) \omega\right)\right|^{2},
\\
&I_{2, 2} = \int_{0}^{T} \int_{0}^{L} \int_{\min(0, y^{*}(t, x))}^{0} \left|\bd{u}\left(t, x, y + \left(1 + \frac{y}{R}\right) \tau_{\Delta t}\omega_{N}\right)\right|^{2},
\end{align*}
for $y^{*}(t, x) = \frac{\omega - \tau_{\Delta t} \omega_{N}}{R + \tau_{\Delta t} \omega_{N}}$. We can interpret $y^{*}(t, x)$ as the $y$ value for which $y + \left(1 + \frac{y}{R}\right) \tau_{\Delta t} \omega_{N} = \omega$. 
Now, note that
{{\begin{align*}
I_{2, 1}& \le \int_{0}^{T} \int_{0}^{L} \int_{-R}^{\min(0, y^{*}(t, x))} \left(\int_{y + \left(1 + \frac{y}{R}\right)\tau_{\Delta t}\omega_{N}}^{y + \left(1 + \frac{y}{R}\right)\omega} |\partial_{y} \bd{u}(t, x, y')| dy'\right)^{2} \\
&\le \int_{0}^{T} \int_{0}^{L} \int_{-R}^{\min(0, y^{*}(t, x))} \left(\int_{y + \left(1 + \frac{y}{R}\right)\tau_{\Delta t}\omega_{N}}^{y + \left(1 + \frac{y}{R}\right)\omega} |\partial_{y} \bd{u}(t, x, y')|^{2} dy' \right) \cdot \left(1 + \frac{y}{R}\right) \cdot |\omega - \tau_{\Delta t} \omega_{N}|,
\end{align*}
where we applied Cauchy-Schwarz to the inner $dy'$ integral.}} We note that $\tau_{\Delta t} \omega_{N} \to \omega$ pointwise uniformly on $[0, T] \times \Gamma$ as $N \to \infty$
by {{the convergence $\overline{\omega}_{N} \to \omega$ in $C(0, T; H^{s}(\Gamma))$ for $0 < s < 2$ (from Proposition \ref{platedisplacement}) and 
the estimate \eqref{holderplate}}}.
Combining this with the fact that $||\nabla \bd{u}||_{L^{2}(0, T; L^{2}(\Omega^{\omega}_{f}(t)))}$ is bounded, we have that $I_{2, 1} \to 0$ as $N \to \infty$. 

Next, by {{the fundamental theorem of calculus and Jensen's inequality}},
\begin{align*}
I_{2, 2} &\le \int_{0}^{T} \int_{0}^{L} |\min(0, y^{*}(t, x))| \cdot \max_{w \in [-R, \omega(t, x)]} |\bd{u}(t, x, w)|^{2} \\
&\le {{C}} \int_{0}^{T} \int_{0}^{L} |\min(0, y^{*}(t, x))| \cdot \int_{-R}^{\omega(t, x)} |\partial_{r} \bd{u}(t, x, y')|^{2} dy',
\end{align*}
{{for a constant $C$ that is independent of $N$}}, so we conclude that $I_{2, 2} \to 0$ as $N \to \infty$ by the fact that $|\min(0, y^{*}(t, x))| \to 0$ uniformly on $[0, T] \times \Gamma$, and by the boundedness of $||\nabla \bd{u}||_{L^{2}(0, T; L^{2}(\Omega^{\omega}_{f}(t))}$. Thus, we have that $||\hat{\bd{u}}_{N} - \hat{\bd{u}}||_{L^{2}(0, T; L^{2}(\Omega_{f})))} \to 0$. { Note that this maximum is well defined for almost every $(t,x)$, since Sobolev functions are absolutely continuous for almost every line $\{(t, x, y) : -R \le y \le \omega(t, x)\}$ with fixed $(t, x)$, see \cite[Theorem 2, Section 4.9]{EvansFine}.} 

\medskip

\noindent \textbf{Step 2.} We claim that the functions $\hat{\bd{u}}_{N}$ for positive integers $N$ and $\hat{\bd{u}}$ are all uniformly bounded in $L^{2}(0, T; H^{1}(\Omega_{f}))$. Recall from Lemma \ref{uniform} that the approximate solutions $\hat{\bd{u}}_{N}$ are uniformly bounded in $L^{2}(0, T; H^{1}(\Omega_{f}))$. Since $\hat{\bd{u}}$ is the strong limit of $\hat{\bd{u}}_{N}$ in $L^{2}(0, T; L^{2}(\Omega_{f}))$ and $\hat{\bd{u}}_{N}$ converge weakly in $L^{2}(0, T; H^{1}(\Omega_{f}))$ along a subsequence to a weak limit which hence must also be $\hat{\bd{u}}$, we conclude that $\hat{\bd{u}}$ is also in $L^{2}(0, T; H^{1}(\Omega_{f}))$, which establishes the desired result of this step. 

\medskip

\noindent \textbf{Step 3.} From Step 1, we have that $\hat{\bd{u}}_{N} \to \hat{\bd{u}}$ in $L^{2}(0, T; L^{2}(\Omega_{f}))$ and from Step 2, the functions $\hat{\bd{u}}_{N}$ and $\hat{\bd{u}}$ are bounded in $L^{2}(0, T; H^{1}(\Omega_{f}))$ independently of $N$, so we can conclude the proof of Proposition \ref{traceconvergence} by using Lemma \ref{Hstrace}. 
\end{proof}

%%%%%%%%%%%%%%%%%%%

\subsection{Convergence of the test functions on approximate fluid domains}

The main difficulty in passing to the limit will be the test functions for the fluid velocity. In particular, on the fixed reference domain $\Omega_{f}$ for the fluid, we note that the test functions for the fluid velocity in \eqref{testspace} satisfy $\nabla^{\omega}_{f} \cdot \bd{v} = 0 \text{ on } \Omega_{f}$, where $\omega$ is the solution for the plate displacement. However, the test functions for the fluid velocity in the semidiscrete formulation in the semidiscrete test space $\mathcal{Q}^{n + 1}_{N}$, defined by \eqref{semitestspace}, satisfy $\nabla^{\omega^{n}_{N}} \cdot \bd{v} = 0 \text{ on } \Omega_{f}$. Hence, we need a way of comparing test functions in $\mathcal{Q}^{n + 1}_{N}$ to test functions in the actual test space $\mathcal{V}^{\omega}_{\text{test}}$.

To do this, recall that we have defined the maximal domain $\Omega^{M}_{f}$ that contains all of the numerical fluid domains $\Omega_{f, N}^{n}$. 
We then propose to work with the test functions that are defined on $\Omega^{M}_{f}$,
and are constructed in such a way that the restrictions of those test functions to the domain defined by the plate displacement $\omega$,
and composed with the 
ALE mapping  $\Phi^{\omega}_{f}$ defined in \eqref{phif}, gives a space of test functions $\mathcal{X}^{\omega}_{f}$ that is dense in 
the fluid velocity test space $\mathcal{V}^{\omega}_{f}$.
The space of all such test functions defined on $\Omega^{M}_{f}$ is denoted by $\mathcal{X}$ and it is defined as follows.

{\bf{The test space $\mathcal{X}$:}} The test space $\mathcal{X}$ consists of  functions $\bd{v}\in C_{c}^{1}([0, T); H^{1}(\Omega^{M}_{f}))$ 
satisfying the following properties for each $t \in [0, T)$:
\begin{enumerate}
\item For each $t \in [0, T)$, $\bd{v}(t)$ is a smooth vector-valued function on $\Omega^{M}_{f}$.
\item $\nabla \cdot \bd{v}(t) = 0$ on $\Omega^{M}_{f}$ for all $t \in [0, T)$. 
\item $\bd{v}(t) = 0$ on $\partial \Omega^{M}_{f} \setminus \Gamma_{M}$ for all $t \in [0, T)$, where $\Gamma_{M} = \{(x, M(x)) : 0 \le x \le L\}$ is the top boundary of the maximal fluid domain $\Omega^{M}_{f}$. 
\end{enumerate}

Given $\bd{v} \in \mathcal{X}$, define
\begin{equation}\label{testf}
\tilde{\bd{v}} = \bd{v}|_{\Omega^{\omega}_{f}} \circ \bd{\Phi}^{\omega}_{f} \quad  {\rm and} \quad 
\tilde{\bd{v}}_{N} = \bd{v}|_{\Omega^{\omega_{N}}_{f}} \circ \bd{\Phi}^{\omega_{N}}_{f}.
\end{equation}
The test functions $\tilde{\bd{v}}$ are dense in the fluid velocity test space $\mathcal{V}^{\omega}_{f}$ {{associated with the fixed domain formulation}}, 
and the test functions $\tilde{\bd{v}}_{N}$ restricted to $[n\Delta t, (n + 1)\Delta t)$ are dense in $V^{\omega^{n}_{N}}_{f}$,
where $V^{\omega^{n}_{N}}_{f}$ is the velocity test space for the semidiscretized problem(s) given in \eqref{semitestspace}.
Therefore, for each fixed $N$, we can consider the semidiscrete formulation with the test function $\tilde{\bd{v}}_{N}$, which we emphasize is discontinuous in time, due to the jumps in $\omega_{N}$ at each $n\Delta t$. To pass to the limit as $N\to\infty$
we can use the same approach as in Lemma 7.1 in \cite{MuhaCanic13} and Lemma 2.8 in \cite{CanicLectureNotes}, to obtain
the following strong convergence results of the velocity test functions $\tilde{\bd{v}}_{N}$ and their gradients, which will allow us to pass to the limit in the 
semidiscrete weak formulations:
\begin{proposition}\label{ConvergenceTestFunctions}
Consider $\bd{v} \in \mathcal{X}$, and $\tilde{\bd{v}}$ and $\tilde{\bd{v}}_{N}$ defined in \eqref{testf}. Then
\begin{equation*}
\tilde{\bd{v}}_{N} \to \bd{v}, \qquad \nabla \tilde{\bd{v}}_{N} \to \nabla \bd{v},
\end{equation*}
pointwise, uniformly on $[0, T] \times \Omega_{f}$, as $N \to \infty$.  
\end{proposition}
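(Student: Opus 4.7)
The strategy is to reduce everything to the explicit formula for the ALE map,
$\bd{\Phi}^{\omega}_{f}(\hat x,\hat y) = \bigl(\hat x,\,\hat y + (1+\hat y/R)\omega(\hat x)\bigr)$,
and exploit the smoothness of the fixed test function $\bd{v}$ on the compact maximal fluid domain $\Omega^{M}_{f}$,
together with uniform convergence of $\omega_{N}\to\omega$ and $\partial_{\hat x}\omega_{N}\to\partial_{\hat x}\omega$.
The map $(x,y,\omega,\partial_{\hat x}\omega)\mapsto (\bd{\Phi}^{\omega}_{f}(\hat x,\hat y),\nabla \bd{\Phi}^{\omega}_{f}(\hat x,\hat y))$ is continuous as a polynomial in these arguments,
so once I upgrade the convergence $\omega_{N}\to\omega$ to uniform convergence in $C^{1}(\Gamma)$, pointwise uniform convergence of $\tilde{\bd{v}}_{N}$ and $\nabla\tilde{\bd{v}}_{N}$ follows from the chain rule and uniform continuity of $\bd{v}$ and $\nabla\bd{v}$ on $\overline{\Omega^{M}_{f}}$.

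The first step is the regularity upgrade for the plate displacement. From Proposition~\ref{platedisplacement} I have $\omega_{N}\to\omega$ in $L^{\infty}(0,T;H^{s}(\Gamma))$ for every $s\in(0,2)$. Choosing any $s\in(3/2,2)$ and using the Sobolev embedding $H^{s}(\Gamma)\hookrightarrow C^{1}(\Gamma)$ (valid since $\Gamma$ is one-dimensional), I get
\[
\sup_{t\in[0,T]}\|\omega_{N}(t)-\omega(t)\|_{C^{1}(\Gamma)}\longrightarrow 0 \quad\text{as }N\to\infty.
\]
Consequently, uniformly on $[0,T]\times\overline{\Omega_{f}}$,
\[
\bd{\Phi}^{\omega_{N}}_{f}\longrightarrow \bd{\Phi}^{\omega}_{f}, \qquad
\nabla\bd{\Phi}^{\omega_{N}}_{f}\longrightarrow \nabla\bd{\Phi}^{\omega}_{f},
\]
since both expressions are explicit polynomials in $\omega_{N}$, $\partial_{\hat x}\omega_{N}$. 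In particular, all images $\bd{\Phi}^{\omega_{N}}_{f}(\overline{\Omega_{f}})$ lie in the compact set $\overline{\Omega^{M}_{f}}$, so all compositions below are well defined.

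The second step is the convergence of $\tilde{\bd{v}}_{N}$ itself. Since $\bd{v}(t,\cdot)\in C^{1}(\overline{\Omega^{M}_{f}})$ with continuous time dependence in $C^{1}_{c}([0,T);H^{1})$, and $\bd{v}$ is uniformly continuous on the compact set $[0,T]\times\overline{\Omega^{M}_{f}}$, I can bound
\[
\bigl|\tilde{\bd{v}}_{N}(t,\hat x,\hat y)-\tilde{\bd{v}}(t,\hat x,\hat y)\bigr|
=\bigl|\bd{v}\bigl(t,\bd{\Phi}^{\omega_{N}}_{f}(\hat x,\hat y)\bigr)-\bd{v}\bigl(t,\bd{\Phi}^{\omega}_{f}(\hat x,\hat y)\bigr)\bigr|
\le \varrho_{\bd{v}}\bigl(\|\bd{\Phi}^{\omega_{N}}_{f}-\bd{\Phi}^{\omega}_{f}\|_{\infty}\bigr),
\]
where $\varrho_{\bd{v}}$ is the modulus of uniform continuity of $\bd{v}$. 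The right-hand side tends to zero uniformly in $(t,\hat x,\hat y)$ by Step~1. For the gradient, the chain rule gives
\[
\nabla \tilde{\bd{v}}_{N} = \bigl[(\nabla\bd{v})\circ\bd{\Phi}^{\omega_{N}}_{f}\bigr]\cdot \nabla\bd{\Phi}^{\omega_{N}}_{f},
\]
and the same modulus-of-continuity argument applied to $\nabla\bd{v}$, combined with the uniform convergence of both factors established in Step~1, yields $\nabla\tilde{\bd{v}}_{N}\to\nabla\tilde{\bd{v}}$ pointwise uniformly on $[0,T]\times\overline{\Omega_{f}}$.

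The only subtle point, and hence the main obstacle, is the fact that $\omega_{N}$ is a piecewise-constant-in-time interpolation and therefore discontinuous across the time nodes $n\Delta t$, so a priori ``pointwise uniform'' convergence should be interpreted in the same sense as the $L^{\infty}(0,T;\cdot)$ bound from Proposition~\ref{platedisplacement}. This is harmless because Proposition~\ref{platedisplacement} controls the convergence uniformly in $t$ over the full interval (both $\omega_{N}$ and $\overline\omega_{N}$ share the same limit there), and that is precisely the regularity needed to sup-bound the chain of estimates above uniformly in $t$. Everything else is just explicit, $\omega$-polynomial dependence of $\bd{\Phi}^{\omega}_{f}$ and its Jacobian plus smoothness of $\bd{v}$ on the fixed compact domain $\overline{\Omega^{M}_{f}}$; no compactness arguments beyond those already invoked are required.
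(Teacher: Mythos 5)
Your proposal is correct and follows essentially the same route as the paper, which does not spell out the argument but defers to Lemma 7.1 of \cite{MuhaCanic13} and Lemma 2.8 of \cite{CanicLectureNotes}: there, as in your proof, one upgrades $\omega_{N}\to\omega$ from $L^{\infty}(0,T;H^{s}(\Gamma))$ with $s>3/2$ to uniform $C^{1}(\Gamma)$ convergence, notes that $\bd{\Phi}^{\omega}_{f}$ and $\nabla\bd{\Phi}^{\omega}_{f}$ depend affinely on $(\omega,\partial_{\hat x}\omega)$, and concludes via the chain rule and the uniform continuity of $\bd{v}$ and $\nabla\bd{v}$ on the compact maximal domain. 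The only (harmless) implicit point, shared with the cited proofs, is that the spatial modulus of continuity of $\nabla\bd{v}(t,\cdot)$ is taken uniform in $t$, i.e.\ the test functions in $\mathcal{X}$ are understood as jointly smooth, which is exactly how they are used in the limit passage.
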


 \begin{remark} We emphasize that we were able to construct such a test space ${\cal{X}}$ because in the definition of the full test space $\mathcal{V}^{\omega}_{\text{test}}$ in \eqref{testspace}, the only component of the test space whose definition depends on the plate displacement is the fluid velocity, and fortunately, this fluid velocity component of the test space is \textit{decoupled from the other components.}
 This is a feature of fluid-poroelastic structure interaction problems. In the purely elastic case of FSI, the fluid velocity test space is 
 coupled to that of the structure, and the construction of the test functions that converge on the approximate fluid domains in more involving, 
 see e.g., \cite{CDEM,MuhaCanic13,BorSunMultiLayered}. 
 \end{remark}

\subsection{Passing to the limit}

We are now in a position to pass to the limit in the semidiscrete formulation. {{To do this, we consider any test function $(\tilde{\bd{v}}_{N}, \varphi, \bd{\psi}, r)$ for a given $\bd{v} \in \mathcal{X}$ and for each $n = 0, 1, ..., N - 1$, we test the semidiscrete formulation \eqref{semi1} with $(\tilde{\bd{v}}_{N}(t), \varphi(t), \bd{\psi}(t), r(t))$ for each $t \in [n\Delta t, (n + 1)\Delta t)$, integrate the resulting expressions in time from $t = n\Delta t$ to $t = (n + 1)\Delta t$, and then finally sum over $n = 0, 1, ..., N - 1$ to get an integral over the entire time interval $[0, T]$. Then, using the definition of the approximate solutions from Section \ref{approximate}, we thus}} obtain that for all $(\tilde{\bd{v}}_{N}, \varphi, \bd{\psi}, r)$ in the test space with $\bd{v} \in \mathcal{X}$, the following holds:
\begin{align*}
&\int_{0}^{T} \int_{\Omega_{f}} \left(1 + \frac{\tau_{\Delta t} \omega_{N}}{R}\right) \partial_{t}\overline{\bd{u}}_{N} \cdot \tilde{\boldsymbol{v}}_{N} + \frac{1}{2} \int_{0}^{T} \int_{\Omega_{f}} \left(1 + \frac{\tau_{\Delta t}\omega_{N}}{R}\right) \Bigg[\left(\left(\tau_{\Delta t} \boldsymbol{u}_{N} - \zeta_{N} \frac{R + y}{R} \boldsymbol{e}_{y}\right) \cdot \nabla^{\tau_{\Delta t} \omega_{N}}_{f} \bd{u}_{N}\right) \cdot \tilde{\boldsymbol{v}}_{N} \\
&- \left(\left(\tau_{\Delta t} \boldsymbol{u}_{N} - \zeta_{N} \frac{R + y}{R} \boldsymbol{e}_{y}\right) \cdot \nabla^{\tau_{\Delta t} \omega_{N}}_{f} \tilde{\boldsymbol{v}}_{N}\right) \cdot \bd{u}_{N}\Bigg] + \frac{1}{2R} \int_{0}^{T} \int_{\Omega_{f}} \zeta_{N} \boldsymbol{u}_{N} \cdot \tilde{\boldsymbol{v}}_{N} \\
&+ \frac{1}{2} \int_{0}^{T} \int_{\Gamma} (\bd{u}_{N} - \zeta_{N}^{*} \bd{e}_{y}) \cdot \boldsymbol{n}^{\tau_{\Delta t} \omega_{N}} (\tau_{\Delta t} \boldsymbol{u}_{N} \cdot \tilde{\boldsymbol{v}}_{N}) + 2\nu \int_{0}^{T} \int_{\Omega_{f}} \left(1 + \frac{\tau_{\Delta t} \omega_{N}}{R}\right) \boldsymbol{D}^{\tau_{\Delta t} \omega_{N}}_{f}(\boldsymbol{u}_{N}) : \boldsymbol{D}^{\tau_{\Delta t} \omega_{N}}_{f}(\tilde{\boldsymbol{v}}_{N}) \\
&+ \int_{0}^{T} \int_{\Gamma} \left(\frac{1}{2} \bd{u}_{N} \cdot \tau_{\Delta t} \boldsymbol{u}_{N} - p_{N} \right)(\boldsymbol{\psi} - \tilde{\boldsymbol{v}}_{N})\cdot \boldsymbol{n}^{\tau_{\Delta t} \omega_{N}} + \frac{\beta}{\mathcal{J}^{\tau_{\Delta t} \omega_{N}}_{\Gamma}} \int_{0}^{T} \int_{\Gamma} (\zeta_{N}^{*}\bd{e}_{y} - \bd{u}_{N}) \cdot \boldsymbol{\tau}^{\tau_{\Delta t} \omega_{N}} (\boldsymbol{\psi} - \tilde{\boldsymbol{v}}_{N}) \cdot \boldsymbol{\tau}^{\tau_{\Delta t} \omega_{N}} \\
&+ \rho_{b} \int_{0}^{T} \int_{\Omega_{b}} \left(\frac{\bd{\xi}_{N} - \tau_{\Delta t} \bd{\xi}_{N}}{\Delta t}\right) \cdot \boldsymbol{\psi} + \rho_{p} \int_{0}^{T} \int_{\Gamma} \partial_{t}\overline{\zeta}_{N} \cdot \varphi + 2\mu_{e} \int_{0}^{T} \int_{\Omega_{b}} \boldsymbol{D}(\bd{\eta}_{N}) : \boldsymbol{D}(\boldsymbol{\psi}) \\
&+ \lambda_{e} \int_{0}^{T} \int_{\Omega_{b}} (\nabla \cdot \bd{\eta}_{N}) (\nabla \cdot \boldsymbol{\psi}) + 2\mu_{v} \int_{0}^{T} \int_{\Omega_{b}} \bd{D}(\bd{\xi}_{N}) : \bd{D}(\bd{\psi}) + \lambda_{v} \int_{0}^{T} \int_{\Omega_{b}} (\nabla \cdot \bd{\xi}_{N}) (\nabla \cdot \bd{\psi}) \\
&- \alpha \int_{0}^{T} \int_{\Omega_{b}} \mathcal{J}^{(\tau_{\Delta t} \eta_{N})^{\delta}}_{b} p_{N} \nabla^{(\tau_{\Delta t} \eta_{N})^{\delta}}_{b} \cdot \boldsymbol{\psi} + c_{0} \int_{0}^{T} \int_{\Omega_{b}} \partial_{t} \overline{p}_{N} \cdot r - \alpha \int_{0}^{T} \int_{\Omega_{b}} \mathcal{J}^{(\tau_{\Delta t} \eta_{N})^{\delta}}_{b} \bd{\xi}_{N} \cdot \nabla^{(\tau_{\Delta t} \eta_{N})^{\delta}}_{b} r \\
&- \alpha \int_{0}^{T} \int_{\Gamma} (\zeta_{N}^{*}\bd{e}_{y} \cdot \boldsymbol{n}^{(\tau_{\Delta t} \omega_{N})^{\delta}}) r + \kappa \int_{0}^{T} \int_{\Omega_{b}} \mathcal{J}^{(\tau_{\Delta t} \eta_{N})^{\delta}}_{b} \nabla^{(\tau_{\Delta t} \eta_{N})^{\delta}}_{b} p_{N} \cdot \nabla^{(\tau_{\Delta t} \eta_{N})^{\delta}}_{b} r \\
&- \int_{0}^{T} \int_{\Gamma} [(\bd{u}_{N} - \zeta_{N}^{*}\bd{e}_{y}) \cdot \boldsymbol{n}^{\tau_{\Delta t} \omega_{N}}]r + \int_{0}^{T} \int_{\Gamma} \Delta \omega_{N} \cdot \Delta \varphi = 0.
\end{align*}
{{For additional details about the limit passage in the semidiscrete formulation \eqref{semi1}, we refer the reader to the discussion in Section 7.2 of \cite{MuhaCanic13} and Section 2.7.2 of \cite{CanicLectureNotes}.}}

Using the strong convergence results established above, combined with the previously established weak convergence results  in Proposition \ref{prop:weak}, we can pass to the limit in all of the terms in the semidiscrete weak formulation except those involving time derivatives. However, we can handle these by a discrete integration by parts. For example, for the first integral, we can use a discrete integration by parts to obtain:
\begin{align*}
&\int_{0}^{T} \int_{\Omega_{f}} \left(1 + \frac{\tau_{\Delta t} \omega_{N}}{R}\right) \partial_{t} \overline{\bd{u}}_{N} \cdot \tilde{\bd{v}}_{N} \\
&\to -\int_{0}^{T} \int_{\Omega_{f}} \left(1 + \frac{\omega}{R}\right) \bd{u} \cdot \partial_{t} \tilde{\bd{v}} - \frac{1}{R} \int_{0}^{T} \int_{\Omega_{f}} (\partial_{t}\omega)\bd{u} \cdot \tilde{\bd{v}} - \int_{\Omega_{f}} \left(1 + \frac{\omega_{0}}{R}\right) \bd{u}(0) \cdot \tilde{\bd{v}}(0),
\end{align*}
where $\tilde{\bd{v}}_{N} = \bd{v} \circ \Phi^{\tau_{\Delta t} \omega_{N}}_{f}$ and $\tilde{\bd{v}} = \bd{v} \circ \Phi^{\omega}_{f}$ for $\bd{v} \in \mathcal{X}$. See for example pg. 79-81 in \cite{CanicLectureNotes}.

The limiting weak formulation holds for all velocity test functions in the smooth test space, which can be extended to the 
general test space $\mathcal{V}^{\omega}_{\text{test}}$ defined in \eqref{testspace} by using a density argument. 
Therefore, we have shown that the approximate weak solutions converge, up to a subsequence, to a weak solution to the regularized problem, as
stated in Theorem~\ref{MainThm1}.

This completes the main result of this manuscript, {{stated in Theorem~\ref{MainThm1},}} providing existence of a weak solution to the nonlinearly coupled,
regularized fluid-poroviscoelastic structure interaction problem, given in Definition~\ref{regularizedfixed}.

We conclude this section by making the important observation that the weak solution that we have constructed to the regularized FPSI problem satisfies the desired energy estimate. This will be important for showing weak-classical consistency in the next section, and can be shown easily by using the discrete energy estimate for the approximate solutions.

\begin{proposition}\label{energyestimate}{\bf{(Energy estimate for the limiting solution to the regularized problem.)}}
The weak solution $(\bd{u}, \bd{\eta}, p, \omega)$ constructed from the splitting scheme as the limit of approximate solutions satisfies the following energy estimate for almost every $t \in [0, T]$:
\begin{align}\label{energyinequality}
&\frac{1}{2}\int_{\Omega_{f}(t)} |\bd{u}|^{2} + \frac{1}{2}\rho_{b}\int_{\Omega_{b}} |\bd{\xi}|^{2} + \frac{1}{2} c_{0} \int_{\Omega_{b}} |p|^{2} + \mu_{e} \int_{\Omega_{b}} |\bd{D}(\bd{\eta})|^{2} 
\nonumber \\
&+ \frac{1}{2}\lambda_{e} \int_{\Omega_{b}} |\nabla \cdot \bd{\eta}|^{2} + \frac{1}{2} \rho_{p} \int_{\Gamma} |\zeta|^{2} + \frac{1}{2} \int_{\Gamma} |\Delta \omega|^{2} + 2\nu \int_{0}^{t} \int_{\Omega_{f}(s)} |\bd{D}(\bd{u})|^{2} \nonumber \\
&+ 2\mu_{v} \int_{0}^{t} \int_{\Omega_{b}} |\bd{D}(\bd{\xi})|^{2} + \lambda_{v} \int_{0}^{t} \int_{\Omega_{b}} |\nabla \cdot \bd{\xi}|^{2} + \kappa \int_{0}^{t} \int_{\Omega^{\delta}_{b}(s)} |\nabla p|^{2} + \beta \int_{0}^{t} \int_{\Gamma(s)} |(\zeta \bd{e}_{y} - \bd{u}) \cdot \bd{\tau})|^{2} \le E_{0},
\end{align} 
{{where $E_{0}$ is the initial energy of the problem.}}
\end{proposition}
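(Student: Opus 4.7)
The strategy is to pass to the limit in the discrete energy equalities \eqref{energyeq1}--\eqref{energyeq2} from Lemma \ref{discreteenergyestimates}, using weak lower semicontinuity of norms together with the convergence results established in Sections \ref{compactness}--\ref{limitpassage}. First, for any $n \in \{0, 1, \dots, N-1\}$, summing the two discrete energy equalities telescopically from step $0$ to step $n$ and discarding the non-negative numerical dissipation terms yields the discrete energy inequality
\begin{equation*}
E^{n+1}_N + \sum_{k=0}^{n} D^{k+1}_N \le E^0_N,
\end{equation*}
where $E^{k}_N$ and $D^{k+1}_N$ are defined in \eqref{discenergy}. This discrete inequality can be rewritten in terms of the piecewise constant approximate solutions \eqref{approxconstant} as a pointwise-in-time inequality: for $t \in (n\Delta t, (n+1)\Delta t]$, the quantities $\boldsymbol{u}_N(t), \boldsymbol{\eta}_N(t), p_N(t), \omega_N(t), \zeta_N(t)$ realize the discrete energy $E^{n+1}_N$, and the sum $\sum_{k=0}^{n} D^{k+1}_N$ equals $\int_0^{(n+1)\Delta t} \mathcal{D}_N(s)\,ds$ for a natural piecewise-constant dissipation functional $\mathcal{D}_N$.

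Next, I would pass to the limit $N \to \infty$ separately on the two sides. For the left-hand side, I will use the fact that each quadratic term is weakly lower semicontinuous with respect to the topology in which the relevant convergence has been established. Specifically: $\|\boldsymbol{u}(t)\|_{L^2(\Omega_f(t))}^2$, $\|\boldsymbol{\xi}(t)\|_{L^2(\Omega_b)}^2$, $\|p(t)\|_{L^2(\Omega_b)}^2$, $\|\zeta(t)\|_{L^2(\Gamma)}^2$ are treated via weak* convergence in $L^\infty(0,T; L^2)$-type spaces, while the elastic energy terms $\|\bd{D}(\bd{\eta})\|_{L^2}^2$, $\|\nabla\cdot\bd{\eta}\|_{L^2}^2$, $\|\Delta\omega\|_{L^2}^2$ use weak* convergence in $L^\infty(0,T; H^1)$ or $L^\infty(0,T; H^2_0)$. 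To obtain a pointwise-in-time statement from these weak* statements I would apply the standard Lebesgue point argument: for a.e.\ $t$, averaging on shrinking intervals $(t-\varepsilon, t+\varepsilon)$ and exploiting weak* convergence plus lower semicontinuity gives the desired pointwise bound. For the integrated dissipation terms on the right of $E^0$ (which in our final inequality appear on the left), weak convergence in $L^2(0,T; H^1)$ combined with weak lower semicontinuity of the $L^2$-norm yields the correct inequality, with the caveat that for the integral $\kappa\int_0^t \int_{\Omega_b^\delta(s)} |\nabla p|^2$ one must change variables to the fixed reference domain (introducing the Jacobian $\mathcal{J}^{\eta^\delta}_b$ and the transformed gradient $\nabla^{\eta^\delta}_b$) so that the integrand becomes a product of a weakly convergent factor and a strongly convergent factor.

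For the right-hand side $E^0_N$, I will use that the initial data for the splitting scheme is chosen consistent with the initial data of the original problem, so $E^0_N = E_0$ (or converges to $E_0$) independently of $N$.

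The main obstacle is the moving-domain nature of several terms, in particular the kinetic energy $\tfrac12 \int_{\Omega_f(t)} |\bd{u}|^2$ and the dissipation $\kappa \int_0^t \int_{\Omega_b^\delta(s)} |\nabla p|^2$. These must be handled by pulling back to the fixed reference domain via the ALE map $\hat{\bd{\Phi}}^\omega_f$ and the regularized Lagrangian map $\hat{\bd{\Phi}}^{\eta^\delta}_b$, so that the integrands split as (uniformly convergent Jacobian/geometric factor) $\times$ (weakly convergent quadratic expression in the unknowns). The uniform convergence of the geometric factors follows from the strong convergence $\bd{\eta}_N \to \bd{\eta}$ in $C(0,T; L^2(\Omega_b))$ together with the regularization, which by spatial smoothing upgrades strong $L^2$ convergence to uniform convergence of $(\bd{\eta}_N)^\delta$ and $\nabla(\bd{\eta}_N)^\delta$ (and hence of $\mathcal{J}^{(\eta_N)^\delta}_b$); similarly $\omega_N \to \omega$ uniformly from Proposition \ref{platedisplacement}. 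After this reduction, weak lower semicontinuity closes the argument and produces the stated inequality.
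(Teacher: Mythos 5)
Your proposal is correct and follows essentially the same route as the paper: the paper's proof likewise starts from the energy inequality satisfied by the approximate solutions (a consequence of the discrete energy identities in Lemma \ref{discreteenergyestimates}) and passes to the limit using the weak and weak* convergences of Proposition \ref{prop:weak} together with weak lower semicontinuity of the norms. Your additional details (telescoping the discrete identities, the Lebesgue-point argument for pointwise-in-time statements, and pulling the moving-domain terms back to the reference configuration so the geometric factors converge uniformly) are consistent elaborations of that same argument.
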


\begin{proof}
The approximate solutions $(\bd{u}_{N}, \bd{\eta}_{N}, p_{N}, \omega_{N})$ satisfy the following energy inequality:
\begin{align*}
&\frac{1}{2}\int_{\Omega_{f, N}(t)} |\bd{u}_{N}|^{2} + \frac{1}{2}\rho_{b}\int_{\Omega_{b}} |\bd{\xi}_{N}|^{2} + \frac{1}{2} c_{0} \int_{\Omega_{b}} |p_{N}|^{2} + \mu_{e} \int_{\Omega_{b}} |\bd{D}(\bd{\eta}_{N})|^{2} \\
&+ \frac{1}{2}\lambda_{e} \int_{\Omega_{b}} |\nabla \cdot \bd{\eta}_{N}|^{2} + \frac{1}{2} \rho_{p} \int_{\Gamma} |\zeta_{N}|^{2} + \frac{1}{2} \int_{\Gamma} |\Delta \omega_{N}|^{2} + 2\nu\int_{0}^{t} \int_{\Omega_{f, N}(s)} |\bd{D}(\bd{u}_{N})|^{2} \\
&+ 2\mu_{v}\int_{0}^{t} \int_{\Omega_{b}} |\bd{D}(\bd{\xi}_{N})|^{2} + \lambda_{v} \int_{0}^{t} \int_{\Omega_{b}} |\nabla \cdot \bd{\xi}_{N}|^{2} + \kappa \int_{0}^{t} \int_{\Omega_{b, N}^{\delta}(s)} |\nabla p_{N}|^{2} + \beta \int_{0}^{t} \int_{\Gamma(s)} |(\zeta_{N}^{*} \bd{e}_{y} - \bd{u}_{N}) \cdot \bd{\tau}|^{2} \le E_{0}.
\end{align*}
{{By using the weak and weak-star convergences of the approximate solutions, stated in Proposition \ref{prop:weak} and {{the weak lower semicontinuity property of the norms}}, we can pass to the limit in the energy inequality, one recovers the energy inequality \eqref{energyinequality}.}}
\end{proof}

\section{{{Weak-classical}} consistency}\label{weakstrong}

We have now shown the existence of weak solutions to the regularized FPSI problem \eqref{deltaweakphysical}. However, it is not clear that the solutions to this regularized problem are physically relevant, since the regularized weak formulation is not equivalent to the original weak formulation without the regularization. However, we will demonstrate the following {{weak-classical}} consistency result: given a spatially and temporally smooth solution $(\bd{u}, \bd{\eta}, p, \omega)$ to the FPSI problem, then the weak solutions to the regularized problem with regularization parameter $\delta$, which we will denote by $(\bd{u}_{\delta}, \bd{\eta}_{\delta}, p_{\delta}, \omega_{\delta})$, converge to the smooth solution as $\delta \to 0$. 

\subsection{Notation}

Since we will have to use spatial convolution of the solution to the regularized problem $(\bd{u}_{\delta}, \bd{\eta}_{\delta}, p_{\delta}, \omega_{\delta})$,
and spatial convolution of the smooth solution $(\bd{u}, \bd{\eta}, p, \omega)$, we introduce the following notation to avoid additional superscripts
involving $\delta$. 
\begin{enumerate}
\item Recall that $(\bd{u}_{\delta}, \bd{\eta}_{\delta}, p_{\delta}, \omega_{\delta})$ denotes the weak solutions to the regularized problem \eqref{deltaweakphysical};
\item We will use $(\bd{u}, \bd{\eta}, p, \omega)$ to denote a {{spatially and temporally smooth solution}} to the original FPSI problem \eqref{Biot1}, \eqref{Biot2}, \eqref{plate}, \eqref{NS1}, \eqref{mass}-\eqref{pressurebalance};
\item We will use the superscript $\delta$ notation
\begin{equation*}
{\bd{\eta}}_{\delta}^\delta = (\bd{\eta}_{\delta})^{\delta} := \delta^{-2} \bd{\eta}_{\delta} * \sigma(x/\delta)
\end{equation*}
to denote the spatial convolution defined by \eqref{etadelta} of the weak solution to the regularized problem with the smooth convolution $\delta$ kernel;
\item Similarly, {{in the same spirit as in 3.,}} we will use 
\begin{equation}\label{tildeeta}
{\bd{\eta}}^\delta = (\bd{\eta})^{\delta} = \delta^{-2} \bd{\eta} * \sigma(x/\delta)
\end{equation}
to denote the spatial convolution of the {{classical}} solution $\bd{\eta}$ with the convolution kernel;
\item
We will use superscript $\delta$ to denote the physical Biot domain under the regularized displacement:
\begin{equation}\label{tildeomega}
{\Omega}_{b,\delta}^\delta(t) = (\bd{I} + {\bd{\eta}}_{\delta}^\delta(t))(\Omega_{b}).
\end{equation}
\end{enumerate}

\noindent
{\bf{Weak formulations reformulated.}} We note that even though the weak formulation \eqref{weakphysical} and the regularized weak formulation \eqref{deltaweakphysical} are stated up until a fixed final time $T$, we can reformulate the weak formulation for almost every time $t \in [0, T]$ by using a cutoff function (see for example the proof of Lemma \ref{weakcontinuity} in the appendix where this is done explicitly). 

Thus,  the {\bf{{{classical (temporally and spatially smooth)}}} solution} $(\bd{u}, \bd{\eta}, p, \omega)$ satisfies the following {\emph{non-regularized  weak formulation}}
 for almost all $t \in [0, T]$, for all test functions $(\bd{v}, \varphi, \bd{\psi}, r) \in \mathcal{V}_{\text{test}}$ with the (moving domain) test space $\mathcal{V}_{\text{test}}$ defined in \eqref{testspacemoving}:
\begin{align}\label{weaknoregularization}
&-\int_{0}^{t} \int_{\Omega_{f}(s)} \boldsymbol{u} \cdot \partial_{t}\boldsymbol{v} + \frac{1}{2} \int_{0}^{t} \int_{\Omega_{f}(s)} [((\boldsymbol{u} \cdot \nabla) \boldsymbol{u}) \cdot \boldsymbol{v} - ((\boldsymbol{u} \cdot \nabla)\boldsymbol{v}) \cdot \boldsymbol{u}] + \frac{1}{2} \int_{0}^{t} \int_{\Gamma_{1}(s)} (\boldsymbol{u} \cdot \boldsymbol{n} - 2\boldsymbol{\xi}_{1} \cdot \boldsymbol{n}) \boldsymbol{u} \cdot \boldsymbol{v} 
\nonumber \\
&+ 2\nu \int_{0}^{t} \int_{\Omega_{f}(s)} \boldsymbol{D}(\boldsymbol{u}) : \boldsymbol{D}(\boldsymbol{v}) + \int_{0}^{t} \int_{\Gamma_{1}(s)} \left(\frac{1}{2}|\boldsymbol{u}|^{2} - p\right)(\psi_{n} - v_{n}) + \beta \int_{0}^{t}  \int_{\Gamma_{1}(s)} (\bd{\xi} - \bd{u}) \cdot \bd{t} (\psi_{t} - v_{t}) 
\nonumber \\
&- \rho_{p} \int_{0}^{t} \int_{\Gamma} \partial_{t} \omega \cdot \partial_{t} \varphi + \int_{0}^{t} \int_{\Gamma} \Delta \omega \cdot \Delta \varphi - \rho_{b} \int_{0}^{t} \int_{\Omega_{b}} \partial_{t} \boldsymbol{\eta}_{1} \cdot \partial_{t}\boldsymbol{\psi} + 2\mu_{e} \int_{0}^{t} \int_{\Omega_{b}} \boldsymbol{D}(\boldsymbol{\eta}_{1}) : \boldsymbol{D}(\boldsymbol{\psi}) 
\nonumber \\
&+ \lambda_{e} \int_{0}^{t} \int_{\Omega_{b}} (\nabla \cdot \boldsymbol{\eta}_{1})(\nabla \cdot \boldsymbol{\psi}) + 2\mu_{v} \int_{0}^{t} \int_{\Omega_{b}} \bd{D}(\partial_{t}\bd{\eta}) : \bd{D}(\bd{\psi}) + \lambda_{v} \int_{0}^{t} \int_{\Omega_{b}} (\nabla \cdot \partial_{t} \bd{\eta})(\nabla \cdot \bd{\psi}) 
\nonumber \\
&- \alpha \int_{0}^{t} \int_{\Omega_{b}(s)} p \nabla \cdot \boldsymbol{\psi} - c_{0} \int_{0}^{t} \int_{\Omega_{b}} p \partial_{t}r - {{\alpha \int_{0}^{t} \int_{\Omega_{b}(s)} \frac{D}{Dt} \boldsymbol{\eta} \cdot \nabla r - \alpha \int_{0}^{t} \int_{\Gamma(s)} (\boldsymbol{\xi} \cdot \boldsymbol{n}) r}} 
\nonumber \\
&+ \kappa \int_{0}^{t} \int_{\Omega_{b}(s)} \nabla p \cdot \nabla r - {{\int_{0}^{t} \int_{\Gamma_{1}(s)} ((\boldsymbol{u} - \boldsymbol{\xi}) \cdot \boldsymbol{n}) r}}
\nonumber \\
&= -\int_{\Omega_{f}(t)} \boldsymbol{u}(t) \cdot \boldsymbol{v}(t) - \rho_{p} \int_{\Gamma} \zeta(t) \cdot \boldsymbol{\psi}(t) - \rho_{b} \int_{\Omega_{b}} \bd{\xi}(t) \cdot \boldsymbol{\psi}(t) - c_{0} \int_{\Omega_{b}} p(t) \cdot r(t) 
\nonumber \\
&+ \int_{\Omega_{f}(0)} \boldsymbol{u}_{0} \cdot \boldsymbol{v}(0) + \rho_{p} \int_{\Gamma} \beta_{0} \cdot \boldsymbol{\psi}(0) + \rho_{b} \int_{\Omega_{b}} \bd{\xi}_{0} \cdot \boldsymbol{\psi}(0) + c_{0} \int_{\Omega_{b}} p_{0} \cdot r(0).
\end{align}

Similarly,  the {\bf{solution to the regularized FPSI problem}} $(\bd{u}_{\delta}, \bd{\eta}_{\delta}, p_{\delta}, \omega_{\delta})$ satisfies the following {\emph{regularized weak formulation}} for every test function $(\boldsymbol{v}, \varphi, \boldsymbol{\psi}, r) \in \mathcal{V}_{\text{test}}$,
and for almost every $t \in [0, T_{\delta}]$ where the final time $T_{\delta}$ potentially depends on $\delta$:
\begin{align}\label{weakdelta}
&-\int_{0}^{t} \int_{\Omega_{f,\delta}(s)} \boldsymbol{u}_{\delta} \cdot \partial_{t}\boldsymbol{v} + \frac{1}{2} \int_{0}^{t} \int_{\Omega_{f,\delta}(s)} [((\boldsymbol{u}_{\delta} \cdot \nabla) \boldsymbol{u}_{\delta}) \cdot \boldsymbol{v} - ((\boldsymbol{u}_{\delta} \cdot \nabla)\boldsymbol{v}) \cdot \boldsymbol{u}_{\delta}] 
\nonumber \\
&+ \frac{1}{2} \int_{0}^{t} \int_{\Gamma_{\delta}(s)} (\boldsymbol{u}_{\delta} \cdot \boldsymbol{n} - 2\boldsymbol{\xi}_{\delta} \cdot \boldsymbol{n}) \boldsymbol{u}_{\delta} \cdot \boldsymbol{v} 
+ 2\nu \int_{0}^{t} \int_{\Omega_{f,\delta}(s)} \boldsymbol{D}(\boldsymbol{u}_{\delta}) : \boldsymbol{D}(\boldsymbol{v})\nonumber \\
&  + \int_{0}^{t} \int_{\Gamma_{\delta}(s)} \left(\frac{1}{2}|\boldsymbol{u}_{\delta}|^{2} - p_{\delta}\right)(\psi_{n} - v_{n})
+ \beta \int_{0}^{t} \int_{\Gamma_{\delta}(s)} (\bd{\xi}_{\delta} - \bd{u}_{\delta}) \cdot \bd{t} (\psi_{t} - v_{t}) 
\nonumber\\
&- \rho_{p} \int_{0}^{t} \int_{\Gamma} \partial_{t} \omega_{\delta} \cdot \partial_{t} \varphi + \int_{0}^{t} \int_{\Gamma} \Delta \omega_{\delta} \cdot \Delta \varphi - \rho_{b} \int_{0}^{t} \int_{\Omega_{b}} \partial_{t} \boldsymbol{\eta}_{\delta} \cdot \partial_{t}\boldsymbol{\psi} + 2\mu_{e} \int_{0}^{t} \int_{\Omega_{b}} \boldsymbol{D}(\boldsymbol{\eta}_{\delta}) : \boldsymbol{D}(\boldsymbol{\psi}) 
\nonumber\\
&+ \lambda_{e} \int_{0}^{t} \int_{\Omega_{b}} (\nabla \cdot \boldsymbol{\eta}_{\delta})(\nabla \cdot \boldsymbol{\psi}) + 2\mu_{v} \int_{0}^{t} \int_{\Omega_{b}} \bd{D}(\partial_{t} \bd{\eta}_{\delta}) : \bd{D}(\bd{\psi}) + \lambda_{v} \int_{0}^{t} \int_{\Omega_{b}} (\nabla \cdot \partial_{t}\bd{\eta}_{\delta}) (\nabla \cdot \bd{\psi}) 
\nonumber\\
&- \alpha \int_{0}^{t} \int_{{\Omega}_{b,\delta}^\delta(s)} p_{\delta} \nabla \cdot \boldsymbol{\psi} - c_{0} \int_{0}^{t} \int_{\Omega_{b}} p_{\delta} \partial_{t}r - \alpha \int_{0}^{t} \int_{{\Omega}_{b,\delta}^\delta(s)} \frac{D^{\delta}}{Dt} \boldsymbol{\eta}_{\delta} \cdot \nabla r - \alpha \int_{0}^{t} \int_{{\Gamma}_{\delta}(s)} (\boldsymbol{\xi}_{\delta} \cdot \boldsymbol{n}) r 
\nonumber\\
&+ \kappa \int_{0}^{t} \int_{{\Omega}_{b,\delta}^\delta(s)} \nabla p_{\delta} \cdot \nabla r - \int_{0}^{t} \int_{\Gamma_{\delta}(s)} ((\boldsymbol{u}_{\delta} - \boldsymbol{\xi}_{\delta}) \cdot \boldsymbol{n}) r 
\nonumber\\
&= -\int_{\Omega_{f,\delta}(t)} \boldsymbol{u}_{\delta}(t) \cdot \boldsymbol{v}(t) - \rho_{p} \int_{\Gamma} \zeta_{\delta}(t) \cdot \varphi(t) - \rho_{b} \int_{\Omega_{b}} \boldsymbol{\xi}_{\delta}(t) \cdot \boldsymbol{\psi}(t) - c_{0} \int_{\Omega_{b}} p_{\delta}(t) \cdot r(t) 
\nonumber\\
&+ \int_{\Omega_{f}(0)} \boldsymbol{u}_{0} \cdot \boldsymbol{v}(0) + \rho_{p} \int_{\Gamma} \beta_{0} \cdot \varphi(0) + \rho_{b} \int_{\Omega_{b}} \boldsymbol{\xi}_{0} \cdot \boldsymbol{\psi}(0) + c_{0} \int_{\Omega_{b}} p_{0} \cdot r(0),
\end{align}
where $\frac{D^{\delta}}{Dt}$ is the material derivative with respect to the regularized Biot displacement. We remark that while our existence proof in the previous sections holds for both a purely elastic and viscoelastic Biot medium, our {{weak-classical consistency}} result will hold in the specific case of a Biot \textit{poroviscoelastic} medium so that the viscoelasticity parameters $\mu_{v}$ and $\lambda_{v}$ are strictly positive, and hence, the plate velocity $\zeta_{\delta} \bd{e}_{y}$ in the weak formulation is equivalently the trace of the Biot medium velocity $\bd{\xi}_{\delta} \in L^{2}(0, T; H^{1}(\Omega_{b}))$ along $\Gamma$. 

\subsection{Statement of the result}
In the remainder of the manuscript, we will prove the {{weak-classical consistency}} result. Before stating the result, we need to introduce some additional notation. 
Namely, to prove the {{weak-classical consistency,}} we will subtract the weak formulations for the two solutions 
$\bd{u}$ and $\bd{u}_{\delta}$ and test formally with the difference of the two solutions $\bd{v} = \bd{u} - \bd{u}_{\delta}$. 
 However, the functions $\bd{u}$ and $\bd{u}_{\delta}$ are defined on different domains, and {{the test functions on the physical domains are required to be divergence-free, so we need to be able to transfer $\bd{u}_{\delta}$ to the physical domain for $\bd{u}$ in a way that preserves the divergence-free condition. To do this, we emphasize that we cannot use the usual ALE mapping $\Phi^{\omega}_{f}: \Omega_{f} \to \Omega_{f}(t)$ defined in \eqref{phif}. This is because given two structure displacements $\omega_{1}$ and $\omega_{2}$ which define two respective fluid domains $\Omega^{\omega_{1}}_{f}$ and $\Omega^{\omega_{2}}_{f}$ and a divergence-free function $\bd{u}_{1}$ on $\Omega^{\omega_{1}}_{f}$, the function $\bd{u} \circ \hat{\bd{\Phi}}^{\omega_{1}}_{f} \circ (\hat{\bd{\Phi}}^{\omega_{2}}_{f})^{-1}$ transformed via the ALE mapping is not necessarily divergence-free on $\Omega^{\omega_{2}}_{f}$. Therefore, we will have to use a different transformation to bring a divergence-free function defined on one fluid domain to a divergence-free function defined on another fluid domain.}}

For this purpose consider the two fluid domains
\begin{equation*}
\Omega_{f}(t) = \{(x, y) \in \mathbb{R}^{2} : 0 \le x \le L, -R \le y \le \omega(t, x)\},
\end{equation*}
\begin{equation*}
\Omega_{f,\delta}(t) = \{(x, y) \in \mathbb{R}^{2} : 0 \le x \le L, -R \le y \le \omega_{\delta}(t, x)\},
\end{equation*}
that are associated to the plate displacements $\omega$ and $\omega_{\delta}$.

We define a map between $\Omega_{f}(t)$ and $\Omega_{f,\delta}(t)$, and a transformation that sends functions on one domain to functions on the other domain as follows. Let  $\psi_{\delta}(t): \Omega_{f,\delta}(t) \to \Omega_{f}(t)$ be the mapping defined by
\begin{equation}\label{psi}
\psi_{\delta}(t, x, y) = (t, x, \gamma_{\delta}(t, x)(R + y) - R), \ {\rm{where}} \ 
\gamma_{\delta}(t, x) = \frac{R + \omega(t, x)}{R + \omega_{\delta}(t, x)}.
\end{equation}
This mapping, unfortunately,  does not preserve the divergence free condition. 
However, if we calculate the gradient of the composite mapped function we get 
\begin{equation}\label{gradidentity}
\nabla (\bd{u} \circ \psi_{\delta}) = [(\nabla \bd{u}) \circ \psi_{\delta}] J_{\delta}
\end{equation} 
where
\begin{equation}\label{J}
J_{\delta}(t, x, y) = \begin{pmatrix} 1 & 0 \\ (R + y) \partial_{x}\gamma_{\delta}(t, x) & \gamma_{\delta}(t, x) \\ \end{pmatrix}.
\end{equation}
Similarly, for the regularized problem we define
\begin{equation}\label{Jtilde}
\tilde{J}_{\delta} = J_{\delta} \circ \psi_{\delta}^{-1} = \begin{pmatrix} 1 & 0 \\ (R + y) \gamma_{\delta}^{-1} \partial_{x} \gamma_{\delta}(t, x) & \gamma_{\delta}(t, x) \\ \end{pmatrix}.
\end{equation}
These Jacobian matrices will now be used to define the transformations that map divergence free functions to divergence free functions.

\begin{definition}
{\bf{Part I:}} Given a divergence-free function $\bd{u}$ on $\Omega_{f}(t)$, the following transformation 
$\widehat{\phantom{u}} : {\bd{u}} \mapsto \widehat{\bd{u}} $ maps ${\bd{u}}$  to a divergence free function 
$\widehat{\bd{u}}$ on $\Omega_{f,\delta}(t)$:
\begin{equation}\label{uhat}
\widehat{\bd{u}} = \gamma_{\delta} J^{-1}_{\delta} \cdot (\bd{u} \circ \psi_{\delta}).
\end{equation}
\vskip 0.1in
\noindent
{\bf{Part II:}} 
Given a divergence-free function $\bd{u}_{\delta}$ on $\Omega_{f,\delta}(t)$ the following transformation 
$\widecheck{\phantom{u}}: \bd{u}_{\delta} \mapsto \widecheck{\bd{u}}_{\delta}$
maps $\bd{u}_{\delta}$ to a divergence free function $\widecheck{\bd{u}}_{\delta}$ on $\Omega_{f}(t)$:
\begin{equation}\label{ucheck}
\widecheck{\bd{u}}_{\delta} = \gamma_{\delta}^{-1} \tilde{J}_{\delta} \cdot (\bd{u}_{\delta} \circ \psi_{\delta}^{-1}).
\end{equation}
\vskip 0.1in
\noindent
\end{definition}

\begin{remark}
Both transformations preserve the trace of functions along $\Gamma$.
\end{remark}

Note that even though the definition of $\widehat{\bd{u}}$ depends on $\delta$, we will not explicitly notate this dependence, as $\delta$ will be clear from the context. 
We now state the {{weak-classical consistency}} result.
\begin{theorem}\label{weakstrongunique}{\bf{({{Weak-classical consistency}})}}
Let $(\bd{\eta}_{0}, \bd{\xi}_{0}, \omega_{0}, \zeta_{0}, p_{0}, \bd{u}_{0})$ be smooth initial data for the nonlinearly coupled FPSI problem
 \eqref{Biot1}, \eqref{Biot2}, \eqref{plate}, \eqref{NS1}, \eqref{mass}-\eqref{pressurebalance}.
Suppose $(\bd{\eta}, \omega, p, \bd{u})$ is a {{classical (temporally and spatially smooth)}} solution to this FPSI problem on the time interval $[0, T]$. 
Let $(\bd{\eta}_{\delta}, \omega_{\delta}, p_{\delta}, \bd{u}_{\delta})$ denote the weak solution to the regularized FPSI problem \eqref{deltaweakphysical}
with regularity parameter $\delta$.

Then the following holds true:
\begin{enumerate}
\item $(\bd{\eta}_{\delta}, \omega_{\delta}, p_{\delta}, \bd{u}_{\delta})$ {{is defined on the time interval $[0, T]$ for all $\delta > 0$, where the final time $T$ is \textit{independent of $\delta$}}};
\item The energy norm of the difference between the two solutions  $E_{\delta}(t)$ converges to zero as $\delta \to 0$, for all $t \in [0, T]$, where
{{
\begin{align}\label{energydiff}
E_{\delta}(t) &:= ||(\widehat{\bd{u}} - \bd{u}_{\delta})(t)||^{2}_{L^{2}(\Omega_{f,\delta}(t))} + \int_{0}^{t} ||\bd{D}(\widehat{\bd{u}} - \bd{u}_{\delta})(s)||^{2}_{L^{2}(\Omega_{f,\delta}(s))} ds 
\nonumber \\
&+ ||(\bd{\xi} - \bd{\xi}_{\delta})(t)||_{L^{2}(\Gamma)}^{2} + ||(\omega - \omega_{\delta})(t)||^{2}_{H^{2}(\Gamma)} + ||(\bd{\xi} - \bd{\xi}_{\delta})(t)||^{2}_{L^{2}(\Omega_{b})} 
\nonumber \\
&+ ||\bd{D}(\bd{\eta} - \bd{\eta}_{\delta})(t)||^{2}_{L^{2}(\Omega_{b})} + ||(\nabla \cdot (\bd{\eta} - \bd{\eta}_{\delta}))(t)||_{L^{2}(\Omega_{b})}^{2} + \int_{0}^{t} ||\bd{D}(\bd{\xi} - \bd{\xi}_{\delta})(s)||^{2}_{L^{2}(\Omega_{b})} ds 
\nonumber \\
&+ \int_{0}^{t} ||\nabla \cdot (\bd{\xi} - \bd{\xi}_{\delta})(s)||_{L^{2}(\Omega_{b})}^{2} ds + ||(p - p_{\delta})(t)||^{2}_{L^{2}(\Omega_{b})} + \int_{0}^{t} ||\nabla(p - p_{\delta})(s)||^{2}_{L^{2}({\Omega}^\delta_{b,\delta}(s))} ds.
\end{align}
}}
\end{enumerate}
\end{theorem}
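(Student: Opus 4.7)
The plan is to combine a Gronwall-type estimate on $E_\delta(t)$ with a bootstrap argument that prevents the geometric quantities of the regularized weak solution from degenerating before the time horizon $T$ of the classical solution. Working on a maximal subinterval $[0, T_*] \subseteq [0, T]$ on which the weak solution exists and enjoys the geometric bounds $\det(\bd{I} + \nabla \bd{\eta}_\delta^\delta) \ge c_1 > 0$ and $\|\omega_\delta(t)\|_{C(\Gamma)} \le R_0 < R$ uniformly in $\delta$, I will prove a differential inequality of the form $\tfrac{d}{dt} E_\delta(t) \le C\, E_\delta(t) + \mathcal{R}_\delta(t)$, where $C$ depends only on norms of the classical solution and $\sup_{[0,T]} \mathcal{R}_\delta \to 0$ as $\delta \to 0$. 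Gronwall's lemma then forces $E_\delta \to 0$ uniformly on $[0, T_*]$; feeding this smallness back into the triangle inequality $\|\bd{\eta}_\delta^\delta - \bd{\eta}\|_{H^1} \le \|(\bd{\eta}_\delta - \bd{\eta})^\delta\|_{H^1} + \|\bd{\eta}^\delta - \bd{\eta}\|_{H^1}$ (the first term bounded by $\|\bd{\eta}_\delta - \bd{\eta}\|_{H^1} \le C\sqrt{E_\delta}$, the second by the classical convolution rate $C\delta$) lifts the assumed bootstrap bounds back to strict inequalities, so the regularized solution can be continued and $T_* = T$.

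\textbf{Derivation of the differential inequality.} The central issue is that $\bd{u}$ and $\bd{u}_\delta$ live on different moving fluid domains, so that a naive pullback breaks the divergence-free constraint needed to take them as admissible test functions. The transformations $\widehat{\cdot}$ and $\widecheck{\cdot}$ in \eqref{uhat}--\eqref{ucheck} were designed precisely to preserve both the solenoidal condition and the trace on $\Gamma(t)$. I will test the regularized weak formulation \eqref{weakdelta} with $(\bd{v}, \varphi, \bd{\psi}, r) = (\widehat{\bd{u}}, \partial_t \omega, \partial_t \bd{\eta}, p)$ and the classical weak formulation \eqref{weaknoregularization} with $(\widecheck{\bd{u}}_\delta, \partial_t \omega_\delta, \partial_t \bd{\eta}_\delta, p_\delta)$, then add the resulting identities to the energy identity for $\bd{u}$ and the energy inequality \eqref{energyinequality} for $\bd{u}_\delta$. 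The quadratic quantities defining $E_\delta$ in \eqref{energydiff} will emerge on the left, with the convective boundary terms telescoping thanks to the matching traces, the Beavers--Joseph--Saffman term contributing a tangential dissipation for $(\bd{\xi} - \bd{u}) - (\bd{\xi}_\delta - \bd{u}_\delta)$, and the dynamic pressure balance \eqref{pressurebalance} reconciling the Bernoulli terms with the Biot pressure trace.

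\textbf{The residual and the role of viscoelasticity.} The residual $\mathcal{R}_\delta$ splits into two families: (i) commutator terms arising from $J_\delta - \bd{I}$, $\tilde{J}_\delta - \bd{I}$, and $\gamma_\delta - 1$ in the transformations $\widehat{\cdot}, \widecheck{\cdot}$, which are bounded by $\|\omega - \omega_\delta\|_{H^2(\Gamma)}$ times norms of the classical solution and are therefore absorbable into $C E_\delta(t)$; and (ii) domain-mismatch terms comparing integrals over $\Omega_b(t)$ and $\Omega_{b,\delta}^\delta(t)$ in the Biot pressure coupling, the material derivative, and the permeability term. Family (ii) is pulled back to the reference configuration $\Omega_b$ at the cost of factors $\det(\bd{I} + \nabla \bd{\eta}) - \det(\bd{I} + \nabla \bd{\eta}_\delta^\delta)$ and $(\bd{I} + \nabla \bd{\eta})^{-1} - (\bd{I} + \nabla \bd{\eta}_\delta^\delta)^{-1}$, whose $L^\infty$ norms on $\Omega_b$ are controlled by $\|\bd{\eta} - \bd{\eta}_\delta^\delta\|_{W^{1,\infty}(\Omega_b)}$. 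It is at this step that the viscoelasticity hypothesis $\mu_v, \lambda_v > 0$ is essential: it upgrades the regularity of $\bd{\xi}_\delta$ to $L^2(0, T; H^1(\Omega_b))$, so that $\bd{\eta}_\delta - \bd{\eta}$ is controlled in $H^1$, and then $\bd{\eta}_\delta^\delta - \bd{\eta}$ in $W^{1,\infty}$ via the classical smoothness of $\bd{\eta}$ together with the standard convolution rate $\|\bd{\eta}^\delta - \bd{\eta}\|_{W^{1,\infty}} \le C\delta \|\bd{\eta}\|_{C^2}$.

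\textbf{Main obstacle.} The principal technical difficulty is that the geometric lower bounds needed to make the regularized weak formulation well-posed (invertibility of the Piola transform in \eqref{physicalstress}, permeability coercivity in \eqref{coercivepressure}, and well-definedness of $\Omega_{b,\delta}^\delta(t)$) are precisely the quantities the bootstrap is trying to control, and they are \emph{not} uniformly bounded in $\delta$ by the energy estimate alone. The argument must therefore be run self-consistently: assume the geometric bounds on $[0, T_*]$, derive smallness of $E_\delta$ via Gronwall, and recover the geometric bounds from that smallness (using the classical solution as a baseline and the $C^1$-regularization gain of $\sigma_\delta * \cdot$ applied to $\bd{\eta}_\delta - \bd{\eta}$). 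Once this loop closes, $T_* = T$ and both assertions of Theorem~\ref{weakstrongunique} follow: the uniform lifespan and the convergence $E_\delta(t) \to 0$ on $[0, T]$.
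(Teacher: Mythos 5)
Your overall architecture coincides with the paper's: cross-test the two weak formulations using the divergence-free-preserving transfers $\widehat{\cdot}$, $\widecheck{\cdot}$, add the energy identity/inequality, absorb the commutator terms (controlled by $\|\omega-\omega_\delta\|_{H^2(\Gamma)}$) and the domain-mismatch terms into a Gronwall inequality for $E_\delta$, and close a bootstrap on the geometric bounds for $\bd{I}+\nabla\bd{\eta}_\delta^\delta$ to make the lifespan uniform in $\delta$. However, there is a genuine quantitative gap precisely where the bootstrap must close. The bounds you need are \emph{pointwise} ($L^\infty$) bounds on $\nabla\bd{\eta}_\delta^\delta-\nabla\bd{\eta}$, obtained via Young's inequality as $\|\nabla(\bd{\eta}_\delta-\bd{\eta})\ast\sigma_\delta\|_{L^\infty}\le \|\nabla(\bd{\eta}_\delta-\bd{\eta})\|_{L^2}\,\|\sigma_\delta\|_{L^2}$ with $\|\sigma_\delta\|_{L^2}\sim\delta^{-1}$ in 2D, i.e. you need $\sqrt{E_\delta}=o(\delta)$. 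With the convolution rate you feed into the Gronwall source, $\|\bd{\eta}^\delta-\bd{\eta}\|_{H^1}\le C\delta$, Gronwall only yields $E_\delta\lesssim\delta^{2}e^{Ct}$, so $\delta^{-1}\sqrt{E_\delta}=O(1)$: the "$C^1$-regularization gain" does not produce smallness, and the determinant and matrix-norm bounds on $\bd{I}+\nabla\bd{\eta}_\delta^\delta$ cannot be recovered, so the continuation argument stalls. The paper resolves this by proving a sharper rate for the convolution of the \emph{odd extension} of the smooth solution, $\|\bd{\eta}^\delta-\bd{\eta}\|_{H^1(\Omega_b)}\le C\delta^{3/2}$ (the interior rate is $\delta^2$, degraded to $\delta^{3/2}$ by the boundary layer where the extension is only Lipschitz in the gradient); then $E_\delta\lesssim\delta^{3}e^{Ct}$ and $\delta^{-1}\sqrt{E_\delta}\lesssim\delta^{1/2}\to0$, which is exactly what closes the bootstrap. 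Without an estimate strictly better than $\delta$ for this convolution error, your loop does not close; also note that $H^1$ smallness of $\bd{\eta}_\delta^\delta-\bd{\eta}$ alone (your triangle-inequality step) never yields the pointwise lower bound on the Jacobian in two dimensions.

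A secondary gap: you propose to test the classical weak formulation directly with $(\widecheck{\bd{u}}_\delta,\partial_t\omega_\delta,\partial_t\bd{\eta}_\delta,p_\delta)$ and to derive a differential inequality $\frac{d}{dt}E_\delta\le CE_\delta+\mathcal{R}_\delta$. The weak solution does not have the $C^1$-in-time regularity required of test functions, and $E_\delta$ is not differentiable in time; the paper instead mollifies the weak solution in time (via a convolution built on the Inoue--Wakimoto-type matrix $K$ so that the time-mollification on the moving domain stays divergence-free), derives an \emph{integral} inequality for a.e.\ $t$, and needs dedicated weak-continuity lemmas to pass the mollification parameter to zero in the terms evaluated at times $0$ and $t$. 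Your sketch uses the transfer maps only for the spatial mismatch and omits this temporal regularization, so as written the cross-testing step is not admissible.
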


\noindent {\emph{{Preview of the main steps of the proof of {{weak-classical consistency}.}}}}\label{strategy}
The proof is based on {{Gronwall's}} inequality for $E_{\delta}(t)$. However, there are several obstacles to applying Gronwall's inequality
 due to the fact that we are working on a moving domain problem.  We summarize those main obstacles, and the main ideas behind 
their resolution here. 

The main idea is to estimate the energy difference between $(\bd{u}, \bd{\eta}, p, \omega)$ and $(\bd{u}_{\delta}, \bd{\eta}_{\delta}, p_{\delta}, \omega_{\delta})$, defined in \eqref{energydiff} and obtain an estimate for $E_{\delta}(t)$ in terms of $E_{\delta}(0)$, the integral of $E_{\delta}(s)$ for times $s \in [0, t]$, and other terms that have sufficiently strong convergence in $\delta$ as $\delta \to 0$:
{{\begin{equation}\label{Gronwall0}
E_{\delta}(t) \le C\left(\int_{0}^{t} ||\nabla \bd{\eta} - \nabla {\bd{\eta}}^\delta||^{2}_{L^{2}(\Omega_{b})} ds + \int_{0}^{t} E_{\delta}(s) ds\right)
\end{equation}}}
and then apply Gronwall's inequality to obtain 
{{\begin{equation}\label{Gronwallresult}
E_{\delta}(t) \le C\delta^{3}e^{Ct},
\end{equation}}}
where $C$ is independent of $\delta$, and conclude that $E_{\delta}(t) \to 0$ as $\delta \to 0$. {{We remark that the factor of $\delta^{3}$ appearing in the Gronwall estimate comes from an estimate of the convergence rate of the spatial convolution $\bd{\eta}^{\delta}$ to $\bd{\eta}$ in $H^{1}(\Omega_{b})$, which we establish in the upcoming Lemma \ref{strongconv}.}}

%%%%%%%%%%%%

To do this, we will test the weak formulations for $\bd{u}$ and $\bd{u}_{\delta}$ with appropriate test functions and use the energy inequality {{\eqref{energyinequality} from Proposition \ref{energyestimate}}}.
More precisely, the main steps in the proof are:
\begin{enumerate}
\item Test the {{non-regularized weak formulation  \eqref{weaknoregularization}}}  for the {{classical solution}} $(\bd{u}, \bd{\eta}, p, \omega)$ with the
 ``difference" of $(\bd{u}, \partial_{t} \bd{\eta}, p, \partial_{t} \omega)$ and $(\bd{u}_{\delta}, \partial_{t} \bd{\eta}_{\delta}, p_{\delta}, \partial_{t} \omega_{\delta})$, where the notion of the difference between these two solutions will be made precise in Section \ref{GronwallSection};
\item Test the {{regularized weak formulation \eqref{weakdelta}}}  for $(\bd{u}_{\delta}, \bd{\eta}_{\delta}, p_{\delta}, \omega_{\delta})$ with $(\bd{u}, \partial_{t}\bd{\eta}, p, \partial_{t}\omega)$;
\item Rewrite the {{energy inequality \eqref{energyinequality}}} for $(\bd{u}_{\delta}, \bd{\eta}_{\delta}, p_{\delta}, \omega_{\delta})$ so that it parallels the terms in the weak formulation {{\eqref{weakdelta}}};
\item Combine the equations from Step 1, Step 2, and Step 3. This will give us an expression that we can analyze term by term in order to obtain estimate
{{\eqref{Gronwall0}}} for the energy difference $E_{\delta}(t)$. Details will be presented in Section~\ref{weakstrongestimate};
\item {{Now, we have that the inequality \eqref{Gronwall0} and the resulting Gronwall estimate \eqref{Gronwallresult} are proven locally in time, namely, on the interval $[0,T_\delta]$ along which certain boundedness assumptions on the Lagrangian map hold for the solution of the regularized problem. However, we want the estimate \eqref{Gronwallresult} to hold along the \textit{entire} time interval 
$[0,T]$, along which the {{classical solution}} is defined. Hence, we will use a bootstrap argument on boundedness assumptions of the Lagrangian map in order to propagate the estimate \eqref{Gronwall0} to the entire time interval $[0, T]$, see Section~\ref{bootstrap}.}}
\item Apply Gronwall's inequality to {{\eqref{Gronwall0}}} holding on $[0,T]$ to obtain the following bound for  $E_{\delta}(t)$:
\begin{equation*}
E_{\delta}(t) \le C\delta^{3}e^{Ct},
\end{equation*}
where $C$ is independent of $\delta$, and conclude that $E_{\delta}(t) \to 0$ as $\delta \to 0$.
\end{enumerate}

%Most of the proof, carried out in Section \ref{weakstrongestimate}, will involve estimating, term by term, the various quantities that arise from combining the weak formulations in Steps 1 and 2, and the energy estimate in Step 3. As noted in Step 4 above, this will allow us to obtain an inequality for the energy $E_{\delta}(t)$ that can be used to conclude the proof of Theorem \ref{weakstrongunique} by an application of Gronwall's inequality. 
Before we start with the proof of {{weak-classical}} consistency, we emphasize that there are two main {\bf{mathematical difficulties}} 
that need to be addressed in the proof:
\begin{enumerate}
\item In step 1 above, we want to test  \eqref{weaknoregularization} with the difference of $(\bd{u}, \partial_{t} \bd{\eta}, p, \partial_{t} \omega)$ and $(\bd{u}_{\delta}, \partial_{t} \bd{\eta}_{\delta}, p_{\delta}, \partial_{t}\omega_{\delta})$. This is formal because the test functions in $\mathcal{V}_{\text{test}}$, defined in \eqref{testspacemoving}, must be continuously differentiable in time, and furthermore, for the fluid velocities, the difference between $\bd{u}$ and $\bd{u}_{\delta}$ does not make sense, since these functions are defined on different fluid domains. Thus, we must carefully define which test functions we will use. This is addressed at the beginning of Section~\ref{GronwallSection} below.
\item As mentioned in step 5 above, the regularized weak formulation involves integrals on the physical time-dependent Biot domain ${\Omega}_{b,\delta}^\delta(t)$, which give an extra factor of $\det(\bd{I} + \nabla {\bd{\eta}}_{\delta}^\delta)$ in the integrand from the Jacobian, when the integrals are transferred to the fixed reference Biot domain $\Omega_{b}$. This factor cannot be estimated in the finite energy space where $\bd{\eta}_{\delta}$ is only bounded uniformly in $\delta$ in the function space $L^{\infty}(0, T; H^{1}(\Omega_{b}))$.  {{To obtain pointwise estimates of this term that hold on the time interval $[0,T]$, where $T$ is independent of $\delta$, 
we need to  use a \textit{bootstrap argument} to get from the local pointwise estimates on $[0,T_\delta]$, where $T_\delta$ depends on $\delta$, to the global, uniform estimates on $[0,T]$.}}
 This is addressed in Section \ref{bootstrap} below.
\end{enumerate}

\if 1 = 0
\textbf{Step 1:} \textit{Use bilinearity to expand out each of the terms.} Using bilinearity, we have that there are three quantities to estimate, which we will denote by $I_{1}$, $I_{2}$, and $I_{3}$. We have that 
\begin{equation*}
E(t) = I_{1}(t) + I_{2}(t) - 2I_{3}(t),
\end{equation*}
where 
\begin{multline*}
I_{1}(t) := ||\widehat{\bd{u}}_{1}||_{L^{2}(\Omega_{f,\delta}(t))}^{2} + \int_{0}^{t} ||\bd{D}(\widehat{\bd{u}}_{1})(s)||^{2}_{L^{2}(\Omega_{f,\delta}(s))} ds + ||\bd{\xi}(t)||_{L^{2}(\Gamma)}^{2} \\
+ ||\omega(t)||^{2}_{H^{2}(\Gamma)} + ||\bd{\xi}(t)||_{L^{2}(\Omega_{b})}^{2} + ||\bd{D}(\bd{\eta})||^{2}_{L^{2}(\Omega_{b})} + ||\nabla \cdot \bd{\eta}||_{L^{2}(\Omega_{b})} \\
+ \int_{0}^{t} ||\bd{D}(\bd{\xi})(s)||^{2}_{L^{2}(\Omega_{b})} ds + ||p(t)||_{L^{2}(\Omega_{b})}^{2} + \int_{0}^{t} ||\nabla p(s)||^{2}_{L^{2}(\tilde{\Omega}_{b, 2, \delta}(s))},
\end{multline*}
\begin{multline*}
I_{2}(t) := ||\bd{u}_{\delta}||_{L^{2}(\Omega_{f,\delta}(t))}^{2} + \int_{0}^{t} ||\bd{D}(\bd{u}_{\delta})(s)||^{2}_{L^{2}(\Omega_{f,\delta}(s))} ds + ||\bd{\xi}_{\delta}(t)||_{L^{2}(\Gamma)}^{2} \\
+ ||\omega_{\delta}(t)||^{2}_{H^{2}(\Gamma)} + ||\bd{\xi}_{\delta}(t)||_{L^{2}(\Omega_{b})}^{2} + ||\bd{D}(\bd{\eta}_{\delta})||^{2}_{L^{2}(\Omega_{b})} + ||\nabla \cdot \bd{\eta}_{\delta}||_{L^{2}(\Omega_{b})} \\
+ \int_{0}^{t} ||\bd{D}(\bd{\xi}_{\delta})(s)||^{2}_{L^{2}(\Omega_{b})} ds + ||p_{\delta}(t)||_{L^{2}(\Omega_{b})}^{2} + \int_{0}^{t} ||\nabla p_{\delta}(s)||^{2}_{L^{2}(\tilde{\Omega}_{b, 2, \delta}(s))},
\end{multline*}
\begin{multline*}
I_{3}(t) := (\widehat{\bd{u}}_{1}(t), \bd{u}_{\delta}(t))_{L^{2}(\Omega_{f,\delta}(t))} + \int_{0}^{t} (\bd{D}(\widehat{\bd{u}}_{1}), \bd{D}(\bd{u}_{\delta}))_{L^{2}(\Omega_{f,\delta}(s))} ds \\
+ (\bd{\xi}(t), \bd{\xi}_{\delta}(t))_{L^{2}(\Gamma)} + (\omega(t), \omega_{\delta}(t))_{H^{2}(\Gamma)} \\ 
+ (\bd{\xi}(t), \bd{\xi}_{\delta}(t))_{L^{2}(\Omega_{b})} + (\bd{D}(\bd{\eta}), \bd{D}(\bd{\eta}_{\delta}))_{L^{2}(\Omega_{b})} + (\nabla \cdot \bd{\eta}(t), \nabla \cdot \bd{\eta}_{\delta}(t))_{L^{2}(\Omega_{b})} \\
+ \int_{0}^{t} (\bd{D}(\bd{\xi})(s), \bd{D}(\bd{\xi}_{\delta})(s))_{L^{2}(\Omega_{b})} ds + (p(t), p_{\delta}(t))_{L^{2}(\Omega_{b})} + \int_{0}^{t} (\nabla p(s), \nabla p_{\delta}(s))_{L^{2}(\tilde{\Omega}_{b, 2, \delta}(s))} ds.
\end{multline*}
In particular, $I_{1}(t)$ contains terms that involve only the {{classical}} solution, $I_{2}(t)$ contains terms that involve only the weak solution, and $I_{3}(t)$ contains the cross terms. 
\fi 

\if 1 = 0

The idea is that we want to test in the weak formulations formally with the difference between $(\bd{u}, \bd{\eta}, p, \omega)$ and $(\bd{u}_{\delta}, \bd{\eta}_{\delta}, p_{\delta}, \omega_{\delta})$. However, we need to rigorously define what we mean by the difference of these solutions, since the time-dependent fluid domains associated with $\bd{u}$ and $\bd{u}_{\delta}$ are different. Hence, we must transform the fluid velocities $\bd{u}_{\delta}$ defined on $\Omega^{\omega_{\delta}}$ onto the fluid domain $\Omega^{\omega}$ defined by $\omega$ given by the strong solution, in such a way so that the divergence-free condition on the physical domains is preserved. 

In addition, the test functions must be at least continuously differentiable in time, and hence, before we use the ``difference of the solutions" as a test function, we must regularize in time. We will denote the regularization parameter in time by $\alpha$, and leave the notation so that $\delta$ is exclusively used as the regularization parameter for the spatial regularization of the entire FPSI problem. 

We resolve both of these issues by defining the following functions. Define the following two-by-two matrix:
\begin{equation*}
K(s, t, z, r) = 
\begin{pmatrix} 
\frac{\eta(s, z)}{\eta(t, z)} & 0 \\
-r\partial_{z}\left(\frac{\eta(s, z)}{\eta(t, z)}\right) & 1 \\
\end{pmatrix}
\end{equation*}
Let $j(t)$ denote a standard compactly supported function in one dimension, with $\text{supp}(j) \in (-1, 1)$, $\int_{\mathbb{R}} j dt = 1$ that is even, and decreasing for $j \ge 0$. We then scale $j$ as
\begin{equation*}
j_{\alpha}(t) := \frac{1}{\alpha}j\left(\frac{t}{\alpha}\right).
\end{equation*}
In addition, to convolve in time with parameter $\alpha$, we need to extend our solutions to solutions on the larger time interval $[-\alpha, T + \alpha]$, where $T$ is the final time of the given smooth weak solution. We extend as follows, using the same notation for the original solutions on the interval $[0, T]$ and for the extensions. First, we consider the fluid velocity and the poroelastic pressure.
\begin{equation*}
\bd{u}(t) = \bd{u}_{0}, \qquad \bd{u}_{\delta}(t) = \bd{u}_{0}, \qquad \text{ for } t \le 0,
\end{equation*}
\begin{equation*}
\bd{u}(t) = 0, \qquad \bd{u}_{\delta}(t) = 0, \qquad \text{ for } t \ge T.
\end{equation*}
We do a similar extension by the initial data for $t \le 0$ and by zero for $t \ge T$ for the pressure solution. 

Next, we consider the extension for the poroelastic structure displacement and the plate displacement.
\begin{equation*}
\eta(t) = \eta_{0}, \qquad \eta_{\delta}(0) = \eta_{0}, \qquad \text{ for } t \le 0,
\end{equation*}
\begin{equation*}
\eta(t) = \eta(T), \qquad \eta_{\delta}(T) = \eta_{\delta}(T), \qquad \text{ for } t \ge T.
\end{equation*}
For the plate displacement, we do a similar extension by the initial data and the final data for $t \le 0$ and $t \ge T$ respectively. This is because the poroelastic structure displacement and the plate displacement are both continuous quantities so both the initial and final values of these quantities can be interpreted in a pointwise sense. 

With these functions extended to functions on $t \in \mathbb{R}$, we can convolve in time as follows. For a function $f: (-\infty, \infty) \to X$ where $X$ is a fixed Banach space, we define $f_{\alpha}$ as
\begin{equation*}
f_{\alpha} = \int_{-\infty}^{\infty} f(s) j_{\alpha}(t - s) ds.
\end{equation*}
We note that $f_{\alpha}$ is smooth in time, and takes values in $X$. For functions defined on fixed Banach spaces for each point in time, such as the plate displacement, or the Biot poroelastic displacement, this method of convolution is sufficient.

However, for a function, such as the fluid velocity, which is defined on a moving domain $\Omega_{f}(t)$ which is time-dependent, such a convolution does not work, since the fluid velocity belongs to a different (time-dependent) Banach space for each time $t$. Thus, for the fluid velocity, we define
\begin{equation*}
\bd{u}_{\alpha}(z, r) = \int_{-\infty}^{\infty} K(s, t, z, r) \bd{u}\left(s, z, \frac{\eta(s, z)}{\eta(t, z)} r\right) j_{\alpha}(t - s) ds.
\end{equation*}
We note that this convolution has the essential property that $\bd{u}_{\alpha}$ is still divergence free. Unlike the standard convolution in time, it is not true that $\bd{u}_{\alpha}$ is smooth in time, but one can verify that $\bd{u}_{\alpha}$ is still differentiable in time. 

{{TODO: I have done all of the Gronwall estimates in the next sections without paying attention to the following two important subtleties: (1) the weak solutions $\bd{u}_{\delta}$ are not actually differentiable in time, so we must use this time convolution defined above and let $\alpha \to 0$ to get the estimates more rigorously, (2) we have assumed that the weak solutions $\bd{u}_{\delta}$ formally satisfy the energy inequality. One can probably prove that this is true, though there is probably a way to rewrite the argument to bypass this assumption, but this requires modifying the following arguments slightly.}}

\fi

\subsection{Gronwall's Inequality}\label{GronwallSection}
We show that the following Gronwall's inequality holds for almost all $t \in [0, T_{\delta}]$, where $T_\delta$ depends on $\delta$. Later on we will use a bootstrap argument to show that the {{weak-classical consistency}} holds uniformly, on the entire interval $[0,T]$ on which the {{classical}} solution exists. 

\begin{lemma}\label{GronwallLemma}{\bf{Gronwall's estimate.}} 
{{Suppose that there exist constants $c > 0$ and $C > 0$ which are independent of $\delta$ such that the following estimates hold for almost all $t \in [0, T_{\delta}]$: 
\begin{align}
\label{det}
\det &(\bd{I} + \nabla {\bd{\eta}}_{\delta}^\delta) \ge c > 0,
\\
\label{norm}
0 < c \le&  |\bd{I} + \nabla {\bd{\eta}}_{\delta}^\delta| \le C, \qquad \text{ pointwise in } \overline{\Omega_{b}},
\\
\label{grad}
&|\nabla {\bd{\eta}}_{\delta}^\delta| \le C, \qquad \text{ pointwise in } \overline{\Omega_{b}},
\end{align}
where the final time $T_{\delta}$ potentially depends on $\delta$.}} Furthermore, let $\bd{\eta}$ and ${\bd{\eta}}^\delta$ denote the smooth solution and its regularization, defined on $[0,T]$, and 
$E_\delta$ be the energy norm difference \eqref{energydiff}.
Then the following inequality hold:
\begin{equation}\label{Gronwall}
E_{\delta}(t) \le C\left(\int_{0}^{t} ||\nabla \bd{\eta} - \nabla {\bd{\eta}}^\delta||^{2}_{L^{2}(\Omega_{b})} ds + \int_{0}^{t} E_{\delta}({{s}}) ds\right),
\end{equation}
where $E_{\delta}(t)$ is defined by \eqref{energydiff}.
Furthermore, 
\begin{equation*}
E_{\delta}(t) \le C\delta^{3}e^{Ct}.
\end{equation*}
%as long as the three conditions \eqref{det}, \eqref{norm}, and \eqref{grad} hold, where the constant $C$ is independent of $\delta$. 
\end{lemma}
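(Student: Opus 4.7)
My strategy is the standard ``subtract the weak formulations and test against the difference of solutions'' approach, adapted to the multi-domain poroviscoelastic setting. I would derive an identity for $E_\delta(t)$ by combining three ingredients: (i) the non-regularized weak formulation \eqref{weaknoregularization} for the classical solution, tested by the difference of the two solutions (suitably transported between domains); (ii) the regularized weak formulation \eqref{weakdelta} for the weak solution, tested by the classical solution (again suitably transported); and (iii) the weak-solution energy inequality \eqref{energyinequality} from Proposition~\ref{energyestimate}. When these three relations are added, the quadratic ``self'' energies recombine via the standard identity $a^2+b^2-2ab=(a-b)^2$ and $\int_0^t\partial_t f\cdot f=\tfrac12(|f(t)|^2-|f(0)|^2)$ to reproduce the left-hand side $E_\delta(t)$ of \eqref{Gronwall}, while the cross terms produce right-hand-side contributions that I intend to absorb either into $\int_0^t E_\delta(s)\,ds$ or into mollification residuals of size $\|\nabla\bd\eta-\nabla\bd\eta^\delta\|_{L^2}^2$.

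Before testing, I must address the fact that $\bd u$ and $\bd u_\delta$ live on different moving domains $\Omega_f(t)$ and $\Omega_{f,\delta}(t)$. The devices $\widehat{\bd u}$ and $\widecheck{\bd u}_\delta$ in \eqref{uhat}--\eqref{ucheck} are designed exactly for this: by the identity \eqref{gradidentity} they preserve the divergence-free constraint, so $\widehat{\bd u}-\bd u_\delta$ is a legitimate test function in the regularized formulation \eqref{weakdelta} and $\widecheck{\bd u}_\delta - \bd u$ in the classical one. Since the test space $\mathcal V_{\mathrm{test}}$ requires $C^1$ regularity in time, I would first apply a Steklov-type time mollification to these differences and pass to the limit in the mollification parameter at the end. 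The structural components (Biot displacement, Biot velocity, pore pressure and plate displacement) all sit on the fixed reference geometry, so I can test directly with $\partial_t(\bd\eta-\bd\eta_\delta)$, $p-p_\delta$, and $\partial_t(\omega-\omega_\delta)$.

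Once the three identities are added, the right-hand side decomposes into four families of terms that I would handle in turn. (a) The viscous and elastic bilinear forms immediately give the dissipative pieces of $E_\delta(t)$ by polarization. (b) The convective nonlinearity $\bd u\cdot\nabla\bd u - \bd u_\delta\cdot\nabla\bd u_\delta$ splits as $(\bd u-\bd u_\delta)\cdot\nabla\bd u+\bd u_\delta\cdot\nabla(\bd u-\bd u_\delta)$ and is absorbed into $\int_0^t E_\delta$ using $L^\infty$ bounds on the classical solution and the symmetric skew form of the trilinear term. (c) Boundary integrals along $\Gamma(t)$ and $\Gamma_\delta(t)$ (including the pressure balance and the Beavers--Joseph--Saffman slip) are controlled through trace inequalities combined with the $L^2$-in-time control of $\bd D(\bd u-\bd u_\delta)$. (d) Geometric error terms from the differences of Jacobians, determinants and normal vectors -- of the form $\gamma_\delta-1$, $J_\delta-I$, $\mathcal J_b^{\eta_\delta^\delta}-\mathcal J_b^{\eta^\delta}$, $\bd n^\omega-\bd n^{\omega_\delta}$ -- are expanded by the multilinear-determinant identity into pieces that are pointwise controlled by $\|\bd\eta-\bd\eta_\delta\|_{H^1}+\|\omega-\omega_\delta\|_{H^2}+\|\bd\eta-\bd\eta^\delta\|_{H^1}$, feeding again either into $E_\delta$ or into $\|\nabla\bd\eta-\nabla\bd\eta^\delta\|_{L^2}^2$.

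The main obstacle is category (d) in the Biot block, because the regularized pressure gradient is controlled only over $\Omega_{b,\delta}^\delta(t)$ while the classical one lives on $\Omega_b(t)$; this is precisely where the hypotheses \eqref{det}, \eqref{norm}, \eqref{grad} are indispensable. They guarantee that $\bd I+\nabla\bd\eta_\delta^\delta$ and its inverse are uniformly bounded \emph{independently of $\delta$}, so that pull-backs to $\Omega_b$ do not introduce $\delta$-divergent constants and the weighted coercivity $\kappa\int\mathcal J_b^{\eta_\delta^\delta}|\nabla^{\eta_\delta^\delta}p_\delta|^2\ge c\,\|\nabla p_\delta\|_{L^2(\Omega_b)}^2$ established in the proof of Lemma~\ref{Subproblem2} is preserved uniformly in $\delta$. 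This is the only place where \eqref{det}--\eqref{grad} are genuinely used; it also explains why this lemma is only stated on $[0,T_\delta]$ and why the bootstrap of Section~\ref{bootstrap} is subsequently required to extend it to $[0,T]$. After all these estimates are collected, I obtain the integral inequality \eqref{Gronwall}, and the final bound $E_\delta(t)\le C\delta^3 e^{Ct}$ is then a direct application of the Gr\"onwall lemma combined with the mollification convergence rate $\|\nabla\bd\eta-\nabla\bd\eta^\delta\|_{L^2(\Omega_b)}^2\le C\delta^3$, which holds because $\bd\eta$ is a classical solution and $\sigma_\delta$ is a smooth symmetric kernel.
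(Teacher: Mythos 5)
Your plan is essentially the paper's own proof: test the non-regularized formulation with the (time-mollified) difference of solutions transported via the divergence-preserving maps $\widehat{\bd{u}}$, $\widecheck{\bd{u}}_{\delta}$, test the regularized formulation with the classical solution, add the energy inequality \eqref{energyinequality}, estimate the resulting cross terms using \eqref{det}--\eqref{grad} for the Biot-block geometric factors, and close with Gronwall and the convolution rate $\|\bd{\eta}-\bd{\eta}^{\delta}\|_{H^{1}(\Omega_{b})}\le C\delta^{3/2}$ of Lemma~\ref{strongconv}. The only ingredients you gloss over are technical ones the paper supplies in the appendix (the weak-continuity result needed to pass $\nu\to 0$ in the inertial endpoint terms, and the explicit term-by-term estimates $T_{1}$--$T_{18}$), but your overall route coincides with theirs.
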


%By using the bootstrap argument described in Section \ref{bootstrap}, we complete the proof of the weak-classical consistency result described in the statement of Theorem \ref{weakstrongunique}.

To prove Gronwall's inequality, we want to test the non-regularized weak formulation formally with the difference between $(\bd{u}, \partial_{t}\bd{\eta}, p, \partial_{t}\omega)$ and $(\bd{u}_{\delta}, \partial_{t}\bd{\eta}_{\delta}, p_{\delta}, \partial_{t}\omega_{\delta})$. However, there are two reasons why this is not rigorously justified. First, $\partial_{t}\bd{\eta} - \partial_{t}\bd{\eta}_{\delta}$ is not a continuously differentiable function in time as is required for the test functions, and hence, we will use a \textit{convolution in time} and pass to the limit as the convolution parameter goes to zero. Second, the fluid velocities give an additional difficulty, as the fluid velocities are defined on \textit{time-dependent moving domains}. Thus, we must transfer the fluid velocities between different time-dependent domains in order to make sense of the ``difference" between $\bd{u}$ and $\bd{u}_{\delta}$ as a test function. Furthermore, the way in which we do this transformation and the way in which we perform the convolution in time must both respect the divergence-free nature of the fluid velocity on the time-dependent domain. We will address both of these difficulties as follows.

\vskip 0.1in
\noindent
{\bf{Construction of appropriate test functions $(\bd{u}, \partial_{t}\bd{\eta}, p, \partial_{t}\omega) - (\bd{u}_{\delta}, \partial_{t}\bd{\eta}_{\delta}, p_{\delta}, \partial_{t}\omega_{\delta})$:}}

\vskip 0.1in
{\bf{Difficulty 1: Lack of regularity in time.}} We address the first difficulty by defining a convolution in time. This will allow us to regularize $\partial_{t}(\bd{\eta} - \bd{\eta}_{\delta}) = \bd{\xi} - \bd{\xi}_{\delta}$, $p - p_{\delta}$, and $\partial_{t}(\omega - \omega_{\delta}) = \zeta - \zeta_{\delta}$ so that these functions are continuously differentiable in time. Since the classical solution is already continuously differentiable in time, we only need to regularize the weak solutions to the regularized problem. Because these differences are all defined on fixed domains, we can use a standard convolution in time. 

{\bf{Convolution in time.}} Let $j(\cdot): \mathbb{R} \to \mathbb{R}$ be a compactly supported even function with $\text{supp}(j) \subset [-1, 1]$ and $\displaystyle \int_{\mathbb{R}} j = 1$, and we define $j_{\ttt}(t) = \ttt^{-1}j(\ttt^{-1}t)$, where $\ttt > 0$ is the convolution parameter in time.

Consider $\ttt > 0$. Extend $\bd{\xi}_{\delta}$, $p_{\delta}$, and $\zeta_{\delta}$ to the larger interval $[-\ttt, T + \ttt]$ by reflecting across $t = 0$ and $t = T$. 
For example, define:
\begin{align*}
\bd{\xi}_{\delta}(t) &= \bd{\xi}_{\delta}(-t), \ \text{ for } t \in [-\ttt, 0],
\\
\bd{\xi}_{\delta}(t) &= \bd{\xi}_{\delta}(2T - t), \  \text{ for } t \in [T, T + \ttt].
\end{align*}
Convolution in time is then defined by:  
\begin{equation*}
(\bd{\xi}_{\delta})_{\ttt}(t) = \bd{\xi}_{\delta}(t, \cdot) * j_{\ttt} = \int_{\mathbb{R}} \bd{\xi}_{\delta}(s) j_{\ttt}(t - s) ds, \ {\rm{for}} \ t\in[0,T].
\end{equation*}
The convolutions $(p_{\delta})_{\ttt}$ and $(\zeta_{\delta})_{\ttt}$  are defined similarly. With these definitions we can now test with $\bd{\xi} - (\bd{\xi}_{\delta})_{\ttt}$, $p - (p_{\delta})_{\ttt}$, and $\zeta - (\zeta_{\delta})_{\ttt}$. 
\vskip 0.1in

{\bf{Difficulty 2: Velocities are defined on moving domains.}} Because the fluid velocities are defined on moving time-dependent domains, we cannot directly apply a convolution in time. We must first be able to transform fluid velocities from one domain to another, while preserving the divergence-free condition, and then convolve in time. 
The transformation of fluid velocities from one domain to another, while preserving the divergence-free condition, will be performed using the following matrix, {{which was inspired by the transformation introduced in \cite{InoueWakimoto}} (see also \cite{WeakStrongFSI})}:
\begin{equation}\label{Kdelta}
K(s, t, x, y) = 
\begin{pmatrix}
\frac{R + \omega(s, x)}{R + \omega(t, x)} & 0 \\ -(R + y)\partial_{x}\left(\frac{R + \omega(s, x)}{R + \omega(t, x)}\right) & 1 \\
\end{pmatrix}.
\end{equation}
This matrix has the following essential property: {{if $\bd{u}(x, y)$ is a divergence-free function on the domain $\Omega_{f}(s)$ defined by the structure displacement $\omega(s, x)$, then the function
\begin{equation*}
K(s, t, x, y) \bd{u}\left(x, \frac{R + \omega(s, x)}{R + \omega(t, x)} (R + y) - R \right)
\end{equation*}
is a divergence-free vector field on the domain $\Omega_{f}(t)$ defined by the structure displacement $\omega(t, x)$.}}
\vskip 0.1in
{\bf{Combined transformation of fluid velocities and convolution in time:}} We can now use this transformation to convolve in time, as follows. We extend $\bd{u}_{\delta}$ to $[-\ttt, T + \ttt]$ by reflection, as above, and define, for $t \in [0, T]$,
\begin{equation}\label{ualpha}
(\bd{u}_{\delta})_{\ttt}(t) = \int_{\mathbb{R}} K_{2, \delta}(s, t, x, y) \bd{u}_{\delta}\left(s, x, \frac{R + \omega_{\delta}(s, x)}{R + \omega_{\delta}(t, x)}(R + y) - R\right)j_{\ttt}(t - s) ds.
\end{equation}
For a divergence-free function $\bd{v}$, extended as above in time to $[-\ttt, T + \ttt]$, we can define $\bd{v}_{\ttt}$ on $\Omega_{f}(t)$ analogously by
\begin{equation*}
\bd{v}_{\ttt}(t) = \int_{\mathbb{R}} K_{1}(s, t, x, y) \bd{v}\left(s, x, \frac{R + \omega(s, x)}{R + \omega(t, x)}(R + y) - R\right) j_{\ttt}(t - s) ds.
\end{equation*}
Here, $K_{1}(s, t, x, y)$ and $K_{2, \delta}(s, t, x, y)$ are defined as $K(s, t, x, y)$ with the choices of $\omega = \omega$ and $\omega = \omega_{\delta}$ respectively. 
An example of such a function $\bd{v}$ which will be convenient to consider on $\Omega_{f}(t)$ is the function $\widecheck{\bd{u}}_{\delta}$ defined on $\Omega_{f}(t)$, which is the function $\bd{u}_{\delta}$ defined on $\Omega_{f,\delta}(t)$ transferred in a divergence-free manner, as described above, onto the domain $\Omega_{f}(t)$. Specifically,
\begin{equation*}
\widecheck{\bd{u}}_{\delta}(t, x, y) = \begin{pmatrix}
\frac{R + \omega_{\delta}(t, x)}{R + \omega(t, x)} & 0 \\ -(R + y)\partial_{x}\left(\frac{R + \omega_{\delta}(t, x)}{R + \omega(t, x)}\right) & 1 \\
\end{pmatrix} \cdot {{\bd{u}_{\delta}}}\left({{t, x, \frac{R + \omega_{\delta}(t, x)}{R + \omega(t, x)}(R + y) - R}}\right).
\end{equation*}

We present the main properties of $(\bd{u}_{\delta})_{\ttt}$ in the proposition below,  which are a specific case of Lemma 2.6 in \cite{WeakStrongFSI}.

\begin{proposition}\label{alphaconvprop}
Fix an arbitrary $\delta > 0$. 
Given $\bd{u}_{\delta} \in L^{2}(0, T; H^{1}(\Omega_{b}(t))$ and $\omega, \omega_{\delta} \in H_{0}^{2}(\Gamma)$, 
the following properties hold:
\begin{itemize}
\item Divergence-free condition: $\text{div}\left[(\bd{u}_{\delta})_{\ttt}\right] = 0$ and $\text{div}[(\widecheck{\bd{u}}_{\delta})_{\ttt}] = 0$,  $\forall \ttt > 0$ and $\forall t \in [0, T]$;
\\
\item {{Convergence properties:
\begin{align*}
&(\bd{u}_{\delta})_{\ttt} \to \bd{u}_{\delta} \qquad \text{ strongly in } L^{p}(0, T; L^{q}(\Omega_{f,\delta}(t))), \qquad \text{ for all } p \in [1, \infty), q \in [1, 2),
\\
&(\widecheck{\bd{u}}_{\delta})_{\ttt} \to \widecheck{\bd{u}}_{\delta} \qquad \text{ strongly in } L^{p}(0, T; L^{q}(\Omega_{f, 1}(t))), \qquad \text{ for all } p \in [1, \infty), q \in [1, 2),
\\
&(\bd{u}_{\delta})_{\ttt} \rightharpoonup \bd{u}_{\delta} \qquad \text{ weakly in } L^{2}(0, T; W^{1, p}(\Omega_{f,\delta}(t))), \qquad \text{ for all } p \in [1, 2),
\\
&(\widecheck{\bd{u}}_{\delta})_{\ttt} \rightharpoonup \widecheck{\bd{u}}_{\delta} \qquad \text{ weakly in } L^{2}(0, T; W^{1, p}(\Omega_{f, 1}(t))), \qquad \text{ for all } p \in [1, 2).
\end{align*}}}
\end{itemize}
\end{proposition}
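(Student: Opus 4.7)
The plan is to verify the two parts of Proposition~\ref{alphaconvprop} in sequence, leveraging the design of the matrix $K$ defined in \eqref{Kdelta} to decouple the divergence-free preservation from the convergence analysis. Since the statement is announced as a specific case of Lemma 2.6 in \cite{WeakStrongFSI}, the proof will follow that template; the work consists of adapting it to the concrete form of our transformation and tracking the parameters $p,q$ in the convergence claims.

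\emph{Step 1: Divergence-free preservation.} The first task is to verify by a direct pointwise computation that, for each fixed $s$, the map
\[
(x,y) \mapsto K_{2,\delta}(s,t,x,y)\,\bd{u}_{\delta}\!\left(s,\,x,\,\tfrac{R+\omega_{\delta}(s,x)}{R+\omega_{\delta}(t,x)}(R+y)-R\right)
\]
is divergence-free on $\Omega_{f,\delta}(t)$ whenever $\bd{u}_{\delta}(s,\cdot)$ is divergence-free on $\Omega_{f,\delta}(s)$. This is an algebraic check: denote the vertical rescaling by $y\mapsto \phi(s,t,x,y)$ so that $K$ acts as the Jacobian matrix of the area-preserving (up to an explicit factor) change of variables $(x,y)\mapsto(x,\phi(s,t,x,y))$; then $\mathrm{div}_x[K\,\bd{u}_{\delta}(s,x,\phi)]$ collapses to $(\mathrm{div}_x\bd{u}_{\delta})\circ(x,\phi)$ multiplied by a factor, after the off-diagonal entry of $K$ cancels against the chain-rule contribution. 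Once this pointwise identity is in hand, both claimed divergence-free identities follow by taking the convolution against $j_{\ttt}(t-s)\,ds$ and exchanging derivative and integral, which is justified by the regularity of $\omega_{\delta}\in L^{\infty}(0,T;H^{2}(\Gamma))\cap W^{1,\infty}(0,T;L^{2}(\Gamma))$. The same argument applied with $K_{1}$ handles $(\widecheck{\bd{u}}_{\delta})_{\ttt}$.

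\emph{Step 2: Convergence as $\ttt\to 0$.} The second task is to show the stated strong and weak convergences. The key observation is that $K_{2,\delta}(t,t,x,y)=I$ and $\phi(t,t,x,y)=y$, so at the diagonal $s=t$ the transformation is trivial and $(\bd{u}_{\delta})_{\ttt}$ differs from the standard temporal mollification $\int \bd{u}_{\delta}(s,x,y)j_{\ttt}(t-s)\,ds$ only by a perturbation controlled by $|K_{2,\delta}(s,t,\cdot)-I|+|\phi(s,t,\cdot)-y|$. By the Lipschitz continuity of $\omega_{\delta}$ in time (which follows from $\partial_t\omega_{\delta}\in L^{\infty}(0,T;L^2(\Gamma))\cap L^{2}(0,T;H^{2}(\Gamma))$), both of these quantities vanish uniformly as $s\to t$ on the support of $j_{\ttt}$, and the strong convergence in $L^{p}(0,T;L^{q}(\Omega_{f,\delta}(t)))$ then follows from standard mollifier arguments combined with the dominated convergence theorem, using the uniform boundedness of $\bd{u}_{\delta}$ in $L^{\infty}(0,T;L^{2})\cap L^{2}(0,T;H^{1})$. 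The weak convergence in $L^{2}(0,T;W^{1,p})$ is obtained by the same decomposition, this time differentiating in $x$ and $y$, producing terms involving $\partial_x K_{2,\delta}$ and $\partial_x\phi$; uniform boundedness of the mollified sequence in $L^{2}(0,T;W^{1,p}(\Omega_{f,\delta}(t)))$ for $p<2$ then yields the weak limit, which must coincide with $\bd{u}_{\delta}$ by the already established strong $L^{p}(L^{q})$ convergence.

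\emph{Main obstacle.} The restriction $p,q<2$ in the convergence statements is not cosmetic: it reflects the limited regularity available for the transformation $K_{2,\delta}$. Indeed, the off-diagonal entry of $K$ involves $\partial_x\!\left(\tfrac{R+\omega_{\delta}(s,x)}{R+\omega_{\delta}(t,x)}\right)$, so taking one further spatial derivative forces $\partial_x^{2}\omega_{\delta}$, which lies only in $L^{\infty}(0,T;L^{2}(\Gamma))$. Chaining this with the trace/embedding losses when pulling the composition $\bd{u}_{\delta}(s,x,\phi)$ back to $\Omega_{f,\delta}(t)$ precludes taking $p=q=2$ and forces us to work slightly below the energy-space exponent. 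Handling this loss cleanly---in particular verifying that the perturbation terms generated by differentiating $K$ and $\phi$ remain uniformly bounded in $L^{2}(0,T;W^{1,p}(\Omega_{f,\delta}(t)))$ for every $p\in[1,2)$ while going to zero strongly in $L^{p}(0,T;L^{q}(\Omega_{f,\delta}(t)))$ for every admissible $(p,q)$---is the main technical point, and this is precisely the content of Lemma~2.6 in \cite{WeakStrongFSI}, whose proof we adapt to the present setting.
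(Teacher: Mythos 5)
Your proposal is correct and takes essentially the same route as the paper, which offers no argument of its own beyond citing Lemma 2.6 of \cite{WeakStrongFSI}: your pointwise computation showing that the off-diagonal entry of $K$ cancels the chain-rule contribution, so that the divergence of the transported field equals $\gamma_{\delta}$ times the transported divergence, and your perturbation-of-standard-mollification argument with the $p,q<2$ restriction traced to $\partial_{x}^{2}\omega_{\delta}\in L^{\infty}(0,T;L^{2}(\Gamma))$, are exactly what that lemma packages. One small correction: $\partial_{t}\omega_{\delta}$ lies only in $L^{\infty}(0,T;L^{2}(\Gamma))$, not in $L^{2}(0,T;H^{2}(\Gamma))$, so the uniform closeness of $K_{2,\delta}(s,t,\cdot)$ to the identity as $s\to t$ should be justified by the interpolation estimate $\|\omega_{\delta}(t+\tau)-\omega_{\delta}(t)\|_{H^{1}(\Gamma)}\le C\tau^{1/2}$ (as in \eqref{holderplate}) combined with the embedding $H^{1}(\Gamma)\hookrightarrow C(\Gamma)$, rather than by pointwise Lipschitz continuity in time.
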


\begin{proof}\label{weakstrongestimate}{\bf{(Proof of Gronwall's estimate.)}}

We begin by testing the weak formulation \eqref{weaknoregularization} for the classical solution $(\bd{u}, \bd{\eta}, p, \omega)$ to the original non-regularized problem with 
\begin{equation}\label{test1}
\bd{v} = \bd{u} - (\widecheck{\bd{u}}_{\delta})_{\ttt}, \quad \varphi = \zeta - (\zeta_{\delta})_{\ttt}, \quad \bd{\psi} = \bd{\xi} - (\bd{\xi}_{\delta})_{\ttt}, \quad r = p - (p_{\delta})_{\ttt},
\end{equation}
and then test the regularized weak formulations \eqref{weakdelta} for the weak solutions $(\bd{u}_{\delta}, \bd{\eta}_{\delta}, p_{\delta}, \omega_{\delta})$ with
\begin{equation}\label{test2delta}
\bd{v} = \widehat{\bd{u}}, \quad \varphi = \zeta, \quad \bd{\psi} = \bd{\xi}, \quad r = p.
\end{equation}
Next, we rewrite the energy estimate in Proposition \ref{energyestimate}, which holds for the function $\bd{u}_{\delta}$, 
 in a more convenient form by adding extra terms that will cancel out, in order to have the energy inequality parallel the weak formulation term by term. In particular, we have that for almost every $t \in [0, T_{\delta}]$,
\begin{multline}\label{energydelta}
\frac{1}{2}\int_{\Omega_{f,\delta}(t)} |\bd{u}_{\delta}|^{2} + \frac{1}{2} \int_{0}^{t} \int_{\Omega_{f}(s)} [((\bd{u}_{\delta} \cdot \nabla) \bd{u}_{\delta}) \cdot \bd{u}_{\delta} - ((\bd{u}_{\delta} \cdot \nabla) \bd{u}_{\delta}] + \frac{1}{2} \int_{0}^{t} \int_{\Gamma_{\delta}(s)} (\bd{u}_{\delta} \cdot \bd{n} - 2\bd{\xi}_{\delta} \cdot \bd{n})\bd{u}_{\delta} \cdot \bd{u}_{\delta} \\
+ 2\nu \int_{0}^{t} \int_{\Omega_{f,\delta}(s)} |\bd{D}(\bd{u}_{\delta})|^{2} + \int_{0}^{t} \int_{\Gamma_{\delta}(s)} \left(\frac{1}{2} |\bd{u}_{\delta}|^{2} - p_{\delta}\right) (\bd{\xi}_{\delta} - \bd{u}_{\delta}) \cdot \bd{n} + \beta \int_{0}^{t} \int_{\Gamma_{\delta}(s)} |(\bd{\xi}_{\delta} - \bd{u}_{\delta}) \cdot \bd{t})|^{2} \\
+ \frac{1}{2} \rho_{p} \int_{\Gamma} |\bd{\xi}_{\delta}|^{2} + \frac{1}{2} \int_{\Gamma} |\Delta \omega_{\delta}|^{2} + \frac{1}{2}\rho_{b}\int_{\Omega_{b}} |\bd{\xi}_{\delta}|^{2} + \mu_{e} \int_0^t\int_{\Omega_{b}} |\bd{D}(\bd{\eta}_{\delta})(s)|^{2} \\
+ \frac{1}{2}\lambda_{e}  \int_{\Omega_{b}} |\nabla \cdot \bd{\eta}_{\delta}(s)|^{2} + 2\mu_{v} \int_{0}^{t} \int_{\Omega_{b}} |\bd{D}(\bd{\xi}_{\delta})|^{2} + \lambda_{v} \int_{0}^{t} \int_{\Omega_{b}} |\nabla \cdot \bd{\xi}_{\delta}|^{2} \\
- \aalpha \int_{0}^{t} \int_{{\Omega}^\delta_{b,\delta}(s)} p_{\delta} \nabla \cdot \bd{\xi}_{\delta} + \frac{1}{2} c_{0} \int_0^t\int_{\Omega_{b}} |p_{\delta}(s)|^{2} - \aalpha \int_{0}^{t} \int_{{\Omega}^\delta_{b,\delta}(s)} \frac{{D}^\delta}{Dt} \bd{\eta}_{\delta} \cdot \nabla p_{\delta} - \aalpha \int_{0}^{t} \int_{{\Gamma}^\delta_{\delta}(s)} (\bd{\xi}_{\delta} \cdot \bd{n})p_{\delta} \\
+ \kappa \int_{0}^{t} \int_{{\Omega}^\delta_{b,\delta}(s)} |\nabla p_{\delta}|^{2} - \int_{0}^{t} \int_{\Gamma_{\delta}(s)} ((\bd{u}_{\delta} - \bd{\xi}_{\delta}) \cdot \bd{n}) p_{\delta} \le \frac{1}{2}\int_{\Omega_{f}(0)} |\bd{u}_{0}|^{2} + \frac{1}{2} \rho_{p} \int_{\Gamma} |\bd{\xi}_{0}|^{2} \\
+ \frac{1}{2} \int_{\Gamma} |\Delta \omega_{0}|^{2} + \frac{1}{2}\rho_{b}\int_{\Omega_{b}} |\bd{\xi}_{0}|^{2} + \mu_{e} \int_{\Omega_{b}} |\bd{D}(\bd{\eta}_{0})|^{2} + \frac{1}{2}\lambda_{e} \int_{\Omega_{b}} |\nabla \cdot \bd{\eta}_{0}|^{2} + \frac{1}{2} c_{0} \int_{\Omega_{b}} |p_{0}|^{2}.
\end{multline}
Finally, we combine the weak formulation for $\bd{u}$ tested with \eqref{test1}, subtract the regularized weak formulation for $\bd{u}_{\delta}$ tested with \eqref{test2delta}, and add the energy estimate \eqref{energydelta} for $\bd{u}_{\delta}$ to obtain 
an expression of the form
\begin{equation}\label{sumT}
\sum_{i = 1}^{18} T_{i} \le 0,
\end{equation}
where the terms $T_{i}$ are given below. 
We have to estimate each term, and the combined estimate will give the Gronwall's inequality \eqref{Gronwall}. 
To make this section more concise, we summarize the final estimates here, and
present details of the derivation of these terms and the estimates in Appendix~\ref{appendix2}. 

{{As a notational note, in many of the estimates on the terms $T_{i}$ that follow, we will use Cauchy's inequality with $\epsilon$ often: $\displaystyle |ab| \le \epsilon |a|^{2} + C(\epsilon) |b|^{2}$ where $\epsilon > 0$ is any parameter and $C(\epsilon)$ is a constant that depends on the final choice of $\epsilon > 0$. In the inequalities that appear, $\epsilon > 0$ will hence be a parameter appearing on dissipative terms that will, at the conclusion of the estimates, be chosen small enough so that the dissipative terms from the estimates on $T_{i}$ with $\epsilon$ can be absorbed by the dissipative terms in \eqref{energydiff} to give the final inequality \eqref{Gronwall0}.}}

\vskip 0.1in
\noindent
{\bf{Term T1.}}  Term $T_1$ is defined as follows:
{{
\begin{align}
T_{1} = &-\int_{0}^{t} \int_{\Omega_{f}(s)} \bd{u} \cdot \partial_{t} \left[\bd{u} - (\widecheck{\bd{u}}_{\delta})_{\ttt}\right] - \frac{1}{2} \int_{0}^{t} \int_{\Gamma(s)} (\bd{\xi} \cdot \bd{n}) \bd{u} \cdot [\bd{u} - (\widecheck{\bd{u}}_{\delta})_{\ttt}] + \int_{\Omega_{f}(t)} \bd{u}(t) \cdot [\bd{u} - (\widecheck{\bd{u}}_{\delta})_{\ttt}](t) 
\\
& - \int_{\Omega_{f}(0)} \bd{u}(0) \cdot [\bd{u} - (\widecheck{\bd{u}}_{\delta})_{\ttt}](0) - \int_{0}^{t} \int_{\Omega_{f,\delta}(s)} \bd{u}_{\delta} \cdot \partial_{t} \widehat{\bd{u}} - \frac{1}{2} \int_{0}^{t} \int_{\Gamma_{\delta}(s)} (\bd{\xi}_{\delta} \cdot \bd{n}_{\delta}) \bd{u}_{\delta} \cdot \widehat{\bd{u}} 
\nonumber\\
&+ \int_{\Omega_{f,\delta}(t)} \bd{u}_{\delta}(t) \cdot \widehat{\bd{u}}(t) - \int_{\Omega_{f}(0)} \bd{u}_{\delta}(0) \cdot \widehat{\bd{u}}(0) + \frac{1}{2} \int_{\Omega_{f,\delta}(t)} |\bd{u}_{\delta}(t)|^{2} - \frac{1}{2} \int_{\Omega_{f,\delta}(0)} |\bd{u}_{0}|^{2}.
\label{T1}
\end{align}}}
This term is estimated so that after taking the limit as $\nu \to 0$, the contribution of this term becomes
\begin{equation*}
T_{1} = \frac{1}{2} \int_{\Omega_{f,\delta}(t)} |(\widehat{\bd{u}} - \bd{u}_{\delta})(t)|^{2} + {{R_{1}}},
\end{equation*}
where
{{
\begin{align*}
{{|R_{1}|}} \le &\epsilon \int_{0}^{t} ||\widehat{\bd{u}} - \bd{u}_{\delta}||^{2}_{H^{1}(\Omega_{f,\delta}(s))} \\
&+ C(\epsilon) \left(\int_{0}^{t} ||\omega - \omega_{\delta}||^{2}_{H^{2}(\Gamma)} + \int_{0}^{t} ||\partial_{t}\omega - \partial_{t}\omega_{\delta}||_{L^{2}(\Gamma)}^{2} + \int_{0}^{t} ||\widehat{\bd{u}} - \bd{u}_{\delta}||_{L^{2}(\Omega_{f,\delta}(s))}^{2}\right).
\end{align*}}}

\vskip 0.1in
\noindent
{\bf{Term T2.}} Term $T_2$ is defined as follows:
{{
\begin{align}
T_{2} = &\frac{1}{2} \int_{0}^{t} \int_{\Omega_{f}(s)} ((\bd{u} \cdot \nabla) \bd{u}) \cdot [\bd{u} - (\widecheck{\bd{u}}_{\delta})_{\ttt}] - \frac{1}{2} \int_{0}^{t} \int_{\Omega_{f}(s)} (\bd{u} \cdot \nabla) [\bd{u} - (\widecheck{\bd{u}}_{\delta})_{\ttt}] \cdot \bd{u} 
\nonumber\\
&- \frac{1}{2} \int_{0}^{t} \int_{\Omega_{f,\delta}(s)} ((\bd{u}_{\delta} \cdot \nabla) \bd{u}_{\delta}) \cdot (\widehat{\bd{u}} - \bd{u}_{\delta}) + \frac{1}{2} \int_{0}^{t} \int_{\Omega_{f,\delta}(s)} ((\bd{u}_{\delta} \cdot \nabla) (\widehat{\bd{u}} - \bd{u}_{\delta})) \cdot \bd{u}_{\delta}.
\label{T2}
\end{align}
}}

After taking the limit $\nu\to 0$, term $T_2$  can be estimated as follows:
{{
\begin{equation*}
|T_{2}| \le \epsilon \int_{0}^{t} ||\nabla(\widehat{\bd{u}} - \bd{u}_{\delta})||_{L^{2}(\Omega_{f,\delta}(s))}^{2} + C(\epsilon)\left(\int_{0}^{t} ||\omega - \omega_{\delta}||^{2}_{H^{2}(\Gamma)} + \int_{0}^{t} ||\widehat{\bd{u}} - \bd{u}_{\delta}||_{L^{2}(\Omega_{f,\delta}(s))}^{2}\right).
\end{equation*}
}}

\vskip 0.1in
\noindent
{\bf{Term T3.}} Term $T_3$ is defined as follows:
{{
\begin{align}
T_{3} = &\frac{1}{2} \int_{0}^{t} \int_{\Gamma(s)} (\bd{u} \cdot \bd{n} - \bd{\xi} \cdot \bd{n}) \bd{u} \cdot [\bd{u} - (\widecheck{\bd{u}}_{\delta})_{\ttt}] - \frac{1}{2} \int_{0}^{t} \int_{\Gamma_{\delta}(s)} (\bd{u}_{\delta} \cdot \bd{n}_{\delta} - \bd{\xi}_{\delta} \cdot \bd{n}_{\delta}) \bd{u}_{\delta} \cdot \widehat{\bd{u}} 
\nonumber \\
&+ \frac{1}{2} \int_{0}^{t} \int_{\Gamma(s)} |\bd{u}|^{2} (\bd{\xi} \cdot \bd{n} - \bd{u} \cdot \bd{n}) - \frac{1}{2} \int_{0}^{t} \int_{\Gamma(s)} |\bd{u}|^{2} [(\bd{\xi}_{\delta})_{\ttt} \cdot \bd{n} - (\widecheck{\bd{u}}_{\delta})_{\ttt} \cdot \bd{n}] - \frac{1}{2} \int_{0}^{t} \int_{\Gamma_{\delta}(s)} |\bd{u}_{\delta}|^{2} (\bd{\xi} \cdot \bd{n}_{\delta} - \widehat{\bd{u}} \cdot \bd{n}_{\delta}).
\label{T3}
\end{align}
}}

After taking the limit $\nu\to 0$, term $T3$ can be estimated as follows:
{{\begin{align*}
|T_{3}| \le &\epsilon \int_{0}^{t} ||\widehat{\bd{u}} - \bd{u}_{\delta}||_{H^{1}(\Omega_{f,\delta}(s))}^{2} \\
&+ C(\epsilon) \left(\int_{0}^{t} ||\omega - \omega_{\delta}||^{2}_{H^{2}(\Gamma)} + \int_{0}^{t} ||\bd{\xi} - \bd{\xi}_{\delta}||^{2}_{L^{2}(\Gamma)} + \int_{0}^{t} ||\widehat{\bd{u}} - \bd{u}_{\delta}||_{L^{2}(\Omega_{f,\delta}(s))}^{2}\right).
\end{align*}
}}

\vskip 0.1in
\noindent
{\bf{Term T4.}} Term $T_4$ is defined as follows:
{{
\begin{align}
T_{4} = & 2\nu \int_{0}^{t} \int_{\Omega_{f}(s)} \bd{D}(\bd{u}) : \bd{D}(\bd{u} - (\widecheck{\bd{u}}_{\delta})_{\ttt}) - 2\nu \int_{0}^{t} \int_{\Omega_{f,\delta}(s)} \bd{D}(\bd{u}_{\delta}) : \bd{D}(\widehat{\bd{u}} - \bd{u}_{\delta}).
\label{T4}
\end{align}
}}

After taking the limit $\nu\to 0$, term $T4$ can be estimated as follows:
{{
\begin{equation*}
T_{4} = 2\nu \int_{0}^{t} \int_{\Omega_{f,\delta}(s)} |\bd{D}(\widehat{\bd{u}} - \bd{u}_{\delta})|^{2} + {{R_{4}}},
\end{equation*}
}}
where
{{
\begin{equation*}
{{|R_{4}|}} \le \epsilon \int_{0}^{t} ||\bd{D}(\widehat{\bd{u}} - \bd{u}_{\delta})||^{2}_{L^{2}(\Omega_{f,\delta}(s))} + C(\epsilon) \left(\int_{0}^{t} ||\omega - \omega_{\delta}||^{2}_{H^{2}(\Gamma)} + \int_{0}^{t} ||\widehat{\bd{u}} - \bd{u}_{\delta}||_{L^{2}(\Omega_{f,\delta}(s))}^{2}\right). 
\end{equation*}
}}

\vskip 0.1in
\noindent
{\bf{Term T5}.}  Term $T_5$ is defined as follows:
{{
\begin{multline*}
T_{5} = \beta\int_{0}^{t} \int_{\Gamma(s)} (\bd{\xi} - \bd{u}) \cdot \bd{\tau}(s) [(\bd{\xi} - (\bd{\xi}_{\delta})_{\ttt}) \cdot \bd{\tau}(s)  - (\bd{u} - (\widecheck{\bd{u}}_{\delta})_{\ttt}) \cdot \bd{\tau}(s)] \\
 - \beta\int_{0}^{t} \int_{\Gamma_{\delta}(s)} (\bd{\xi}_{\delta} - \bd{u}_{\delta}) \cdot \bd{\tau}_{\delta}(s) [(\bd{\xi} - \bd{\xi}_{\delta}) \cdot \bd{\tau}_{\delta}(s) - (\widehat{\bd{u}} - \bd{u}_{\delta}) \cdot \bd{\tau}_{\delta}(s)],
\end{multline*}
where $\bd{\tau}(s)$ is the unit tangent vector to $\Gamma(s)$ and $\bd{\tau}_{\delta}(s)$ is the unit tangent vector to $\Gamma_{\delta}(s)$. 
}}

After taking the limit $\nu\to 0$, term $T_5$ can be estimated as follows:
{{
\begin{equation*}
T_{5} = \beta \int_{0}^{t} \int_{\Gamma_{\delta}(s)} |(\bd{\xi} - \bd{\xi}_{\delta}) \cdot \bd{\tau}_{\delta}(s) - (\widehat{\bd{u}} - \bd{u}_{\delta}) \cdot \bd{\tau}_{\delta}(s)|^{2} + R_{5},
\end{equation*}
}}
where
{{
\begin{equation*}
|R_{5}| \le \epsilon \int_{0}^{t} ||\bd{D}(\widehat{\bd{u}} - \bd{u}_{\delta})||_{L^{2}(\Omega_{f,\delta}(s))} + C(\epsilon)\left(\int_{0}^{t} ||\omega - \omega_{\delta}||_{H^{2}(\Gamma)}^{2} + \int_{0}^{t} ||\bd{\xi} - \bd{\xi}_{\delta}||_{L^{2}(\Gamma)}^{2}\right).
\end{equation*}
}}

\vskip 0.1in
\noindent
{\bf{Terms T6-T8.}} Terms $T_6$-$T_8$ are defined as follows:
{{
\begin{align}
T_{6} = &-\rho_{p} \int_{0}^{t} \int_{\Gamma} \zeta \cdot \partial_{t}\left[\zeta - (\zeta_{\delta})_{\ttt}\right] + \rho_{p} \int_{\Gamma} \zeta(s) \cdot  \left[\zeta(t) - (\zeta_{\delta})_{\ttt}(t)\right] - \rho_{p} \int_{\Gamma} \zeta(0) \cdot \left[\zeta(0) - (\zeta_{\delta})_{\ttt}(0)\right] 
\nonumber \\
&+ \rho_{p} \int_{0}^{t} \int_{\Gamma} \zeta_{\delta} \cdot \partial_{t} \zeta - \rho_{p} \int_{\Gamma} \zeta_{\delta}(t) \cdot \zeta(t) + \rho_{p} \int_{\Gamma} \zeta_{\delta}(0) \cdot \zeta(0) + \frac{1}{2}\rho_{p} \int_{\Gamma} |\zeta_{\delta}(t)|^{2} - \frac{1}{2} \rho_{p} \int_{\Gamma} |\zeta_{0}|^{2}.
\label{T6}
\\
T_{7} = &\int_{0}^{t} \int_{\Gamma} \Delta \omega \cdot \Delta \left[\zeta - (\zeta_{\delta})_{\ttt}\right] - \int_{0}^{t} \int_{\Gamma} \Delta \omega_{\delta} \cdot \Delta \zeta + \frac{1}{2} \int_{\Gamma} |\Delta \omega_{\delta}(t)|^{2} - \frac{1}{2} \int_{\Gamma} |\Delta \omega_{0}|^{2}.
\\
T_{8} = &-\rho_{b} \int_{0}^{t} \int_{\Omega_{b}} \partial_{t}\bd{\eta} \cdot \partial_{t}\left[\bd{\xi} - (\bd{\xi}_{\delta})_{\ttt}\right] + \rho_{b} \int_{\Omega_{b}} \bd{\xi}(t) \cdot \left[\bd{\xi}(s) - (\bd{\xi}_{\delta})_{\ttt}(s)\right] - \rho_{b} \int_{\Omega_{b}} \bd{\xi}(0) \cdot \left[\bd{\xi}(0) - (\bd{\xi}_{\delta})_{\ttt}(0)\right] 
\nonumber \\
&+ \rho_{b} \int_{0}^{t} \int_{\Omega_{b}} \partial_{t}\bd{\eta}_{\delta} \cdot \partial_{t} \bd{\xi} - \rho_{b} \int_{\Omega_{b}} \bd{\xi}_{\delta}(t) \cdot \bd{\xi}(t) + \rho_{b} \int_{\Omega_{b}} \bd{\xi}_{\delta}(0) \cdot \bd{\xi}(0) + \frac{1}{2} \rho_{b} \int_{\Omega_{b}} |\bd{\xi}_{\delta}(t)|^{2} - \frac{1}{2} \rho_{b} \int_{\Omega_{b}} |\bd{\xi}_{0}|^{2}.
\end{align}
}}

After taking the limit $\nu\to 0$,  the terms $T_6$-$T_8$ become:

{{
\begin{equation*}
T_{6} = \frac{1}{2} \rho_{p} \int_{\Gamma} |(\zeta - \zeta_{\delta})(t)|^{2}, \ \ \ 
T_{7} = \frac{1}{2} \int_{\Gamma} |\Delta (\omega - \omega_{\delta})(t)|^{2}, \ \ \ T_{8} = \frac{1}{2} \rho_{b} \int_{\Omega_{b}} |(\bd{\xi} - \bd{\xi}_{\delta})(t)|^{2}.
\end{equation*}
}}

\vskip 0.1in
\noindent
{\bf{Terms T9-T12.}} Terms $T_9$-$T_{12}$ are defined as follows:
{{
\begin{align}
T_{9} = &2\mu_{e} \int_{0}^{t} \int_{\Omega_{b}} \bd{D}(\bd{\eta}) : \bd{D}\left[\bd{\xi} - (\bd{\xi}_{\delta})_{\ttt}\right] - 2\mu_{e} \int_{0}^{t} \int_{\Omega_{b}} \bd{D}(\bd{\eta}_{\delta}) : \bd{D}(\bd{\xi}) + \mu_{e} \int_{\Omega_{b}} |\bd{D}(\bd{\eta}_{\delta})(t)|^{2} - \mu_{e} \int_{\Omega_{b}} |\bd{D}(\bd{\eta}_{0})|^{2}.
\nonumber \\
T_{10} = &\lambda_{e} \int_{0}^{t} \int_{\Omega_{b}} (\nabla \cdot \bd{\eta}) \left(\nabla \cdot \left[\bd{\xi} - (\bd{\xi}_{\delta})_{\ttt}\right]\right) - \lambda_{e} \int_{0}^{t} \int_{\Omega_{b}} (\nabla \cdot \bd{\eta}_{\delta}) (\nabla \cdot \bd{\xi}) + \frac{1}{2} \lambda_{e} \int_{\Omega_{b}} |\nabla \cdot \bd{\eta}_{\delta}(t)|^{2} - \frac{1}{2} \lambda_{e} \int_{\Omega_{b}} |\nabla \cdot \bd{\eta}_{0}|^{2}.
\nonumber\\
T_{11} = &2\mu_{v} \int_{0}^{t} \int_{\Omega_{b}} \bd{D}(\bd{\xi}) : \bd{D}\left[\bd{\xi} - (\bd{\xi}_{\delta})_{\ttt}\right] - 2\mu_{v} \int_{0}^{t} \int_{\Omega_{b}} \bd{D}(\bd{\xi}_{\delta}) : \bd{D}(\bd{\xi}) + 2\mu_{v} \int_{0}^{t} \int_{\Omega_{b}} |\bd{D}(\bd{\xi}_{\delta})|^{2}.
\nonumber\\
T_{12} = &\lambda_{v} \int_{0}^{t} \int_{\Omega_{b}} (\nabla \cdot \bd{\xi}) \left(\nabla \cdot \left[\bd{\xi} - (\bd{\xi}_{\delta})_{\ttt}\right]\right) - \lambda_{v} \int_{0}^{t} \int_{\Omega_{b}} (\nabla \cdot \bd{\xi}_{\delta}) (\nabla \cdot \bd{\xi}) + \lambda_{v} \int_{0}^{t} \int_{\Omega_{b}} |\nabla \cdot \bd{\xi}_{\delta}|^{2}.
\label{T12}
\end{align}
}}

Because $\bd{\xi}_{\delta} \in L^{2}(0, T; H^{1}(\Omega_{b}))$ where $\Omega_{b}$ is a fixed domain, we have that $(\bd{\xi}_{\delta})_{\ttt} \to \bd{\xi}_{\delta}$ strongly in $L^{2}(0, T; H^{1}(\Omega_{b}))$. Hence, as $\nu \to 0$, we have that Terms 9-12 converge to the following:
\begin{equation*}
T_{9} = \mu_{e} \int_{\Omega_{b}} |\bd{D}(\bd{\eta} - \bd{\eta}_{\delta})(t)|^{2}, \qquad T_{10} = \frac{1}{2} \lambda_{e} \int_{\Omega_{b}} |\nabla \cdot (\bd{\eta} - \bd{\eta}_{\delta})(t)|^{2},
\end{equation*}
\begin{equation*}
T_{11} = 2\mu_{v} \int_{0}^{t} \int_{\Omega_{b}} |\bd{D}(\bd{\xi} - \bd{\xi}_{\delta})|^{2}, \qquad T_{12} = \lambda_{v} \int_{0}^{t} \int_{\Omega_{b}} |\nabla \cdot (\bd{\xi} - \bd{\xi}_{\delta})|^{2}.
\end{equation*}

\vskip 0.1in
\noindent
{\bf{Term T13.}}  Term $T_{13}$ is defined as follows:
{{
\begin{align*}
T_{13} = &-\aalpha \int_{0}^{t} \int_{\Omega_{b}(s)} p \left(\nabla \cdot \left[\bd{\xi} - (\bd{\xi}_{\delta})_{\ttt}\right]\right) + \aalpha \int_{0}^{t} \int_{{\Omega}^\delta_{b,\delta}(s)} p_{\delta} \left(\nabla \cdot (\bd{\xi} - \bd{\xi}_{\delta})\right).
\end{align*}
}}

After taking the limit $\nu\to 0$, term {{$T_{13}$}} can be estimated as follows:
{{
\begin{align*}
|T_{13}| \le &C(\epsilon)\int_{0}^{t} ||\nabla \bd{\eta} - \nabla {\bd{\eta}}^\delta ||_{L^{2}(\Omega_{b})}^{2} + \epsilon \int_{0}^{t} ||\nabla(\bd{\xi} - \bd{\xi}_{\delta})||_{L^{2}(\Omega_{b})}^{2} \\
&+ C(\epsilon)\left(\int_{0}^{t} ||\nabla \bd{\eta} - \nabla \bd{\eta}_{\delta}||^{2}_{L^{2}(\Omega_{b})} + \int_{0}^{t} ||\omega - \omega_{\delta}||^{2}_{H^{2}(\Gamma)} + \int_{0}^{t} ||p - p_{\delta}||^{2}_{L^{2}(\Omega_{b})} \right).
\end{align*}
}}

\vskip 0.1in
\noindent
{\bf{Term T14.}}  Term $T_{14}$ is defined as follows:
{{
\begin{align}
T_{14} = &-c_{0} \int_{0}^{t} \int_{\Omega_{b}} p \cdot \partial_{t} \left[p - (p_{\delta})_{\ttt}\right] + c_{0} \int_{\Omega_{b}} p(t) \cdot \left[p(t) - (p_{\delta})_{\ttt}(t)\right] - c_{0} \int_{\Omega_{b}} p_{0} \cdot \left[p(0) - (p_{\delta})_{\ttt}(0)\right] 
\nonumber \\
&+ c_{0} \int_{0}^{t} \int_{\Omega_{b}} p_{\delta} \cdot \partial_{t} p - c_{0} \int_{\Omega_{b}} p_{\delta}(t) \cdot p(t) + c_{0} \int_{\Omega_{b}} |p_{0}|^{2} + \frac{1}{2} c_{0} \int_{\Omega_{b}} |p_{\delta}(t)|^{2} - \frac{1}{2} c_{0} \int_{\Omega_{b}} |p_{0}|^{2}.
\label{T14}
\end{align}
}}

This term can be handled in the same way as Terms 6-8. In the limit as $\nu \to 0$, the contribution from this term is
{{
\begin{equation*}
T_{14} = \frac{1}{2} c_{0} \int_{\Omega_{b}} |(p - p_{\delta})(t)|^{2}.
\end{equation*}
}}

\vskip 0.1in
\noindent
{\bf{Term T15.}} Term $T_{15}$ is defined as follows:
{{
\begin{align}
T_{15} = &-\aalpha \int_{0}^{t} \int_{\Omega_{b}(s)} \bd{\xi} \cdot \nabla\left[p - (p_{\delta})_{\ttt}\right] + \aalpha \int_{0}^{t} \int_{{\Omega}^\delta_{b,\delta}(s)} \bd{\xi}_{\delta} \cdot \nabla (p - p_{\delta}).
\label{T15}
\end{align}
}}

After taking the limit $\nu\to 0$, term {{$T_{15}$}} can be estimated as follows:
{{
\begin{align*}
|T_{15}| \le &\epsilon \int_{0}^{t} ||\nabla(p - p_{\delta})||_{L^{2}(\Omega^{\delta}_{b, \delta}(s))}^{2} + C(\epsilon) \int_{0}^{t} ||\nabla \bd{\eta} - \nabla {\bd{\eta}}^\delta ||_{L^{2}(\Omega_{b})}^{2} \\
&+ C(\epsilon) \left(\int_{0}^{t} ||\nabla \bd{\eta} - \nabla \bd{\eta}_{\delta}||_{L^{2}(\Omega_{b})}^{2} + \int_{0}^{t} ||\omega - \omega_{\delta}||^{2}_{H^{2}(\Gamma)} + \int_{0}^{t} ||\partial_{t} \bd{\eta} - \partial_{t} \bd{\eta}_{\delta}||^{2}_{L^{2}(\Omega_{b})}\right).
\end{align*}
}}

\vskip 0.1in
\noindent
{\bf{Term T16.}}  Term $T_{16}$ is defined as follows:
{{
\begin{align}
T_{16} = &-\aalpha \int_{0}^{t} \int_{\Gamma(s)} (\bd{\xi} \cdot \bd{n}) \left[p - (p_{\delta})_{\ttt}\right] + \aalpha \int_{0}^{t} \int_{{\Gamma}^\delta_{\delta}(s)} (\bd{\xi}_{\delta} \cdot \bd{n}) (p - p_{\delta}).
\label{T16}
\end{align}
}}

After passing to the limit as $\nu \to 0$, this term can be estimated as follows:
{{
\begin{multline*}
|T_{16}| \le \epsilon\left(\int_{0}^{t} ||\nabla \bd{\xi} - \nabla \bd{\xi}_{\delta}||^{2}_{L^{2}(\Omega_{b})} + \int_{0}^{t} ||\nabla p - \nabla p_{\delta}||_{L^{2}(\Omega^{\delta}_{b, \delta}(s))}^{2}\right) + C(\epsilon)\left(\int_{0}^{t} ||p - p_{\delta}||^{2}_{L^{2}(\Omega_{b})} \right. \\
\left. + \int_{0}^{t} ||\nabla \bd{\eta} - \nabla \bd{\eta}^{\delta}||_{L^{2}(\Omega_{b})}^{2} + \int_{0}^{t} ||\nabla \bd{\eta} - \nabla \bd{\eta}_{\delta}||_{L^{2}(\Omega_{b})}^{2} + \int_{0}^{t} ||\omega - \omega_{\delta}||_{H^{2}(\Gamma)}^{2} + \int_{0}^{t} ||\bd{\xi} - \bd{\xi}_{\delta}||^{2}_{L^{2}(\Omega_{b})}\right).
\end{multline*}
}}

\vskip 0.1in
\noindent
{\bf{Term T17.}}  Term $T_{17}$ is defined as follows:
{{
\begin{align}
T_{17} = &\kappa \int_{0}^{t} \int_{\Omega_{b}(s)} \nabla p \cdot \nabla \left[p - (p_{\delta})_{\ttt}\right] - \kappa \int_{0}^{t} \int_{{\Omega}^\delta_{b,\delta}(s)} \nabla p_{\delta} \cdot \nabla (p - p_{\delta}).
\label{T17}
\end{align}
}}

This term can be estimated as follows:
{{
\begin{equation*}
T_{17} \le \kappa \int_{0}^{t} \int_{{\Omega}^\delta_{b,\delta}(s)} |\nabla(p - p_{\delta})|^{2} + {{R_{17}}},
\end{equation*}
}}
where the remainder is bounded by 
{{
\begin{align*}
{{|R_{17}|}} \le &\epsilon \int_{0}^{t} ||\nabla(p - p_{\delta})||^{2}_{L^{2}({\Omega}^\delta_{b,\delta}(s))} \\
&+ C(\epsilon) \left(\int_{0}^{t} ||\nabla \bd{\eta} - \nabla {\bd{\eta}}^\delta||_{L^{2}(\Omega_{b})}^{2} + \int_{0}^{t} ||\nabla \bd{\eta} - \nabla \bd{\eta}_{\delta}||_{L^{2}(\Omega_{b})}^{2} + \int_{0}^{t} ||\omega - \omega_{\delta}||_{H^{2}(\Gamma)}^{2} \right).
\end{align*}
}}

\vskip 0.1in
\noindent
{\bf{Term T18.}} Term $T_{18}$ is defined as follows:
{{
\begin{align*}
T_{18} &= \int_{0}^{t} \int_{\Gamma(s)} p (\bd{u} - \bd{\xi}) \cdot \bd{n} - \int_{0}^{t} \int_{\Gamma(s)} p [(\bd{u}_{\delta})_{\ttt} - (\bd{\xi}_{\delta})_{\ttt}] \cdot \bd{n} - \int_{0}^{t} \int_{\Gamma_{\delta}(s)} p_{\delta}(\bd{u} - \bd{\xi}) \cdot \bd{n} 
\nonumber \\
&+ \int_{0}^{t} \int_{\Gamma_{\delta}(s)} p_{\delta}(\bd{u}_{\delta} - \bd{\xi}_{\delta}) \cdot \bd{n} - \int_{0}^{t} \int_{\Gamma(s)} ((\bd{u} - \bd{\xi}) \cdot \bd{n}) [p - (p_{\delta})_{\ttt}] + \int_{0}^{t} \int_{\Gamma_{\delta}(s)} ((\bd{u}_{\delta} - \bd{\xi}_{\delta}) \cdot \bd{n}) (p - p_{\delta}).
\end{align*}
}}

This term can be estimated as follows:
{{
\begin{align*}
|T_{18}| \le &\epsilon \left(\int_{0}^{t} ||\bd{D}(\widehat{\bd{u}} - \bd{u}_{\delta})||^{2}_{L^{2}(\Omega_{f,\delta}(s))} + \int_{0}^{t} ||\nabla (\bd{\xi} - \bd{\xi}_{\delta})||_{L^{2}(\Omega_{b})} + \int_{0}^{t} ||\nabla(p - p_{\delta})||^{2}_{L^{2}({\Omega}^\delta_{b,\delta}(s))}\right) \\
&+ C(\epsilon) \int_{0}^{t} ||\omega - \omega_{\delta}||^{2}_{H^{2}(\Gamma)}.
\end{align*}
}}

{{The combined estimates for the terms $T_1$-$T_{18}$ give the estimate:
\begin{multline*}
E_{\delta}(t) \le \epsilon \left(\int_{0}^{t} ||\widehat{\bd{u}} - \bd{u}_{\delta}||^{2}_{H^{1}(\Omega_{f, \delta}(s))} + \int_{0}^{t} ||\bd{D}(\widehat{\bd{u}} - \bd{u}_{\delta})||^{2}_{L^{2}(\Omega_{f, \delta}(s))} \right. \\
\left. + \int_{0}^{t} ||\nabla (\bd{\xi} - \bd{\xi}_{\delta})||^{2}_{L^{2}(\Omega_{b})} + \int_{0}^{t} ||\nabla (p - p_{\delta})||^{2}_{L^{2}(\Omega_{b, \delta}^{\delta}(s))}\right) \\
+ C(\epsilon) \left(\int_{0}^{t} ||\omega - \omega_{\delta}||_{H^{2}(\Gamma)}^{2} + \int_{0}^{t} ||\bd{\xi} - \bd{\xi}_{\delta}||^{2}_{L^{2}(\Gamma)} + \int_{0}^{t} ||\widehat{\bd{u}} - \bd{u}_{\delta}||^{2}_{L^{2}(\Omega_{f, \delta}(s))} + \int_{0}^{t} ||\nabla \bd{\eta} - \nabla \bd{\eta}^{\delta}||^{2}_{L^{2}(\Omega_{b})} \right. \\
\left. + \int_{0}^{t} ||\nabla (\bd{\eta} - \bd{\eta}_{\delta})||_{L^{2}(\Omega_{b})}^{2} + \int_{0}^{t} ||p - p_{\delta}||_{L^{2}(\Omega_{b})}^{2} + \int_{0}^{t} ||\bd{\xi} - \bd{\xi}_{\delta}||^{2}_{L^{2}(\Omega_{b})}\right),
\end{multline*}
where $\epsilon > 0$ remains to be chosen, and $C(\epsilon)$ is a constant depending only on $\epsilon$, that is independent of $t$ and $\delta$. By using Korn's inequality, Poincar\'{e}'s inequality, and by choosing $\epsilon$ sufficiently small to absorb the terms on the right hand side into the dissipation terms in $E_{\delta}(t)$ defined by \eqref{energydiff}, we then obtain the final inequality \eqref{Gronwall}. This finishes the proof of the Gronwall estimate presented in 
Lemma~\ref{GronwallLemma}.}}
\end{proof}

%%%%%%%%%%%%%%%%%BOOTSTRAP%%%%%%%%%%%%%%%%%%%%%%%%

All that is left to show to complete the proof of {{weak-classical consistency}} stated in Theorem~\ref{weakstrongunique},
is to argue that Gronwall's inequality \eqref{Gronwall} holds 
for all $t \in [0,T]$ where $T$ is independent of $\delta$. This will also imply the first statement in the theorem, which states that
$(\bd{\eta}_{\delta}, \omega_{\delta}, p_{\delta}, \bd{u}_{\delta})$ is uniformly defined on the time interval $[0, T]$ for all $\delta > 0$.
In order to do this we use a bootstrap argument presented in the next subsection.

\subsection{Bootstrap argument}\label{bootstrap}

{{To obtain the desired Gronwall estimate as stated in Lemma \ref{GronwallLemma}, we need the following uniform bounds \eqref{det}, \eqref{norm}, and \eqref{grad}}} on the factor $\det(\bd{I} + \nabla {\bd{\eta}}_{\delta}^\delta)$,
which appears in the regularized weak formulation \eqref{deltaweakphysical} defined on the fixed reference domain $\Omega_{b}$:
{{\begin{align*}
\det(\bd{I} + \nabla {\bd{\eta}}_{\delta}^\delta) &\ge c > 0,
\\
0 < c \le |\bd{I} + \nabla {\bd{\eta}}_{\delta}^\delta| &\le C, \qquad \text{ pointwise in } \overline{\Omega_{b}},
\\
|\nabla {\bd{\eta}}_{\delta}^\delta| &\le C, \qquad \text{ pointwise in } \overline{\Omega_{b}},
\end{align*}}}
which need to hold {\emph{for all $t\in[0,T]$ where $T>0$ is independent of $\delta$}}. 
Notice that we only have uniform boundedness of $\bd{\eta}_{\delta}$ with respect to $\delta$ in $L^{\infty}(0, T; H^{1}(\Omega_{b}))$,
which implies that $\det(\bd{I} + \nabla {\bd{\eta}}_{\delta}^\delta)$ is uniformly bounded with respect to $\delta$ only in $L^{\infty}(0, T; L^{1}(\Omega_{b}))$, which is insufficient for estimating any integrands with this factor.

To get around this difficulty we use the following strategy. Recall that by the way  the weak solution to the regularized problem was constructed using the splitting scheme, we have that there exists a sufficiently small constant $c$ (uniform in $\delta$) such that 
\begin{equation}\label{local}
\det(\bd{I} + \nabla {\bd{\eta}}_{\delta}^\delta) \ge c > 0, 
\end{equation}
for all $t \in [0, T_{\delta}]$ where $T_{\delta} > 0$ {\emph{may depend on $\delta$}}. This estimate holds at least locally, although not locally uniformly,
for each $\delta > 0$. 
{{In fact, similarly, the following three estimates \eqref{det}, \eqref{norm}, and \eqref{grad} from Lemma \ref{GronwallLemma} hold locally,  for $t \in [0,T_\delta]$, where {\emph{$T_\delta$ may depend on $\delta$}},
with positive constants $c$ and $C$ that are independent of $\delta$:}}
{{\begin{align*}
\det &(\bd{I} + \nabla {\bd{\eta}}_{\delta}^\delta) \ge c > 0,
\\
0 < c \le&  |\bd{I} + \nabla {\bd{\eta}}_{\delta}^\delta| \le C, \qquad \text{ pointwise in } \overline{\Omega_{b}},
\\
&|\nabla {\bd{\eta}}_{\delta}^\delta| \le C, \qquad \text{ pointwise in } \overline{\Omega_{b}}.
\end{align*}}}
These estimates imply that for sufficiently small $c>0$, the following inequality {{also holds locally, for all $t \in [0, T_{\delta}]$}}:
\begin{equation}\label{inverse}
0 < C^{-1} \le |(\bd{I} + \nabla {\bd{\eta}}_{\delta}^\delta)^{-1}| \le c^{-1}.
\end{equation}

Let  $[0,T]$ denote the time interval on which the {{classical}} solution $\bd{\eta}$ exists. 
Then, we can choose {{$c>0$}} and $C>0$ so that the inequalities \eqref{local}-\eqref{inverse} also hold for the {{classical}} solution 
{\emph{for all $t\in [0,T]$}}. 

{{We will now show that the time interval on which 
 estimates \eqref{local}-\eqref{inverse} hold for the weak solution of the regularized problem ${\bd{\eta}}_{\delta}^\delta$ can, in fact, be extended to the entire 
 interval $[0,T]$, namely, that estimates \eqref{local}-\eqref{inverse} hold globally, uniformly in $\delta$, where $T$ is independent of $\delta$. We will do this by using a classical bootstrap argument the steps of which we present below. This bootstrap argument will propagate the desired estimates which hold a priori locally on $[0, T_{\delta}]$ (for a time $T_{\delta}$ possibly depending on $\delta$) to an entire uniform time interval $[0, T]$.}}
{{The global uniform estimates will follow}} if we can show that $\nabla \bd{\eta}$ and $\nabla {\bd{\eta}}_{\delta}^\delta$ are pointwise uniformly ``close'', i.e., 
\begin{equation}\label{est3}
|(\nabla \bd{\eta} - \nabla {\bd{\eta}}_{\delta}^\delta)(t, x)| \to 0 \ {\rm{pointwise\  uniformly\  in}} \ [0, T] \times \Omega_{b}\  {\rm{as}} \ \delta \to 0. 
\end{equation}

To obtain this estimate we start with the main proof of 
Gronwall's inequality {\emph{under the assumptions that \eqref{det}, \eqref{norm}, and \eqref{grad} are locally valid 
for $t \in [0,T_\delta]$}}:
\begin{equation}\label{GronAux}
E_{\delta}(t) \le C_{1}\int_{0}^{t} ||({\bd{\eta}}^\delta - \bd{\eta})(s)||^{2}_{H^{1}(\Omega_{b})} ds + C_{2}\int_{0}^{t} E_{\delta}(s) ds,
\end{equation}
where the constants $C_{1}$ and $C_{2}$ are independent of $\delta$.
%
%We will use Gronwall's inequality to get an estimate that will hold as long as the assumptions \eqref{det}, \eqref{norm}, and \eqref{grad} are valid. The resulting estimate we obtain will formally be of the form
%\begin{equation*}
%E_{\delta}(t) \le C_{1}\int_{0}^{t} ||({\bd{\eta}}^\delta - \bd{\eta})(s)||^{2}_{H^{1}(\Omega_{b})} ds + C_{2}\int_{0}^{t} E_{\delta}(s) ds,
%\end{equation*}
%where the constants $C_{1}$ and $C_{2}$ are independent of $\delta$ and $E_{\delta}(t)$ is the energy difference between the classical solution and the weak solution to the regularized problem with regularity parameter $\delta$, defined by \eqref{energydiff}.
%
Then, by Lemma~\ref{strongconv} below, we obtain that the first term on the right hand-side above can be estimated as follows:
\begin{equation}\label{convolutionbootstrap}
||{\bd{\eta}}^\delta - \bd{\eta} ||_{H^{1}(\Omega_{b})} \le C\delta^{3/2}, \qquad \text{ for all } t \in [0, T], 
\end{equation}
since the classical solution $\bd{\eta}$ is spatially smooth, and ${\bd{\eta}}^\delta$ is the convolution of $\bd{\eta}$ with the smooth $\delta$ kernel, defined in 
\eqref{tildeeta}. 
With this essential observation, the Gronwall estimate based on \eqref{GronAux} gives
\begin{equation*}
E_{\delta}(t) \le C_{1}\left(\int_{0}^{t} ||({\bd{\eta}}^\delta - \bd{\eta})(s)||^{2}_{H^{1}(\Omega_{b})} ds\right) e^{C_{2}t} \le C_{1}\left(\int_{0}^{T} ||({\bd{\eta}}^\delta - \bd{\eta})(s)||^{2}_{H^{1}(\Omega_{b})} ds\right) e^{C_{2}t} \le C\delta^{3}e^{C_{2}t}.
\end{equation*} 
By the definition of $E_{\delta}(t)$ and an application of Poincare's and Korn's inequalities on $\Omega_{b}$, see Proposition \ref{Korn}, 
this implies that the following terms in the definition of $E_{\delta}(t)$
$$||(\bd{\eta} - \bd{\eta}_{\delta})(t)||_{H^{1}(\Omega_{b})}\le C \delta^{3/2}, \ {\rm{and}}  \ ||(\omega - \omega_{\delta})(t)||_{H^{2}(\Gamma)} \le C \delta^{3/2} \to 0 \ {\rm{as}} \ \delta \to 0$$
converge to zero as $\delta \to 0$ at a rate of $\delta^{3/2}$, as long as the assumptions \eqref{det}, \eqref{norm}, and \eqref{grad} hold. 
Therefore, by H\"{o}lder's inequality, for sufficiently small $\delta>0$, we can prove that the following estimate holds:
\begin{align}\label{closedbootstrap}
|(\nabla {\bd{\eta}}^\delta - \nabla {\bd{\eta}}_{\delta}^\delta)(t, x)|& = \left|\int_{\tilde{\Omega}_{b}} (\nabla \bd{\eta} - \nabla \bd{\eta}_{\delta})(t, y) \sigma_{\delta}(x - y) dy\right| \le C \delta^{3/2} \cdot \delta^{-1} \to 0, 
 \\
& \text{pointwise uniformly in $[0, T] \times \Omega_{b}$ as $\delta \to 0$},
\nonumber
\end{align}
where $C$ is independent of $\delta$.
%We want to show that, in fact, 
%\begin{equation}\label{est3}
%|(\nabla \bd{\eta} - \nabla {\bd{\eta}}_{\delta}^\delta)(t, x)| \to 0 \ {\rm{pointwise\  uniformly\  in}} \ [0, T] \times \Omega_{b}\  {\rm{as}} \ \delta \to 0. 
%\end{equation}
More precisely, notice that the convolution integral in \eqref{closedbootstrap} is defined on the domain ${\tilde{\Omega}_{b}}$, which is triple the size of
the domain $\Omega_b$.
{{Furthermore, we recall that the convolution and the larger domain $\tilde{\Omega}_{b}$ are defined using odd extensions as in Definition \ref{extension}.}}
Thus, by the definition of the odd extensions of $\bd{\eta}$ and $\bd{\eta}_{\delta}$ to the larger domain $\tilde{\Omega}_{b}$, we get
\begin{equation*}
||\bd{\eta} - \bd{\eta}_{\delta}||_{H^{1}(\tilde{\Omega}_{b})} \le C\left(||\bd{\eta} - \bd{\eta}_{\delta}||_{H^{1}(\Omega_{b})} + ||\omega - \omega_{\delta}||_{H^{1}(\Gamma)}\right).
\end{equation*}
In addition, since we have extended the functions $\bd{\eta}$ and $\bd{\eta}_{\delta}$ to the larger domain $\tilde{\Omega}_{b}$, the estimate \eqref{closedbootstrap} holds for all $\delta$ such that $\{(x, y) \in \mathbb{R}^{2}: \text{dist}((x, y), \Omega_{b}) \le \delta\} \subset \tilde{\Omega}_{b}$. 
Thus, $|(\nabla {\bd{\eta}}^\delta - \nabla {\bd{\eta}}_{\delta}^\delta)(t, x)| \to 0$ pointwise uniformly in $[0,T]$ as $\delta \to 0$.

To obtain \eqref{est3} it suffices to show that $|(\nabla \bd{\eta} - \nabla {\bd{\eta}}^\delta)(t, x)| \to 0$ pointwise uniformly in $[0,T]$ as $\delta \to 0$.
This follows from Lemma~\ref{strongconv}. Namely, Lemma~\ref{strongconv} implies
\begin{equation}\label{est2}
|(\nabla \bd{\eta} - \nabla {\bd{\eta}}^\delta)(t, x)| \le C\delta \to 0 \  \text{pointwise uniformly in $[0, T] \times \Omega_{b}$ as $\delta \to 0$}.
\end{equation}
So combining \eqref{est2} with \eqref{closedbootstrap}, we get \eqref{est3}.

{{To conclude the bootstrap argument, we combine the fact that $\text{det}(\bd{I} + \nabla \bd{\eta}) \ge c > 0$ on a time interval $[0, T]$ with the estimate \eqref{est3} (which states that $\nabla \bd{\eta}^{\delta}_{\delta}$ and $\nabla \bd{\eta}$ are uniformly close on the full time interval $[0, T]$ as $\delta \to 0$), to deduce that $\text{det}(\bd{I} + \nabla \bd{\eta}^{\delta}_{\delta}) \ge c > 0$ also uniformly on $[0, T]$, for all sufficiently small $\delta$.}} Similarly, for all sufficiently small $\delta$, the assumptions \eqref{norm} and \eqref{grad} will also hold up to the final time $T > 0$, as we can {{also propagate the estimates \eqref{norm} and \eqref{grad} similarly by combining estimate \eqref{est3} with the fact that these estimates hold for the classical solution $\bd{\eta}$ on some time interval $[0, T]$}}. This closes the bootstrap argument, and so we obtain that the estimate \eqref{det}, and similarly the estimates \eqref{norm} and \eqref{grad}, hold uniformly up to the final time $T > 0$ uniformly in $\delta$.

\if 1 = 0
\begin{remark}(Remark about extensions to three dimensions)
This bootstrap argument is especially useful, because it would extend to the physically relevant case of three dimensions. If one defines a corresponding odd extension (extending Definition \ref{extension} to three spatial dimensions), one can carry out a similar procedure, and use the same bootstrap argument described above. Even in three spatial dimensions, one can still close the bootstrap, as the $L^{2}$ norm of the convolution kernel $\sigma^{\delta}(x - y)$ is on the order of $\delta^{-3/2}$. Hence, we would obtain the following corresponding estimate \eqref{closedbootstrap} in three spatial dimensions
\begin{equation*}
|(\nabla {\bd{\eta}}^\delta - \nabla {\bd{\eta}}_{\delta}^\delta)(t, x)| \sim \delta^{2} \cdot \delta^{-3/2} \to 0,
\end{equation*}
pointwise uniformly as $\delta \to 0$ on the spacetime domain $[0, T] \times \Omega_{b}$, which would still allow us to close the bootstrap argument to establish the weak-classical consistency result.
\end{remark}
\fi

We end this section by proving the following lemma, which establishes convergence of the spatial convolution of the classical solution $\bd{\eta}$ in $H^{1}(\Omega_{b})$, which is needed for the argument described above, {{in the estimates \eqref{convolutionbootstrap} and \eqref{est2}.}}
\begin{lemma}\label{strongconv}
Let $\bd{\eta} \in L^{\infty}(0, T; V_{d})$ be an arbitrary but fixed smooth function in time and space on $[0, T] \times \overline{\Omega_{b}}$, where $V_{d}$ is defined in \eqref{Vd}. Then, there exists a constant $C$ independent of $\delta > 0$, depending only on $\bd{\eta}$, such that 
\begin{equation*}
\max_{t \in [0, T]} ||{\bd{\eta}}^\delta - \bd{\eta}||_{H^{1}(\Omega_{b})} \le C\delta^{3/2},\ {\rm{and}}\ 
|\nabla {\bd{\eta}}^\delta - \nabla \bd{\eta}| \le C\delta\ \quad \forall x \in \overline{\Omega}_{b}\ {\rm{ and}}\  \forall t \in [0, T]. 
\end{equation*}
\end{lemma}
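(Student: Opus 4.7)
The strategy is to combine the symmetry of the mollifier $\sigma_\delta$ with a careful analysis of the regularity of the odd extension $\tilde{\bd{\eta}}$ on $\tilde{\Omega}_b$ from Definition~\ref{extension}. Since $\bd{\eta}$ is smooth in time, all constants will depend only on spatial norms of $\bd{\eta}$ up to second order, uniformly in $t \in [0,T]$; hence the maximum in time is not the difficulty and I will suppress the $t$-dependence below.

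First, I would verify that the extension $\tilde{\bd{\eta}}$ is globally of class $C^{1,1}$ on $\tilde{\Omega}_b$, i.e., $\nabla\tilde{\bd{\eta}}$ is Lipschitz, with Lipschitz constant controlled by $\|\nabla^2\bd{\eta}\|_{L^\infty(\overline{\Omega_b})}$ and $\|\omega\|_{C^2(\Gamma)}$. On each component of $\tilde{\Omega}_b\setminus\overline{\Omega_b}$ the extended function is an affine image of $\bd{\eta}$ (plus possibly $2\omega \bd{e}_y$ across $\hat{\Gamma}$), so it is smooth on each piece with derivatives bounded by those of $\bd{\eta}$ and $\omega$. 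It then suffices to check continuity of $\nabla\tilde{\bd{\eta}}$ across each of the four extension lines $\{\hat{y}=0\}$, $\{\hat{y}=R\}$, $\{\hat{x}=0\}$, $\{\hat{x}=L\}$. For the three Dirichlet sides this is immediate from $\bd{\eta}=0$ on $\partial\hat{\Omega}_b\setminus\hat{\Gamma}$ and the fact that tangential differentiation of a constant trace vanishes. For $\hat{y}=0$ the relations $\tilde{\eta}_x(\hat{x},\hat{y})=-\eta_x(\hat{x},-\hat{y})$ and $\tilde{\eta}_y(\hat{x},\hat{y})=2\omega(\hat{x})-\eta_y(\hat{x},-\hat{y})$ together with the coupling conditions $\eta_x|_{\hat{\Gamma}}=0$ and $\eta_y|_{\hat{\Gamma}}=\omega$ give the continuity of all four first partials across $\hat{y}=0$; a short case-by-case computation (using in particular that $\partial_{\hat{x}}\eta_y|_{\hat{\Gamma}}=\omega'$) is the substance of this step. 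A piecewise-smooth continuous function on a bounded domain with finitely many interfaces is globally Lipschitz, which gives the claimed $C^{1,1}$ regularity of $\tilde{\bd{\eta}}$.

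Second, I would derive the two pointwise estimates. For any $x\in\overline{\Omega_b}$, writing $L$ for the Lipschitz constant of $\nabla\tilde{\bd{\eta}}$ and using $\int\sigma_\delta(z)\,z\,dz=0$ by radial symmetry, Taylor's theorem gives
\[
\bd{\eta}^\delta(x)-\bd{\eta}(x)=\int_{B_\delta}\sigma_\delta(z)\bigl[\tilde{\bd{\eta}}(x-z)-\tilde{\bd{\eta}}(x)+\nabla\tilde{\bd{\eta}}(x)\cdot z\bigr]\,dz,
\]
whose integrand is bounded by $L|z|^2$, yielding $|\bd{\eta}^\delta(x)-\bd{\eta}(x)|\le C\delta^2$ uniformly on $\overline{\Omega_b}$. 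For the gradient, $\nabla\bd{\eta}^\delta(x)-\nabla\bd{\eta}(x)=\int\sigma_\delta(z)\bigl[\nabla\tilde{\bd{\eta}}(x-z)-\nabla\tilde{\bd{\eta}}(x)\bigr]\,dz$, and the Lipschitz bound on $\nabla\tilde{\bd{\eta}}$ gives $|\nabla\bd{\eta}^\delta(x)-\nabla\bd{\eta}(x)|\le L\delta$ uniformly on $\overline{\Omega_b}$, which is the second claimed estimate.

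Third, for the sharper $H^1$ estimate I would split $\Omega_b$ into the boundary strip $S_\delta=\{x\in\Omega_b:\mathrm{dist}(x,\partial\Omega_b)\le\delta\}$ of two-dimensional measure $|S_\delta|=O(\delta)$, and the interior region $\Omega_b\setminus S_\delta$. For $x\in\Omega_b\setminus S_\delta$ the support of $\sigma_\delta(x-\cdot)$ lies entirely inside $\Omega_b$, so the convolution only samples the original smooth $\bd{\eta}$ and \emph{not} the extension. A second application of Taylor's theorem, now using boundedness and Lipschitz continuity of $\nabla^2\bd{\eta}$ on $\overline{\Omega_b}$, together with the symmetry of $\sigma_\delta$, yields $|\nabla\bd{\eta}^\delta(x)-\nabla\bd{\eta}(x)|\le C\delta^2$ on $\Omega_b\setminus S_\delta$. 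Combining this with the global pointwise bound $C\delta$ on $S_\delta$:
\[
\|\nabla\bd{\eta}^\delta-\nabla\bd{\eta}\|_{L^2(\Omega_b)}^2\le |S_\delta|\,(C\delta)^2+|\Omega_b|\,(C\delta^2)^2\le C\delta^3,
\]
so $\|\nabla\bd{\eta}^\delta-\nabla\bd{\eta}\|_{L^2(\Omega_b)}\le C\delta^{3/2}$, and together with the uniform pointwise bound $|\bd{\eta}^\delta-\bd{\eta}|\le C\delta^2$ this yields $\|\bd{\eta}^\delta-\bd{\eta}\|_{H^1(\Omega_b)}\le C\delta^{3/2}$.

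The main obstacle is the first step: the extension is $C^1$ but generically \emph{not} $C^2$ across $\{\hat{y}=0\}$ (the relevant odd-across-$\hat{y}=0$ second derivatives are not forced to vanish there by any of the structural conditions on $\bd{\eta}$). Thus one cannot hope for the globally uniform $O(\delta^2)$ rate for the gradient, and the gain from $O(\delta)$ on $S_\delta$ to $O(\delta^{3/2})$ in $L^2$ comes precisely from the small measure of the boundary strip where regularity is lost.
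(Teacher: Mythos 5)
Your proposal is correct and follows essentially the same route as the paper: the same split into an $O(\delta)$-wide boundary strip and an interior region, second-order cancellation from the radial symmetry of $\sigma_{\delta}$ giving $O(\delta^{2})$ in the interior, the Lipschitz continuity of $\nabla\tilde{\bd{\eta}}$ giving $O(\delta)$ on the strip, and the combination $|S_{\delta}|(C\delta)^{2}+|\Omega_{b}|(C\delta^{2})^{2}\le C\delta^{3}$. The only difference is that you explicitly verify, via the trace conditions $\eta_{x}|_{\hat{\Gamma}}=0$, $\eta_{y}|_{\hat{\Gamma}}=\omega$ and the Dirichlet conditions, that the odd extension is $C^{1,1}$ across the extension lines, a fact the paper only asserts when invoking Lipschitz continuity of $\nabla\bd{\eta}$ on $\tilde{\Omega}_{b}$.
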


\begin{remark}
More generally, if $f$ is a smooth function on $\mathbb{R}^{2}$ with sufficient decay at infinity, such as a Schwartz function, {{then the argument below}} shows that the function $\tilde{f}$ defined by
\begin{equation*}
\tilde{f} = f * \sigma_{\delta} \qquad \text{ on } \mathbb{R}^{2}
\end{equation*}
would satisfy $||\tilde{f} - f||_{H^{1}(\Omega_{b})} \le C\delta^{2}$ for a constant $C$. However, because we are working on a bounded domain $\Omega_{b}$, we must use an odd extension to define the spatial convolution of $\bd{\eta}$. Since the odd extension of $\bd{\eta}$ to the larger domain $\tilde{\Omega}_{b}$ is not necessarily smooth on $\tilde{\Omega}_{b}$ even if $\bd{\eta}$ is a smooth function on $\overline{\Omega_{b}}$, we incur a loss in our estimate due to potential irregularities of the odd extension due to the behavior of the initial function $\bd{\eta}$ near the boundary $\partial \Omega_{b}$, which gives rise to the convergence rate $\delta^{3/2}$ instead of the optimal rate of convergence $\delta^{2}$. 
\end{remark}

\begin{proof}
Separate the domain $\Omega_{b} = (0, L) \times (0, R)$ into two parts:
\begin{equation*}
\Omega_{b, 1} = (\delta, L - \delta) \times (\delta, R - \delta), \qquad \Omega_{b, 2} = \Omega_{b} \setminus \Omega_{b, 1}.
\end{equation*}
For $\bd{x} \in \Omega_{b, 1}$, we note that because the convolution kernel $\sigma_{\delta}$ is radially symmetric,
\begin{equation*}
({\bd{\eta}}^\delta - \bd{\eta})(\bd{x}) = \int_{\Omega_{b}} \left(\frac{1}{2} \bd{\eta}(\bd{x} + \bd{x}') - \bd{\eta}(\bd{x}) + \frac{1}{2} \bd{\eta}(\bd{x} - \bd{x}')\right) \sigma_{\delta}(\bd{x}') d\bd{x}',
\end{equation*}
\begin{equation*}
(\nabla {\bd{\eta}}^\delta - \nabla \bd{\eta})(\bd{x}) = \int_{\Omega_{b}} \left(\frac{1}{2} \nabla \bd{\eta}(\bd{x} + \bd{x}') - \nabla \bd{\eta}(\bd{x}) + \frac{1}{2} \nabla \bd{\eta}(\bd{x} - \bd{x}') \right) \sigma_{\delta}(\bd{x}') d\bd{x}'.
\end{equation*}
For $\bd{x} \in \Omega_{b, 1}$, these points are at least $\delta$ away from the boundary. Therefore, we have the following estimate for the discretized second derivative:
\begin{equation*}
\left|\frac{1}{2} \bd{\eta}(\bd{x} + \bd{x}') - \bd{\eta}(x) + \frac{1}{2} \bd{\eta}(\bd{x} - \bd{x}')\right| \le C\delta^{2} \quad \text{ for } |\bd{x}'| \le \delta,
\end{equation*}
and similarly for $\nabla \bd{\eta}$, by using the fact that $\bd{\eta}$ is spatially smooth in $\overline{\Omega_{b}}$. 
Therefore,
\begin{equation}\label{omegab1}
|({\bd{\eta}}^\delta - \bd{\eta})(\bd{x})| \le C\delta^{2}, \quad |(\nabla {\bd{\eta}}^\delta - \nabla \bd{\eta})(\bd{x})| \le C\delta^{2}, \qquad \text{ for } \bd{x} \in \Omega_{b, 1},
\end{equation}
for a constant $C$ depending only on $\bd{\eta}$. 

For $\bd{x} \in \Omega_{b, 2}$ we cannot use the same estimate, since after extending $\bd{\eta}$ to the larger domain $\tilde{\Omega}_{b}$, the extended function on $\tilde{\Omega}_{b}$ does not necessarily have a continuous second derivative, as a result of the properties of odd extension, and in fact, there may be discontinuities of the second derivative along the boundary $\partial \Omega_{b}$. However, $\nabla \bd{\eta}$ on the larger domain $\tilde{\Omega}_{b}$ is still \textit{Lipschitz continuous}. Thus, we instead use the equations:
\begin{equation*}
({\bd{\eta}}^\delta - \bd{\eta})(\bd{x}) = \int_{\Omega_{b}} (\bd{\eta}(\bd{x} + \bd{x}') - \bd{\eta}(\bd{x})) \sigma_{\delta}(\bd{x}') d\bd{x}',
\end{equation*}
\begin{equation*}
(\nabla {\bd{\eta}}^\delta - \nabla \bd{\eta})(\bd{x}) = \int_{\Omega_{b}} (\nabla \bd{\eta}(\bd{x} + \bd{x}') - \nabla \bd{\eta}(\bd{x})) \sigma_{\delta}(\bd{x}') d\bd{x}'.
\end{equation*}
Since $\bd{x} \in \Omega_{b, 2}$, even if $|\bd{x}'| \le \delta$, we may have that $\bd{x} + \bd{x}'$ is outside of $\Omega_{b}$. However, due to the Lipschitz continuity of $\nabla \bd{\eta}$ on the larger domain $\tilde{\Omega}_{b}$, we still have the estimates
\begin{equation*}
|\bd{\eta}(\bd{x} + \bd{x}') - \bd{\eta}(\bd{x})| \le C\delta, \quad |\nabla \bd{\eta}(\bd{x} + \bd{x}') - \nabla \bd{\eta}(\bd{x})| \le C\delta, \qquad \text{ for } \bd{x} \in \Omega_{b, 2}, |\bd{x'}| \le \delta,
\end{equation*}
which give
\begin{equation}\label{omegab2}
|({\bd{\eta}}^\delta - \bd{\eta})(\bd{x})| \le C\delta, \quad |(\nabla {\bd{\eta}}^\delta - \nabla \bd{\eta})(\bd{x})| \le C\delta, \qquad \text{ for } \bd{x} \in \Omega_{b, 2}.
\end{equation}
The area of $\Omega_{b, 2}$ is bounded by $(2R + 2L)\delta$, so by \eqref{omegab1} and \eqref{omegab2}, we have $||{\bd{\eta}}^\delta - \bd{\eta}||_{H^{1}(\Omega_{b})} \le C\delta^{3/2}$ for a spatially smooth function $\bd{\eta}$ on $\overline{\Omega_{b}}$, where $C$ depends only on the norms of up to the second spatial derivative of $\bd{\eta}$ on $\overline{\Omega_{b}}$. The generalization of this result to a function $\bd{\eta}$ that also depends on time and is spatially smooth in both space and time follows analogously.

\end{proof}

{{This completes the proof of the weak-classical consistency results. 
This  proof effectively shows that the weak solutions that we have constructed to the regularized FPSI problem converge (in the energy norm on a uniform time interval) as the regularization parameter goes to zero to a classical solution of the original (non-regularized) FPSI problem when such a classical solution to the original FPSI problem exists.}}

\section{Conclusions}
In this manuscript we proved the existence of a weak solution to a fluid-structure interaction problem between the flow of an incompressible, viscous fluid and a multi-layered poroelastic/poroviscoelastic structure consisting of the Biot equations of poro(visco)elasticity and a thin, reticular interface with mass and elastic energy, which is transparent to fluid flow. The fluid and multilayered structure are nonlinearly coupled, giving rise to significant difficulties in the existence proof, associated with the geometric nonlinearity of the coupled problem. The existence proof is constructive, and it consists of two major steps. In the fist step we proved the existence of a weak solution to a regularized problem in the class of finite energy solutions. In the second step we showed that the solution of this regularized problem converges to a {{classical}} solution to the original, nonregularized probroblem as the regularization parameter tends to zero, as long as the original problem possesses a {{classical solution}}. 
While the proof of the existence of a weak solution to the regularized problem only requires that the Biot structure is poroelastic, additional regularity of the Biot poroelatic medium is required to prove the {{weak-classical}} consistency-the Biot structure is assumed to be poroviscoelastic. 
This {{weak-classical}} consistency result also shows that the solution we constructed is unique in the sense of {{weak-classical}} uniqueness. 

{{We make a few comments about extensions of these results on fluid-poroelastic structure interaction to the case of three spatial dimensions, as the model problem discussed in this manuscript involves two spatial dimensions. For the existence proof, the constructive existence proof outlined for the two-dimensional FPSI problem carries out to the case of fluid-poroelastic structure interaction between a fluid modeled by the Navier-Stokes in three spatial dimensions and a three-dimensional Biot poroviscoelastic medium, separated by a two-dimensional reticular plate. In the course of such an analysis to a three-dimensional problem, one would encounter several new difficulties, which we briefly discuss here. First, the odd extension used to define the convolution in Definition \ref{extension} would have to be modified, but a similar odd extension could be used in three dimensions too. More importantly, the plate displacement in the finite energy space is in $H^{2}(\Gamma)$, which for a two-dimensional plate interface separating a 3D fluid and 3D Biot medium, would produce an interface that is only $\alpha$-H\"{o}lder continuous for $0 < \alpha < 1/2$. Thus, we would be working with time-dependent fluid domains $\Omega_{f}(t)$, which are not uniformly Lipschitz, which is a geometric requirement for many classical results such as the trace theorem. However, such problems have already been addressed in the fluid-structure interaction literature, for example in \cite{BorNote3D}, and techniques exist for the analysis of FSI systems in three spatial dimensions, see for example \cite{BorSun3d} and \cite{BorSunNonLinearKoiter}. Hence, the proof of constructive existence of a weak solution to a regularized 3D FPSI problem is expected to carry through similarly without significant challenges to give an analogue of Theorem \ref{MainThm1} for an analogous 3D FPSI model. 

However, we emphasize that it is still an open question to show weak-classical consistency for the 3D FPSI problem. Although one can still show existence of weak solutions to the regularized problem, using the current analysis, an analogue of Theorem \ref{weakstrongunique} is at the moment unattainable. The issue is the rate of convergence in Lemma \ref{strongconv} of the convolution of the odd extension $\bd{\eta}^{\delta}$ to the original displacement $\bd{\eta}$ when $\bd{\eta}$ is a smooth function in time and space, which is on the order of $\delta^{3/2}$. In two-dimensions, the usual convolution kernel $\sigma_{\delta}$ has an $L^{2}$ norm on the order of $\delta^{-1}$, so we get a crucial convergence to zero in the estimate \eqref{closedbootstrap} as $\delta \to 0$ stating that the gradients of $\bd{\eta}_{\delta}$ and $\bd{\eta}^{\delta}_{\delta}$ converge pointwise uniformly in the limit as $\delta \to 0$, which is the estimate that allows us to close our bootstrap argument. In three dimensions, we would lose this convergence to zero since in three dimensions, $\sigma_{\delta}$ has an $L^{2}$ norm on the order of $\delta^{-3/2}$ while the convolution  estimate would still give a convergence rate of $\delta^{3/2}$ for the $L^{2}$ norm of $(\nabla \bd{\eta} - \nabla \bd{\eta}_{\delta})$. Hence, to establish a corresponding weak-classical consistency result for the three dimensional problem, one would either have to improve the convergence rate of the convolution $\bd{\eta}^{\delta}$ to the actual function $\bd{\eta}$ in Lemma \ref{strongconv}, or find an alternative regularization/extension procedure in place of that in Definition \ref{extension} that exhibits a better convergence rate than $\delta^{3/2}$ as in Lemma \ref{strongconv}.}}

{{A final interesting extension of this work is to consider the singular limit as the thin interface thickness converges to zero, and analyze the resulting FPSI problem between the Navier-Stokes equations for an incompressible, viscous fluid and the Biot equations,
nonlinearly coupled over the moving interface, without a reticular plate separating the two.}} Preliminary results indicate that this will be possible under certain assumptions{{, including viscoelasticity of the Biot medium. In this case, we believe that one could obtain an analogous result for existence of a weak solution, first to a regularized problem, either through an adaptation of the methods presented in this manuscript for the case with a reticular plate, or as a singular limit of weak solutions to the regularized problem with a plate as the plate thickness goes to zero. Then, under the assumption of the existence of a classical solution to the FPSI problem with direct Biot-fluid contact (and no plate), one could pursue a similar weak-classical consistency result showing convergence of weak solutions (to the regularized problem) to the classical solution as the regularization parameter goes to zero. Consequently, we would have a weak-classical consistency result of the same type as in the current paper for the FPSI model without a reticular plate.}}
%\section{The Gronwall estimate}\label{Gronwall}

%\input{Gronwall_Section11.tex}

\section*{Acknowledgements}

{{We would like to thank the anonymous referees for their careful reading of our manuscript and for providing insightful comments that improved the quality of this paper. Partial support for this research was provided by the NSF MSPRF fellowship DMS-2303177 (Jeffrey Kuan), the NSF grants DMS-2247000 and DMS-2011319 (Sun\v{c}ica \v{C}ani\'{c} and Jeffrey Kuan), and the Croatian Science Foundation under project number IP-2022-10-2962 (Boris Muha).}}

\appendix
\section{Appendix}\label{appendix}

\subsection{Weak continuity of solutions to the regularized {{FPSI}} problem}\label{appendix1}

In this appendix, we show a result related to weak continuity of solutions to the regularized FPSI problem,
namely, we will show that as $\nu \to 0$:
\begin{equation*}
\int_{\Omega_{f, \delta}(0)} \widehat{\bd{u}}(0) \cdot (\bd{u}_{\delta})_{\nu}(0) \to \int_{\Omega_{f, \delta}(0)} |\bd{u}_{0}|^{2}, \quad \text{ and } \quad \int_{\Omega_{f,\delta}(t)} \widehat{\bd{u}}(t) \cdot (\bd{u}_{\delta})_{\nu}(t) \to \int_{\Omega_{f,\delta}(t)} \widehat{\bd{u}}(t) \cdot \bd{u}_{\delta}(t),
\end{equation*}
for almost all points $0 < t \le T$. 

This result will be used in {{Section \ref{appendix2}}} to estimate the first term $T_{1}$ in \eqref{sumT} in the Gronwall's estimate. 
We will show weak continuity through the following series of lemmas.

\begin{lemma}\label{alphaconv}
Let $\omega \in L^{\infty}(0, T; H_{0}^{2}(\Gamma)) \cap W^{1, \infty}(0, T; L^{2}(\Gamma))$ with
\begin{equation*}
\min_{t \in [0, T], x \in [0, L]} R + \omega(t, x) > 0,
\end{equation*}
define the moving fluid domain $\Omega_{f}^{\omega}(t)$. Then, given $\bd{u} \in L^{2}(0, T; H^{1}(\Omega_{f}^{\omega}(t))) \cap L^{\infty}(0, T; L^{2}(\Omega_{f}^{\omega}(t)))$ where $\Omega_{f}^{\omega}(t) = \{(x, y) \in \mathbb{R}^{2} : 0 \le x \le L, -R \le y \le \omega(t, x)\}$, we have that
\begin{equation*}
||\bd{u}_{\nu}(t, x, y) - \bd{u}(t, x, y)||_{L^{2}(\Omega_{f}^{\omega}(t))} \to 0 \qquad \text{ as } \nu \to 0,
\end{equation*}
for almost all $t \in [0, T]$.
\end{lemma}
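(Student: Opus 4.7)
The plan is to reduce the claim to a standard mollifier-convergence statement on the \emph{fixed} reference domain $\Omega_f$, where the usual $L^2$ theory of time convolution applies.

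First I would pull everything back to $\Omega_f$ via the ALE map $\hat{\bd{\Phi}}^{\omega}_{f}(t,\cdot):\Omega_f\to\Omega^\omega_f(t)$ from \eqref{phif}, setting
\begin{equation*}
\hat{\bd{u}}(t,\hat{x},\hat{y}) = \bd{u}\bigl(t,\hat{\bd{\Phi}}^{\omega}_{f}(t,\hat{x},\hat{y})\bigr).
\end{equation*}
Under the assumption $\min_{t,x}(R+\omega(t,x))>0$ and $\omega\in L^\infty(0,T;H^2_0(\Gamma))$, the Jacobian $\hat{\mathcal{J}}^{\omega}_{f}=1+\hat{y}/R$ is bounded above and below by positive constants uniform in $t$, so the map $\bd{u}\mapsto\hat{\bd{u}}$ is a bijection between the space of the hypothesis and $L^2(0,T;H^1(\Omega_f))\cap L^\infty(0,T;L^2(\Omega_f))$, with equivalence of norms. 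The key geometric observation is that, for $(x,y)\in\Omega^\omega_f(t)$ with reference preimage $(\hat{x},\hat{y})$, the argument of $\bd{u}$ in the definition of $\bd{u}_\nu$ satisfies
\begin{equation*}
\Bigl(x,\;\tfrac{R+\omega(s,x)}{R+\omega(t,x)}(R+y)-R\Bigr) \;=\; \hat{\bd{\Phi}}^{\omega}_{f}(s,\hat{x},\hat{y}),
\end{equation*}
so that the inner evaluation of $\bd{u}$ is exactly $\hat{\bd{u}}(s,\hat{x},\hat{y})$, independent of $x,y$.

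Second, I would rewrite the matrix factor. Setting $\gamma(s,t,x)=(R+\omega(s,x))/(R+\omega(t,x))$, a direct computation gives, after substituting $y=\hat{y}+(1+\hat{y}/R)\omega(t,x)$,
\begin{equation*}
K(s,t,x,y) = \begin{pmatrix}\gamma(s,t,x) & 0 \\ -(R+y)\partial_x\gamma(s,t,x) & 1\end{pmatrix},
\end{equation*}
which at $s=t$ equals the identity, and whose entries are bounded in $L^\infty_{t,x}$ uniformly in $s$ with $|s-t|\le\nu$ (using $\omega\in W^{1,\infty}(0,T;L^2(\Gamma))\cap L^\infty(0,T;H^2_0(\Gamma))$ and the Sobolev embedding $H^2_0(\Gamma)\hookrightarrow C^1(\Gamma)$). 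Thus on the reference domain
\begin{equation*}
\bd{u}_\nu(t,x,y) \;=\; \int_{\mathbb R} \tilde K(s,t,\hat{x},\hat{y})\,\hat{\bd{u}}(s,\hat{x},\hat{y})\,j_\nu(t-s)\,ds,
\end{equation*}
with $\tilde K(s,t,\hat{x},\hat{y})=K(s,t,x,y)$ uniformly bounded and $\tilde K(t,t,\cdot,\cdot)=I$.

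Third, I would apply a standard mollifier argument. Write
\begin{align*}
\bd{u}_\nu(t)-\bd{u}(t) \;=\;& \int_{\mathbb R}\bigl[\tilde K(s,t,\cdot)-I\bigr]\hat{\bd{u}}(s)\,j_\nu(t-s)\,ds \\
&+ \int_{\mathbb R}\bigl[\hat{\bd{u}}(s)-\hat{\bd{u}}(t)\bigr]j_\nu(t-s)\,ds,
\end{align*}
where the identification is through the ALE map at time $t$. For the second term, the usual theory of mollification for $L^2(0,T;L^2(\Omega_f))$ functions (after the symmetric reflection extension across $t=0,T$) yields convergence to $0$ in $L^2(0,T;L^2(\Omega_f))$, hence along a subsequence pointwise a.e.\ in $t$; since the whole family converges in $L^2_t$, every Lebesgue point of $t\mapsto \|\hat{\bd{u}}_\nu(t)-\hat{\bd{u}}(t)\|_{L^2}$ is a point of convergence, giving convergence for a.e.\ $t\in(0,T)$. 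For the first term, the uniform bound on $\tilde K-I$ together with the continuity estimate $|\tilde K(s,t,\cdot)-I|\le C|s-t|$ (consequence of the Lipschitz-in-time regularity of $\omega$) gives the pointwise bound
\begin{equation*}
\Bigl\|\int_{\mathbb R}[\tilde K(s,t,\cdot)-I]\hat{\bd{u}}(s)\,j_\nu(t-s)\,ds\Bigr\|_{L^2(\Omega_f)} \le C\nu\,\sup_{|s-t|\le\nu}\|\hat{\bd{u}}(s)\|_{L^2(\Omega_f)}\le C\nu,
\end{equation*}
which tends to $0$. Transferring back to $\Omega^\omega_f(t)$ costs only a bounded Jacobian factor and preserves convergence.

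The main obstacle I anticipate is bookkeeping, not analysis: one must carefully verify that the definition of $\bd{u}_\nu$ in \eqref{ualpha}, involving the matrix $K$ designed to preserve the divergence-free property on moving domains, reduces cleanly to a standard time mollification on the reference domain up to a smooth multiplicative correction that vanishes at $s=t$. Once this reduction is established, the convergence itself is a direct consequence of classical mollifier theory combined with the continuity of $\tilde K$ in $s$.
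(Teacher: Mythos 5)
Your overall route is essentially the paper's: pull back to a fixed domain, split the error into a pure time-mollification term plus a geometric correction coming from the matrix $K$, handle the first by Lebesgue points and the second by uniform-in-space smallness as $s\to t$. (The paper pulls back to $\Omega^{\omega}_f(0)$ by applying $K(t,0,\cdot)$, you pull back to $\Omega_f$ by composing with the ALE map; these are equivalent choices.) However, two of your justifications do not hold as written. First, your argument for the mollification term is a non sequitur: convergence of $\hat{\bd{u}}_\nu\to\hat{\bd{u}}$ in $L^2(0,T;L^2(\Omega_f))$ gives a.e.-$t$ convergence only along a subsequence, and ``every Lebesgue point of $t\mapsto\|\hat{\bd{u}}_\nu(t)-\hat{\bd{u}}(t)\|_{L^2}$'' is not a meaningful criterion since that function depends on $\nu$; the lemma requires the full limit in $\nu$ at a.e.\ fixed $t$. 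The correct (and standard) fix is the one the paper uses in its estimate of $I_1$ via \eqref{Lebesgue}: by the vector-valued Lebesgue differentiation theorem, a.e.\ $t$ is a Lebesgue point of the Bochner function $\hat{\bd{u}}:[0,T]\to L^2(\Omega_f)$, and at such $t$ one bounds
\begin{equation*}
\Bigl\|\int_{\mathbb R}\bigl[\hat{\bd{u}}(s)-\hat{\bd{u}}(t)\bigr]j_\nu(t-s)\,ds\Bigr\|_{L^2(\Omega_f)}\le \frac{C}{\nu}\int_{t-\nu}^{t+\nu}\|\hat{\bd{u}}(s)-\hat{\bd{u}}(t)\|_{L^2(\Omega_f)}\,ds\;\longrightarrow\;0 .
\end{equation*}

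Second, the claimed estimate $|\tilde K(s,t,\cdot)-I|\le C|s-t|$ is not a consequence of the stated regularity: $\omega$ is Lipschitz in time only with values in $L^2(\Gamma)$, while the entry $(R+y)\partial_x\gamma(s,t,x)$ requires pointwise-in-$x$ control of $\partial_x\omega(s,x)-\partial_x\omega(t,x)$. Interpolating $W^{1,\infty}(0,T;L^2(\Gamma))$ with $L^\infty(0,T;H^2_0(\Gamma))$ gives $\|\omega(s)-\omega(t)\|_{H^{2\alpha}(\Gamma)}\le C|s-t|^{1-\alpha}$, and $H^{2\alpha}(\Gamma)\hookrightarrow C^1(\overline\Gamma)$ only for $\alpha>3/4$, so the best uniform modulus you get is H\"older with exponent strictly below $1/4$, not Lipschitz. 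This does not break the proof: a bound $C\nu^{1/4-}$ (or mere uniform continuity in time of $\omega$ and $\partial_x\omega$, which is all the paper invokes for its term $I_2$, using additionally that $\bd{v}\in L^\infty(0,T;L^2)$) still sends the correction term to zero. So the strategy is sound and matches the paper, but you should replace the subsequence/Lebesgue-point sentence by the pointwise Lebesgue-differentiation estimate above and weaken the Lipschitz claim on $\tilde K-I$ to the interpolation-based modulus of continuity.
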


\begin{proof}
Recall that in the case of real-valued functions, one shows convergence of the convolution to the function itself almost everywhere by using the Lebesgue differentiation theorem \cite{Evans}. To apply the theorem in this context, we need to apply it to a function taking values in a \textit{fixed} Banach space rather than a time-dependent Banach space.

As a result, we consider the following function,
\begin{equation*}
\bd{v}(t, x, y) = K(t, 0, x, y) \bd{u}\left(t, x, \frac{R + \omega(t, x)}{R + \omega(0, x)}(R + y) - R\right),
\end{equation*}
where we have pulled the fluid velocity back to the fixed initial domain $\Omega^{\omega}_{f}(0)$. We recall the definition of $K(s, t, z, r)$ from \eqref{Kdelta} and its inverse:
\begin{equation*}
K(s, t, x, y) = 
\begin{pmatrix}
\frac{R + \omega(s, x)}{R + \omega(t, x)} & 0 \\ -(R + y)\partial_{x}\left(\frac{R + \omega(s, x)}{R + \omega(t, x)}\right) & 1 \\
\end{pmatrix}, \quad K^{-1}(s, t, x, y) =
\begin{pmatrix}
\frac{R + \omega(t, x)}{R + \omega(s, x)} & 0 \\
(R + y)\frac{R + \omega(t, x)}{R + \omega(s, x)} \partial_{x}\left(\frac{R + \omega(s, x)}{R + \omega(t, x)}\right) & 1 \\
\end{pmatrix}.
\end{equation*}
By the uniform boundedness of $R + \omega(t, x)$ and $|\partial_{x}\omega(t, x)|$, and $\min_{t \in [0, T], x \in [0, L]} R + \omega(t, x) > 0$, it is immediate to see that $\bd{v}(t, z, r)$ is in $L^{\infty}(0, T; L^{2}(\Omega^{\omega}_{f}(0)))$, where we emphasize that $L^{2}(\Omega^{\omega}_{f}(0))$ is a fixed function space that no longer depends on time. 

By Lebesgue's differentiation theorem, almost every $t \in [0, T]$ is a Lebesgue point satisfying
\begin{equation}\label{Lebesgue}
\lim_{\nu \to 0} \frac{1}{2\nu} \int_{t - \nu}^{t + \nu} ||\bd{v}(t, \cdot) - \bd{v}(s, \cdot)||_{L^{2}(\Omega^{\omega}_{f}(0))} ds \to 0.
\end{equation}

Recall that by definition \eqref{ualpha},
\begin{equation*}
\bd{u}_{\nu}(t, x, y) = \int_{\mathbb{R}} K(s, t, x, y) \bd{u}\left(s, x, \frac{R + \omega(s, x)}{R + \omega(t, x)}(R + y) - R\right) j_{\nu}(t - s) ds.
\end{equation*}
Thus, we compute 
\begin{align*}
\bd{u}_{\nu}(t, x, y) - \bd{u}(t, x, y) = &\int_{\mathbb{R}} \left(K(s, t, x, y) \bd{u}\left(s, x, \frac{R + \omega(s, x)}{R + \omega(t, x)}(R + y) - R\right) - \bd{u}(t, x, y)\right) \cdot j_{\nu}(t - s) ds \\
:= & I_{1} + I_{2},
\end{align*}
where 
\begin{align*}
I_{1} = \int_{\mathbb{R}} &K^{-1}\left(t, 0, x, \frac{R + \omega(0, x)}{R + \omega(t, x)}(R + y) - R\right) \cdot \\
&\left(\bd{v}\left(s, x, \frac{R + \omega(0, x)}{R + \omega(t, x)}(R + y) - R\right) - \bd{v}\left(t, x, \frac{R + \omega(0, x)}{R + \omega(t, x)}(R + y) - R\right)\right) j_{\nu}(t - s) ds,
\end{align*}
\begin{align*}
I_{2} = \int_{\mathbb{R}} &\left(K(s, t, x, y) K^{-1}\left(s, 0, x, \frac{R + \omega(0, x)}{R + \omega(t, x)}(R + y) - R\right) - K^{-1}\left(t, 0, x, \frac{R + \omega(0, x)}{R + \omega(t, x)}(R + y) - R\right)\right) \cdot \\
&\bd{v}\left(s, x, \frac{R + \omega(0, x)}{R + \omega(t, x)}(R + y) - R\right) j_{\nu}(t - s) {{ds}}.
\end{align*}

We estimate each of these terms as follows. For $I_{1}$, we compute that
\begin{equation*}
K^{-1}\left(t, 0, x, \frac{R + \omega(0, x)}{R + \omega(t, x)}(R + y) - R\right) = 
\begin{pmatrix} \frac{R + \omega(0, x)}{R + \omega(t, x)} & 0 \\ (R + y) \left(\frac{R + \omega(0, x)}{R + \omega(t, x)}\right)^{2} \partial_{x}\left(\frac{R + \omega(t, x)}{R + \omega(0, x)}\right) & 1 \\ \end{pmatrix},
\end{equation*}
which we note is uniformly bounded on $[0, T]$. Hence, using the fact that $|j_{\nu}(t - s)| \le \frac{1}{\nu}$, we get
\begin{align*}
\small
||I_{1}||_{L^{2}(\Omega^{\omega}_{f}(t))} 
&\le C \cdot \frac{1}{\nu} \int_{t - \nu}^{t + \nu} \left|\left| \bd{v}\left(s, x, \frac{R + \omega(0, x)}{R + \omega(t, x)}(R + y) - R\right) - \bd{v}\left(t, x, \frac{R + \omega(0, x)}{R + \omega(t, x)}(R + y) - R\right) \right| \right|_{L^{2}(\Omega^{\omega}_{f}(t))} ds \\
&\le C \cdot \frac{1}{\nu} \int_{t - \nu}^{t + \nu} \left(\frac{R + \omega(t, x)}{R + \omega(0, x)}\right)^{1/2} ||\bd{v}(s, x, y) - \bd{v}(t, x, y)||_{L^{2}(\Omega^{\omega}_{f}(0))} ds \to 0,
\end{align*}
as $\nu \to 0$ if $t$ is a Lebesgue point, by \eqref{Lebesgue} and the uniform boundedness of $\frac{R + \omega(t, x)}{R + \omega(0, x)}$ on $[0, T]$. 

To estimate $I_{2}$, we can use the continuity in time of $\omega$ and $\partial_{x}\omega$ to calculate that 
\begin{equation*}
\left|K(s, t, x, y) K^{-1}\left(s, 0, x, \frac{R + \omega(0, x)}{R + \omega(t, x)}(R + y) - R\right) - K^{-1}\left(t, 0, x, \frac{R + \omega(0, x)}{R + \omega(t, x)}(R + y) - R\right)\right| \to 0,
\end{equation*}
uniformly in $(x, y)$ as $s \to t$. Now, we estimate
\begin{align*}
\small
||I_{2}||_{L^{2}(\Omega^{\omega}_{f}(t))} 
&\le \int_{\mathbb{R}} \max_{x, y \in \Omega^{\omega}_{f}(t)} \left|K(s, t, x, y) K^{-1}\left(s, 0, x, \frac{R + \omega(0, x)}{R + \omega(t, x)}(R + y) - R\right) 
\right.
\\
&\left. \quad -  K^{-1}\left(t, 0, x, \frac{R + \omega(0, x)}{R + \omega(t, x)}(R + y) - R\right) \right| \\
&\quad \cdot \left|\left|\bd{v}\left(s, x, \frac{R + \omega(0, x)}{R + \omega(t, x)}(R + y) - R\right)\right|\right|_{L^{2}(\Omega^{\omega}_{f}(t))} \cdot j_{\nu}(t - s) ds \\
&\le \int_{\mathbb{R}} \max_{x, y \in \Omega^{\omega}_{f}(t)} \left|K(s, t, x, y) K^{-1}\left(s, 0, x, \frac{R + \omega(0, x)}{R + \omega(t, x)}(R + y) - R\right) 
\right.
\\
& \quad \left. - K^{-1}\left(t, 0, x, \frac{R + \omega(0, x)}{R + \omega(t, x)}(R + y) - R\right)\right| \\
&\quad \cdot \left(\frac{R + \omega(t, x)}{R + \omega(0, x)}\right)^{1/2} \cdot \left|\left|\bd{v}\left(s, x, y\right)\right|\right|_{L^{2}(\Omega^{\omega}_{f}(0))} \cdot j_{\nu}(t - s) ds \\
&\le C\int_{\mathbb{R}} \max_{x, y \in \Omega^{\omega}_{f}(t)} \left|K(s, t, x, y) K^{-1}\left(s, 0, x, \frac{R + \omega(0, x)}{R + \omega(t, x)}(R + y) - R\right) 
\right.
\\
& \left. \quad - K^{-1}\left(t, 0, x, \frac{R + \omega(0, x)}{R + \omega(t, x)}(R + y) - R\right)\right| \cdot j_{\nu}(t - s) ds,
\end{align*}
where we used the fact that $\bd{v} \in L^{\infty}(0, T; L^{2}(\Omega^{\omega}_{f}(0)))$. Thus, we conclude that $||I_{2}||_{L^{2}(\Omega^{\omega}_{f}(t))} \to 0$ as $\nu \to 0$. This completes the proof. 
\end{proof}

We also have a weak continuity lemma, which states that the value of $\bd{u}_{\delta}$ tested against any function in the fluid function space has a continuity property as $t \to 0$. 

\begin{lemma}\label{weakcontinuity}
Consider an arbitrary $\bd{q} \in C^{1}(0, T; V_{f, \delta}(t))$ and the weak solution $\bd{u}_{\delta}$ to the regularized problem for arbitrary $\delta$, where $V_{f,\delta}(t)$ is defined by the displacement $\omega_{\delta}$ and \eqref{Vft}. There exists a measure zero subset $S$ of $[0, T]$ (depending on $\delta$) such that
\begin{equation*}
\lim_{t \to 0, t \in [0, T] \cap S^{c}} \int_{\Omega_{f,\delta}(t)} \bd{u}_{\delta}(t) \cdot \bd{q}(t) = \int_{\Omega_{f, \delta}(0)} \bd{u}_{0} \cdot \bd{q}(0). 
\end{equation*}
\end{lemma}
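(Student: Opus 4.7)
The plan is to extract the continuity at $t=0$ from a reformulated version of the regularized weak formulation \eqref{deltaweakphysical} that is valid for almost every $t \in [0,T]$ — exactly the reformulation alluded to in the paragraph preceding equation \eqref{weaknoregularization} and used to derive \eqref{weakdelta}. First I would fix a cutoff $\chi \in C^\infty_c([0,T))$ with $\chi \equiv 1$ on $[0, T/2]$ and set $\bd{v}(s, x) := \chi(s)\,\bd{q}(s,x)$. By construction the quadruple $(\bd{v}, 0, 0, 0)$ lies in $\mathcal{V}_{\text{test}}$, since the structure components all vanish and the constraint $\bd{\psi}=\varphi\,\bd{e}_y$ on $\hat\Gamma$ is trivially satisfied.

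Next I would substitute into \eqref{deltaweakphysical} the in-time cutoff $\chi_{\epsilon,t}(s) := \min\!\bigl(1,\max(0,(t+\epsilon-s)/\epsilon)\bigr)$, which approximates $\mathbb{1}_{[0,t]}$, and send $\epsilon \to 0$. The term generated by $\partial_s \chi_{\epsilon,t}$ equals $-\epsilon^{-1}\int_t^{t+\epsilon}\!\int_{\Omega_{f,\delta}(s)} \bd{u}_\delta\cdot\bd{v}\,dx\,ds$, which by the Lebesgue differentiation theorem converges, for $t$ outside a measure-zero set, to $\int_{\Omega_{f,\delta}(t)}\bd{u}_\delta(t)\cdot\bd{v}(t)\,dx$. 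The set $S$ in the statement of the lemma is taken to be the union of the non-Lebesgue points of the finitely many $L^1(0,T)$ functions of $s$ that arise this way. The outcome is an identity of the form
\[
\int_{\Omega_{f,\delta}(t)} \bd{u}_\delta(t)\cdot\bd{q}(t) \;=\; \int_{\Omega_{f,\delta}(0)} \bd{u}_0 \cdot \bd{q}(0) \;+\; \int_0^t \mathcal{R}(s)\, ds,
\]
valid for every $t \in [0, T/2]\setminus S$, where $\mathcal{R}(s)$ collects the convective, viscous, pressure-trace, and Beavers--Joseph--Saffman terms inherited from \eqref{deltaweakphysical}, together with $\int_{\Omega_{f,\delta}(s)} \bd{u}_\delta\cdot\partial_s\bd{v}$; here I have used that $\bd{v}(t)=\bd{q}(t)$ and $\bd{v}(0)=\bd{q}(0)$ because $\chi\equiv 1$ on $[0,T/2]$. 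The conclusion would then follow immediately from absolute continuity of the Lebesgue integral, once $\mathcal{R}\in L^1(0,T)$ is known.

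Verifying $\mathcal{R}\in L^1(0,T)$ is the only nontrivial ingredient. Bulk integrands are handled directly by the uniform energy estimate of Proposition \ref{energyestimate} together with the $C^1$ regularity of $\bd{q}$. The potentially delicate boundary terms on $\Gamma(s)$, in particular the quadratic contribution $\int_{\Gamma(s)} |\bd{u}_\delta|^2 v_n$ and the pressure-trace contribution $\int_{\Gamma(s)} p_\delta v_n$, I would estimate via the trace embedding $H^1(\Omega_{f,\delta}(s))\hookrightarrow H^{1/2}(\Gamma(s))$ followed by the one-dimensional Sobolev embedding $H^{1/2}(\Gamma(s))\hookrightarrow L^q(\Gamma(s))$ for every finite $q$, which reduces everything to finite-energy controls of $\bd{u}_\delta$ and $p_\delta$.

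The hard part will not be any individual estimate but the bookkeeping: collecting the finitely many $s$-dependent $L^1$ functions whose Lebesgue sets must simultaneously be avoided to define $S$, and ensuring that the $\epsilon\to 0$ limit can be taken in each of the corresponding terms. Once this is organized, letting $t\to 0^+$ through $[0, T/2]\setminus S$ yields the claim of the lemma.
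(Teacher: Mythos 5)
Your proposal is correct and follows essentially the same route as the paper: the paper also tests the regularized weak formulation with $\bd{q}$ multiplied by a smoothed time-indicator of $[0,\tau]$ (its $J_{\tau,\nu}(t)=1-\int_0^t j_\nu(s-\tau)\,ds$, the smooth analogue of your $\chi_{\epsilon,t}$), uses the Lebesgue differentiation theorem in time to recover $\int_{\Omega_{f,\delta}(\tau)}\bd{u}_\delta(\tau)\cdot\bd{q}(\tau)$ at every $\tau$ outside a null set $S$, and then lets $\tau\to 0$ through $S^{c}$ using the $L^{1}$-in-time bounds on the remaining terms. The only adjustments you would need are cosmetic: replace the merely Lipschitz cutoff $\chi_{\epsilon,t}$ by its mollification so the test function is genuinely $C^{1}_{c}$ in time, and (as the paper does via the pullback $\bd{w}$ to the fixed domain $\Omega_{f,\delta}(0)$) justify that the time function to which Lebesgue differentiation is applied is measurable and integrable despite the moving domain.
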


\begin{proof}
Consider the following function for each $\tau \in [0, T]$ and $\alpha > 0$, given by
\begin{equation}\label{Jtaualpha}
J_{\tau, \nu}(t) = 1 - \int_{0}^{t} j_{\nu}(s - \tau) ds,
\end{equation}
and note that $J_{\tau, \nu}'(t) = -j_{\nu}(t - \tau)$. We want to test the regularized weak formulation for $\bd{u}_{\delta}$ with the test function $J_{\tau, \nu}(t)\bd{q}$ for certain admissible choices of $\tau$. To see which $\tau$ we want to choose, we define the function
\begin{equation*}
\bd{w}(t, x, y) = \frac{R + \omega_{\delta}(t)}{R + \omega_{\delta}(0)} \cdot \bd{u}_{\delta}\left(t, x, \frac{R + \omega_{\delta}(t)}{R + \omega_{\delta}(0)}(R + y) - R\right) \cdot \bd{q}\left(t, x, \frac{R + \omega_{\delta}(t)}{R + \omega_{\delta}(0)}(R + y) - R\right).
\end{equation*}
We claim that $\bd{w} \in L^{\infty}(0, T; L^{1}(\Omega_{f, \delta}(0)))$. To see this, we compute by a change of variables that 
\begin{equation*}
||\bd{w}(t, x, y)||_{L^{1}(\Omega_{f, \delta}(0))} = \int_{\Omega_{f,\delta}(t)} |\bd{u}_{\delta}(t, x, y) \cdot \bd{q}(t, x, y)|,
\end{equation*}
and we then use the fact that $\bd{u}_{\delta}, \bd{q} \in L^{\infty}(0, T; L^{2}(\Omega_{f, \delta}(t)))$. 

Hence, by the Lebesgue differentiation theorem, there exists a measurable subset $S \subset [0, T]$ of measure zero such that every point in $[0, T] \cap S^{c}$ is a Lebesgue point of $\bd{w}$, in the sense that
\begin{equation}\label{Lebesguew}
\lim_{\nu \to 0} \frac{1}{2\nu} \int_{\tau - \nu}^{\tau + \nu} ||\bd{w}(\tau, \cdot) - \bd{w}(s, \cdot)||_{L^{1}(\Omega_{f, \delta}(0))} ds \to 0.
\end{equation}
for every $\tau \in [0, T] \cap S^{c}$. These are the $\tau$ for which we will consider the test function $J_{\tau, \nu}(t)\bd{q}$. For the test functions for the Biot medium and the plate, we will take these test functions to be zero. Hence, in the regularized weak formulation \eqref{weakdelta}, we will test with $(\bd{v}, \varphi, \bd{\psi}, r) = (J_{\tau, \nu}(t)\bd{q}, 0, 0, 0)$.

Hence, we obtain the following equality:
\begin{align*}
&-\int_{0}^{T} \int_{\Omega_{f,\delta}(t)} \bd{u}_{\delta} \cdot \partial_{t}(J_{\tau, \nu}(t)\bd{q}) + \frac{1}{2} \int_{0}^{T} \int_{\Omega_{f,\delta}(t)} [((\bd{u}_{\delta} \cdot \nabla)\bd{u}_{\delta}) \cdot (J_{\tau, \nu}(t)\bd{q}) - ((\bd{u}_{\delta} \cdot \nabla)(J_{\tau, \nu}(t)\bd{q})) \cdot \bd{u}_{\delta}] \\
&+ \frac{1}{2} \int_{0}^{T} \int_{\Gamma_{\delta}(t)} (\bd{u}_{\delta} \cdot \bd{n} - 2\bd{\xi}_{\delta} \cdot \bd{n}) \bd{u}_{\delta} \cdot (J_{\tau, \nu}(t)\bd{q}) 
+ 2\nu \int_{0}^{T} \int_{\Omega_{f,\delta}(t)} \bd{D}(\bd{u}_{\delta}) : \bd{D}(J_{\tau, \nu}(t)\bd{q})
\\
& - \int_{0}^{T} \int_{\Gamma_{\delta}(t)} \left(\frac{1}{2}|\bd{u}_{\delta}|^{2} - p_{\delta}\right) J_{\tau, \nu}(t) q_{n} 
- \beta \int_{0}^{T} \int_{\Gamma_{\delta}(t)} [(\xi_{\delta})_{\tau} - (u_{\delta})_{\tau}] \cdot J_{\tau, \nu}(t) q_{\tau} = \int_{\Omega_{f, \delta}(0)} \bd{u}_{0} \cdot J_{\tau, \nu}(0) \bd{q}(0). 
\end{align*}
Consider $\tau \in (0, T) \cap S^{c}$. We want to pass to the limit as $\nu \to 0$, and then pass to the limit as $\tau \to 0$, in order to obtain the desired result. 

First, we pass to the limit as $\nu \to 0$. We handle the convergences as follows.

\noindent \textbf{First term:} We will show that because $\tau$ is a Lebesgue point of $\bd{w}$, 
\begin{equation*}
-\int_{0}^{T} \int_{\Omega_{f,\delta}(t)} \bd{u}_{\delta} \cdot \partial_{t}(J_{\tau, \nu}(t)\bd{q}) \to \int_{\Omega_{f, \delta}(\tau)} \bd{u}_{\delta}(\tau) \bd{q}(\tau) - \int_{0}^{t} \int_{\Omega_{f,\delta}(t)} \bd{u}_{\delta} \cdot \partial_{t}\bd{q}, \qquad \text{ as } \nu \to 0.
\end{equation*}
We compute that
\begin{equation*}
-\int_{0}^{T} \int_{\Omega_{f,\delta}(t)} \bd{u}_{\delta} \cdot \partial_{t}(J_{\tau, \nu}(t)\bd{q}) = \int_{0}^{T} \int_{\Omega_{f,\delta}(t)} \bd{u}_{\delta} \cdot j_{\nu}(t - \tau) \bd{q} - \int_{0}^{T} \int_{\Omega_{f,\delta}(t)} \bd{u}_{\delta} \cdot J_{\tau, \nu}(t) \partial_{t}\bd{q}.
\end{equation*}
It is easy to see that
\begin{equation*}
\int_{0}^{T} \int_{\Omega_{f,\delta}(t)} \bd{u}_{\delta} J_{\tau, \nu}(t) \partial_{t}\bd{q} \to \int_{0}^{t} \int_{\Omega_{f,\delta}(t)} \bd{u}_{\delta} \partial_{t} \bd{q}.
\end{equation*}
So it remains to show that
\begin{equation*}
\int_{0}^{T} \int_{\Omega_{f,\delta}(t)} \bd{u}_{\delta} \cdot j_{\nu}(t - \tau) \bd{q} \to \int_{\Omega_{f, \delta}(\tau)} \bd{u}_{\delta}(\tau) \bd{q}(\tau), \qquad \text{ as } \nu \to 0. 
\end{equation*}
By a change of variables, we compute that
\begin{multline*}
\int_{0}^{T} \int_{\Omega_{f,\delta}(t)} \bd{u}_{\delta} \cdot j_{\nu}(t - \tau) \bd{q} \\
= \int_{0}^{T} \int_{\Omega_{f, \delta}(\tau)} \frac{R + \omega_{\delta}(t)}{R + \omega_{\delta}(\tau)} \cdot \bd{u}_{\delta}\left(t, x, \frac{R + \omega_{\delta}(t)}{R + \omega_{\delta}(\tau)}(R + y) - R\right) \cdot j_{\nu}(t - \tau) \bd{q}\left(t, x, \frac{R + \omega_{\delta}(t)}{R + \omega_{\delta}(\tau)}(R + y) - R\right) \\
= \int_{0}^{T} \int_{\Omega_{f, \delta}(\tau)} \frac{R + \omega_{\delta}(0)}{R + \omega_{\delta}(\tau)} \bd{w}\left(t, x, \frac{R + \omega_{\delta}(0)}{R + \omega_{\delta}(\tau)}(R + y) - R\right) \cdot j_{\nu}(t - \tau) = \int_{0}^{T} \int_{\Omega_{f, \delta}(0)} \bd{w}(t, x, y) \cdot j_{\nu}(t - \tau).
\end{multline*}
By \eqref{Lebesguew}, we have that
\begin{equation*}
\int_{0}^{T} \int_{\Omega_{f, \delta}(0)} \bd{w}(t, x, y) \cdot j_{\nu}(t - \tau) \to \int_{\Omega_{f, \delta}(0)} \bd{w}(\tau, x, y) = \int_{\Omega_{f, \delta}(\tau)} \bd{u}_{\delta}(\tau) \cdot \bd{q}(\tau),
\end{equation*}
which establishes the desired convergence. 

\noindent \textbf{Final term:} It is immediate to see that for all sufficiently small $\nu > 0$,
\begin{equation*}
\int_{\Omega_{f, \delta}(0)} \bd{u}_{0} \cdot J_{\tau, \nu}(0) \bd{q}(0) = \int_{\Omega_{f, \delta}(0)} \bd{u}_{0} \cdot \bd{q}(0).
\end{equation*}
We can now easily take $\nu \to 0$ in the remaining terms to obtain that for any $\tau \in (0, T) \cap S^{c}$, 
\begin{multline*}
\int_{\Omega_{f, \delta}(\tau)} \bd{u}_{\delta}(\tau) \cdot \bd{q}(\tau) - \int_{0}^{t} \int_{\Omega_{f,\delta}(t)} \bd{u}_{\delta} \cdot \partial_{t} \bd{q} + \frac{1}{2} \int_{0}^{t} \int_{\Omega_{f,\delta}(t)} [((\bd{u}_{\delta} \cdot \nabla) \bd{u}_{\delta}) \cdot \bd{q} - ((\bd{u}_{\delta} \cdot \nabla) \bd{q}) \cdot \bd{u}_{\delta}] \\
+ \frac{1}{2} \int_{0}^{t} \int_{\Gamma_{\delta}(t)} (\bd{u}_{\delta} \cdot \bd{n} - 2\bd{\xi}_{\delta} \cdot \bd{n}) \bd{u}_{\delta} \cdot \bd{q} + 2\nu \int_{0}^{t} \int_{\Omega_{f,\delta}(t)} \bd{D}(\bd{u}_{\delta}) : \bd{D}(\bd{q}) \\
- \int_{0}^{t} \int_{\Gamma_{\delta}(t)} \left(\frac{1}{2} |\bd{u}_{\delta}|^{2} - p_{\delta}\right) q_{n} - \beta \int_{0}^{t} \int_{\Gamma_{\delta}(t)} [(\xi_{\delta})_{\tau} - (u_{\delta})_{\tau}] \cdot q_{\tau} = \int_{\Omega_{f, \delta}(0)} \bd{u}_{0} \cdot \bd{q}(0). 
\end{multline*}
Passing to the limit as $\tau \to 0$ with $\tau \in (0, T) \cap S^{c}$ gives the desired result. 

\end{proof}

\begin{lemma}\label{qtilde}
Let $\bd{u}_{0}$ be divergence free and smooth on $\overline{\Omega_{f}(0)}$. Define
\begin{equation}\label{tildeq}
\tilde{\bd{q}}(t, x, y) = K_{\delta}(0, t, x, y) \bd{u}_{0}\left(x, \frac{R + \omega_{\delta}(0, x)}{R + \omega_{\delta}(t, x)}(R + y) - R\right),
\end{equation}
where $K_{\delta}$ is given by \eqref{Kdelta}. Then, there exists a sequence of functions $\tilde{\bd{q}}_{m} \in C^{1}_{c}(0, T; V_{f, \delta}(t))$, with $V_{f, \delta}(t)$ determined by the plate displacement $\omega_{\delta}$ via the definition \eqref{Vft}, such that
\begin{equation*}
\max_{0 \le t \le T} ||\tilde{\bd{q}} - \tilde{\bd{q}}_{m}||_{L^{2}(\Omega_{f, \delta}(t))} \to 0, \qquad \text{ as } m \to \infty.
\end{equation*}
\end{lemma}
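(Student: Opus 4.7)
My strategy is to build $\tilde{\bd{q}}_m$ from $\tilde{\bd{q}}$ in three stages: (i) verify the pointwise (in $t$) membership in $V_{f,\delta}(t)$, (ii) regularize in time by a divergence-preserving mollification, and (iii) apply a time cutoff to enforce compact support.

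First, I would verify that $\tilde{\bd{q}}(t) \in V_{f,\delta}(t)$ for every $t \in [0,T]$. The factor $K_\delta(0,t,x,y)$ is precisely the matrix appearing in the transformation \eqref{Kdelta} with $s=0$, which by the computation recorded in Proposition \ref{alphaconvprop} has the property that, composed with the affine change of variable $y \mapsto \tfrac{R+\omega_\delta(0,x)}{R+\omega_\delta(t,x)}(R+y)-R$, it sends divergence-free fields on $\Omega_{f,\delta}(0)$ to divergence-free fields on $\Omega_{f,\delta}(t)$. Since $\bd{u}_0$ is divergence-free on $\overline{\Omega_{f,\delta}(0)}$, this gives $\nabla \cdot \tilde{\bd{q}}(t) = 0$ on $\Omega_{f,\delta}(t)$; the homogeneous Dirichlet boundary conditions on the rigid part of $\partial\Omega_{f,\delta}(t)$ are inherited from those of $\bd{u}_0$ because the transformation fixes $x\in\{0,L\}$ and the line $y = -R$.

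Second, for the time regularization I would define, after a suitable extension of $\tilde{\bd{q}}$ to $s \in (-\nu, T+\nu)$ by reflection across $t=0$ and $t=T$ through the $K_\delta$-transformation (mimicking the convention used in \eqref{ualpha}),
\begin{equation*}
\tilde{\bd{q}}^{(\nu)}(t, x, y) = \int_{\R} K_\delta(s, t, x, y)\, \tilde{\bd{q}}\!\left(s, x, \tfrac{R+\omega_\delta(s,x)}{R+\omega_\delta(t,x)}(R+y) - R\right) j_\nu(t-s)\, ds.
\end{equation*}
By exactly the argument in Proposition \ref{alphaconvprop}, $\nabla \cdot \tilde{\bd{q}}^{(\nu)}(t) = 0$ on $\Omega_{f,\delta}(t)$, and the rigid boundary conditions are preserved. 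The $C^1$-in-time regularity is the delicate point: the explicit $t$-dependence enters through $K_\delta(s,t,\cdot)$, through the stretching factor $(R+\omega_\delta(s))/(R+\omega_\delta(t))$, and through $j_\nu(t-s)$. Since $\omega_\delta$ is only in $W^{1,\infty}(0,T;L^2(\Gamma))\cap L^\infty(0,T;H^2_0(\Gamma))$, differentiating the first two factors in $t$ naively would require a $\partial_t \omega_\delta$ that is not classically defined; I would circumvent this by integrating by parts in $s$ in the integral defining $\tilde{\bd{q}}^{(\nu)}$, so that all $t$-derivatives ultimately land on $j_\nu(t-s)$, which is smooth. An analogue of Lemma \ref{alphaconv} then yields $\tilde{\bd{q}}^{(\nu)}(t) \to \tilde{\bd{q}}(t)$ in $L^2(\Omega_{f,\delta}(t))$, uniformly on compact subsets of the time interval on which the $L^2$-valued pull-back of $\tilde{\bd{q}}$ to $\Omega_{f,\delta}(0)$ is continuous. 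The continuity is a consequence of the embedding $W^{1,\infty}(0,T;L^2(\Gamma))\cap L^\infty(0,T;H^2_0(\Gamma))\hookrightarrow C(0,T;H^s(\Gamma))$ for $s<2$ together with the smoothness of $\bd{u}_0$.

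Third, I would introduce a scalar cutoff $\chi_m \in C_c^\infty(0,T)$ with $\chi_m=1$ on $[1/m, T-1/m]$, $0\le\chi_m\le 1$, and set
$\tilde{\bd{q}}_m(t) := \chi_m(t)\, \tilde{\bd{q}}^{(\nu_m)}(t)$
for a suitably chosen $\nu_m \downarrow 0$. Multiplication by the scalar $\chi_m(t)$ preserves both the divergence-free condition and the rigid boundary conditions, so $\tilde{\bd{q}}_m \in C_c^1(0,T; V_{f,\delta}(t))$. The remaining task is to control
$\sup_{t\in[0,T]} \|\tilde{\bd{q}}(t) - \chi_m(t)\tilde{\bd{q}}^{(\nu_m)}(t)\|_{L^2(\Omega_{f,\delta}(t))}$
by splitting into the ``interior'' region $[1/m, T-1/m]$, where the error is bounded by $\|\tilde{\bd{q}} - \tilde{\bd{q}}^{(\nu_m)}\|_{L^\infty L^2}$ and converges to zero by the mollification argument above, and the transition zones $[0,1/m] \cup [T-1/m, T]$, where the error is controlled using the uniform-in-$t$ continuity of $\tilde{\bd{q}}$. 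The main obstacle I expect here is precisely this endpoint matching: because $\tilde{\bd{q}}(0) = \bd{u}_0$ is in general nonzero, driving $\sup_t\|(1-\chi_m(t))\tilde{\bd{q}}(t)\|_{L^2}$ to zero requires that $\tilde{\bd{q}}$ be small on the support of $1-\chi_m$, which is only true in the limit $m\to\infty$ if one interprets $C^1_c(0,T;V_{f,\delta}(t))$ as $C^1$-maps compactly supported in $[0,T)$ (vanishing at $t=T$) rather than in $(0,T)$. Adopting this interpretation (consistent with the convention used in \eqref{testspacemoving}), I would choose $\chi_m$ to equal $1$ on $[0, T-2/m]$ and vanish near $t=T$, and then the error at $t=0$ is zero (since $\chi_m(0)=1$) and the error near $t=T$ is bounded by $\|\tilde{\bd{q}}\|_{C([T-2/m,T];L^2)}$, which does \emph{not} vanish; in that case one can only obtain the stated max-norm convergence by an additional reduction to a subinterval $[0, T-\epsilon]$ and a subsequent diagonal argument passing $\epsilon \to 0$.
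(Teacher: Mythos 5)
Your construction diverges from the paper's and has a genuine gap at its central step: the claimed $C^{1}$-in-time regularity of the moving-domain mollification $\tilde{\bd{q}}^{(\nu)}$. In your integral, the $t$-dependence does not enter only through $j_{\nu}(t-s)$: it also enters through $K_{\delta}(s,t,x,y)$ and through the vertical rescaling factor $\frac{R+\omega_{\delta}(s,x)}{R+\omega_{\delta}(t,x)}$ in the argument of $\tilde{\bd{q}}$, i.e.\ through $\omega_{\delta}(t,x)$ itself, separately from $s$. Integration by parts in $s$ only converts $\partial_{t}j_{\nu}(t-s)=-\partial_{s}j_{\nu}(t-s)$; it cannot touch the terms produced by differentiating $K_{\delta}(s,t,\cdot)$ and the rescaled argument in $t$, which are proportional to $\partial_{t}\omega_{\delta}(t,\cdot)$. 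Since $\omega_{\delta}$ is only in $W^{1,\infty}(0,T;L^{2}(\Gamma))\cap L^{\infty}(0,T;H^{2}_{0}(\Gamma))$, $\partial_{t}\omega_{\delta}$ is merely $L^{\infty}$ in time and not continuous, so $\tilde{\bd{q}}^{(\nu)}$ is at best Lipschitz in time with values in $L^{2}$; it is not $C^{1}(0,T;V_{f,\delta}(t))$. Note that Proposition \ref{alphaconvprop} (the $K_{\delta}$-convolution of \eqref{ualpha}) is only invoked in the paper for divergence-preservation and convergence, never for producing classically differentiable-in-time test functions — which is exactly what this lemma must deliver. Your endpoint discussion also concedes the second gap: with a cutoff forcing $\tilde{\bd{q}}_{m}$ to vanish near $t=T$, you can only obtain the sup-norm convergence on $[0,T-\epsilon]$ plus a diagonal argument, which is weaker than the stated $\max_{0\le t\le T}$ convergence, so the proposal as written does not prove the lemma.

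The paper avoids both problems by working on a fixed spatial domain rather than adapting the convolution to the moving one. It extends $\tilde{\bd{q}}$ to a fixed maximal rectangle $\Omega_{M}\supset\Omega_{f,\delta}(t)$ by continuing it constantly in $y$ above the interface with its trace value on $\Gamma_{\delta}(t)$ (this extension is still divergence-free and lies in $H^{1}$), proves $\tilde{\bd{q}}\in L^{\infty}(0,T;H^{1}(\Omega_{M}))\cap C(0,T;L^{2}(\Omega_{M}))$ by a direct splitting of the domain, extends in time past a sequence $T_{m}\uparrow T$ by the constant value $\tilde{\bd{q}}(T_{m})$, and then applies a plain time convolution $*\,j_{1/m}$ on $\Omega_{M}$. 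There all $t$-dependence sits in the smooth kernel, so genuine $C^{1}$ regularity in time comes for free, the divergence-free condition and the boundary conditions on the fixed parts of $\partial\Omega_{f,\delta}(t)$ survive restriction, and no cutoff is needed: the endpoint is controlled by the continuity of $\tilde{\bd{q}}$ in $C(0,T;L^{2}(\Omega_{M}))$, which yields the sup-norm convergence on all of $[0,T]$. If you want to salvage your approach you would have to replace the moving-domain mollification by some device that removes the direct $\omega_{\delta}(t,\cdot)$-dependence before differentiating in time — which is, in effect, what the paper's fixed-maximal-domain extension accomplishes.
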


\begin{proof}
There exists a rectangular two-dimensional maximal domain $\Omega_{M}$ of the form $[0, L] \times [-R, R_{max}]$ for some positive constant $R_{max}$ that contains all of the domains $\Omega_{f, \delta}(t)$ for $t \in [0, T]$. We will extend $\tilde{\bd{q}}$ to the maximal spacetime domain $[0, T] \times \Omega_{M}$ by extending vertically in the radial direction by the trace of $\tilde{\bd{q}}$ along $\Gamma_{\delta}(t)$. In particular, we define
\begin{multline}\label{traceext}
\small
\tilde{\bd{q}}(t, x, y) = K(0, t, x, \omega_{\delta}(t, x)) \bd{u}_{0}\left(x, \omega_{\delta}(0, x)\right), 
\qquad \text{ for } (t, x, y) \in ([0, T] \times \Omega_{M}) - ([0, T] \times \Omega_{f, \delta}(t)).
\end{multline}
Note that this extension preserves the \textit{divergence free} property.

We have the following two claims about the extended function, considered as a function on the \textit{fixed} maximal domain $\Omega_{M}$. First, we claim that $\tilde{\bd{q} }\in L^{\infty}(0, T; H^{1}(\Omega_{M}))$. Second, we claim that $\tilde{\bd{q}} \in C(0, T; L^{2}(\Omega_{M}))$. To see that $\tilde{\bd{q}} \in L^{\infty}(0, T; H^{1}(\Omega_{M}))$, we note that $\omega_{\delta}$ and $\partial_{x} \omega_{\delta}$ are bounded uniformly pointwise, and furthermore $\bd{u}_{0}$ and its first spatial derivatives are bounded by assumption. In addition, $\partial_{x}^{2} \omega_{\delta} \in L^{\infty}(0, T; L^{2}(\Gamma))$, which allows us to conclude that $\tilde{\bd{q}} \in L^{\infty}(0, T; H^{1}(\Omega_{M}))$. 

Next, we want to verify that $\tilde{\bd{q}} \in C(0, T; L^{2}(\Omega_{M}))$. Consider any $t \in [0, T]$ and consider any $s \in [0, T]$ with $s \ne t$. We define the following regions:
\begin{align*}
A(s, t) = \Omega^{M}_{f} \cap (\Omega_{f, \delta}(s) \cup \Omega_{f, \delta}(t))^{c},
\
&B(s, t) = [\Omega_{f, \delta}(s) \cap (\Omega_{f, \delta}(t))^{c}] \cup [(\Omega_{f, \delta}(s))^{c} \cap \Omega_{f, \delta}(t)],
\\
&C(s, t) = \Omega_{f, \delta}(s) \cap \Omega_{f, \delta}(t).
\end{align*}
Consider $\epsilon > 0$. We want to find $h > 0$ such that 
\begin{equation}\label{L2continuity}
||\tilde{\bd{q}}(t, \cdot) - \tilde{\bd{q}}(s, \cdot)||^{2}_{L^{2}(\Omega_{M})} \le \epsilon, \qquad \text{ for all } s \in (t - h, t + h) \cap [0, T]. 
\end{equation}
We compute that
\begin{align}\label{IABC}
||\tilde{\bd{q}}(t, \cdot) - \tilde{\bd{q}}(s, \cdot)||_{L^{2}(\Omega_{M})}^{2} 
&= \int_{A(s, t)} |\tilde{\bd{q}}(t, x, y) - \tilde{\bd{q}}(s, x, y)|^{2} + \int_{B(s, t)} |\tilde{\bd{q}}(t, x, y) - \tilde{\bd{q}}(s, x, y)|^{2} 
\nonumber 
\\
&+ \int_{C(s, t)} |\tilde{\bd{q}}(t, x, y) - \tilde{\bd{q}}(s, x, y)|^{2} 
= I_{A} + I_{B} + I_{C}.  
\end{align}
We estimate each of the terms $I_{A}$, $I_{B}$, and $I_{C}$ separately. 

For $I_{A}$, we recall that we are extending by the trace as in \eqref{traceext} on $A(s, t)$, so we have that
\begin{equation*}
I_{A} = \int_{A(s, t)} |K_{\delta}(0, t, x, \omega_{\delta}(t, x)) - K_{\delta}(0, s, x, \omega_{\delta}(s, x))|^{2} \cdot |\bd{u}_{0}(x, \omega_{\delta}(0, x))|^{2}.
\end{equation*}
We have that $|\bd{u}_{0}(x, \omega_{\delta}(0, x))| \le M_{1}$ for some constant $M_{1}$ by the fact that $\bd{u}_{0}$ is continuous on $\overline{\Omega_{f}(0)}$. By continuity, we can choose $h > 0$ sufficiently small so that
\begin{equation*}
|K_{\delta}(0, t, x, \omega_{\delta}(t, x)) - K_{\delta}(0, s, x, \omega_{\delta}(s, x))|^{2} < \frac{\epsilon}{3M_{1}^{2}(R + R_{max})L}, \qquad \text{ for all } s \in (t - h, t + h) \cap [0, T]. 
\end{equation*}
Thus, for all $s \in (t - h, t + h) \cap [0, T]$, 
\begin{equation*}
I_{A} \le |A(s, t)| \cdot \frac{\epsilon}{3 (R + R_{max})L} \le \frac{\epsilon}{3}.
\end{equation*}

For $I_{B}$, we will use the fact that $\omega_{\delta}$ does not change much in time over small time intervals, by continuity. We note that there exists a uniform constant $M_{2}$ such that $|\tilde{\bd{q}}| \le M_{2}$ on $[0, T] \times \Omega_{M}$.  Hence, 
\begin{equation*}
I_{B} = \int_{B(s, t)} |\tilde{\bd{q}}(t, z, r) - \tilde{\bd{q}}(s, z, r)|^{2} \le |B(s, t)| \cdot 4M_{2}^{2} = 4M_{2}^{2} \int_{0}^{L} |\omega_{\delta}(t, x) - \omega_{\delta}(s, x)| dx.
\end{equation*}
Because $\omega_{\delta} \in L^{\infty}(0, T; H^{2}_{0}(\Gamma)) \cap W^{1, \infty}(0, T; L^{2}(\Gamma))$, there exists $h > 0$ sufficiently small such that
\begin{equation*}
|\omega_{\delta}(t, x) - \omega_{\delta}(s, x)| \le \frac{\epsilon}{12M_{2}^{2}L}, \qquad \text{ for all } x \in [0, L] \text{ and } s \in (t - h, t + h) \cap [0, T].
\end{equation*}
This allows us to conclude that $I_{B} \le \frac{\epsilon}{3}$, for all $s \in (t - h, t + h) \cap [0, T]$. 

For $I_{C}$, we refer to the definition of $\tilde{\bd{q}}$ in \eqref{tildeq} and note that $K_{\delta}(0, t, x, y)$ is continuous in time uniformly in $(x, y) \in [0, L] \times [-R, R_{max}]$, $\bd{u}_{0}$ is uniformly continuous as a function on $\overline{\Omega_{f}(0)}$, and $\omega_{\delta}(t, x)$ is continuous in time uniformly in $x \in [0, L]$. Hence, there exists $h > 0$ sufficiently small such that
\begin{equation*}
|\tilde{\bd{q}}(t, x, y) - \tilde{\bd{q}}(s, x, y)|^{2} \le \frac{\epsilon}{3(R + R_{max})L}, \qquad \text{ for all } (x, y) \in C(s, t) \text{ and } s \in (t - h, t + h) \cap [0, T],
\end{equation*}
which gives the desired result that $I_{C} \le \frac{\epsilon}{3}$ for all $s \in (t - h, t + h) \cap [0, T]$. Thus, by using \eqref{IABC}, we have established \eqref{L2continuity}.

Since $\tilde{\bd{q}} \in L^{\infty}(0, T; H^{1}(\Omega_{M})) \cap C(0, T; L^{2}(\Omega_{M}))$, we can extend $\tilde{\bd{q}}$ to a continuous function on all of $\mathbb{R}$ as follows. We can find an increasing sequence $T_{m}$ with $T_{m} \to T$ as $m \to \infty$, such that $\tilde{\bd{q}}(T_{m}) \in H^{1}(\Omega_{M})$ for all $m$. Define an extension $\hat{\bd{q}}_{m}$ for each $m$ to all of $\mathbb{R}$ by $\hat{\bd{q}}_{m} = \tilde{\bd{q}}$ if $t \in [0, T_{m}]$, 
\begin{equation*}
\hat{\bd{q}}_{m} = \tilde{\bd{q}}(0), \qquad \text{ if } t < 0, \qquad \hat{\bd{q}}_{m} = \tilde{\bd{q}}(T_{m}), \qquad \text{ if } t > T_{m}.
\end{equation*}
Define
\begin{equation*}
\tilde{\bd{q}}_{m} = \hat{\bd{q}}_{m} * j_{1/m},
\end{equation*}
where the convolution is a convolution in time with $j_{\nu}$ for $\alpha = 1/m$. Because $\hat{\bd{q}}_{m} \in L^{\infty}(0, T; H^{1}(\Omega_{M})) \cap C(0, T; L^{2}(\Omega_{M}))$ with $\hat{\bd{q}}_{m}$ being divergence free for every $t \in [0, T]$, we have that $\tilde{\bd{q}}_{m}$ restricted to $\bigcup_{t \in [0, T]} \{t\} \times \Omega_{f, \delta}(t)$ gives a function in $C^{1}([0, T); V_{f, \delta}(t))$, where $V_{f, \delta}(t)$ is the space defined in \eqref{Vft} with the plate displacement $\omega_{\delta}$. The fact that
\begin{equation*}
\max_{0 \le t \le T} ||\tilde{\bd{q}} - \tilde{\bd{q}}_{m}||_{L^{2}(\Omega_{f, \delta}(t))} \to 0, \qquad \text{ as } m \to \infty,
\end{equation*}
follows from the uniform continuity of $\tilde{\bd{q}}$ on $[0, T]$ as a function taking values in $L^{2}(\Omega_{M})$, convergence properties of convolutions, and the fact that $\tilde{\bd{q}} \in C(0, T; L^{2}(\Omega_{M}))$ which gives the convergence
\begin{equation*}
\max_{t \in [T_{m}, T]} ||\tilde{\bd{q}}(T) - \tilde{\bd{q}}(t)||_{L^{2}(\Omega_{M})} \to 0, \qquad \text{ as } m \to \infty.
\end{equation*}
\end{proof}

\begin{lemma}\label{qtildecontinuity}
For the function $\tilde{\bd{q}}$ defined in \eqref{tildeq}, there exists a measure zero subset $S$ of $[0, T]$ such that 
\begin{equation*}
\lim_{t \to 0, t \in [0, T] \cap S^{c}} \int_{\Omega_{f,\delta}(t)} \bd{u}_{\delta}(t) \cdot \tilde{\bd{q}}(t) = \int_{\Omega_{f, \delta}(0)} \bd{u}_{0} \cdot \tilde{\bd{q}}(0). 
\end{equation*}

\begin{proof}
Note that because $\partial_{t}\tilde{\bd{q}}$ is not necessarily in $H^{1}(\Omega_{f, \delta}(t))$, $\tilde{\bd{q}}$ is not a valid test function. Thus, we use the sequence $\tilde{\bd{q}}_{m} \in C^{1}(0, T; \mathcal{V}_{f, \delta}(t))$ from Lemma \ref{qtilde}, which satisfies
\begin{equation*}
\max_{0 \le t \le T} ||\tilde{\bd{q}} - \tilde{\bd{q}}_{m}||_{L^{2}(\Omega_{f, \delta}(t))} \to 0, \qquad \text{ as } m \to \infty.
\end{equation*}

We can then apply Lemma \ref{weakcontinuity} to each of the test functions $\tilde{\bd{q}}_{m}$, to deduce that there exists a measure zero subset $S_{m}$ of $[0, T]$ such that
\begin{equation*}
\lim_{t \to 0, t \in [0, T] \cap S_{m}^{c}} \int_{\Omega_{f,\delta}(t)} \bd{u}_{\delta}(t) \cdot \tilde{\bd{q}}_{m}(t) = \int_{\Omega_{f, \delta}(0)} \bd{u}_{0} \cdot \tilde{\bd{q}}_{m}(0). 
\end{equation*}
In addition, by uniform boundedness, $\bd{u}_{\delta} \in L^{\infty}(0, T; L^{2}(\Omega_{f, \delta}(t)))$, and hence, there exists a measure zero subset $S_{0}$ of $[0, T]$, and a positive constant $C$ such that $||\bd{u}_{0}||_{L^{2}(\Omega_{f, \delta}(0))} \le C$, and 
\begin{equation}\label{S0}
||\bd{u}_{\delta}(t)||_{L^{2}(\Omega_{f, \delta}(t))} \le C, \qquad \text{ for all } t \in S_{0}^{c}.
\end{equation}
Define $S = S_{0} \cup \bigcup_{m \ge 1} S_{m}$, which is also a measure zero subset of $[0, T]$. Then, for each $m$, 
\begin{equation}\label{Sm}
\lim_{t \to 0, t \in [0, T] \cap S^{c}} \int_{\Omega_{f,\delta}(t)} \bd{u}_{\delta}(t) \cdot \tilde{\bd{q}}_{m}(t) = \int_{\Omega_{f, \delta}(0)} \bd{u}_{0} \cdot \tilde{\bd{q}}_{m}(0). 
\end{equation}
By passing to the limit in $m$, we claim that in addition,
\begin{equation*}
\lim_{t \to 0, t \in [0, T] \cap S^{c}} \int_{\Omega_{f,\delta}(t)} \bd{u}_{\delta}(t) \cdot \tilde{\bd{q}}(t) = \int_{\Omega_{f, \delta}(0)} \bd{u}_{0} \cdot \tilde{\bd{q}}(0). 
\end{equation*}

To see this, consider $\epsilon > 0$. We claim that there exists $h > 0$ sufficiently small such that for all $t \in (0, h) \cap S^{c}$, 
\begin{equation*}
\left|\int_{\Omega_{f,\delta}(t)} \bd{u}_{\delta}(t) \cdot \tilde{\bd{q}}(t) - \int_{\Omega_{f, \delta}(0)} \bd{u}_{0} \cdot \tilde{\bd{q}}(0)\right| < \epsilon.
\end{equation*}
We can choose $M$ sufficiently large such that $\displaystyle \max_{0 \le t \le T} ||\tilde{\bd{q}} - \tilde{\bd{q}}_{M}||_{L^{2}(\Omega_{f, \delta}(t))} < \frac{\epsilon}{3C}$, where $C$ is defined by \eqref{S0}. Therefore, for all $t \in [0, T] \cap S^{c}$,
\begin{equation*}
\left|\int_{\Omega_{f,\delta}(t)} \bd{u}_{\delta}(t) \cdot \tilde{\bd{q}}(t) - \int_{\Omega_{f,\delta}(t)} \bd{u}_{\delta}(t) \cdot \tilde{\bd{q}}_{M}(t)\right| < \frac{\epsilon}{3}.
\end{equation*}
In addition,
\begin{equation*}
\left|\int_{\Omega_{f, \delta}(0)} \bd{u}_{0} \cdot \tilde{\bd{q}}(0) - \int_{\Omega_{f, \delta}(0)} \bd{u}_{0} \cdot \tilde{\bd{q}}_{M}(0)\right| < \frac{\epsilon}{3}.
\end{equation*}
By applying \eqref{Sm} with $m = M$, we can choose $h > 0$ sufficiently small such that for all $t \in (0, h) \cap S^{c}$,
\begin{equation*}
\left|\int_{\Omega_{f,\delta}(t)} \bd{u}_{\delta}(t) \cdot \tilde{\bd{q}}_{M}(t) - \int_{\Omega_{f, \delta}(0)} \bd{u}_{0} \cdot \tilde{\bd{q}}_{M}(0)\right| < \frac{\epsilon}{3}.
\end{equation*}
Thus, by applying the triangle inequality, we have that for all $t \in (0, h) \cap S^{c}$,
\begin{equation*}
\left|\int_{\Omega_{f,\delta}(t)} \bd{u}_{\delta}(t) \cdot \tilde{\bd{q}}(t) - \int_{\Omega_{f, \delta}(0)} \bd{u}_{0} \cdot \tilde{\bd{q}}(0)\right| < \epsilon,
\end{equation*}
which establishes the desired result. 

\end{proof}

\end{lemma}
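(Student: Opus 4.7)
The strategy is a density/approximation argument reducing the claim to Lemma \ref{weakcontinuity}. The obstacle is that the explicit function $\tilde{\bd{q}}$ defined in \eqref{tildeq} is a pull-back of $\bd{u}_0$ through the Inoue--Wakimoto-type mapping $K_\delta$, so it is divergence-free at each time and lies in $L^\infty(0,T;H^1(\Omega_{f,\delta}(t)))$, but its time derivative need not lie in $H^1(\Omega_{f,\delta}(t))$ as required by the admissible test space $C_c^{1}([0,T);V_{f,\delta}(t))$ of Definition \ref{regularizedmoving}. Hence $\tilde{\bd{q}}$ itself cannot be plugged into Lemma \ref{weakcontinuity} directly.

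The plan is the following. First, invoke Lemma \ref{qtilde} to obtain a sequence $\tilde{\bd{q}}_m \in C_c^{1}(0,T;V_{f,\delta}(t))$ with
\[
\max_{t\in[0,T]} \bigl\|\tilde{\bd{q}}(t) - \tilde{\bd{q}}_m(t)\bigr\|_{L^2(\Omega_{f,\delta}(t))} \longrightarrow 0 \quad \text{as } m\to\infty.
\]
Second, for each $m$, apply Lemma \ref{weakcontinuity} with test function $\bd{q} = \tilde{\bd{q}}_m$ to obtain a null set $S_m\subset[0,T]$ such that
\[
\lim_{t\to 0,\, t\in S_m^c}\int_{\Omega_{f,\delta}(t)} \bd{u}_\delta(t)\cdot\tilde{\bd{q}}_m(t)
= \int_{\Omega_{f,\delta}(0)} \bd{u}_0\cdot \tilde{\bd{q}}_m(0).
\]
Third, by the energy estimate of Proposition \ref{energyestimate}, $\bd{u}_\delta \in L^\infty(0,T;L^2(\Omega_{f,\delta}(t)))$, so there is a null set $S_0$ and a constant $C>0$ with $\|\bd{u}_\delta(t)\|_{L^2(\Omega_{f,\delta}(t))} \le C$ for every $t\in S_0^c$ and $\|\bd{u}_0\|_{L^2(\Omega_{f,\delta}(0))} \le C$.

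Finally, set $S := S_0 \cup \bigcup_{m\ge 1} S_m$, which is again a Lebesgue null subset of $[0,T]$. Given $\epsilon>0$, pick $M$ so large that $\max_t\|\tilde{\bd{q}}-\tilde{\bd{q}}_M\|_{L^2(\Omega_{f,\delta}(t))} < \epsilon/(3C)$; Cauchy--Schwarz together with the uniform bound on $\bd{u}_\delta$ then controls the two ``tail'' differences
\[
\Bigl|\!\int_{\Omega_{f,\delta}(t)} \bd{u}_\delta(t)\cdot(\tilde{\bd{q}}(t) - \tilde{\bd{q}}_M(t))\Bigr| < \tfrac{\epsilon}{3},\qquad
\Bigl|\!\int_{\Omega_{f,\delta}(0)} \bd{u}_0\cdot(\tilde{\bd{q}}(0) - \tilde{\bd{q}}_M(0))\Bigr| < \tfrac{\epsilon}{3},
\]
uniformly in $t\in S^c$. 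The middle difference is $<\epsilon/3$ for $t$ sufficiently close to $0$ in $S^c$ by the $m=M$ instance of Lemma \ref{weakcontinuity}, so a triangle inequality yields the claim. The argument is essentially routine modulo the two preparatory lemmas; the only subtle point to verify is that the convergence in Lemma \ref{qtilde} is \emph{uniform} in $t$, which is exactly what allows the single constant $C$ from the energy bound to absorb the approximation error independently of how small $t$ is taken.
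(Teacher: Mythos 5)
Your proposal is correct and matches the paper's own proof essentially step for step: approximation by the sequence $\tilde{\bd{q}}_m$ from Lemma \ref{qtilde}, application of Lemma \ref{weakcontinuity} to each $\tilde{\bd{q}}_m$ to obtain null sets $S_m$, the uniform $L^{\infty}(0,T;L^{2})$ bound furnishing $S_0$ and the constant $C$, the union $S = S_0 \cup \bigcup_m S_m$, and the concluding $\epsilon/3$ triangle inequality with $M$ chosen so the uniform-in-$t$ approximation error is below $\epsilon/(3C)$. No further comment is needed.
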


We can now prove the final result of this appendix. We recall the definition of $\widehat{\bd{u}}$ from \eqref{uhat}.

\begin{lemma}\label{alphaconvolution}
In the limit as $\nu \to 0$ we have the following convergence results:
\begin{equation*}
\int_{\Omega_{f, \delta}(0)} \widehat{\bd{u}}(0) \cdot (\bd{u}_{\delta})_{\nu}(0) \to \int_{\Omega_{f, \delta}(0)} |\bd{u}_{0}|^{2}, \quad \text{ and } \quad \int_{\Omega_{f,\delta}(t)} \widehat{\bd{u}}(t) \cdot (\bd{u}_{\delta})_{\nu}(t) \to \int_{\Omega_{f,\delta}(t)} \widehat{\bd{u}}(t) \cdot \bd{u}_{\delta}(t),
\end{equation*}
for almost all points $t \in (0, T]$. 
\end{lemma}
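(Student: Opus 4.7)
The proof splits naturally into two parts, the second (interior time) being routine and the first (at $t=0$) being the genuine difficulty. For the interior limit at $t\in(0,T]$, the plan is to apply Lemma~\ref{alphaconv} with $\omega=\omega_{\delta}$ and $\bd{u}=\bd{u}_{\delta}$. The hypotheses are met since $\bd{u}_{\delta}\in L^{\infty}(0,T;L^{2}(\Omega_{f,\delta}(t)))\cap L^{2}(0,T;H^{1}(\Omega_{f,\delta}(t)))$ by the energy estimate in Proposition~\ref{energyestimate}, $\omega_{\delta}\in L^{\infty}(0,T;H_{0}^{2}(\Gamma))\cap W^{1,\infty}(0,T;L^{2}(\Gamma))$ by construction, and $\min_{t,x}(R+\omega_{\delta})>0$ on the local time interval. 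Lemma~\ref{alphaconv} then yields $(\bd{u}_{\delta})_{\nu}(t)\to\bd{u}_{\delta}(t)$ strongly in $L^{2}(\Omega_{f,\delta}(t))$ for a.e.\ $t\in(0,T]$. Since $\widehat{\bd{u}}(t)\in L^{2}(\Omega_{f,\delta}(t))$ (the classical solution is smooth and $J_{\delta}^{-1}$, $\gamma_{\delta}$ are pointwise bounded), the Cauchy--Schwarz inequality closes out the second convergence.

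For the limit at $t=0$, I first observe that $\widehat{\bd{u}}(0)=\bd{u}_{0}$. Indeed, both $\omega$ and $\omega_{\delta}$ share the initial datum $\omega_{0}$, so $\gamma_{\delta}(0,x)=1$, $\Omega_{f,\delta}(0)=\Omega_{f}(0)$, and $J_{\delta}(0)=I$, making $\widehat{\,\cdot\,}$ reduce to the identity at $t=0$. Similarly, the test function $\tilde{\bd{q}}$ constructed in Lemma~\ref{qtilde} from initial datum $\bd{u}_{0}$ satisfies $\tilde{\bd{q}}(0)=\bd{u}_{0}$ since $K_{\delta}(0,0,\cdot,\cdot)=I$. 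Thus the identity to prove becomes $\int_{\Omega_{f,\delta}(0)}\bd{u}_{0}\cdot(\bd{u}_{\delta})_{\nu}(0)\to\int_{\Omega_{f,\delta}(0)}|\bd{u}_{0}|^{2}$.

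The heart of the proof is a duality identity. By Fubini and the change of variables $\tilde{y}=\frac{R+\omega_{\delta}(s,x)}{R+\omega_{\delta}(0,x)}(R+y)-R$ (which sends $\Omega_{f,\delta}(0)$ to $\Omega_{f,\delta}(s)$ with Jacobian $\frac{R+\omega_{\delta}(s,x)}{R+\omega_{\delta}(0,x)}$), together with the evenness $j_{\nu}(-s)=j_{\nu}(s)$, I would establish
\begin{equation*}
\int_{\Omega_{f,\delta}(0)}\bd{u}_{0}\cdot(\bd{u}_{\delta})_{\nu}(0)\,dx\,dy
=\int_{\mathbb{R}} j_{\nu}(-s)\left[\int_{\Omega_{f,\delta}(s)}\tilde{\bd{q}}(s)\cdot\bd{u}_{\delta}(s)\,dx\,dy\right]ds.
\end{equation*}
The algebraic content of this identity is that the pair $(K_{2,\delta}(s,0,\cdot,\cdot),\ \text{stretching map})$ used to build the convolution $(\bd{u}_{\delta})_{\nu}$ is, upon integration against $\bd{u}_{0}$, \emph{dual} to the pair $(K_{\delta}(0,s,\cdot,\cdot),\ \text{stretching map})$ used in $\tilde{\bd{q}}(s)$ --- precisely the compatibility that makes both constructions preserve the divergence-free condition. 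Verifying it is a direct (if bookkeeping-heavy) computation with the explicit formulas \eqref{Kdelta} for $K$ and \eqref{tildeq} for $\tilde{\bd{q}}$.

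Once this duality identity is in hand, set $F(s):=\int_{\Omega_{f,\delta}(s)}\tilde{\bd{q}}(s)\cdot\bd{u}_{\delta}(s)\,dx\,dy$, extended evenly across $s=0$ in agreement with the extension used to define $(\bd{u}_{\delta})_{\nu}$. Lemma~\ref{qtildecontinuity} asserts that $F$ is right-continuous at $s=0$ on the complement of a measure-zero set, with $F(0)=\int_{\Omega_{f,\delta}(0)}|\bd{u}_{0}|^{2}$. Standard convolution theory for Lebesgue points of bounded, right-continuous functions then gives $\int_{\mathbb{R}} j_{\nu}(-s)F(s)\,ds\to F(0)$ as $\nu\to 0$, which is exactly the first limit claimed. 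The main obstacle, as flagged above, is the duality identity in the second step: it is structurally unsurprising, but getting the transposed matrices, the Jacobian factor from the vertical stretching, and the odd/even extensions of $\bd{u}_{\delta}$ (and hence of $\omega_{\delta}$) to line up correctly requires careful attention.
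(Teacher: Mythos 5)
Your treatment of the interior times $t\in(0,T]$ and your overall strategy at $t=0$ (Fubini, the vertical change of variables onto $\Omega_{f,\delta}(s)$, reduction to the functional $s\mapsto\int_{\Omega_{f,\delta}(s)}\tilde{\bd{q}}(s)\cdot\bd{u}_{\delta}(s)$, and Lemma~\ref{qtildecontinuity}) coincide with the paper's proof. However, the step you flag as ``bookkeeping-heavy but structurally unsurprising'' is where your argument breaks: the exact duality identity you claim is \emph{false}. Writing $\gamma(s,x)=\frac{R+\omega_{\delta}(0,x)}{R+\omega_{\delta}(s,x)}$, the change of variables and transposition produce, as the kernel acting on $\bd{u}_{0}$, the matrix
\begin{equation*}
\gamma\,K_{2,\delta}^{t}\Bigl(s,0,x,\gamma(R+y)-R\Bigr)
=\begin{pmatrix} 1 & (R+y)\,\partial_{x}\gamma \\ 0 & \gamma \end{pmatrix},
\end{equation*}
whereas $\tilde{\bd{q}}(s)$ is built from
\begin{equation*}
K_{\delta}(0,s,x,y)=\begin{pmatrix} \gamma & 0 \\ -(R+y)\,\partial_{x}\gamma & 1 \end{pmatrix}.
\end{equation*}
These two matrices have their diagonal entries interchanged and their off-diagonal entries in opposite corners; they agree only when $\omega_{\delta}(s,\cdot)=\omega_{\delta}(0,\cdot)$, i.e.\ only at $s=0$. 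Consequently your identity holds only modulo a remainder
\begin{equation*}
R_{\delta}(0,s,x,y)=\begin{pmatrix} 1-\gamma & (R+y)\,\partial_{x}\gamma \\ (R+y)\,\partial_{x}\gamma & \gamma-1 \end{pmatrix},
\end{equation*}
and the quantity $\int_{\Omega_{f,\delta}(0)}\bd{u}_{0}\cdot(\bd{u}_{\delta})_{\nu}(0)$ equals the term you wrote \emph{plus} $\int_{\mathbb{R}}\bigl(\int_{\Omega_{f,\delta}(s)}\bd{u}_{\delta}(s)\cdot R_{\delta}(0,s,\cdot)\,\bd{u}_{0}(\cdot)\bigr)j_{\nu}(-s)\,ds$.

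The gap is repairable, and this is precisely the extra step the paper carries out: since $\omega_{\delta}$ and $\partial_{x}\omega_{\delta}$ are continuous in time (uniformly in $x$), one has $\max_{(x,y)}|R_{\delta}(0,s,x,y)|\to 0$ as $s\to 0$, while the inner integral is bounded uniformly in $s$ by $\|\bd{u}_{\delta}\|_{L^{\infty}(0,T;L^{2})}$ and the sup norm of $\bd{u}_{0}$; hence the remainder contribution vanishes as $\nu\to 0$, and only the main term converges to $\int_{\Omega_{f,\delta}(0)}|\bd{u}_{0}|^{2}$ via Lemma~\ref{qtildecontinuity}. As written, though, your proposal rests on an identity that a direct computation with \eqref{Kdelta} and \eqref{tildeq} refutes, so the estimate of the remainder term is a missing, necessary ingredient.
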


\begin{proof}
The second convergence for almost all points $t \in (0, T]$ follows directly from Lemma \ref{alphaconv} and the fact that $\widehat{\bd{u}} \in L^{\infty}(0, T; L^{2}(\Omega_{f, \delta}(t)))$. 

So we just need to verify the convergence at $t = 0$. To do this, we note that $\widehat{\bd{u}}(0) = \bd{u}_{0}$. Hence,
\begin{align*}
\int_{\Omega_{f, \delta}(0)} &\widehat{\bd{u}}(0) \cdot (\bd{u}_{\delta})_{\nu}(0) \\
&= \int_{\Omega^{\omega_{0}}} \left(\int_{\mathbb{R}} K_{\delta}(s, 0, x, y) \bd{u}_{\delta}\left(s, x, \frac{R + \omega_{\delta}(s, x)}{R + \omega_{\delta}(0, x)}(R + y) - R\right) j_{\delta}(t - s) ds\right) \bd{u}_{0}(x, y) dx dy \\
&= \int_{\mathbb{R}} \left(\int_{\Omega^{\omega_{0}}} K_{\delta}(s, 0, x, y) \bd{u}_{\delta} \left(s, x, \frac{R + \omega_{\delta}(s, x)}{R + \omega_{\delta}(0, x)}(R + y) - R\right) \cdot \bd{u}_{0}(x, y) dx dy \right) j_{\nu}(t - s) ds \\
&=  \int_{\mathbb{R}} \left(\int_{\Omega_{f, \delta}(s)} \bd{u}_{\delta}(s, x, y) \cdot \frac{R + \omega_{\delta}(0, x)}{R + \omega_{\delta}(s, x)} K_{\delta}^{t}\left(s, 0, x, \frac{R + \omega_{\delta}(0, x)}{R + \omega_{\delta}(s, x)}(R + y) - R\right) \right. \\
&\quad \left. \cdot \bd{u}_{0}\left(x, \frac{R + \omega_{\delta}(0, x)}{R + \omega_{\delta}(s, x)}(R + y) - R\right) dx dy\right) j_{\nu}(t - s) ds.
\end{align*}

We compute 
\begin{align*}
&\frac{R + \omega_{\delta}(0, x)}{R + \omega_{\delta}(s, x)} \cdot K_{\delta}^{t}\left(s, 0, x, \frac{R + \omega_{\delta}(0, x)}{R + \omega_{\delta}(s, x)}(R + y) - R\right) = \begin{pmatrix}
1 & (R + y)\nabla\left(\frac{R + \omega_{\delta}(0, x)}{R + \omega_{\delta}(s, x)}\right) \\
0 & \frac{R + \omega_{\delta}(0, x)}{R + \omega_{\delta}(s, x)} \\
\end{pmatrix} 
\\
&= \begin{pmatrix}
\frac{R + \omega_{\delta}(0, x)}{R + \omega_{\delta}(s, x)} & 0 \\
-(R + y)\nabla\left(\frac{R + \omega_{\delta}(0, x)}{R + \omega_{\delta}(s, x)}\right) & 1 \\
\end{pmatrix} 
+
\begin{pmatrix}
1 - \frac{R + \omega_{\delta}(0, x)}{R + \omega_{\delta}(s, x)} & (R + y)\nabla\left(\frac{R + \omega_{\delta}(0, x)}{R + \omega_{\delta}(s, x)}\right) \\
(R + y)\nabla\left(\frac{R + \omega_{\delta}(0, x)}{R + \omega_{\delta}(s, x)}\right) & \frac{R + \omega_{\delta}(0, x)}{R + \omega_{\delta}(s, x)} - 1 \\
\end{pmatrix} \\
&:= K_{\delta}(0, s, x, y) + R_{\delta}(0, s, x, y).
\end{align*}

Hence,
\begin{align*}
&\int_{\Omega_{f, \delta}(0)} \widehat{\bd{u}}(0) \cdot (\bd{u}_{\delta})_{\nu}(0) \\
&= \int_{\mathbb{R}} \left(\int_{\Omega_{f, \delta}(s)} \bd{u}_{\delta} (s, x, y) \cdot K_{\delta}(0, s, x, y) \bd{u}_{0}\left(x, \frac{R + \omega_{\delta}(0, x)}{R + \omega_{\delta}(s, x)}(R + y) - R\right) dx dy \right) j_{\nu}(t - s) ds \\
&+ \int_{\mathbb{R}} \left(\int_{\Omega_{f, \delta}(s)} \bd{u}_{\delta} (s, x, y) \cdot R_{\delta}(0, s, x, y) \bd{u}_{0}\left(x, \frac{R + \omega_{\delta}(0, x)}{R + \omega_{\delta}(s, x)}(R + y) - R\right) dx dy \right) j_{\nu}(t - s) ds = I_{K, \delta} + I_{R, \delta}. 
\end{align*}
Note that
\begin{equation*}
I_{K, \delta} = \int_{\mathbb{R}} \left(\int_{\Omega_{f, \delta}(s)} \bd{u}_{\delta}(s, x, y) \cdot \tilde{\bd{q}}(s, x, y) dx dy \right) j_{\nu}(t - s) ds
\end{equation*}
where $\tilde{\bd{q}}$ is defined by \eqref{tildeq}. Since $\bd{u}_{\delta}(s) = \bd{u}_{\delta}(-s)$ so that $\omega_{\delta}(s) = \omega_{\delta}(-s)$ for $s \le 0$ 
(see the extension procedure), we conclude by Lemma \ref{qtildecontinuity} that 
\begin{equation*}
I_{K, \delta} \to \int_{\Omega_{f, \delta}(0)} \bd{u}_{0} \cdot \tilde{\bd{q}}(0) = \int_{\Omega_{f, \delta}(0)} |\bd{u}_{0}|^{2}, \qquad \text{ as } \nu \to 0. 
\end{equation*}

So it suffices to show that $I_{R, \delta} \to 0$ as $\nu \to 0$. This follows from the fact that $|R_{\delta}| \to 0$ uniformly as $s \to 0$. In particular,
\begin{equation*}
\int_{\Omega_{f, \delta}(s)} \left|\bd{u}_{\delta}(s, x, y) \cdot \bd{u}_{0}\left(x, \frac{R + \omega_{\delta}(0, x)}{R + \omega_{\delta}(s, x)}(R + y) - R\right)\right| dx dy \le C, \qquad \text{ for almost all } s \in [0, T],
\end{equation*}
by the boundedness of $\bd{u}_{\delta} \in L^{\infty}(0, T; L^{2}(\Omega_{f, \delta}(t))$ and the fact that $\bd{u}_{0}$ is uniformly bounded. In addition, by the continuity properties of $\omega_{\delta}$ in time, we have that 
\begin{equation*}
\max_{(x, y) \in \overline{\Omega_{f, \delta}(s)}} |R_{\delta}(0, s, x, y)| \to 0, \qquad \text{ as } s \to 0,
\end{equation*}
which implies that $I_{R, \delta} \to 0$ as $\nu \to 0$. This completes the proof. 

We will use this result in the next section to estimate the first term $T_1$, see \eqref{sumT} in the Gronwall's estimate.
\subsection{Gronwall's terms estimates}\label{appendix2}
In this appendix  we provide details of the derivation of the terms appearing in \eqref{sumT} and the calculations
providing the desired estimates of the terms in \eqref{sumT} used to prove Gronwall's estimate in Section~\ref{GronwallSection}. 

\vskip 0.1in
\noindent
{\bf{Term T1.}} To derive term $T_1$, defined in \eqref{T1}, we first 
multiply the weak formulation \eqref{weaknoregularization} for $\bd{u}$ with the test function $\bd{v} = \bd{u} - (\widecheck{\bd{u}}_{\delta})_{\nu}$ 
to obtain the terms:
{{
\begin{align*}
T_{1, 1} &= -\int_{0}^{t} \int_{\Omega_{f}(s)} \bd{u} \cdot \partial_{t} \left[\bd{u} - (\widecheck{\bd{u}}_{\delta})_{\nu}\right] - \frac{1}{2} \int_{0}^{t} \int_{\Gamma(s)} (\bd{\xi} \cdot \bd{n}) \bd{u} \cdot [\bd{u} - (\widecheck{\bd{u}}_{\delta})_{\nu}] \\
&+ \int_{\Omega_{f}(t)} \bd{u}(t) \cdot [\bd{u} - (\widecheck{\bd{u}}_{\delta})_{\nu}](t) - \int_{\Omega_{f}(0)} \bd{u}(0) \cdot [\bd{u} - (\widecheck{\bd{u}}_{\delta})_{\nu}](0),
\end{align*}
}}
where $\Omega_{f}(0)$ is the fluid domain corresponding to the initial structure displacement $\omega_{0}$. We note that $\bd{u}$ is smooth in time and $(\widecheck{\bd{u}}_{\delta})_{\nu}$  is differentiable in time as a result of the time convolution. Thus, by the Reynold's transport theorem, 
{{
\begin{equation*}
T_{1, 1} = \int_{0}^{t} \int_{\Omega_{f}(s)} \partial_{t}\bd{u} \cdot [\bd{u} - (\widecheck{\bd{u}}_{\delta})_{\nu}] + \frac{1}{2} \int_{0}^{t} \int_{\Gamma(s)} (\bd{\xi} \cdot \bd{n}) \bd{u} \cdot [\bd{u} - (\widecheck{\bd{u}}_{\delta})_{\nu}].
\end{equation*}
}}
Because $\bd{u}$ is smooth and by the weak convergence properties of $(\widecheck{\bd{u}}_{\delta})_{\nu}$ in Proposition \ref{alphaconvprop}, 
{{
\begin{equation*}
T_{1, 1} = \int_{0}^{t} \int_{\Omega_{f}(s)} \partial_{t}\bd{u} \cdot [\bd{u} - \widecheck{\bd{u}}_{\delta}] + \frac{1}{2} \int_{0}^{t} \int_{\Gamma(s)} (\bd{\xi} \cdot \bd{n}) \bd{u} \cdot [\bd{u} - \widecheck{\bd{u}}_{\delta}] + K_{1, 1, \nu},
\end{equation*}
}}
where $K_{1, 1, \nu} \to 0$ as $\nu \to 0$. Using estimates as found in \cite{WeakStrongFSI}, we can transfer the first integral from {{$\Omega_{1}(s)$ to $\Omega_{f,\delta}(s)$}} at the cost of an additional term, so that 
{{
\begin{equation*}
T_{1, 1} = \int_{0}^{t} \int_{\Omega_{f,\delta}(s)} \partial_{t} \widehat{\bd{u}} \cdot (\widehat{\bd{u}} - \bd{u}_{\delta}) + \frac{1}{2} \int_{0}^{t} \int_{\Gamma(s)} (\bd{\xi} \cdot \bd{n}) \bd{u} \cdot (\bd{u} - \widecheck{\bd{u}}_{\delta}) + {{\tilde{R}_{1}}} + K_{1, 1, \nu},
\end{equation*}
}}
where
{{
\begin{multline*}
{{|\tilde{R}_{1}|}} \le \epsilon \int_{0}^{t} ||\widehat{\bd{u}} - \bd{u}_{\delta}||^{2}_{H^{1}(\Omega_{f,\delta}(s))} \\
+ C(\epsilon) \left(\int_{0}^{t} ||\omega - \omega_{\delta}||^{2}_{H^{2}(\Gamma)} + \int_{0}^{t} ||\partial_{t}\omega - \partial_{t}\omega_{\delta}||_{L^{2}(\Gamma)}^{2} + \int_{0}^{t} ||\widehat{\bd{u}} - \bd{u}_{\delta}||_{L^{2}(\Omega_{f,\delta}(s))}^{2}\right).
\end{multline*}
}}
Thus, by using Proposition \ref{alphaconvprop} again,
{{
\begin{equation}\label{T1-1}
T_{1, 1} = \int_{0}^{t} \int_{\Omega_{f,\delta}(s)} \partial_{t} \widehat{\bd{u}} \cdot (\widehat{\bd{u}} - (\bd{u}_{\delta})_{\nu}) + \frac{1}{2} \int_{0}^{t} \int_{\Gamma(s)} (\bd{\xi} \cdot \bd{n}) \bd{u} \cdot (\bd{u} - (\widecheck{\bd{u}}_{\delta})_{\nu}) + {{\tilde{R}_{1} + K_{1, 1, \nu}}},
\end{equation}
}}
where {{$K_{1, 1, \nu} \to 0$}} as $\nu \to 0$. 

Next, we test the regularized weak formulation for $\bd{u}_{\delta}$ with $\widehat{\bd{u}}$ and obtain the following terms:
{{
\begin{align*}
T_{1, 2} = -\int_{0}^{t} \int_{\Omega_{f,\delta}(s)} \bd{u}_{\delta} \cdot \partial_{t} \widehat{\bd{u}} - \frac{1}{2} \int_{0}^{t} \int_{\Gamma_{\delta}(s)} (\bd{\xi}_{\delta} \cdot \bd{n}_{\delta}) \bd{u}_{\delta} \cdot \widehat{\bd{u}} 
+ \int_{\Omega_{f,\delta}(t)} \bd{u}_{\delta}(t) \cdot \widehat{\bd{u}}(t) - \int_{\Omega_{f}(0)} \bd{u}_{\delta}(0) \cdot \widehat{\bd{u}}(0). 
\end{align*}
}}
We want to integrate by parts in time, but $\bd{u}_{\delta}$ is not necessarily smooth in time. Thus, we replace $\bd{u}_{\delta}$ by its time regularization $(\bd{u}_{\delta})_{\nu}$ at the cost of a term $K_{1, 2, \nu}$ which goes to zero as $\nu \to 0$ by Proposition \ref{alphaconvprop}. Combining this with the Reynold's transport theorem, we get:
{{
\begin{equation}\label{T1-2}
T_{1, 2} = \int_{0}^{t} \int_{\Omega_{f,\delta}(s)} \partial_{t}\left[(\bd{u}_{\delta})_{\nu}\right] \cdot \widehat{\bd{u}} + \frac{1}{2} \int_{0}^{t} \int_{\Gamma_{\delta}(s)} (\bd{\xi}_{\delta} \cdot \bd{n}_{\delta}) (\bd{u}_{\delta})_{\nu} \cdot \widehat{\bd{u}} + K_{1, 2, \nu},
\end{equation}
}}
where $K_{1, 2, \nu} \to 0$ as $\nu \to 0$. 

Now, from the energy inequality, we obtain the terms
\begin{equation}\label{T1-3}
T_{1, 3} = \frac{1}{2} \int_{\Omega_{f,\delta}(t)} |\bd{u}_{\delta}(t)|^{2} - \frac{1}{2} \int_{\Omega_{f,\delta}(0)} |\bd{u}_{\delta}(0)|^{2}.
\end{equation}

Using the Reynold's transport theorem, the total contribution $T_{1} = T_{1, 1} - T_{1, 2} + T_{1, 3}$ is
{{
\begin{align*}
T_{1} &= \frac{1}{2}\int_{\Omega_{f,\delta}(t)} |\widehat{\bd{u}}(t)|^{2} - \frac{1}{2}\int_{\Omega_{f,\delta}(0)} |\widehat{\bd{u}}(0)|^{2} - \int_{\Omega_{f,\delta}(t)} (\widehat{\bd{u}} \cdot (\bd{u}_{\delta})_{\nu})(t) + \int_{\Omega_{f,\delta}(0)} (\widehat{\bd{u}} \cdot (\bd{u}_{\delta})_{\nu})(0) \\
&+ \frac{1}{2} \int_{\Omega_{f,\delta}(t)} |\bd{u}_{\delta}(t)|^{2} - \frac{1}{2} \int_{\Omega_{f,\delta}(0)} |\bd{u}_{\delta}(0)|^{2} - \frac{1}{2} \int_{0}^{t} \int_{\Gamma_{\delta}(s)} (\bd{\xi}_{\delta} \cdot \bd{n}_{\delta}) \widehat{\bd{u}} \cdot (\widehat{\bd{u}} - (\bd{u}_{\delta})_{\nu}) \\
&+ \frac{1}{2} \int_{0}^{t} \int_{\Gamma(s)} (\bd{\xi} \cdot \bd{n}) \bd{u} \cdot (\bd{u} - (\widecheck{\bd{u}}_{\delta})_{\nu}) + {{\tilde{R}_{1} + K_{1, 1, \nu}}} + K_{1, 2, \nu}. 
\end{align*}
}}
By Proposition \ref{alphaconvprop}, $(\bd{u}_{\delta})_{\nu}$ and $(\widecheck{\bd{u}}_{\delta})_{\nu}$ converge weakly to $\bd{u}_{\delta}$ and $\widecheck{\bd{u}}_{\delta}$ respectively, weakly in {{$L^{2}(0, T, W^{1, p}(\Omega_{f,\delta}(t)))$ and $L^{2}(0, T, W^{1, p}(\Omega_{f, 1}(t)))$}} for all $p \in [1, 2)$. 
Furthermore,  by Lemma \ref{alphaconvolution} proved in the appendix above, we have that
\begin{equation}\label{appendixconv}
\int_{\Omega_{f,\delta}(0)} (\widehat{\bd{u}} \cdot (\bd{u}_{\delta})_{\nu})(0) \to \int_{\Omega_{f,\delta}(0)} (\widehat{\bd{u}} \cdot \bd{u}_{\delta})(0), \quad \int_{\Omega_{f,\delta}(t)} (\widehat{\bd{u}} \cdot (\bd{u}_{\delta})_{\nu})(t) \to \int_{\Omega_{f,\delta}(t)} (\widehat{\bd{u}} \cdot \bd{u}_{\delta})(t).
\end{equation}
Thus, taking the limit as $\nu \to 0$, the contribution of this term is now
{{
\begin{align*}
T_{1} &= \frac{1}{2} \int_{\Omega_{f,\delta}(s)} |(\widehat{\bd{u}} - \bd{u}_{\delta})(t)|^{2} - \frac{1}{2} \int_{\Omega_{f,\delta}(0)} |(\widehat{\bd{u}} - \bd{u}_{\delta})(0)|^{2} \\
&- \frac{1}{2} \int_{0}^{t} \int_{\Gamma_{\delta}(s)} (\bd{\xi}_{\delta} \cdot \bd{n}_{\delta}) \widehat{\bd{u}} \cdot (\widehat{\bd{u}} - \bd{u}_{\delta}) + \frac{1}{2} \int_{0}^{t} \int_{\Gamma(s)} (\bd{\xi} \cdot \bd{n}) \bd{u} \cdot (\bd{u} - \widecheck{\bd{u}}_{\delta}) + {{\tilde{R}_{1}}}.
\end{align*}
}}
Since $\widehat{\bd{u}}(0) = \bd{u}_{\delta}(0) = \bd{u}_{0}$, we obtain after some standard estimates that
\begin{equation*}
T_{1} = \frac{1}{2} \int_{\Omega_{f,\delta}(t)} |(\widehat{\bd{u}} - \bd{u}_{\delta})(t)|^{2} + {{R_{1}}},
\end{equation*}
where
{{
\begin{align*}
{{|R_{1}|}} &\le \epsilon \int_{0}^{t} ||\widehat{\bd{u}} - \bd{u}_{\delta}||^{2}_{H^{1}(\Omega_{f,\delta}(s))} \\
&+ C(\epsilon) \left(\int_{0}^{t} ||\omega - \omega_{\delta}||^{2}_{H^{2}(\Gamma)} + \int_{0}^{t} ||\partial_{t}\omega - \partial_{t}\omega_{\delta}||_{L^{2}(\Gamma)}^{2} + \int_{0}^{t} ||\widehat{\bd{u}} - \bd{u}_{\delta}||_{L^{2}(\Omega_{f,\delta}(s))}^{2}\right).
\end{align*}
}}
This completes the calculations associated with term $T_1$.

\vskip 0.1in
\noindent
{\bf{Term T2.}} To estimate term $T_2$, defined in \eqref{T2} above, we notice that 
since $(\widecheck{\bd{u}}_{\delta})_{\nu}$ converges weakly to $\widecheck{\bd{u}}_{\delta}$ in {{$L^{2}(0, T; W^{1, p}(\Omega_{f, \delta}(t)))$}} for $p \in [1, 2)$ by Proposition \ref{alphaconvprop}, and because $\bd{u}$ is smooth, as $\nu \to 0$, we have that $T_{2}$ converges to
{{
\begin{align*}
T_{2} &:= \frac{1}{2} \int_{0}^{t} \int_{\Omega_{f}(s)} ((\bd{u} \cdot \nabla) \bd{u}) \cdot (\bd{u} - \widecheck{\bd{u}}_{\delta}) - \frac{1}{2} \int_{0}^{t} \int_{\Omega_{f}(s)} ((\bd{u} \cdot \nabla) (\bd{u} - \widecheck{\bd{u}}_{\delta})) \cdot \bd{u} \\
&- \frac{1}{2} \int_{0}^{t} \int_{\Omega_{f,\delta}(s)} ((\bd{u}_{\delta} \cdot \nabla) \bd{u}_{\delta}) \cdot (\widehat{\bd{u}} - \bd{u}_{\delta}) + \frac{1}{2} \int_{0}^{t} \int_{\Omega_{f,\delta}(s)} ((\bd{u}_{\delta} \cdot \nabla) (\widehat{\bd{u}} - \bd{u}_{\delta})) \cdot \bd{u}_{\delta}.
\end{align*}
}}
We note that the quantity
{{$\displaystyle 
\frac{1}{2} \int_{0}^{t} \int_{\Omega_{f,\delta}(s)} ((\bd{u}_{\delta} \cdot \nabla) \bd{u}_{\delta}) \cdot \bd{u}_{\delta},
$}}
is well-defined because {{$\bd{u}_{\delta} \in L^{\infty}(0, T; L^{2}(\Omega_{f,\delta}(t))) \cap L^{2}(0, T; H^{1}(\Omega_{f,\delta}(t)))$}}, which by interpolation is in {{$L^{4}(0, T; H^{1/2}(\Omega_{f,\delta}(t)))$, and hence by Sobolev inequalities embeds into $L^{4}(0, T; L^{4}(\Omega_{f,\delta}(t)))$}}. 

We want to transfer the integrals
{{
\begin{equation}\label{term2integrals}
\int_{0}^{t} \int_{\Omega_{f}(s)} ((\bd{u} \cdot \nabla) \bd{u}) \cdot (\bd{u} - \widecheck{\bd{u}}_{\delta}), \qquad \int_{0}^{t} \int_{\Omega_{f}(s)} ((\bd{u} \cdot \nabla) (\bd{u} - \widecheck{\bd{u}}_{\delta})) \cdot \bd{u},
\end{equation}
}}
to integrals on {{$\Omega_{f,\delta}(s)$}} by using the map {{$\psi_{\delta}(s, \cdot): \Omega_{f,\delta}(s) \to \Omega_{f}(s)$}} defined by \eqref{psi}. We use 
\begin{equation*}
\widehat{\bd{u}} = \gamma_{\delta} J_{\delta}^{-1} \cdot (\bd{u} \circ \psi_{\delta}), \qquad \widehat{\bd{u}} - \bd{u}_{\delta} = \gamma_{\delta} J_{\delta}^{-1} \cdot ((\bd{u} - \widecheck{\bd{u}}_{\delta}) \circ \psi_{\delta}),
\end{equation*}
where we recall the definitions of the appropriate terms from  \eqref{psi}, \eqref{J}, \eqref{uhat}, and \eqref{ucheck}.

Following arguments found in \cite{WeakStrongFSI}, we obtain the following estimates. We have, using \eqref{gradidentity}, that
{{
\begin{align}\label{term2transfer1}
&\int_{0}^{t} \int_{\Omega_{f}(s)} ((\bd{u} \cdot \nabla) \bd{u}) \cdot (\bd{u} - \widecheck{\bd{u}}_{\delta}) 
= \int_{0}^{t} \int_{\Omega_{f,\delta}(s)} \gamma_{\delta} [(\nabla (\bd{u} \circ \psi_{\delta})) J_{\delta}^{-1} (\bd{u} \circ \psi_{\delta})] \cdot (\bd{u} - \widecheck{\bd{u}}_{\delta}) \circ \psi_{\delta} 
\nonumber \\
&= \int_{0}^{t} \int_{\Omega_{f,\delta}(s)} [(\nabla (\bd{u} \circ \psi_{\delta})) \widehat{\bd{u}}] \cdot [\gamma_{\delta}^{-1}J_{\delta}(\widehat{\bd{u}} - \bd{u}_{\delta})] 
\nonumber
\\
&= \int_{0}^{t} \int_{\Omega_{f,\delta}(s)} [(\nabla(\bd{u} \circ \psi_{\delta})) \widehat{\bd{u}}] \cdot (\widehat{\bd{u}} - \bd{u}_{\delta}) - \int_{0}^{t} \int_{\Omega_{f,\delta}(s)} [(\nabla(\bd{u} \circ \psi_{\delta})) \widehat{\bd{u}}] \cdot [(I - \gamma_{\delta}^{-1}J_{\delta})(\widehat{\bd{u}} - \bd{u}_{\delta})] 
\nonumber \\
&= \int_{0}^{t} \int_{\Omega_{f,\delta}(s)} ((\nabla \widehat{\bd{u}})\widehat{\bd{u}}) \cdot (\widehat{\bd{u}} - \bd{u}_{\delta}) + \int_{0}^{t} \int_{\Omega_{f,\delta}(s)} (\nabla ((I - \gamma_{\delta} J_{\delta}^{-1})(\bd{u} \circ \psi_{\delta})) \widehat{\bd{u}}) \cdot (\widehat{\bd{u}} - \bd{u}_{\delta}) 
\nonumber \\
&- \int_{0}^{t} \int_{\Omega_{f,\delta}(s)} [(\nabla(\bd{u} \circ \psi_{\delta})) \widehat{\bd{u}}] \cdot [(I - \gamma_{\delta}^{-1}J_{\delta})(\widehat{\bd{u}} - \bd{u}_{\delta})] 
\nonumber \\
&= \int_{0}^{t} \int_{\Omega_{f,\delta}(s)} [(\widehat{\bd{u}} \cdot \nabla) \widehat{\bd{u}}] \cdot (\widehat{\bd{u}} - \bd{u}_{\delta}) + {{R_{2, 1}}},
\end{align}
}}
where
{{
\begin{align*}
{{R_{2, 1}}} = \int_{0}^{t} \int_{\Omega_{f,\delta}(s)} (\nabla ((I - \gamma_{\delta} J_{\delta}^{-1})(\bd{u} \circ \psi_{\delta})) \widehat{\bd{u}}) \cdot (\widehat{\bd{u}} - \bd{u}_{\delta}) 
- \int_{0}^{t} \int_{\Omega_{f,\delta}(s)} [(\nabla(\bd{u} \circ \psi_{\delta})) \widehat{\bd{u}}] \cdot [(I - \gamma_{\delta}^{-1}J_{\delta})(\widehat{\bd{u}} - \bd{u}_{\delta})].
\end{align*}
}}
In the following estimates, we will repeatedly use the following inequalities, which hold for a constant $C$ that is independent of $\delta$:
\begin{equation*}
|\gamma_{\delta}^{-1}J_{\delta} - I| \le C(|\gamma_{\delta}^{-1} - 1| + |\nabla \gamma_{\delta}|) \le C||\omega - \omega_{\delta}||_{H^{2}(\Gamma)},
\end{equation*}
\begin{equation*}
|\gamma_{\delta} J_{\delta}^{-1} - I| \le C(|\gamma_{\delta} - 1| + |\nabla \gamma_{\delta}|) \le C||\omega - \omega_{\delta}||_{H^{2}(\Gamma)},
\end{equation*}
\begin{equation}\label{nablagammaJ}
|\nabla (\gamma_{\delta} J_{\delta}^{-1})| \le C(|\partial_{x}\gamma_{\delta}| + |\partial_{xx}\gamma_{\delta}|) \le C(||\omega - \omega_{\delta}||_{H^{2}(\Gamma)} + |\partial_{xx}(\omega - \omega_{\delta})|),
\end{equation}
so that 
\begin{equation}\label{gammaJinv}
||\nabla(\gamma_{\delta} J_{\delta}^{-1})||_{L^{2}(\Omega_{f,\delta}(t))} \le C||\omega - \omega_{\delta}||_{H^{2}(\Gamma)}.
\end{equation}
To obtain \eqref{nablagammaJ}, we estimate $|\partial_{xx}\gamma_{\delta}|$ by using the fact that $\omega$ is smooth so that $|\partial_{xx}\omega| \le C$ and a direct computation of $\partial_{xx} \gamma_{\delta}$.
Using these estimates, the Leibniz rule, and the smoothness of $\bd{u}$,  we get
{{
\begin{align*}
&\left|\int_{0}^{t} \int_{\Omega_{f,\delta}(s)} (\nabla ((I - \gamma_{\delta} J_{\delta}^{-1})(\bd{u} \circ \psi_{\delta})) \widehat{\bd{u}}) \cdot (\widehat{\bd{u}} - \bd{u}_{\delta}) \right| \\
&\le C \int_{0}^{t} ||\omega - \omega_{\delta}||_{H^{2}(\Gamma)} ||\widehat{\bd{u}} - \bd{u}_{\delta}||_{L^{2}(\Omega_{f,\delta}(s))} \le C\left(\int_{0}^{t} ||\omega - \omega_{\delta}||^{2}_{H^{2}(\Gamma)} + \int_{0}^{t} ||\widehat{\bd{u}} - \bd{u}_{\delta}||^{2}_{L^{2}(\Omega_{f,\delta}(s))}\right).
\end{align*}
}}
By using \eqref{gradidentity}, and  the fact that $|J_{\delta}| \le C$ is uniformly bounded, due to the fact that $|J_{\delta}| \le C(1 + ||\omega - \omega_{\delta}||_{H^{2}(\Gamma)}) \le C$ is uniformly bounded, we obtain a similar estimate:
{{
\begin{equation*}
\int_{0}^{t} \int_{\Omega_{f,\delta}(s)} [(\nabla(\bd{u} \circ \psi_{\delta})) \widehat{\bd{u}}] \cdot [(I - \gamma_{\delta}^{-1}J_{\delta})(\widehat{\bd{u}} - \bd{u}_{\delta})] \le C\left(\int_{0}^{t} ||\omega - \omega_{\delta}||^{2}_{H^{2}(\Gamma)} + \int_{0}^{t} ||\widehat{\bd{u}} - \bd{u}_{\delta}||^{2}_{L^{2}(\Omega_{f,\delta}(s))}\right).
\end{equation*}
}}
Thus, we obtain
{{
\begin{equation}\label{T2R1}
{{|R_{1}|}} \le C\left(\int_{0}^{t} ||\omega - \omega_{\delta}||^{2}_{H^{2}(\Gamma)} + \int_{0}^{t} ||\widehat{\bd{u}} - \bd{u}_{\delta}||^{2}_{L^{2}(\Omega_{f,\delta}(s))}\right).
\end{equation}
}}

We now focus on the second integral in \eqref{term2integrals}. By using \eqref{gradidentity} we obtain
{{
\begin{align}\label{term2transfer2}
&\int_{0}^{t} \int_{\Omega_{f}(s)} ((\bd{u} \cdot \nabla)(\bd{u} - \widecheck{\bd{u}}_{\delta})) \cdot \bd{u} = \int_{0}^{t} \int_{\Omega_{f,\delta}(s)} \gamma_{\delta} [(\nabla((\bd{u} - \widecheck{\bd{u}}_{\delta}) \circ \psi_{\delta})) J_{\delta}^{-1} (\bd{u} \circ \psi_{\delta})] \cdot (\bd{u} \circ \psi_{\delta}) 
\nonumber \\
&= \int_{0}^{t} \int_{\Omega_{f,\delta}(s)} [(\nabla((\bd{u} - \widecheck{\bd{u}}_{\delta}) \circ \psi_{\delta})) \widehat{\bd{u}}] \cdot (\gamma_{\delta}^{-1}J_{\delta}\widehat{\bd{u}}) 
\nonumber\\
&=  \int_{0}^{t} \int_{\Omega_{f,\delta}(s)} [(\nabla((\bd{u} - \widecheck{\bd{u}}_{\delta}) \circ \psi_{\delta})) \widehat{\bd{u}}] \cdot \widehat{\bd{u}} - \int_{0}^{t} \int_{\Omega_{f,\delta}(s)} [(\nabla((\bd{u} - \widecheck{\bd{u}}_{\delta}) \circ \psi_{\delta})) \widehat{\bd{u}}] \cdot [(I - \gamma_{\delta}^{-1}J_{\delta})\widehat{\bd{u}}] \nonumber\\
&= \int_{0}^{t} \int_{\Omega_{f,\delta}(s)} (\nabla (\widehat{\bd{u}} - \bd{u}_{\delta}) \widehat{\bd{u}}) \cdot \widehat{\bd{u}} + \int_{0}^{t} \int_{\Omega_{f,\delta}(s)} (\nabla [(I - \gamma_{\delta} J_{\delta}^{-1})((\bd{u} - \widecheck{\bd{u}}_{\delta}) \circ \psi_{\delta})] \widehat{\bd{u}}) \cdot \widehat{\bd{u}} 
\nonumber\\
&- \int_{0}^{t} \int_{\Omega_{f,\delta}(s)} [(\nabla((\bd{u} - \widecheck{\bd{u}}_{\delta}) \circ \psi_{\delta})) \widehat{\bd{u}}] \cdot [(I - \gamma_{\delta}^{-1}J_{\delta})\widehat{\bd{u}}] \nonumber \\
&= \int_{0}^{t} \int_{\Omega_{f,\delta}(s)} ((\widehat{\bd{u}} \cdot \nabla) (\widehat{\bd{u}} - \bd{u}_{\delta})) \cdot \widehat{\bd{u}} + {{R_{2, 2}}},
\end{align}
}}
where
{{
\begin{equation*}
R_{2, 2} := \int_{0}^{t} \int_{\Omega_{f,\delta}(s)} (\nabla [(I - \gamma_{\delta} J_{\delta}^{-1})((\bd{u} - \widecheck{\bd{u}}_{\delta}) \circ \psi_{\delta})] \widehat{\bd{u}}) \cdot \widehat{\bd{u}} - \int_{0}^{t} \int_{\Omega_{f,\delta}(s)} [(\nabla((\bd{u} - \widecheck{\bd{u}}_{\delta}) \circ \psi_{\delta})) \widehat{\bd{u}}] \cdot [(I - \gamma_{\delta}^{-1}J_{\delta})\widehat{\bd{u}}].
\end{equation*}
}}
To estimate {{$R_{2, 2}$}}, we will use the following inequalities:
\begin{align*}
|(\bd{u} - \widecheck{\bd{u}}_{\delta}) \circ \psi| &= |\gamma_{\delta}^{-1}J_{\delta} \cdot (\widehat{\bd{u}} - \bd{u}_{\delta})| 
\le C|\widehat{\bd{u}} - \bd{u}_{\delta}|,
\nonumber \\
|\nabla((\bd{u} - \widecheck{\bd{u}}_{\delta}) \circ \psi_{\delta})| 
&= |\nabla(\gamma_{\delta}^{-1}J_{\delta} \cdot (\widehat{\bd{u}} - \bd{u}_{\delta}))| \le |\nabla(\gamma_{\delta}^{-1}J_{\delta})| \cdot |\widehat{\bd{u}} - \bd{u}_{\delta}| + |\gamma_{\delta}^{-1}J_{\delta}| \cdot |\nabla(\widehat{\bd{u}} - \bd{u}_{\delta})| 
\nonumber \\
&\le C(|\nabla(\gamma_{\delta}^{-1}J_{\delta})| \cdot |\widehat{\bd{u}} - \bd{u}_{\delta}| + |\nabla(\widehat{\bd{u}} - \bd{u}_{\delta})|).
\end{align*}
From the fact that $\max\left(|I - \gamma_{\delta}^{-1}J_{\delta}|, |I - \gamma_{\delta} J_{\delta}^{-1}|\right) \le C\min\left(1, ||\omega - \omega_{\delta}||_{H^{2}(\Gamma)}\right)$, 
we obtain:
{{
\begin{align}\label{T2R2}
{{|R_{2, 2}|}} &\le C\left(\int_{0}^{t} \int_{\Omega_{f,\delta}(s)} |\nabla(\gamma_{\delta} J_{\delta}^{-1})| \cdot |(\bd{u} - \widecheck{\bd{u}}_{\delta}) \circ \psi_{\delta}| + \int_{0}^{t} \int_{\Omega_{f,\delta}(s)} |I - \gamma_{\delta} J_{\delta}^{-1}| \cdot |\nabla((\bd{u} - \widecheck{\bd{u}}_{\delta}) \circ \psi_{\delta})| \right.
\nonumber \\
&\left. + \int_{0}^{t} \int_{\Omega_{f,\delta}(s)} |I - \gamma_{\delta}^{-1}J_{\delta}| \cdot |\nabla((\bd{u} - \widecheck{\bd{u}}_{\delta}) \circ \psi_{\delta})|\right) 
\nonumber\\
&\le C\left(\int_{0}^{t} \int_{\Omega_{f,\delta}(s)} \left(|\nabla(\gamma_{\delta}J_{\delta}^{-1})| + |\nabla(\gamma_{\delta}^{-1}J_{\delta})|\right) \cdot |\widehat{\bd{u}} - \bd{u}_{\delta}| + \int_{0}^{t} \int_{\Omega_{f,\delta}(s)} ||\omega - \omega_{\delta}||_{H^{2}(\Gamma)} \cdot |\nabla(\widehat{\bd{u}} - \bd{u}_{\delta})|\right) 
\nonumber\\
&\le \epsilon \int_{0}^{t} ||\nabla(\widehat{\bd{u}} - \bd{u}_{\delta})||_{L^{2}(\Omega_{f,\delta}(s))}^{2} + C(\epsilon)\left(\int_{0}^{t} ||\omega - \omega_{\delta}||^{2}_{H^{2}(\Gamma)} + \int_{0}^{t} ||\widehat{\bd{u}} - \bd{u}_{\delta}||_{L^{2}(\Omega_{f,\delta}(s))}^{2}\right).
\end{align}
}}
In the last line, we use the following estimates, derived similarly as for \eqref{gammaJinv},
\begin{align*}
|\nabla (\gamma_{\delta}^{-1}J_{\delta})| \le C(|\partial_{x}(\gamma_{\delta}^{-1})| + |\partial_{x}\gamma_{\delta}| + |\partial_{xx}\gamma_{\delta}|) &\le C(||\omega - \omega_{\delta}||_{H^{2}(\Gamma)} + |\partial_{xx}(\omega - \omega_{\delta})|),
\\
||\nabla(\gamma_{\delta}^{-1} J_{\delta})||_{L^{2}(\Omega_{f,\delta}(t))} &\le C||\omega - \omega_{\delta}||_{H^{2}(\Gamma)}.
\end{align*}

Therefore, for the expression in \eqref{T2}, after transferring the integrals \eqref{term2transfer1} and \eqref{term2transfer2} and estimating {{$R_{2, 1}$ \eqref{T2R1} and $R_{2, 2}$ \eqref{T2R2}}}, the remaining terms are:
{{
\begin{align*}
&\frac{1}{2} \int_{0}^{t} \int_{\Omega_{f,\delta}(s)} [(\widehat{\bd{u}} \cdot \nabla) \widehat{\bd{u}}] \cdot (\widehat{\bd{u}} - \bd{u}_{\delta}) - [(\widehat{\bd{u}} \cdot \nabla)(\widehat{\bd{u}} - \bd{u}_{\delta})] \cdot \widehat{\bd{u}} \\
&- \frac{1}{2} \int_{0}^{t} \int_{\Omega_{f,\delta}(s))} [(\bd{u}_{\delta} \cdot \nabla) \bd{u}_{\delta}] \cdot (\widehat{\bd{u}} - \bd{u}_{\delta}) - [(\bd{u}_{\delta} \cdot \nabla)(\widehat{\bd{u}} - \bd{u}_{\delta})] \cdot \bd{u}_{\delta} \\
&= \frac{1}{2} \int_{0}^{t} \int_{\Omega_{f,\delta}(s)} [((\widehat{\bd{u}} - \bd{u}_{\delta}) \cdot \nabla) \bd{u}_{\delta}] \cdot \widehat{\bd{u}} - \frac{1}{2} \int_{0}^{t} \int_{\Omega_{f,\delta}(s)} [((\widehat{\bd{u}} - \bd{u}_{\delta}) \cdot \nabla) \widehat{\bd{u}}] \cdot \bd{u}_{\delta}.
\end{align*}
}}
In absolute values, the right hand-side can be bounded as follows:
{{
\begin{equation*}
\le \epsilon \int_{0}^{t} ||\nabla(\widehat{\bd{u}} - \bd{u}_{\delta})||_{L^{2}(\Omega_{f,\delta}(s))}^{2} + C(\epsilon) \int_{0}^{t} ||\widehat{\bd{u}} - \bd{u}_{\delta}||_{L^{2}(\Omega_{f,\delta}(s))}^{2}.
\end{equation*}
}}
Combining this estimate with \eqref{T2R1} and \eqref{T2R2} we obtain
{{
\begin{equation*}
|T_{2}| \le \epsilon \int_{0}^{t} ||\nabla(\widehat{\bd{u}} - \bd{u}_{\delta})||_{L^{2}(\Omega_{f,\delta}(s))}^{2} + C(\epsilon)\left(\int_{0}^{t} ||\omega - \omega_{\delta}||^{2}_{H^{2}(\Gamma)} + \int_{0}^{t} ||\widehat{\bd{u}} - \bd{u}_{\delta}||_{L^{2}(\Omega_{f,\delta}(s))}^{2}\right).
\end{equation*}
}}

\vskip 0.1in
\noindent
{\bf{Term T3.}} To estimate term $T_3$ defined in \eqref{T3}, we start by noting that 
because $\bd{u}$ and $\bd{\xi}$ are smooth, we can pass to the limit as $\nu \to 0$ using Proposition \ref{alphaconvprop} and the fact that $(\bd{\xi}_{\delta})_{\nu} \to \bd{\xi}_{\delta}$ strongly in $L^{2}(0, T; H^{1}(\Omega_{b}))$, so that we can ultimately just test with $\bd{v} = \bd{u} - \widecheck{\bd{u}}_{\delta}$ and $\bd{\psi} = \bd{\xi} - \bd{\xi}_{\delta}$. In the regularized weak formulation for $\bd{u}_{\delta}$, we test with $\bd{u}$ and $\bd{\xi}$. Note that both test functions $\bd{u} - \widecheck{\bd{u}}_{\delta}$ and $\widehat{\bd{u}} - \bd{u}_{\delta}$ have the same trace along $\Gamma(t)$ and $\Gamma_{\delta}(t)$ respectively, which we will formally denote by $\bd{u} - \bd{u}_{\delta}$ along the reference configuration of the interface $\Gamma$. Combining the resulting expressions, we have the following contribution of $T_{3}$ in the limit as $\nu \to 0$:
{{
\begin{align*}
T_{3} &= \frac{1}{2} \int_{0}^{t} \int_{\Gamma(s)} (\bd{u} \cdot \bd{n} - \bd{\xi} \cdot \bd{n}) \bd{u} \cdot (\bd{u} - \widecheck{\bd{u}}_{\delta}) - \frac{1}{2} \int_{0}^{t} \int_{\Gamma_{\delta}(s)} (\bd{u}_{\delta} \cdot \bd{n}_{\delta} - \bd{\xi}_{\delta} \cdot \bd{n}_{\delta}) \bd{u}_{\delta} \cdot \widehat{\bd{u}} \\
&+ \frac{1}{2} \int_{0}^{t} \int_{\Gamma(s)} |\bd{u}|^{2} (\bd{\xi} \cdot \bd{n} - \bd{u} \cdot \bd{n}) - \frac{1}{2} \int_{0}^{t} \int_{\Gamma(s)} |\bd{u}|^{2} (\bd{\xi}_{\delta} \cdot \bd{n} - \widecheck{\bd{u}}_{\delta} \cdot \bd{n}) \\
&- \frac{1}{2} \int_{0}^{t} \int_{\Gamma_{\delta}(s)} |\bd{u}_{\delta}|^{2} (\bd{\xi} \cdot \bd{n}_{\delta} - \widehat{\bd{u}} \cdot \bd{n}_{\delta}) = \frac{1}{2} \int_{0}^{t} \int_{\Gamma(s)} (\bd{\xi} \cdot \bd{n} - \bd{u} \cdot \bd{n}) \bd{u} \cdot \widecheck{\bd{u}}_{\delta} \\
&- \frac{1}{2} \int_{0}^{t} \int_{\Gamma(s)} (\bd{\xi}_{\delta} \cdot \bd{n} - \widecheck{\bd{u}}_{\delta} \cdot \bd{n}) |\bd{u}|^{2} - \frac{1}{2} \int_{0}^{t} \int_{\Gamma_{\delta}(s)} (\bd{\xi} \cdot \bd{n}_{\delta} - \widehat{\bd{u}} \cdot \bd{n}_{\delta}) |\bd{u}_{\delta}|^{2} \\
&+ \frac{1}{2} \int_{0}^{t} \int_{\Gamma_{\delta}(s)} (\bd{\xi}_{\delta} \cdot \bd{n}_{\delta} - \bd{u}_{\delta} \cdot \bd{n}_{\delta}) \bd{u}_{\delta} \cdot \widehat{\bd{u}} = {{R_{3, 1} + R_{3, 2}}},
\end{align*}
}}
where
{{
\begin{equation*}
{{R_{3, 1}}} = \frac{1}{2} \int_{0}^{t} \int_{\Gamma} [(\bd{\xi} - \bd{u}) \cdot \bd{e}_{y}] \bd{u}_{\delta} \cdot (\bd{u} - \bd{u}_{\delta}) - \frac{1}{2} \int_{0}^{t} \int_{\Gamma} [(\bd{\xi}_{\delta} - \bd{u}_{\delta}) \cdot \bd{e}_{y}] \bd{u} \cdot (\bd{u} - \bd{u}_{\delta}),
\end{equation*}
\begin{multline*}
{{R_{3, 2}}} = \frac{1}{2} \int_{0}^{t} \int_{\Gamma} \partial_{x}\omega (\bd{u} \cdot \bd{e}_{x}) \bd{u} \cdot \bd{u}_{\delta} - \frac{1}{2} \int_{0}^{t} \int_{\Gamma} \partial_{x}\omega (\bd{u}_{\delta} \cdot \bd{e}_{x}) |\bd{u}|^{2} - \frac{1}{2} \int_{0}^{t} \int_{\Gamma} \partial_{x}\omega_{\delta} (\bd{u} \cdot \bd{e}_{x}) |\bd{u}_{\delta}|^{2} \\
+ \frac{1}{2} \int_{0}^{t} \int_{\Gamma} \partial_{x}\omega_{\delta} (\bd{u}_{\delta} \cdot \bd{e}_{x}) \bd{u} \cdot \bd{u}_{\delta}.
\end{multline*}
}}

We estimate {{$R_{3, 1}$}} as follows: decompose {{$R_{3, 1}$ as $R_{3, 1} = R_{3, 1, 1} + R_{3, 1, 2}$}}, where
{{
\begin{equation*}
{{R_{3, 1, 1}}} = -\frac{1}{2} \int_{0}^{t} \int_{\Gamma} (\bd{\xi} \cdot \bd{e}_{y}) (\bd{u} - \bd{u}_{\delta}) \cdot (\bd{u} - \bd{u}_{\delta}) + \frac{1}{2} \int_{0}^{t} \int_{\Gamma} [(\bd{\xi} - \bd{\xi}_{\delta}) \cdot \bd{e}_{y}] \bd{u} \cdot (\bd{u} - \bd{u}_{\delta}),
\end{equation*}
\begin{equation*}
{{R_{3, 1, 2}}} = \frac{1}{2} \int_{0}^{t} \int_{\Gamma} (\bd{u} \cdot \bd{e}_{y}) (\bd{u} - \bd{u}_{\delta}) \cdot (\bd{u} - \bd{u}_{\delta}) - \frac{1}{2} \int_{0}^{t} \int_{\Gamma} [(\bd{u} - \bd{u}_{\delta}) \cdot \bd{e}_{y}] \bd{u} \cdot (\bd{u} - \bd{u}_{\delta}).
\end{equation*}
}}
By interpolation, 
{{
\begin{multline*}
{{|R_{3, 1, 1}|}} \le C\left(\int_{0}^{t} ||\widehat{\bd{u}} - \bd{u}_{\delta}||^{1/2}_{L^{2}(\Omega_{f,\delta}(s))} ||\widehat{\bd{u}} - \bd{u}_{\delta}||^{3/2}_{H^{1}(\Omega_{f,\delta}(s))} + \int_{0}^{t} ||\bd{\xi} - \bd{\xi}_{\delta}||_{L^{2}(\Gamma)} ||\widehat{\bd{u}} - \bd{u}_{\delta}||_{H^{1}(\Omega_{f,\delta}(s))}\right) \\
\le \epsilon \int_{0}^{t} ||\widehat{\bd{u}} - \bd{u}_{\delta}||^{2}_{H^{1}(\Omega_{f,\delta}(s))} + C(\epsilon) \left(\int_{0}^{t} ||\widehat{\bd{u}} - \bd{u}_{\delta}||_{L^{2}(\Omega_{f,\delta}(s))}^{2} + \int_{0}^{t} ||\bd{\xi} - \bd{\xi}_{\delta}||_{L^{2}(\Gamma)}^{2}\right).
\end{multline*}
}}
By using the same interpolation inequality, we obtain the following estimate for {{$R_{3, 1, 2}$}}.
{{
\begin{equation*}
{{|R_{3, 1, 2}|}} \le \epsilon \int_{0}^{t} ||\widehat{\bd{u}} - \bd{u}_{\delta}||^{2}_{H^{1}(\Omega_{f,\delta}(s))} + C(\epsilon) \int_{0}^{t} ||\widehat{\bd{u}} - \bd{u}_{\delta}||_{L^{2}(\Omega_{f,\delta}(s))}^{2}.
\end{equation*}
}}

We estimate {{$R_{3, 2}$}} by first rewriting {{$R_{3, 2}$}} as follows:
\begin{align*}
{{R_{3, 2}}} &= -\frac{1}{2}\int_{0}^{t} \int_{\Gamma} (\partial_{x}\omega - \partial_{x}\omega_{\delta}) (\bd{u})_{x} \bd{u} \cdot (\bd{u} - \bd{u}_{\delta}) - \frac{1}{2} \int_{0}^{t} \int_{\Gamma} \partial_{x}\omega_{\delta} (\bd{u})_{x} (\bd{u} - \bd{u}_{\delta}) \cdot (\bd{u} - \bd{u}_{\delta}) \\
&+ \frac{1}{2} \int_{0}^{t} \int_{\Gamma} (\partial_{x}\omega - \partial_{x}\omega_{\delta}) (\bd{u} - \bd{u}_{\delta})_{x} |\bd{u}|^{2} + \frac{1}{2} \int_{0}^{t} \int_{\Gamma} \partial_{x}\omega_{\delta} (\bd{u} - \bd{u}_{\delta})_{x} \bd{u} \cdot (\bd{u} - \bd{u}_{\delta}).
\end{align*}
By interpolation, by the boundedness of $|\partial_{x} \omega|$ and $|\partial_{x}\omega_{\delta}|$, and by the smoothness of $\bd{u}$, we get:
{{
\begin{equation*}
{{|R_{3, 2}|}} \le \epsilon \int_{0}^{t} ||\widehat{\bd{u}} - \bd{u}_{\delta}||_{H^{1}(\Omega_{f,\delta}(s))}^{2} + C(\epsilon) \left(\int_{0}^{t} ||\omega - \omega_{\delta}||^{2}_{H^{2}(\Gamma)} + \int_{0}^{t} ||\widehat{\bd{u}} - \bd{u}_{\delta}||_{L^{2}(\Omega_{f,\delta}(s))}^{2}\right).
\end{equation*}
}}

Hence, by combining the two estimates we get the final estimate for $T_3$:
{{
\begin{align*}
|T_{3}| \le \epsilon \int_{0}^{t} ||\widehat{\bd{u}} - \bd{u}_{\delta}||_{H^{1}(\Omega_{f,\delta}(s))}^{2} 
+ C(\epsilon) \left(\int_{0}^{t} ||\omega - \omega_{\delta}||^{2}_{H^{2}(\Gamma)} + \int_{0}^{t} ||\bd{\xi} - \bd{\xi}_{\delta}||^{2}_{L^{2}(\Gamma)} + \int_{0}^{t} ||\widehat{\bd{u}} - \bd{u}_{\delta}||_{L^{2}(\Omega_{f,\delta}(s))}^{2}\right).
\end{align*}
}}

\vskip 0.1in
\noindent
{\bf{Term T4.}} To estimate term $T_4$, defined in \eqref{T4}, we again 
use Proposition \ref{alphaconvprop} to pass to the limit as $\nu \to 0$ so that the contribution from $T_{4}$ is
{{
\begin{equation}\label{T4equality}
T_{4} := 2\nu \int_{0}^{t} \int_{\Omega_{f}(s)} \bd{D}(\bd{u}) : \bd{D}(\bd{u} - \widecheck{\bd{u}}_{\delta}) - 2\nu \int_{0}^{t} \int_{\Omega_{f,\delta}(s)} \bd{D}(\bd{u}_{\delta}) : \bd{D}(\widehat{\bd{u}} - \bd{u}_{\delta}).
\end{equation}
}}
We want to transfer the integral on $\Omega_{1}(t)$ to $\Omega_{f,\delta}(t)$. Recalling \eqref{gradidentity}, we have that
{{
\begin{equation*}
\int_{0}^{t} \int_{\Omega_{f}(s)} \bd{D}(\bd{u}) : \bd{D}(\bd{u} - \widecheck{\bd{u}}_{\delta}) = \int_{0}^{t} \int_{\Omega_{f,\delta}(s)} \gamma_{\delta} [\nabla(\bd{u} \circ \psi_{\delta}) J_{\delta}^{-1}]^{sym} : [\nabla((\bd{u} - \widecheck{\bd{u}}_{\delta}) \circ \psi_{\delta}) J_{\delta}^{-1}]^{sym},
\end{equation*}
}}
where the superscript {{`sym'}} notation denotes a symmetrization. Following the procedure in \cite{WeakStrongFSI}, we break up the integral as
{{
\begin{multline}\label{T4equality2}
\int_{0}^{t} \int_{\Omega_{f}(s)} \bd{D}(\bd{u}) : \bd{D}(\bd{u} - \widecheck{\bd{u}}_{\delta}) = \int_{0}^{t} \int_{\Omega_{f,\delta}(s)} \bd{D}(\widehat{\bd{u}}) : \bd{D}(\widehat{\bd{u}} - \bd{u}_{\delta}) + {{R_{4, 1} + R_{4, 2} + R_{4, 3} + R_{4, 4}}},
\end{multline}
}}
where
{{
\begin{align*}
{{R_{4, 1}}} &= \int_{0}^{t} \int_{\Omega_{f,\delta}(s)} (\nabla (\bd{u} \circ \psi_{\delta}) J_{\delta}^{-1})^{sym} : [\nabla (\widehat{\bd{u}} - \bd{u}_{\delta}) (J_{\delta}^{-1} - I) + (J_{\delta} - I) \nabla (\widehat{\bd{u}} - \bd{u}_{\delta}) J_{\delta}^{-1}]^{sym},
\\
{{R_{4, 2}}} &= \int_{0}^{t} \int_{\Omega_{f,\delta}(s)} [(I - \gamma_{\delta} J_{\delta}^{-1})\nabla (\bd{u} \circ \psi_{\delta}) + \nabla (\bd{u} \circ \psi_{\delta}) (J_{\delta}^{-1} - I)]^{sym} : \bd{D}(\widehat{\bd{u}} - \bd{u}_{\delta}),
\\
{{R_{4, 3}}} &= \int_{0}^{t} \int_{\Omega_{f,\delta}(s)} (\nabla (\bd{u} \circ \psi_{\delta}) J_{\delta}^{-1})^{sym} : (\gamma_{\delta} \nabla(\gamma_{\delta}^{-1}J_{\delta})(\widehat{\bd{u}} - \bd{u}_{\delta})J_{\delta}^{-1})^{sym},
\\
{{R_{4, 4}}} &= -\int_{0}^{t} \int_{\Omega_{f,\delta}(s)} [(\nabla(\gamma_{\delta} J_{\delta}^{-1})) \bd{u} \circ \psi_{\delta}]^{sym} : \bd{D}(\widehat{\bd{u}} - \bd{u}_{\delta}).
\end{align*}
}}
To verify this equality, one can use the Leibniz rule, the definition $\widehat{\bd{u}} = \gamma_{\delta} J_{\delta}^{-1} \cdot (\bd{u} \circ \psi_{\delta})$, and the identity $\widehat{\bd{u}} - \bd{u}_{\delta} = \gamma_{\delta} J_{\delta}^{-1} \cdot ((\bd{u} - \widecheck{\bd{u}}_{\delta}) \circ \psi_{\delta})$.

We now estimate the terms {{$R_{4, 1}$-$R_{4, 4}$}}. For this purpose we will use the following inequalities:
\begin{align*}
|J_{\delta}^{-1}| &\le C(1 + |\partial_{x} \gamma_{\delta}|), \quad 
|J_{\delta}^{-1} - I| \le C(|\gamma_{\delta}^{-1} - 1| + |\partial_{x} \gamma_{\delta}|), 
\\
 |J_{\delta} - I| &\le C(|\gamma_{\delta} - 1| + |\partial_{x} \gamma_{\delta}|), \quad   |\gamma_{\delta} J_{\delta}^{-1} - I| \le C(|\gamma_{\delta} - 1| + |\partial_{x} \gamma_{\delta}|).
\end{align*}
and, recalling the definition of $\gamma_{\delta}$ in \eqref{psi}, we have the following inequalities: 
\begin{equation*}
|\gamma_{\delta} - 1| \le C||\omega - \omega_{\delta}||_{H^{2}(\Gamma)}, \qquad |\gamma_{\delta}^{-1} - 1| \le C||\omega - \omega_{\delta}||_{H^{2}(\Gamma)},
\end{equation*}
\begin{equation*}
|\partial_{x} \gamma_{\delta}| \le C||\omega - \omega_{\delta}||_{H^{2}(\Gamma)}, \qquad |\partial_{x} (\gamma_{\delta}^{-1})| \le C||\omega - \omega_{\delta}||_{H^{2}(\Gamma)}.
\end{equation*}
Because $|J_{\delta}^{-1}| \le C(1 + |\partial_{x} \gamma_{\delta}|) \le C$ since $|\partial_{x} \gamma_{\delta}|$ is bounded, and because $\bd{u}$ is smooth,
{{
\begin{align*}
{{|R_{4, 1}|}} &\le C\int_{0}^{t} ||\nabla (\widehat{\bd{u}} - \bd{u}_{\delta})||_{L^{2}(\Omega_{f,\delta}(s))} (||\gamma_{\delta}^{-1} - 1||_{L^{2}(\Omega_{f,\delta}(s))} + ||\gamma_{\delta} - 1||_{L^{2}(\Omega_{f,\delta}(s))} + ||\partial_{x} \gamma_{\delta}||_{L^{2}(\Omega_{f,\delta}(s))}) \\
&\le \epsilon \int_{0}^{t} ||\nabla (\widehat{\bd{u}} - \bd{u}_{\delta})||_{L^{2}(\Omega_{f,\delta}(s))}^{2} + C(\epsilon) \int_{0}^{t} ||\omega - \omega_{\delta}||^{2}_{H^{2}(\Gamma)}.
\end{align*}
}}
We also have that
{{
\begin{equation*}
{{|R_{4, 2}|}} \le \epsilon \int_{0}^{t} ||\bd{D} (\widehat{\bd{u}} - \bd{u}_{\delta})||_{L^{2}(\Omega_{f,\delta}(s))}^{2} + C(\epsilon) \int_{0}^{t} ||\omega - \omega_{\delta}||^{2}_{H^{2}(\Gamma)}.
\end{equation*}
}}
For {{$R_{4, 3}$ and $R_{4, 4}$}}, we compute that
\begin{equation*}
\nabla(\gamma_{\delta}^{-1}J_{\delta}) = \nabla \begin{pmatrix} \gamma_{\delta}^{-1} & 0 \\ (R + y)\gamma_{\delta}^{-1}\partial_{x}\gamma_{\delta} & 1 \\ \end{pmatrix}, \qquad \nabla (\gamma_{\delta} J_{\delta}^{-1}) = \nabla \begin{pmatrix} \gamma_{\delta} & 0 \\ -(R + y)\partial_{x}\gamma_{\delta} & 1 \\ \end{pmatrix}.
\end{equation*}
Therefore,
\begin{equation*}
|\nabla (\gamma_{\delta}^{-1}J_{\delta})| \le C(|\partial_{x}(\gamma_{\delta}^{-1})| + |\partial_{x}\gamma_{\delta}| + |\partial_{xx}\gamma_{\delta}|), \qquad |\nabla(\gamma_{\delta} J_{\delta}^{-1})| \le C(|\partial_{x}\gamma_{\delta}| + |\partial_{xx}\gamma_{\delta}|),
\end{equation*}
where we can estimate
\begin{equation*}
|\partial_{xx}\gamma_{\delta}| \le C(||\omega - \omega_{\delta}||_{H^{2}(\Gamma)} |\partial_{xx}\omega| + |\partial_{xx}(\omega - \omega_{\delta})| + ||\omega - \omega_{\delta}||_{H^{2}(\Gamma)}).
\end{equation*}
So since $||\partial_{xx}\omega||_{L^{2}(\Omega_{f,\delta}(t))} \le C$ since $\omega$ is uniformly bounded in $H^{2}(\Gamma)$, we have that
{{
\begin{align*}
{{|R_{4, 3}|}} &\le C \int_{0}^{t} ||\nabla(\gamma_{\delta}^{-1}J_{\delta})||_{L^{2}(\Omega_{f,\delta}(s))} ||\widehat{\bd{u}} - \bd{u}_{\delta}||_{L^{2}(\Omega_{f,\delta}(s))} \le C\int_{0}^{t} ||\omega - \omega_{\delta}||_{H^{2}(\Gamma)} ||\widehat{\bd{u}} - \bd{u}_{\delta}||_{L^{2}(\Omega_{f,\delta}(s))} \\
&\le C\left(\int_{0}^{t} ||\omega - \omega_{\delta}||_{H^{2}(\Gamma)}^{2} + \int_{0}^{t} ||\widehat{\bd{u}} - \bd{u}_{\delta}||_{L^{2}(\Omega_{f,\delta}(s))}^{2}\right).
\end{align*}
}}
Similarly, using $||\nabla(\gamma_{\delta} J_{\delta}^{-1})||_{L^{2}(\Omega_{f,\delta}(t))} \le C||\omega - \omega_{\delta}||_{H^{2}(\Gamma)}$, we have the following estimate for $R_{4}$:
{{ 
\begin{align*}
{{|R_{4, 4}|}} &\le C \int_{0}^{t} ||\nabla(\gamma_{\delta} J_{\delta}^{-1})||_{L^{2}(\Omega_{f,\delta}(s))} ||\bd{D}(\widehat{\bd{u}} - \bd{u}_{\delta})||_{L^{2}(\Omega_{f,\delta}(s))} \\
&\le \epsilon \int_{0}^{t} ||\bd{D}(\widehat{\bd{u}} - \bd{u}_{\delta})||_{L^{2}(\Omega_{f,\delta}(s))}^{2} + C(\epsilon) \int_{0}^{t} ||\omega - \omega_{\delta}||^{2}_{H^{2}(\Gamma)}. 
\end{align*}
}}
We now have the final estimate of $T_4$, obtained after using  \eqref{T4equality} and \eqref{T4equality2} as follows:
{{
\begin{equation*}
T_{4} = 2\nu \int_{0}^{t} \int_{\Omega_{f,\delta}(s)} |\bd{D}(\widehat{\bd{u}} - \bd{u}_{\delta})|^{2} + {{R_{4}}},
\end{equation*}
}}
where
{{
\begin{equation*}
{{|R_{4}|}} \le \epsilon \int_{0}^{t} ||\bd{D}(\widehat{\bd{u}} - \bd{u}_{\delta})||^{2}_{L^{2}(\Omega_{f,\delta}(s))} + C(\epsilon) \left(\int_{0}^{t} ||\omega - \omega_{\delta}||^{2}_{H^{2}(\Gamma)} + \int_{0}^{t} ||\widehat{\bd{u}} - \bd{u}_{\delta}||_{L^{2}(\Omega_{f,\delta}(s))}^{2}\right). 
\end{equation*}
}}

\vskip 0.1in
\noindent
{\bf{Term T5.}} Similarly as before, 
after passing to the limit as $\nu \to 0$ in term $T_5$, defined by \eqref{T5}, the contribution of this term is
{{
\begin{multline}\label{T5}
T_{5} = \beta\int_{0}^{t} \int_{\Gamma(s)} (\bd{\xi} - \bd{u}) \cdot \bd{\tau}(s)[(\bd{\xi} - \bd{\xi}_{\delta}) \cdot \bd{\tau}(s) - (\bd{u} - \widecheck{\bd{u}}_{\delta}) \cdot \bd{\tau}(s)] \\
- \beta\int_{0}^{t} \int_{\Gamma_{\delta}(s)} (\bd{\xi}_{\delta} - \bd{u}_{\delta}) \cdot \bd{\tau}_{\delta}(s) [(\bd{\xi} - \bd{\xi}_{\delta}) \cdot \bd{\tau}_{\delta}(s) - (\widehat{\bd{u}} - \bd{u}_{\delta}) \cdot \bd{\tau}_{\delta}(s)].
\end{multline} 
}}
We note that when we test the weak formulation for $\bd{u}$ with $\bd{v} = \bd{u} - (\widecheck{\bd{u}}_{\delta})_{\nu}$ and $\bd{\psi} = \bd{\xi} - (\bd{\xi}_{\delta})_{\nu}$, we can pass to the limit as $\nu \to 0$ to obtain the first term in $T_{5}$ above, by using similar arguments involving Proposition \ref{alphaconvprop}, as for the previously considered terms. 
This term can now be rewritten as follows:
{{
\begin{equation*}
T_{5} = \beta \int_{0}^{t} \int_{\Gamma_{\delta}(s)} |(\bd{\xi} - \bd{\xi}_{\delta}) \cdot \bd{\tau}_{\delta}(s) - (\widehat{\bd{u}} - \bd{u}_{\delta}) \cdot \bd{\tau}_{\delta}(s)|^{2} + R_{5},
\end{equation*}
}}
where
{{
\begin{multline*}
R_{5} = \beta\int_{0}^{t} \int_{\Gamma(s)} (\bd{\xi} - \bd{u}) \cdot \bd{\tau}(s)[(\bd{\xi} - \bd{\xi}_{\delta}) \cdot \bd{\tau}(s) - (\bd{u} - \widecheck{\bd{u}}_{\delta}) \cdot \bd{\tau}(s)] \\
- \beta\int_{0}^{t} \int_{\Gamma_{\delta}(s)} (\bd{\xi} - \widehat{\bd{u}}) \cdot \bd{\tau}_{\delta}(s)[(\bd{\xi} - \bd{\xi}_{\delta}) \cdot \bd{\tau}_{\delta}(s) - (\widehat{\bd{u}} - \bd{u}_{\delta}) \cdot \bd{\tau}_{\delta}(s)].
\end{multline*}
}}
Denote the arc length elements of {{$\Gamma(t)$}} and $\Gamma_{\delta}(t)$ respectively by $\mathcal{J}^{\omega}_{\Gamma} = \sqrt{1 + |\partial_{x}\omega|^{2}}$ and $\mathcal{J}^{\omega_{\delta}}_{\Gamma} = \sqrt{1 + |\partial_{x}\omega_{\delta}|^{2}}$, 
and {{we recall that we denote the tangent vectors to $\Gamma(t)$ and $\Gamma_{\delta}(t)$ respectively by $\bd{\tau}(t) = \frac{1}{\mathcal{J}^{\omega}_{\Gamma}}(1, \partial_{x}\omega)$ and $\bd{\tau}_{\delta}(t) = \frac{1}{\mathcal{J}^{\omega_{\delta}}_{\Gamma}}(1, \partial_{x}\omega_{\delta})$.}}
We can now rewrite $R_{5}$ by writing everything in terms of the $x$ and $y$ components. For this purpose, recall that $\bd{\xi}$ and $\bd{\xi}_{\delta}$ along the interface displace in only the $y$ direction. We formally express the common trace of $\bd{u} - \widecheck{\bd{u}}_{\delta}$ and $\widehat{\bd{u}} - \bd{u}_{\delta}$ along the reference configuration of the interface $\Gamma$ by $\bd{u} - \bd{u}_{\delta}$. Thus, 
{{
\begin{align*}
R_{5} &= \beta\int_{0}^{t} \int_{\Gamma} (\bd{\xi} - \bd{u}) \cdot (1, \partial_{x}\omega) [(\bd{\xi} - \bd{\xi}_{\delta}) - (\bd{u} - \bd{u}_{\delta})] \cdot \bd{\tau}(s) \\
&- \beta\int_{0}^{t} \int_{\Gamma} (\bd{\xi} - \bd{u}) \cdot (1, \partial_{x}\omega_{\delta}) [(\bd{\xi} - \bd{\xi}_{\delta}) - (\bd{u} - \bd{u}_{\delta})] \cdot \bd{\tau}_{\delta}(s).
\end{align*}
}}
In the previous step, we used the fact that when transferred back to the reference configuration $\Omega_{f}$, $\widehat{\bd{u}} - \bd{u}_{\delta}$ and $\bd{u} - \widecheck{\bd{u}}_{\delta}$ have the same trace along $\Gamma$. Thus, $R_{5} = R_{5, 1} + R_{5, 2}$, where
{{
\begin{align*}
R_{5, 1} &= \beta \int_{0}^{t} \int_{\Gamma} (\bd{\xi} - \bd{u}) \cdot \bd{e}_{y} (\partial_{x}\omega - \partial_{x}\omega_{\delta}) [(\bd{\xi} - \bd{\xi}_{\delta}) - (\bd{u} - \bd{u}_{\delta})] \cdot \bd{\tau}(s),
\\
R_{5, 2} &= \beta \int_{0}^{t} \int_{\Gamma} (\bd{\xi} - \bd{u}) \cdot (1, \partial_{x}\omega_{\delta}) [(\bd{\xi} - \bd{\xi}_{\delta}) - (\bd{u} - \bd{u}_{\delta})] \cdot (\bd{\tau}(s) - \bd{\tau}_{\delta}(s)).
\end{align*}
}}
We can use the fact that $|\partial_{x}\omega|$ and $|\partial_{x}\omega_{\delta}|$ are uniformly bounded to obtain the following estimates:
{{
\begin{equation*}
|R_{5, 1}| \le \epsilon \int_{0}^{t} ||\bd{D}(\widehat{\bd{u}} - \bd{u}_{\delta})||_{L^{2}(\Omega_{f,\delta}(s))} + C(\epsilon)\left(\int_{0}^{t} ||\omega - \omega_{\delta}||_{H^{2}(\Gamma)}^{2} + \int_{0}^{t} ||\bd{\xi} - \bd{\xi}_{\delta}||_{L^{2}(\Gamma)}^{2}\right),
\end{equation*}
}}
where we used the trace inequality, Poincare's inequality, and Korn's inequality for the fluid. For the second term {{$R_{5, 2}$}}, we use the estimate {{$|\bd{\tau}(s) - \bd{\tau}_{\delta}(s)| \le C|\partial_{x}\omega - \partial_{x}\omega_{\delta}|$}} to obtain
{{
\begin{equation*}
|R_{5, 2}| \le \epsilon \int_{0}^{t} ||\bd{D}(\widehat{\bd{u}} - \bd{u}_{\delta})||_{L^{2}(\Omega_{f,\delta}(s))} + C(\epsilon)\left(\int_{0}^{t} ||\omega - \omega_{\delta}||_{H^{2}(\Gamma)}^{2} + \int_{0}^{t} ||\bd{\xi} - \bd{\xi}_{\delta}||_{L^{2}(\Gamma)}^{2}\right).
\end{equation*}
}}
Hence,
{{
\begin{equation*}
T_{5} = \beta \int_{0}^{t} \int_{\Gamma_{\delta}(s)} |(\bd{\xi} - \bd{\xi}_{\delta}) \cdot \bd{\tau}_{\delta}(s) - (\widehat{\bd{u}} - \bd{u}_{\delta}) \cdot \bd{\tau}_{\delta}(s)|^{2} + R_{5},
\end{equation*}
}}
where
{{
\begin{equation*}
|R_{5}| \le \epsilon \int_{0}^{t} ||\bd{D}(\widehat{\bd{u}} - \bd{u}_{\delta})||_{L^{2}(\Omega_{f,\delta}(s))} + C(\epsilon)\left(\int_{0}^{t} ||\omega - \omega_{\delta}||_{H^{2}(\Gamma)}^{2} + \int_{0}^{t} ||\bd{\xi} - \bd{\xi}_{\delta}||_{L^{2}(\Gamma)}^{2}\right).
\end{equation*}
}}

\vskip 0.1in
\noindent
{\bf{Terms T6-T8.}}
We will present estimates only for term $T_6$, defined in \eqref{T6}, as the same procedure will hold for $T_7$ and $T_8$. 
Since $\zeta$ and $\zeta_{\delta}$ are weakly continuous in $L^{2}(\Gamma)$, by the weak formulation, we get:
\begin{align*}
\int_{0}^{t} \int_{\Gamma} \zeta \cdot \partial_{t}\left[(\zeta_{\delta})_{\nu}\right] + \int_{0}^{t} \int_{\Gamma} \zeta_{\delta} \cdot \partial_{t}\zeta 
&= \int_{0}^{t} \int_{\Gamma} \zeta \cdot \partial_{t}\left[(\zeta_{\delta})_{\nu}\right] + \int_{0}^{t} \int_{\Gamma} \zeta_{\delta} \cdot \partial_{t}\left[(\zeta)_{\nu}\right] - \int_{0}^{t} \int_{\Gamma} \zeta_{\delta} \cdot \partial_{t}\left[(\zeta)_{\nu} - \zeta\right] \\
&\to \int_{\Gamma} \zeta(t) \cdot \zeta_{\delta}(t) - \int_{\Gamma} |\zeta_{0}|^{2}.
\end{align*}
This follows from Lemma 2.5 in \cite{WeakStrongFSI}, which implies:
\begin{equation*}
\int_{0}^{t} \int_{\Gamma} \zeta \cdot \partial_{t}\left[(\zeta_{\delta})_{\nu}\right] + \int_{0}^{t} \int_{\Gamma} \zeta_{\delta} \cdot \partial_{t}\left[(\zeta)_{\nu}\right]
\to \int_{\Gamma} \zeta(t) \cdot \zeta_{\delta}(t) - \int_{\Gamma} |\zeta_{0}|^{2}, \qquad \text{ as } \nu \to 0,
\end{equation*}
and from the fact that  $\zeta$ is smooth in space and time, which implies
\begin{equation*}
\int_{0}^{t} \int_{\Gamma} \zeta_{\delta} \cdot \partial_{t}\left[(\zeta)_{\nu} - \zeta\right] \to 0, \qquad \text{ as } \nu \to 0.
\end{equation*}
Furthermore, because $\zeta(0) = \zeta_{\delta}(0) = \zeta_{0}$ weak continuity of $\zeta_{\delta}$ at $t = 0$ implies
 that $\displaystyle \int_{\Gamma} \zeta(0) \cdot [\zeta(0) - (\zeta_{\delta})_{\nu}(0)] \to 0$ as $\nu \to 0$. 
{{Similarly, $\displaystyle \int_{\Gamma} \zeta(s) \cdot [\zeta(s) - (\zeta_{\delta})_{\nu}(s)] \to 0$ as $\nu \to 0$ for almost every $s \in [0, T]$.}}
 Hence, as $\nu \to 0$, the contribution from $T_{6}$ is
 {{
\begin{equation*}
T_{6} = \frac{1}{2} \rho_{p} \int_{\Gamma} |(\zeta - \zeta_{\delta})(t)|^{2}.
\end{equation*}
}}
Similarly, the contributions from $T_7$ and $T_8$ as $\nu \to 0$ are
{{
\begin{equation*}
T_{7} = \frac{1}{2} \int_{\Gamma} |\Delta (\omega - \omega_{\delta})(t)|^{2}, \qquad T_{8} = \frac{1}{2} \rho_{b} \int_{\Omega_{b}} |(\bd{\xi} - \bd{\xi}_{\delta})(t)|^{2}.
\end{equation*}
}}

\vskip 0.1in
\noindent
{\bf{Terms T9-T12.}} Since these calculations are straight forward, a discussion about the limiting expressions as $\nu \to 0$ for terms $T_9$-$T_{12}$ was presented earlier, just under 
\eqref{T12}. 
%Because $\bd{\xi}_{\delta} \in L^{2}(0, T; H^{1}(\Omega_{b}))$ where $\Omega_{b}$ is a fixed domain, we have that $(\bd{\xi}_{\delta})_{\nu} \to \bd{\xi}_{2, \nu}$ strongly in $L^{2}(0, T; H^{1}(\Omega_{b}))$. Hence, we have that Terms 9-12 converge to the following as $\nu \to 0$.
%\begin{equation*}
%T_{9} = \mu_{e} \int_{\Omega_{b}} |\bd{D}(\bd{\eta} - \bd{\eta}_{\delta})(\tau)|^{2}, \qquad T_{10} = \frac{1}{2} \lambda_{e} \int_{\Omega_{b}} |\nabla \cdot (\bd{\eta} - \bd{\eta}_{2})(\tau)|^{2},
%\end{equation*}
%\begin{equation*}
%T_{11} = 2\mu_{v} \int_{0}^{t} \int_{\Omega_{b}} |\bd{D}(\bd{\xi} - \bd{\xi}_{\delta})|^{2}, \qquad T_{12} = \lambda_{v} \int_{0}^{t} \int_{\Omega_{b}} |\nabla \cdot (\bd{\xi} - \bd{\xi}_{\delta})|^{2}.
%\end{equation*}

\vskip 0.1in
\noindent
{\bf{Term T13.}} Similarly as before, by taking the limit as $\nu \to 0$, we have that 
{{
\begin{equation*}
T_{13} = -\aalpha \int_{0}^{t} \int_{\Omega_{b}(s)} p \nabla \cdot (\bd{\xi} - \bd{\xi}_{\delta}) + \aalpha \int_{0}^{t} \int_{{\Omega}^\delta_{b, \delta}(s)} p_{\delta} \nabla \cdot (\bd{\xi} - \bd{\xi}_{\delta}).
\end{equation*}
}}
To estimate this term we use \eqref{nablaeta} and the matrix identity $\bd{B}^{-1} = \frac{1}{\text{det}(\bd{B})} \bd{B}^{C}$ to obtain 
\begin{multline*}
|T_{13}| = \aalpha \left|\int_{0}^{t} \int_{\Omega_{b}} \mathcal{J}^{\eta}_{b} p \nabla^{\eta}_{b} \cdot (\bd{\xi} - \bd{\xi}_{\delta}) - \int_{0}^{t} \int_{\Omega_{b}} \mathcal{J}^{{\eta}^\delta_{\delta}}_{b} p_{\delta} \nabla^{{\eta}^\delta_{\delta}}_{b} \cdot (\bd{\xi} - \bd{\xi}_{\delta}) \right| \\
= \aalpha \left|\int_{0}^{t} \int_{\Omega_{b}} p \cdot \text{tr}\left(\nabla (\bd{\xi} - \bd{\xi}_{\delta}) \cdot (\bd{I} + \nabla \bd{\eta})^{C}\right) - \int_{0}^{t} \int_{\Omega_{b}} p_{\delta} \cdot \text{tr}\left(\nabla (\bd{\xi} - \bd{\xi}_{\delta}) \cdot (\bd{I} + \nabla {\bd{\eta}}^\delta_{\delta})^{C}\right) \right| \le {{R_{13, 1} + R_{13, 2}}},
\end{multline*}
where the superscript $``C"$ denotes the cofactor matrix. The integrals {{$R_{13, 1}$ and $R_{13, 2}$}} are defined as follows: 
\begin{align*}
{{R_{13, 1}}} &= \aalpha \left|\int_{0}^{t} \int_{\Omega_{b}} p \cdot \text{tr}\left(\nabla (\bd{\xi} - \bd{\xi}_{\delta}) \cdot (\nabla (\bd{\eta} - {\bd{\eta}}^\delta_{\delta}))^{C}\right) \right|,
\\
{{R_{13, 2}}} &= \aalpha \left|\int_{0}^{t} \int_{\Omega_{b}} (p - p_{\delta}) \cdot \text{tr}\left(\nabla (\bd{\xi} - \bd{\xi}_{\delta}) \cdot (\bd{I} + \nabla {\bd{\eta}}^\delta_{\delta})^{C}\right)\right|.
\end{align*}
In the previous calculations, we observe that the cofactor matrix operation is linear when the matrices are two by two. Using the fact that $p$ is smooth, the assumption \eqref{grad}, and the fact that 
\begin{equation}\label{convolutionineq}
||\nabla {\bd{\eta}}^\delta - \nabla {\bd{\eta}}^\delta_{\delta}||_{L^{2}(\Omega_{b})} \le {{C||\nabla \bd{\eta} - \nabla \bd{\eta}_{\delta}||_{L^{2}(\tilde{\Omega}_{b})}}} \le C\left(||\nabla \bd{\eta} - \nabla \bd{\eta}_{\delta}||_{L^{2}(\Omega_{b})} + ||\omega - \omega_{\delta}||_{H^{2}(\Gamma)}\right)
\end{equation}
for a constant $C$ independent of $\delta$, by Young's convolution inequality and the definition of odd extension to the larger domain $\tilde{\Omega}_{b}$ in Definition \ref{extension}, we obtain the estimates on {{$R_{13, 1}$ and $R_{13, 2}$}}:
\begin{align*}
{{R_{13, 1}}} &\le \epsilon \int_{0}^{t} ||\nabla(\bd{\xi} - \bd{\xi}_{\delta})||_{L^{2}(\Omega_{b})}^{2} + C(\epsilon)\left(\int_{0}^{t} ||\nabla \bd{\eta} - \nabla {\bd{\eta}}^\delta_{\delta}||_{L^{2}(\Omega_{b})}^{2}\right) \\
&\le C(\epsilon)\int_{0}^{t} ||\nabla \bd{\eta} - \nabla {\bd{\eta}}^\delta||_{L^{2}(\Omega_{b})}^{2} + \epsilon \int_{0}^{t} ||\nabla(\bd{\xi} - \bd{\xi}_{\delta})||_{L^{2}(\Omega_{b})}^{2} \\
&+ C(\epsilon)\left(\int_{0}^{t} ||\nabla \bd{\eta} - \nabla \bd{\eta}_{\delta}||^{2}_{L^{2}(\Omega_{b})} + \int_{0}^{t} ||\omega - \omega_{\delta}||^{2}_{H^{2}(\Gamma)}\right),
\\
{{R_{13, 2}}} &\le \epsilon \int_{0}^{t} ||\nabla(\bd{\xi} - \bd{\xi}_{\delta})||_{L^{2}(\Omega_{b})}^{2} + C(\epsilon) \left(\int_{0}^{t} ||p - p_{\delta}||^{2}_{L^{2}(\Omega_{b})}\right).
\end{align*}
Therefore, the final estimate for $T_{13}$ is as follows:
\begin{align*}
|T_{13}| &\le C(\epsilon)\int_{0}^{t} ||\nabla \bd{\eta} - \nabla {\bd{\eta}}^\delta||_{L^{2}(\Omega_{b})}^{2} + \epsilon \int_{0}^{t} ||\nabla(\bd{\xi} - \bd{\xi}_{\delta})||_{L^{2}(\Omega_{b})}^{2} \\
&+ C(\epsilon)\left(\int_{0}^{t} ||\nabla \bd{\eta} - \nabla \bd{\eta}_{\delta}||^{2}_{L^{2}(\Omega_{b})} + \int_{0}^{t} ||\omega - \omega_{\delta}||^{2}_{H^{2}(\Gamma)} + \int_{0}^{t} ||p - p_{\delta}||^{2}_{L^{2}(\Omega_{b})} \right).
\end{align*}

\vskip 0.1in
\noindent
{\bf{Term T14.}} This term  can be handled in the same way as terms $T_6$-$T_8$. 

\vskip 0.1in
\noindent
{\bf{Term T15.}} We pass to the limit  as $\nu \to 0$ in \eqref{T15}  to obtain:
{{
\begin{equation*}
T_{15} = -\aalpha \int_{0}^{t} \int_{\Omega_{b}(s)} \frac{D}{Dt} \bd{\eta} \cdot \nabla (p - p_{\delta}) + \alpha \int_{0}^{t} \int_{{\Omega}^\delta_{b, \delta}(s)} \frac{{D}^\delta}{Dt} \bd{\eta}_{\delta} \cdot \nabla (p - p_{\delta}).  
\end{equation*}
}}
To estimate this term we pull back to the reference domain and  use \eqref{nablaeta} and the cofactor formula for the matrix inverse to obtain:
\begin{align*}
|T_{15}| &= \aalpha \left|\int_{0}^{t} \int_{\Omega_{b}} \mathcal{J}^{\eta}_{b} \partial_{t} \bd{\eta} \cdot \nabla^{\eta}_{b} (p - p_{\delta}) - \int_{0}^{t} \int_{\Omega_{b}} \mathcal{J}^{{\eta}^\delta_{\delta}}_{b} \partial_{t} \bd{\eta}_{\delta} \cdot \nabla^{{\eta}^\delta_{\delta}}_{b} (p - p_{\delta})\right| \\
&= \aalpha \left|\int_{0}^{t} \int_{\Omega_{b}} \partial_{t} \bd{\eta} \cdot \left[\nabla(p - p_{\delta}) \cdot (\bd{I} + \nabla \bd{\eta})^{C}\right] - \int_{0}^{t} \int_{\Omega_{b}} \partial_{t}\bd{\eta}_{\delta} \cdot \left[\nabla (p - p_{\delta}) \cdot (\bd{I} + \nabla {\bd{\eta}}^\delta_{\delta})^{C}\right] \right|
\le {{R_{15, 1} + R_{15, 2}}},
\end{align*}
where
\begin{align*}
{{R_{15, 1}}} &= \aalpha \left|\int_{0}^{t} \int_{\Omega_{b}} \partial_{t}\bd{\eta} \cdot \left[\nabla(p - p_{\delta}) \cdot (\nabla \bd{\eta} - \nabla {\bd{\eta}}^\delta_{\delta})^{C}\right]\right|,
\\
{{R_{15, 2}}} &= \aalpha \left|\int_{0}^{t} \int_{\Omega_{b}} (\partial_{t} \bd{\eta} - \partial_{t} \bd{\eta}_{\delta}) \cdot \left[\nabla (p - p_{\delta}) \cdot (\bd{I} + \nabla {\bd{\eta}}^\delta_{\delta})^{C}\right] \right|.
\end{align*}

To estimate {{$R_{15, 1}$,}} we use \eqref{det}, \eqref{norm}, and the convolution inequality \eqref{convolutionineq} to obtain:
{{
\begin{align*}
{{R_{15, 1}}} &\le \epsilon \int_{0}^{t} ||\nabla(p - p_{\delta})||_{L^{2}({\Omega}^\delta_{b, \delta}(s))}^{2} + C(\epsilon) \int_{0}^{t} ||\nabla \bd{\eta} - \nabla {\bd{\eta}}^\delta||_{L^{2}(\Omega_{b})}^{2} \\
&+ C(\epsilon) \left(\int_{0}^{t} ||\nabla \bd{\eta} - \nabla \bd{\eta}_{\delta}||_{L^{2}(\Omega_{b})}^{2} + \int_{0}^{t} ||\omega - \omega_{\delta}||^{2}_{H^{2}(\Gamma)}\right).
\end{align*}
}}
Here, we also used the following estimate on the norm of the gradient of the pressure on the reference domain and on the moving domain,
which is obtained by using \eqref{det}, \eqref{norm}, and \eqref{nablaeta}:
\begin{align}\label{gradprescompare}
||\nabla(p - p_{\delta})(t)||^{2}_{L^{2}(\Omega_{b})} &= \int_{\Omega_{b}} |\nabla(p - p_{\delta})|^{2} = \int_{\Omega_{b}} \mathcal{J}^{{\eta}^\delta_{\delta}}_{b} |\nabla^{{\eta}^\delta_{\delta}}_{b} (p - p_{\delta}) \cdot (\bd{I} + \nabla {\bd{\eta}}^\delta_{\delta})|^{2} \cdot (\mathcal{J}^{{\eta}^\delta_{\delta}}_{b})^{-1} 
\nonumber \\
&\le C \int_{\Omega_{b}} \mathcal{J}^{{\eta}^\delta_{\delta}}_{b} |\nabla^{{\eta}^\delta_{\delta}}_{b} (p - p_{\delta})|^{2} = C||\nabla(p - p_{\delta})(t)||_{L^{2}({\Omega}^\delta_{b, \delta}(t))}^{2},
\end{align}
where constant $C$ is independent of $\delta$ and  $t \in [0, T_{\delta}]$. 

The estimate of {{$R_{15, 2}$}} is straight forward:
{{
\begin{equation*}
{{R_{15, 2}}} \le \epsilon \int_{0}^{t} ||\nabla(p - p_{\delta})||_{L^{2}({\Omega}^\delta_{b, \delta}(s))}^{2} + C(\epsilon) \int_{0}^{t} ||\partial_{t} \bd{\eta} - \partial_{t} \bd{\eta}_{\delta}||^{2}_{L^{2}(\Omega_{b})}.
\end{equation*}
}}

From here we get the final estimate of $T_{15}$:
{{
\begin{align*}
|T_{15}| &\le \epsilon \int_{0}^{t} ||\nabla(p - p_{\delta})||_{L^{2}({\Omega}^\delta_{b, \delta}(s))}^{2} + C(\epsilon) \int_{0}^{t} ||\nabla \bd{\eta} - \nabla {\bd{\eta}}^\delta||_{L^{2}(\Omega_{b})}^{2} \\
&+ C(\epsilon) \left(\int_{0}^{t} ||\nabla \bd{\eta} - \nabla \bd{\eta}_{\delta}||_{L^{2}(\Omega_{b})}^{2} + \int_{0}^{t} ||\omega - \omega_{\delta}||^{2}_{H^{2}(\Gamma)} + \int_{0}^{t} ||\partial_{t} \bd{\eta} - \partial_{t} \bd{\eta}_{\delta}||^{2}_{L^{2}(\Omega_{b})}\right).
\end{align*}
}}

\vskip 0.1in
\noindent
{\bf{Term T16.}} To estimate $T_{16}$ defined in \eqref{T16} we start by passing to the limit as $\nu \to 0$ to obtain
{{
\begin{equation*}
T_{16} = -\aalpha \int_{0}^{t} \int_{\Gamma(s)} (\bd{\xi} \cdot \bd{n}) (p - p_{\delta}) + \aalpha \int_{0}^{t} \int_{{\Gamma}^\delta_{\delta}(s)} (\bd{\xi}_{\delta} \cdot {\bd{n}}^\delta_{\delta}) (p - p_{\delta}),
\end{equation*}
}}
where ${\bd{n}}^\delta_{\delta}$ is the upward pointing normal vector to ${\Gamma}^\delta_{\delta}(t)$. We integrate by parts to obtain that {{$|T_{16}| \le R_{16, 1} + R_{16, 2}$}}, where
{{
\begin{equation*}
{{R_{16, 1}}} := \aalpha \left|\int_{0}^{t} \int_{\Omega_{b}(s)} (\nabla \cdot \bd{\xi}) (p - p_{\delta}) - \int_{0}^{t} \int_{{\Omega}^\delta_{b, \delta}(s)} (\nabla \cdot \bd{\xi}_{\delta})(p - p_{\delta})\right|,
\end{equation*}
\begin{equation*}
{{R_{16, 2}}} := \aalpha \left|\int_{0}^{t} \int_{\Omega_{b}(s)} \bd{\xi} \cdot \nabla(p - p_{\delta}) - \int_{0}^{t} \int_{{\Omega}^\delta_{b, \delta}(s)} \bd{\xi}_{\delta} \cdot \nabla(p - p_{\delta})\right|.
\end{equation*}
}}
By using \eqref{nablaeta} and the bootstrap assumption \eqref{grad}, we have that 
\begin{align*}
{{R_{16, 1}}} &= \aalpha \left|\int_{0}^{t} \int_{\Omega_{b}} \mathcal{J}^{\eta}_{b} (\text{tr}(\nabla^{\eta}_{b} \bd{\xi})) (p - p_{\delta}) - \int_{0}^{t} \int_{\Omega_{b}} \mathcal{J}^{{\eta}^\delta_{\delta}}_{b} (\text{tr}(\nabla^{{\eta}^\delta_{\delta}}_{b} \bd{\xi}_{\delta})) (p - p_{\delta}) \right| \\
&= \aalpha \left|\int_{0}^{t} \int_{\Omega_{b}} \text{tr}(\nabla \bd{\xi} \cdot (\bd{I} + \nabla \bd{\eta})^{C}) (p - p_{\delta}) - \int_{0}^{t} \int_{\Omega_{b}} \text{tr}(\nabla \bd{\xi}_{\delta} \cdot (\bd{I} + \nabla {\bd{\eta}}^\delta_{\delta})^{C}) (p - p_{\delta})\right| \\
&\le \aalpha \left|\int_{0}^{t} \int_{\Omega_{b}} \text{tr}(\nabla \bd{\xi} \cdot (\nabla \bd{\eta} - \nabla {\bd{\eta}}^\delta_{\delta})^{C}) (p - p_{\delta})\right| 
+ \aalpha \left|\int_{0}^{t} \int_{\Omega_{b}} \text{tr}(\nabla (\bd{\xi} - \bd{\xi}_{\delta}) \cdot (\bd{I} + \nabla {\bd{\eta}}^\delta_{\delta})^{C}) (p - p_{\delta})\right| \\
&\le C \int_{0}^{t} ||\nabla \bd{\eta} - \nabla {\bd{\eta}}^\delta_{\delta}||_{L^{2}(\Omega_{b})} \cdot ||p - p_{\delta}||_{L^{2}(\Omega_{b})} + C\int_{0}^{t} ||\nabla \bd{\xi} - \nabla \bd{\xi}_{\delta}||_{L^{2}(\Omega_{b})} \cdot ||p - p_{\delta}||_{L^{2}(\Omega_{b})}.
\end{align*}
For {{$R_{16, 2}$,}} we compute 
\begin{align*}
{{R_{16, 2}}} &= \alpha \left|\int_{0}^{t} \int_{\Omega_{b}} \bd{\xi} \cdot \left[\nabla(p - p_{\delta}) \cdot (\bd{I} + \nabla \bd{\eta})^{C}\right] - \int_{0}^{t} \int_{\Omega_{b}} \bd{\xi}_{\delta} \cdot \left[\nabla(p - p_{\delta}) \cdot (\bd{I} + \nabla {\bd{\eta}}^\delta_{\delta})^{C}\right] \right| \\
&\le \alpha \left|\int_{0}^{t} \int_{\Omega_{b}} \bd{\xi} \cdot \left[\nabla(p - p_{\delta}) \cdot (\nabla \bd{\eta} - \nabla {\bd{\eta}}^\delta_{\delta})^{C}\right]\right| + \alpha \left|\int_{0}^{t} \int_{\Omega_{b}} (\bd{\xi} - \bd{\xi}_{\delta}) \cdot \left[\nabla(p - p_{\delta}) \cdot (\bd{I} + \nabla {\bd{\eta}}^\delta_{\delta})^{C}\right] \right| \\
&\le C \int_{0}^{t} ||\nabla p - \nabla p_{\delta}||_{L^{2}(\Omega_{b})} \cdot ||\nabla \bd{\eta} - \nabla {\bd{\eta}}^\delta_{\delta}||_{L^{2}(\Omega_{b})} + C \int_{0}^{t} ||\bd{\xi} - \bd{\xi}_{\delta}||_{L^{2}(\Omega_{b})} \cdot ||\nabla p - \nabla p_{\delta}||_{L^{2}(\Omega_{b})}.
\end{align*}
By the convolution inequality \eqref{convolutionineq} and the previous estimate on the gradient of the pressure \eqref{gradprescompare}, we conclude that 
{{
\begin{multline*}
|T_{16}| \le \epsilon\left(\int_{0}^{t} ||\nabla \bd{\xi} - \nabla \bd{\xi}_{\delta}||^{2}_{L^{2}(\Omega_{b})} + \int_{0}^{t} ||\nabla p - \nabla p_{\delta}||_{L^{2}({\Omega}^\delta_{b, \delta}(s))}^{2}\right) + C(\epsilon)\left(\int_{0}^{t} ||p - p_{\delta}||^{2}_{L^{2}(\Omega_{b})} \right. \\
\left. + \int_{0}^{t} ||\nabla \bd{\eta} - \nabla {\bd{\eta}}^\delta||_{L^{2}(\Omega_{b})}^{2} + \int_{0}^{t} ||\nabla \bd{\eta} - \nabla \bd{\eta}_{\delta}||_{L^{2}(\Omega_{b})}^{2} + \int_{0}^{t} ||\omega - \omega_{\delta}||_{H^{2}(\Gamma)}^{2} + \int_{0}^{t} ||\bd{\xi} - \bd{\xi}_{\delta}||^{2}_{L^{2}(\Omega_{b})}\right).
\end{multline*}
}}

\vskip 0.1in
\noindent
{\bf{Term T17.}} To estimate term $T_{17}$ defined in \eqref{T17} we use \eqref{nablaeta} to compute
\begin{align*}
T_{17} &= \kappa \int_{0}^{t} \int_{\Omega_{b}} \mathcal{J}^{\eta}_{b} \nabla^{\eta}_{b} p \cdot \nabla^{\eta}_{b} (p - p_{\delta}) - \kappa \int_{0}^{t} \int_{\Omega_{b}} \mathcal{J}^{{\eta}^\delta_{\delta}}_{b} \nabla^{{\eta}^\delta_{\delta}}_{b} p_{\delta} \cdot \nabla^{{\eta}^\delta_{\delta}}_{b} (p - p_{\delta}) \\
&= \kappa \int_{0}^{t} \int_{\Omega_{b}} \mathcal{J}^{{\eta}^\delta_{\delta}}_{b} \nabla^{{\eta}^\delta_{\delta}}_{b} (p - p_{\delta}) \cdot \nabla^{{\eta}^\delta_{\delta}}_{b}(p - p_{\delta}) + I_{1} + I_{2} = \kappa \int_{0}^{t} \int_{{\Omega}^\delta_{b, \delta}(t)} |\nabla(p - p_{\delta})|^{2} + {{R_{17, 1} + R_{17, 2}}},
\end{align*}
where
\begin{align*}
{{R_{17, 1}}} &= \kappa \int_{0}^{t} \int_{\Omega_{b}} \mathcal{J}^{\eta}_{b} \nabla^{\eta}_{b} p \cdot \nabla^{\eta}_{b}(p - p_{\delta}) - \kappa \int_{0}^{t} \int_{\Omega_{b}} \mathcal{J}^{{\eta}^\delta_{\delta}}_{b} \nabla^{\eta}_{b} p \cdot \nabla^{{\eta}^\delta_{\delta}}_{b} (p - p_{\delta}),
\\
{{R_{17, 2}}} &= \kappa \int_{0}^{t} \int_{\Omega_{b}} \mathcal{J}^{{\eta}^\delta_{\delta}}_{b} (\nabla^{\eta}_{b} p - \nabla^{{\eta}^\delta_{\delta}}_{b} p) \cdot \nabla^{{\eta}^\delta_{\delta}}_{b}(p - p_{\delta}).
\end{align*}

To estimate {{$R_{17, 1}$,}} we use \eqref{nablaeta} to obtain
\begin{equation*}
{{R_{17, 1}}} = \kappa \int_{0}^{t} \int_{\Omega_{b}} \nabla^{\eta}_{b} p \cdot \Big(\nabla(p - p_{\delta}) \cdot \left[(\bd{I} + \nabla \bd{\eta})^{C} - (\bd{I} + \nabla {\bd{\eta}}^\delta_{\delta})^{C}\right]\Big). 
\end{equation*}
Because $\bd{\eta}$ is smooth, $|\nabla^{\eta}_{b} p| \le C$ uniformly in space and time. Therefore,
\begin{equation*}
{{|R_{17, 1}|}} \le C \int_{0}^{t} ||\nabla (p - p_{\delta})||_{L^{2}(\Omega_{b})} \cdot ||(\nabla \bd{\eta} - \nabla {\bd{\eta}}^\delta_{\delta})^{C}||_{L^{2}(\Omega_{b})}.
\end{equation*}
Using the estimate in \eqref{gradprescompare}, we obtain the desired estimate that
{{
\begin{equation*}
{{|R_{17, 1}|}} \le \epsilon \int_{0}^{t} ||\nabla(p - p_{\delta})||_{L^{2}({\Omega}^\delta_{b, \delta}(s))}^{2} + C(\epsilon) \int_{0}^{t} ||\nabla \bd{\eta} - \nabla {\bd{\eta}}^\delta_{\delta}||^{2}_{L^{2}(\Omega_{b})}.
\end{equation*}
}}

To estimate {{$R_{17, 2}$}}, we use the bootstrap assumption \eqref{grad} that there exists a constant $C$ (independent of $\delta$) such that $|\nabla {\bd{\eta}}^\delta_{\delta}| \le C$ pointwise for $t \in [0, T_{\delta}]$. Therefore, $|(\bd{I} + \nabla \bd{\eta}_{\delta})^{C}|$ is  pointwise uniformly bounded in space and time on the time interval $[0, T_{\delta}]$. Thus, by \eqref{nablaeta},
\begin{equation*}
{{R_{17, 2}}} = \kappa \int_{0}^{t} \int_{\Omega_{b}} (\nabla^{\eta}_{b} p - \nabla^{{\eta}^\delta_{\delta}}_{b} p) \cdot \left[\nabla(p - p_{\delta}) \cdot (\bd{I} + \nabla {\bd{\eta}}^\delta_{\delta})^{C}\right]
\end{equation*}
and hence
\begin{equation*}
{{|R_{17, 2}|}} \le C \int_{0}^{t} ||\nabla^{\eta}_{b} p - \nabla^{{\eta}^\delta_{\delta}}_{b} p||_{L^{2}(\Omega_{b})} \cdot ||\nabla(p - p_{\delta})||_{L^{2}(\Omega_{b})}.
\end{equation*}
We estimate the first pressure term by using  \eqref{nablaeta} to obtain
\begin{align*}
||\nabla^{\eta}_{b} p - \nabla^{{\eta}^\delta_{\delta}}_{b} p||_{L^{2}(\Omega_{b})}^{2} &= \int_{\Omega_{b}} \left|\nabla p \cdot \left[(\bd{I} + \nabla \bd{\eta})^{-1} - (\bd{I} + \nabla {\bd{\eta}}^\delta_{\delta})^{-1}\right]\right|^{2} \\
&= \int_{\Omega_{b}} \left|\nabla p \cdot (\bd{I} + \nabla {\bd{\eta}}^\delta_{\delta})^{-1} [(\bd{I} + \nabla {\bd{\eta}}^\delta_{\delta})(\bd{I} + \nabla \bd{\eta})^{-1} - \bd{I}]\right|^{2} \\
&= \int_{\Omega_{b}} \left|\nabla p \cdot (\bd{I} + \nabla {\bd{\eta}}^\delta_{\delta})^{-1} [(\bd{I} + \nabla {\bd{\eta}}^\delta_{\delta}) - (\bd{I} + \nabla \bd{\eta})] (\bd{I} + \nabla \bd{\eta})^{-1}\right|^{2} \\
&= \int_{\Omega_{b}} \left|\nabla p \cdot (\bd{I} + \nabla {\bd{\eta}}^\delta_{\delta})^{-1} (\nabla {\bd{\eta}}^\delta_{\delta} - \nabla \bd{\eta}) (\bd{I} + \nabla \bd{\eta})^{-1}\right|^{2}.
\end{align*}
Using the fact that $p$ is smooth and the bootstrap assumption \eqref{norm}, we have that
\begin{equation*}
||\nabla^{\eta}_{b} p - \nabla^{{\eta}^\delta_{\delta}}_{b} p||_{L^{2}(\Omega_{b})}^{2} \le C||\nabla {\bd{\eta}}^\delta_{\delta} - \nabla \bd{\eta}||_{L^{2}(\Omega_{b})}^{2}.
\end{equation*}
Therefore, combining this with  \eqref{gradprescompare} we obtain
{{
\begin{equation*}
{{R_{17, 2}}} \le \epsilon \int_{0}^{t} ||\nabla (p - p_{\delta})||^{2}_{L^{2}({\Omega}^\delta_{b, \delta}(s))} + C(\epsilon) \int_{0}^{t} ||\nabla \bd{\eta} - \nabla {\bd{\eta}}^\delta_{\delta}||^{2}_{L^{2}(\Omega_{b})}.
\end{equation*}
}}

The final estimate of $T_{17}$ now follows after the application of the convolution inequality \eqref{convolutionineq}:
{{
\begin{equation*}
T_{17} \le \kappa \int_{0}^{t} \int_{{\Omega}^\delta_{b, \delta}(s)} |\nabla(p - p_{\delta})|^{2} + {{R_{17}}},
\end{equation*}
}}
where the remainder is bounded by 
{{
\begin{align*}
{{|R_{17}|}} &\le \epsilon \int_{0}^{t} ||\nabla(p - p_{\delta})||^{2}_{L^{2}({\Omega}^\delta_{b, \delta}(s))} \\
&+ C(\epsilon) \left(\int_{0}^{t} ||\nabla \bd{\eta} - \nabla {\bd{\eta}}^\delta||_{L^{2}(\Omega_{b})}^{2} + \int_{0}^{t} ||\nabla \bd{\eta} - \nabla \bd{\eta}_{\delta}||_{L^{2}(\Omega_{b})}^{2} + \int_{0}^{t} ||\omega - \omega_{\delta}||_{H^{2}(\Gamma)}^{2} \right).
\end{align*}
}}

\vskip 0.1in
\noindent
{\bf{Term 18.}} Here want to estimate
{{
\begin{align*}
T_{18} &= \int_{0}^{t} \int_{\Gamma(s)} p (\bd{u} - \bd{\xi}) \cdot \bd{n} - \int_{0}^{t} \int_{\Gamma(s)} p (\bd{u}_{\delta} - \bd{\xi}_{\delta}) \cdot \bd{n} 
- \int_{0}^{t} \int_{\Gamma_{\delta}(s)} p_{\delta}(\bd{u} - \bd{\xi}) \cdot \bd{n}_{\delta} + \int_{0}^{t} \int_{\Gamma_{\delta}(s)} p_{\delta}(\bd{u}_{\delta} - \bd{\xi}_{\delta}) \cdot \bd{n}_{\delta} \\
&- \int_{0}^{t} \int_{\Gamma(s)} ((\bd{u} - \bd{\xi}) \cdot \bd{n}) (p - p_{\delta}) + \int_{0}^{t} \int_{\Gamma_{\delta}(s)} ((\bd{u}_{\delta} - \bd{\xi}_{\delta}) \cdot \bd{n}_{\delta}) (p - p_{\delta}) \\
&= -\int_{0}^{t} \int_{\Gamma(s)} p (\bd{u}_{\delta} - \bd{\xi}_{\delta}) \cdot \bd{n} - \int_{0}^{t} \int_{\Gamma_{\delta}(s)} p_{\delta}(\bd{u} - \bd{\xi}) \cdot \bd{n}_{\delta} 
+ \int_{0}^{t} \int_{\Gamma(s)} ((\bd{u} - \bd{\xi}) \cdot \bd{n}) p_{\delta} + \int_{0}^{t} \int_{\Gamma_{\delta}(s)} ((\bd{u}_{\delta} - \bd{\xi}_{\delta}) \cdot \bd{n}_{\delta}) p.
\end{align*}
}}
By mapping all of the integrals back to the reference domain $\Gamma$, we obtain
{{
\begin{align*}
T_{18} &= - \int_{0}^{t} \int_{\Gamma} p(\bd{u}_{\delta} - \bd{\xi}_{\delta}) \cdot (-\partial_{x} \omega, 1) - \int_{0}^{t} \int_{\Gamma} p_{\delta}(\bd{u} - \bd{\xi}) \cdot (-\partial_{x} \omega_{\delta}, 1) \\
&+ \int_{0}^{t} \int_{\Gamma} p_{\delta} (\bd{u} - \bd{\xi}) \cdot (-\partial_{x} \omega, 1) + \int_{0}^{t} \int_{\Gamma} p (\bd{u}_{\delta} - \bd{\xi}_{\delta}) \cdot (-\partial_{x}\omega_{\delta}, 1) \\
&= \int_{0}^{t} \int_{\Gamma} p (\bd{u}_{\delta} - \bd{\xi}_{\delta}) \cdot \bd{e}_{x} (\partial_{x}\omega - \partial_{x}\omega_{\delta}) - \int_{0}^{t} \int_{\Gamma} p_{\delta}(\bd{u} - \bd{\xi}) \cdot \bd{e}_{x} (\partial_{x} \omega - \partial_{x}\omega_{\delta}) \\
&= -\int_{0}^{t} \int_{\Gamma} p[(\bd{u} - \bd{\xi}) \cdot \bd{e}_{x} - (\bd{u}_{\delta} - \bd{\xi}_{\delta}) \cdot \bd{e}_{x}] (\partial_{x} \omega - \partial_{x} \omega_{\delta}) + \int_{0}^{t} \int_{\Gamma} (p - p_{\delta}) (\bd{u} - \bd{\xi}) \cdot \bd{e}_{x} (\partial_{x}\omega - \partial_{x}\omega_{\delta}).
\end{align*}
}}
The absolute value is bounded as follows:
{{
\begin{multline*}
\left|\int_{0}^{t} \int_{\Gamma} p[(\bd{u} - \bd{\xi}) \cdot \bd{e}_{x} - (\bd{u}_{\delta} - \bd{\xi}_{\delta}) \cdot \bd{e}_{x}] (\partial_{x} \omega - \partial_{x} \omega_{\delta})\right| + \left|\int_{0}^{t} \int_{\Gamma} (p - p_{\delta}) (\bd{u} - \bd{\xi}) \cdot \bd{e}_{x} (\partial_{x}\omega - \partial_{x}\omega_{\delta})\right| \\
\le C\left(\int_{0}^{t} ||(\bd{u} - \bd{\xi}) \cdot \bd{e}_{x} - (\bd{u}_{\delta} - \bd{\xi}_{\delta}) \cdot \bd{e}_{x}||_{L^{2}(\Gamma)} ||\partial_{x}\omega - \partial_{x}\omega_{\delta}||_{L^{2}(\Gamma)} + \int_{0}^{t} ||p - p_{\delta}||_{L^{2}(\Gamma)} ||\partial_{x}\omega - \partial_{x}\omega_{\delta}||_{L^{2}(\Gamma)}\right).
\end{multline*}
}}
After the application of the trace theorem, Poincare's inequality, and Korn's inequality we obtain the final estimate:
{{
\begin{align*}
|T_{18}| &\le \epsilon \left(\int_{0}^{t} ||\bd{D}(\widehat{\bd{u}} - \bd{u}_{\delta})||^{2}_{L^{2}(\Omega_{f,\delta}(s))} + \int_{0}^{t} ||\nabla (\bd{\xi} - \bd{\xi}_{\delta})||^{2}_{L^{2}(\Omega_{b})} + \int_{0}^{t} ||\nabla(p - p_{\delta})||^{2}_{L^{2}({\Omega}^\delta_{b, \delta}(s))}\right) \\
&+ C(\epsilon) \int_{0}^{t} ||\omega - \omega_{\delta}||^{2}_{H^{2}(\Gamma)}.
\end{align*}
}}

\end{proof}

{{\subsection{Generalized Aubin-Lions Compactness Theorem \cite{AubinLions}}\label{appendix2}
To help the reader follow the results from Section~\ref{VelConv} we state here the Generalized Aubin-Lions Compactness Theorem, i.e., Theorem 3.1 of \cite{AubinLions}.

\begin{theorem}\label{Compactness} {\rm{\bf{(The Generalized Aubin-Lions Compactness Theorem)}}}
Let $V$ and $H$ be Hilbert spaces such that  $V\subset\subset H$.
Suppose that 
$\{\bu_\dt\} \subset L^2(0,T;H)$ is a sequence such that
$
\bu_\dt (t,\cdot)=\bu_\dt^n(\cdot) \ {\rm on}\; ((n-1)\dt,n\dt],\;n=1,\dots,N,
$
with $N\Delta t = T$.
Let $V_\dt^n$ and $Q_\dt^n$ be Hilbert spaces such that $(V_\dt^n,Q_\dt^n)\hookrightarrow V\times V$,  
where the embeddings are uniformly continuous w.r.t. $\dt$ and $n$,
and $V_\dt^n \subset \subset \overline{Q_\dt^n}^H \hookrightarrow (Q_\dt^n)'$.
Let $\bu_\dt^n \in V_\dt^n$, $n = 1,\dots,N$.
%Furthermore, let $Q_\dt^n \hookrightarrow V$ be such that $V_\dt^n = \overline{Q_\dt^n}^V$, and let $H^n_\dt=\overline{Q^n_\dt}^H$.
If the following is true: 
\begin{itemize}
\item[(A)] There exists a universal constant $C>0$ such that for every $\dt$ 
\begin{enumerate}
\item[A1.] 
$
\sum_{n=1}^N\|\bu^n_\dt\|_{V^n_\dt}^2\dt\leq C,
$
\item[A2.] 
$
\|\bu_\dt\|_{L^\infty(0,T;H)}\leq C,
$
\item[A3.]
$
\|\tau_\dt\bu_\dt-\bu_\dt\|^2_{L^2(\dt,T;H)}\leq C\dt.
$

\end{enumerate}

\item[(B)] 
There exists a universal constant $C>0$ such that
$$
\|P^{n}_\dt\frac{\bu^{n+1}_\dt-\bu^{n}_\dt}{\dt}\|_{(Q^n_\dt)'}\leq C(\|\bu^{n+1}_\dt\|_{V^{n+1}_\dt}+1),\; n=0,\dots,N-1,
$$
where $P^n_\dt$ is the orthogonal projector onto  $\overline{Q^n_\dt}^H$.

\item[(C)]\label{FuncSp} The function spaces $Q_\dt^n$ and $V_\dt^n$ depend smoothly on time in the following sense:
\begin{enumerate}
\item[C1.] 
For every $\dt > 0$, and for every $l\in \{1,\dots,N\}$ and $n\in\{1,\dots,N-l\}$,  
there exists a space $Q^{n,l}_\dt\subset V$ and the operators 
${{J}}^i_{\Delta t,l,n}:Q^{n,l}_\dt\to Q^{n+i}_\dt, i=0,1,\dots,l,$ such that
$\|J^i_{\Delta t,l,n}\bq\|_{Q^{n+i}_\dt}\leq C\|\bq\|_{Q^{n,l}_\dt}, \ \forall \bq\in Q^{n,l}_\dt$, and
%and a universal constant $C>0$ such that for every $\bq\in Q^{n,l}_\dt$  the following holds:
\begin{equation}\label{Ji1}
\Big ((J^{j+1}_{\Delta t,l,n}\bq-J^j_{\Delta t,l,n}\bq),\bu^{n+j+1}_\dt\Big )_H\leq C \dt\|\bq\|_{Q^{n,l}_\dt}\|\bu^{n+j+1}_\dt\|_{V^{n+j+1}_\dt},\quad j\in \{0,\dots,l-1\},
\end{equation}
\begin{equation}\label{Ji2}
\|J^i_{\Delta t,l,n}\bq-\bq\|_H\leq C \sqrt{l\dt}\|\bq\|_{Q^{n,l}_\dt},\quad i\in \{0,\dots,l\},
\end{equation}
where $C > 0$ is independent of $\dt,n$ and $l$.

\item[C2.] 
Let $V^{n,l}_\dt = \overline{Q^{n,l}_\dt}^V$. 
There exist the functions $I^i_{\dt,l,n}:V^{n+i}_\dt\to V^{n,l}_\dt, \  i=0,1,\dots,l,$
and a universal constant $C>0$, such that for every $\bv\in V^{n+i}_\dt$ 
\begin{equation}\label{C21}
\|I^i_{\dt,l,n} \bv\|_{V^{n,l}_\dt}\leq C\|\bv\|_{V^{n+i}_\dt}, \quad i\in \{0,\dots,l\},
\end{equation}
\begin{equation}\label{C22}
\|I^i_{\dt,l,n} \bv-\bv\|_H\leq  g(l\dt)\|\bv\|_{V^{n+i}_\dt},\quad i\in \{0,\dots,l\},
\end{equation}
where $g:\R_+\to\R_+$ is a universal, monotonically increasing function such that $g(h)\to 0$ as $h\to 0$.

\item[C3.] {\rm{Uniform Ehrling property:}}
%Let  $H^{n,l}_\dt = \overline{Q_\dt^{n,l}}^H$. TODO: zasto ovo treba biti tu? Nema ga nigdje u iskazu.
%Then,  $\bv\in V^{n,l}_\dt$ the following uniform Ehrling property holds:
%
For every $\delta>0$ there exists a constant $C(\delta)$ independent of $n,l$ and $\dt$, such that
\begin{equation}\label{Ehrling}
\|\bv\|_{H}\leq \delta\|\bv\|_{V^{n,l}_\dt}+C(\delta)\|\bv\|_{(Q^{n,l}_\dt)'},\quad \bv\in V^{n,l}_\dt.
\end{equation}
\end{enumerate}
\end{itemize}
then $\{\bu_{\Delta t}\}$ is relatively compact in $L^2(0,T;H)$.
\end{theorem}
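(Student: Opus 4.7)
The plan is to establish relative compactness of $\{\bu_{\Delta t}\}$ in $L^2(0,T;H)$ via a time-shift equicontinuity criterion of Simon/Kolmogorov type, adapted to the fact that the ``good'' regularity spaces $V^n_{\Delta t}$ and $Q^n_{\Delta t}$ themselves depend on $n$ and $\Delta t$. Since $\sum_n \|\bu^n_{\Delta t}\|^2_{V^n_{\Delta t}}\,\Delta t \le C$ by A1 and each $V^n_{\Delta t}\hookrightarrow V\subset\subset H$ uniformly, one has control of $\bu_{\Delta t}$ in $L^2(0,T;V)$ with $V\subset\subset H$. Therefore, by a Simon-type compactness lemma, it suffices to prove the uniform integral equicontinuity
\[
\|\tau_h \bu_{\Delta t}-\bu_{\Delta t}\|_{L^2(h,T;H)}^2 \le \omega(h),\qquad \omega(h)\to 0\ \text{as}\ h\to 0,
\]
uniformly in $\Delta t$. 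Writing $h=l\Delta t$, this reduces to uniformly estimating
$\Delta t\sum_n \|\bu^{n+l}_{\Delta t}-\bu^n_{\Delta t}\|_H^2$ in terms of a function of $l\Delta t$ that vanishes with $l\Delta t$.

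The key mechanism for the pointwise-in-$n$ bound will be the uniform Ehrling inequality C3 applied to the difference. Since $\bu^{n+l}_{\Delta t}-\bu^n_{\Delta t}$ does not a priori lie in $V^{n,l}_{\Delta t}$, I will first replace it by its surrogate $I^l_{\Delta t,l,n}\bu^{n+l}_{\Delta t}-I^0_{\Delta t,l,n}\bu^n_{\Delta t}\in V^{n,l}_{\Delta t}$, paying an $H$-error bounded by $g(l\Delta t)(\|\bu^{n+l}_{\Delta t}\|_{V^{n+l}_{\Delta t}}+\|\bu^n_{\Delta t}\|_{V^n_{\Delta t}})$ via \eqref{C22}. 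Then C3 gives, for any $\delta>0$,
\[
\|I^l\bu^{n+l}_{\Delta t}-I^0\bu^n_{\Delta t}\|_H \le \delta\|I^l\bu^{n+l}_{\Delta t}-I^0\bu^n_{\Delta t}\|_{V^{n,l}_{\Delta t}}+C(\delta)\|I^l\bu^{n+l}_{\Delta t}-I^0\bu^n_{\Delta t}\|_{(Q^{n,l}_{\Delta t})'}.
\]
The $V^{n,l}_{\Delta t}$-norm is controlled by $\|\bu^{n+l}_{\Delta t}\|_{V^{n+l}_{\Delta t}}+\|\bu^n_{\Delta t}\|_{V^n_{\Delta t}}$ through \eqref{C21}; after squaring, summing in $n$, and using A1, these terms contribute $C\delta$ to the final bound and can be absorbed for $\delta$ small.

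The crux will be estimating the dual-space term. Here I plan to test against $\bq\in Q^{n,l}_{\Delta t}$ with $\|\bq\|_{Q^{n,l}_{\Delta t}}\le 1$ and telescope:
\[
\bigl(\bu^{n+l}_{\Delta t}-\bu^n_{\Delta t},\bq\bigr)_H=\sum_{j=0}^{l-1}\bigl(\bu^{n+j+1}_{\Delta t}-\bu^{n+j}_{\Delta t},\,J^{j+1}_{\Delta t,l,n}\bq\bigr)_H + \sum_{j=0}^{l-1}\bigl(\bu^{n+j+1}_{\Delta t}-\bu^{n+j}_{\Delta t},\,\bq-J^{j+1}_{\Delta t,l,n}\bq\bigr)_H,
\]
together with an analogous identity involving $I^i$ to account for having replaced $\bu$ by $I^i\bu$. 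In the first sum, $J^{j+1}\bq\in Q^{n+j+1}_{\Delta t}$ allows me to insert the orthogonal projector $P^{n+j+1}_{\Delta t}$ and apply hypothesis B together with the operator-norm bound on $J^i$ from C1, yielding a bound of the form $C\Delta t(1+\|\bu^{n+j+1}_{\Delta t}\|_{V^{n+j+1}_{\Delta t}})\|\bq\|_{Q^{n,l}_{\Delta t}}$. The second sum will be regrouped using Abel summation so that the increments $J^{j+1}\bq-J^j\bq$ appear, and then controlled by \eqref{Ji1}. Summing all contributions, squaring, and applying Cauchy--Schwarz against A1, the dual-space piece contributes at most $C(\delta)\,l\Delta t$ after integrating in $n$.

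Combining everything yields $\Delta t\sum_n\|\bu^{n+l}_{\Delta t}-\bu^n_{\Delta t}\|_H^2\le C\delta + C(\delta)\bigl(l\Delta t + g(l\Delta t)^2\bigr)$; taking $\delta\to 0$ after $l\Delta t\to 0$ closes the equicontinuity estimate and completes the compactness argument by Simon's lemma. The main obstacle will be the careful telescoping in the dual-space estimate: hypothesis B provides control only of the \emph{projected} discrete time derivative $P^n_{\Delta t}(\bu^{n+1}-\bu^n)/\Delta t$ in $(Q^n_{\Delta t})'$, so I must scrupulously arrange the pairing so that every test function genuinely lies in the appropriate $Q^{n+j}_{\Delta t}$, which is precisely what the family $\{J^i_{\Delta t,l,n}\}$ is designed for, and ensure the constants are independent of $n$, $l$, and $\Delta t$.
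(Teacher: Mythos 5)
First, a point of orientation: this manuscript does not actually prove the statement — it is restated from \cite{AubinLions}, and the only ingredient the manuscript manipulates is the equicontinuity estimate of Lemma 3.1 there, namely $\|P^{n,l}_{\dt}(\bu^{n+l}_{\dt}-\bu^{n}_{\dt})\|_{(Q^{n,l}_{\dt})'}\le C\sqrt{l\dt}$. Your plan reconstructs precisely that architecture: a dual-norm equicontinuity estimate obtained by telescoping with the operators $J^{i}_{\dt,l,n}$ and hypothesis B, an upgrade to $H$-norm equicontinuity via the operators $I^{i}_{\dt,l,n}$ and the uniform Ehrling property \eqref{Ehrling} with \eqref{C21}--\eqref{C22} and A1, and a conclusion by a Simon-type criterion using $V\subset\subset H$. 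So this is the same route as the proof the theorem rests on, not a new one; however, as written there are two concrete gaps.

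(i) \emph{Index mismatch in the use of B.} Hypothesis B bounds $P^{m}_{\dt}\frac{\bu^{m+1}_{\dt}-\bu^{m}_{\dt}}{\dt}$ in $(Q^{m}_{\dt})'$, i.e. the increment $\bu^{n+j+1}_{\dt}-\bu^{n+j}_{\dt}$ may only be tested against elements of $Q^{n+j}_{\dt}$ through the projector $P^{n+j}_{\dt}$. In your telescoping you pair this increment with $J^{j+1}_{\dt,l,n}\bq\in Q^{n+j+1}_{\dt}$ and insert $P^{n+j+1}_{\dt}$; B says nothing about that pairing, so the step fails as stated. The fix is to put $J^{j}_{\dt,l,n}\bq$ in the first sum; Abel summation of the remainder $\sum_{j}(\bu^{n+j+1}_{\dt}-\bu^{n+j}_{\dt},\,\bq-J^{j}_{\dt,l,n}\bq)_H$ then produces exactly the increments $J^{j+1}_{\dt,l,n}\bq-J^{j}_{\dt,l,n}\bq$ handled by \eqref{Ji1}, together with boundary terms $(\bu^{n+l}_{\dt},\bq-J^{l-1}_{\dt,l,n}\bq)_H$ and $(\bu^{n}_{\dt},\bq-J^{0}_{\dt,l,n}\bq)_H$, which require \eqref{Ji2} combined with A2 — a hypothesis your outline never invokes. (ii) \emph{Sub-grid time shifts.} The Simon criterion needs $\|\tau_h\bu_{\dt}-\bu_{\dt}\|_{L^2(h,T;H)}\to0$ as $h\to0$ uniformly over the whole family, including members whose $\dt$ is not small. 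Your reduction ``$h=l\dt$'' is empty when $0<h<\dt$ (and incomplete when $h$ is not a multiple of $\dt$), and in that regime the Ehrling route does not yield smallness, because the replacement error $g(\dt)$ in \eqref{C22} need not be small for fixed $\dt$. This is exactly what A3 is for: for a piecewise constant function one has $\|\tau_h\bu_{\dt}-\bu_{\dt}\|^2_{L^2(h,T;H)}\le \frac{h}{\dt}\,\|\tau_{\dt}\bu_{\dt}-\bu_{\dt}\|^2_{L^2(\dt,T;H)}\le Ch$, which supplies the missing uniform modulus. Once you repair the pairing in (i), add the A3 step in (ii), and record where A2 enters, your argument coincides with the proof of Theorem 3.1 in \cite{AubinLions} and goes through.
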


}}
\bibliographystyle{plain}
%\bibliography{}
\bibliography{FPSIBibliography}

\end{document}